\newcommand{\R}{\mathbb{R}}
\newcommand{\C}{\mathbb{C}}
\renewcommand{\d}{\,\mathrm{d}}
\newcommand{\T}{\mathbb{T}}
\newcommand{\Q}{\mathbb{Q}}
\newcommand{\orb}{\operatorname{orb}}
\newcommand{\Z}{\mathbb{Z}}
\newcommand{\N}{\mathbb{N}}
\renewcommand{\P}{\mathbb{P}}
\newcommand{\E}{\mathop{\mathbb{E}}}
\newtheorem{theorem}{Theorem}[section]
\newtheorem{proposition}[theorem]{Proposition}
\newtheorem{conjecture}[theorem]{Conjecture}
\newtheorem{lemma}[theorem]{Lemma}
\newtheorem{corollary}[theorem]{Corollary}
\theoremstyle{definition}
\newtheorem{definition}[theorem]{Definition}
\newtheorem*{definition*}{Definition}
\newtheorem{question}[theorem]{Question}
\theoremstyle{remark}
\newtheorem{example}[theorem]{Example}
\newtheorem{remark}[theorem]{Remark}
\newcommand{\edit}[3]{\color{#1}{#3}\color{black}\marginpar{\textcolor{#1}{[[#2]]}}}
\newcommand{\joel}[1]{\edit{blue!50}{JM}{#1}}
\newcommand{\wenbo}[1]{\edit{purple!90}{WS}{#1}}
\author{Sebasti\'an Donoso}
\author{Anh N. Le}
\author{Joel Moreira}
\author{Wenbo Sun}
\address{Departamento de Ingenier\'{\i}a Matem\'atica and Centro de Modelamiento Matem\'atico, CNRS IRL 2807,  Universidad de Chile, Beauchef 851, Santiago, Chile.}
\email{sdonoso@dim.uchile.cl}
\address{Department of Mathematics\\
	Ohio State University\\
	231 W. 18th Ave., Columbus, OH 43210}
\email{le.286@osu.edu  }
\address{Mathematics Institute\\ University of Warwick\\
Coventry, UK}
\email{joel.moreira@warwick.ac.uk}
\address{Department of Mathematics\\ Virginia Polytechnic Institute and State University\\ 225 Stanger Street,
	Blacksburg VA, 24061-1026, USA}
\email{swenbo@vt.edu}
\thanks{The first author was supported by ANID/Fondecyt/1200897 and  Centro de Modelamiento Matem\'atico (CMM), ACE210010 and FB210005, BASAL funds for centers of excellence from ANID-Chile.}
\title{Additive averages of multiplicative correlation sequences and applications}
\subjclass[2010]{Primary: 37B20 ; Secondary: 37A45, 11N37}
\begin{document}
\maketitle

%% to use todonotes with amsart
%\makeatletter
%\providecommand\@dotsep{10}
%\makeatother
%\listoftodos\relax
%%%%
\begin{abstract}
We study sets of recurrence, in both measurable and topological settings, for actions of $(\mathbb{N},\times)$ and $(\mathbb{Q}^{>0},\times)$. 
In particular, we show that autocorrelation sequences of positive functions arising from multiplicative systems have positive additive averages.
We also give criteria for when sets of the form  $\{(an+b)^{\ell}/(cn+d)^{\ell}: n \in \mathbb{N}\}$ are sets of multiplicative recurrence, and consequently we recover two recent results in number theory regarding completely multiplicative functions and the Omega function.
    
    %We study sets of multiplicative recurrence in both measurable and topological settings. In particular, we show that the additive averages of multiple recurrence sequences arising from multiplicative systems are positive. We also give criteria for sets of the forms $\{an + b: n \in \N\}$ and $\{(an+b)^{\ell}/(cn+d)^{\ell}: n \in \N\}$ to be sets of topological multiplicative recurrence with $a, b, c, d, \ell \in \N$. As a consequence, we obtain simple proofs of two recent results in number theory regarding the Omega function and completely multiplicative functions%, which were first proved by Klurman and Mangerel
    %. Finally, we characterize all polynomial sets that contain multiplicative semigroups and answer some questions involving partition regularity of Pythagorean triples in finite fields.
\end{abstract}
%- \textcolor{red}{In Section 5, do we use the notion $\Gamma_d, \Gamma_2, \Gamma_3$ anymore? If not, then remove mentioning of this notion in examples.}

%- \textcolor{red}{After every theorem in the introduction, refer to the point in the paper where it is proved.}

%- \textcolor{red}{Should we keep Section 6.1?}
%- \textcolor{red}{Decide which notion to use: $(\N, \times), (\Q^{>0}, \times)$, or multiplicative systems. The choices between $\N$ and $\Q^{>0}$ is tough since at some occasions, we are genuinely interested in colorings of $\N$, whereas at other places we talk about sets of multiplicative recurrence in $\Q^{>0}$ (like $(n+1)/n$).} Use "multiplicative recurrence" in the text (or multiplicative systems" if it is unambiguous). Use $(\N, \times)$ and $(\Q^{>0}, \times)$ in the statement of theorems. 

%- \textcolor{red}{Wenbo finds concrete examples for (5).}

%\textcolor{red}{Sebastian finds concrete examples for (6).}

%\textcolor{red}{Anh polishes section 6.}

%- Use $c$ for colorings of $\N$? 

%- \textcolor{red}{Replace $G$ by $\Gamma$ for countable amenable groups?}

%- \textcolor{red}{Do we need injectivity the definition of parametrized multiplicative functions induced by $\mathcal{M}_{d \times d}(\Z)$ in page 27?}

\section{Introduction and results}

An equation of several variables is \emph{partition regular} if for any finite coloring of $\N$, there exists a solution to the equation with all variables having the same color. 
An old question of Erd\H{o}s and Graham \cite{Erdos_Graham80} asks whether the equation $x^2 + y^2 = z^2$ is partition regular. 
A partial answer was obtained in 2016, when the conjecture was confirmed in the case of two colors \cite{Heule_Kullmann_Marek16}. 
%Their proof required about $4$ CPU-years of computation over a period of two days on a supercomputer at the Texas Advanced Computing Center and generated about $200$ terabytes of data.
In general, the answer to Erd\H{o}s and Graham's question is still unknown even if we just require two of the variables to have the same color.  %That is to ask if there exists a set containing $x, y$ such that $x^2 + y^2$ (or $x^2 - y^2$) is a square. 
\begin{conjecture}
    \label{ques:x^2-y^2}
    For any finite coloring of $\N$, there exist $x, y$ of the same color such that $x^2 + y^2$ (or $x^2-y^2$) is a perfect square.
\end{conjecture}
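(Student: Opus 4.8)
The natural strategy --- and the kind of machinery developed in this paper is built to feed into it --- is to reduce Conjecture~\ref{ques:x^2-y^2} to the assertion that a certain subset of $\Q^{>0}$ is a \emph{set of multiplicative recurrence}. If $x,y\in\N$ and $y/x=t$ with $1+t^{2}$ a rational square, then $x^{2}+y^{2}=x^{2}(1+t^{2})$ is a nonnegative integer that is also a rational square, hence a perfect square; likewise $x^{2}-y^{2}$ is a perfect square when $x>y$ and $1-(y/x)^{2}$ is a rational square. So, setting $\mathcal{T}_{+}=\{t\in\Q^{>0}:1+t^{2}\in(\Q^{>0})^{2}\}$ and $\mathcal{T}_{-}=\{t\in(0,1)\cap\Q:1-t^{2}\in(\Q^{>0})^{2}\}$, the two--variable form of the conjecture follows once one shows that $\mathcal{T}_{+}\cup\mathcal{T}_{-}$ --- or even $\mathcal{T}_{+}$ alone --- is a set of multiplicative recurrence for colorings of $\N$: for any finite coloring there would then be $x$ and $t$ in the set with $x$ and $tx$ the same color, and $tx\in\N$ forces the denominator of $t$ (in lowest terms) to divide $x$, so $x$ and $tx$ are the desired monochromatic witnesses. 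Parametrizing rational points on the circle identifies $\mathcal{T}_{+}=\{(m^{2}-n^{2})/(2mn):m>n\geq1\}$ and $\mathcal{T}_{-}=\{2mn/(m^{2}+n^{2}):m>n\geq1\}$, the familiar leg/leg and leg/hypotenuse ratios of Pythagorean triples.

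With the reduction in hand, the plan would be to invoke the criteria established above for when a set of the form $\{(an+b)^{\ell}/(cn+d)^{\ell}:n\in\N\}$ is a set of multiplicative recurrence, taking $\ell=2$: concretely, one would try to exhibit inside $\mathcal{T}_{+}$ (or $\mathcal{T}_{-}$) a one--parameter subfamily of exactly that shape, since enlarging a set of recurrence keeps it one. Alternatively, one could attempt to run the ``additive average of a multiplicative correlation'' estimate directly, bounding below an average over $m,n$ (and an auxiliary dilation $k$) of $1_{C}(2mn\,k)\,1_{C}((m^{2}-n^{2})k)$ for a color class $C$, using the positivity of additive averages of autocorrelations proved here.

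The step I expect to be the real obstacle --- and the reason the conjecture remains open --- is bridging the gap between ``Pythagorean ratios'' and ``ratios of the form $(an+b)^{\ell}/(cn+d)^{\ell}$''. On the one hand, $\mathcal{T}_{+}$ contains no perfect square, since $t=v^{2}$ would make $1+v^{4}$ a rational square, contradicting the classical fact that $a^{4}+b^{4}=c^{2}$ has no nontrivial solutions (and $a^{4}-b^{4}=c^{2}$ likewise rules out squares in $\mathcal{T}_{-}$); so there is no $\ell=2$ family of the required form inside these sets at all. On the other hand, demanding a nonconstant linear--fractional family $\{(an+b)/(cn+d):n\}\subseteq\mathcal{T}_{+}$ leads, after completing the square via Lagrange's identity, to a Pell equation $M^{2}-(a^{2}+c^{2})W^{2}=-(ad-bc)^{2}$, whose solutions single out an exponentially sparse set of values of $n$ rather than a full progression --- so no such $\ell=1$ family works either. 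Thus one genuinely needs to push the recurrence criteria of this paper from one--parameter polynomial families to the two--parameter conic family $\{2mn/(m^{2}-n^{2})\}$, or else find a different multiplicative input adapted to rational points on circles; that extension is exactly what the present techniques do not (yet) deliver.
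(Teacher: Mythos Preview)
The statement you were asked to address is \emph{Conjecture}~\ref{ques:x^2-y^2}; it is presented in the paper as an open problem, not as a theorem, and the paper does not supply a proof. Your write-up is therefore not in competition with any argument in the paper --- and you seem to have recognized this, since your proposal is really an outline of a strategy together with a diagnosis of where it breaks down, ending with the (correct) observation that ``that extension is exactly what the present techniques do not (yet) deliver.''

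Your reduction to multiplicative recurrence is exactly the one the paper makes: your sets $\mathcal{T}_{\pm}$ are, up to inversion, the sets appearing in Conjecture~\ref{conj:pythagorean_ergodic} and Proposition~\ref{prop:pythagorean_triples_semigroups}, and the paper explicitly states that to settle the $x^2+y^2$ case of Conjecture~\ref{ques:x^2-y^2} via this route it would suffice to show $\{(m^2-n^2)/(2mn):m>n\}$ is a set of multiplicative recurrence. Your obstacle analysis is also on target and is corroborated by the paper: Theorem~\ref{thm:n+1_n_intro} only handles ratios of the shape $(an+b)^\ell/(cn+d)^\ell$, which (as you argue via the Fermat--type obstructions and the Pell analysis) cannot be embedded inside the Pythagorean ratio sets; and Proposition~\ref{prop:pythagorean_triples_semigroups} confirms that the Pythagorean ratio sets contain no infinite multiplicative subsemigroup, so the ``easy'' route via Poincar\'e recurrence is also blocked. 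In short, there is no proof in the paper for you to match, and your proposal accurately explains why.
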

%among other things,
Frantzikinakis and Host proved in \cite{Frantzikinakis_Host_2017} that for any finite coloring of $\N$, there are $x, y$ of the same color such that $16x^2 + 9y^2$ is a perfect square. 
Expanding on these ideas, the fourth author established an analogue of \cref{ques:x^2-y^2} where $\N$ is replaced with the ring of integers of a larger number field (see \cite{Sun2018,Sun2019}). However the methods used there do not seem to apply to $\N$.

The above results were proved by first recasting the combinatorial problems into questions in ergodic theory, in particular about sets of return times in multiplicative measure preserving systems. For a semigroup $G$, a \emph{topological $G$-system} is a pair $(X, T)$ where $X$ is a compact metric space and $T$ is a $G$-action on $X$, i.e. for $g \in G$, $T_{g} \colon X \to X$ is a homeomorphism and for $g, h \in G$, $T_{g} \circ T_{h} = T_{gh}$. Let $\mathcal{B}$ be the Borel $\sigma$-algebra and $\mu$ be a probability measure on $X$ such that $\mu(T_{g}^{-1} A) = \mu(A)$ for all $A \in \mathcal{B}$. Then the tuple $(X, \mathcal{B}, \mu, T)$ is called a \emph{measure preserving $G$-system}. By a \emph{multiplicative  measure preserving system}, we mean a measure preserving $(\N, \times)$-system with $\times$ being the multiplication on $\N$.

For $x, y \in \N$, $x^2 + y^2$ is a perfect square if and only if $x = k(m^2 - n^2)$ and $y = k(2mn)$ for $k, m, n \in \N$ and $m > n$. Therefore, to answer the case $x^2+y^2$ of \cref{ques:x^2-y^2} using the approach in \cite{Frantzikinakis_Host_2017}, it suffices to solve the following conjecture:
\begin{conjecture}
\label{conj:pythagorean_ergodic}
    Let $(X, \mathcal{B}, \mu, T)$ be a measure preserving $(\N, \times)$-system and $A \in \mathcal{B}$ with $\mu(A) > 0$. There exist $m, n \in \N$ with $m > n$ such that
    \[
        \mu(T_{m^2 - n^2}^{-1} A \cap T_{2mn}^{-1} A) > 0. 
    \]
\end{conjecture}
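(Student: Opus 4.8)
The plan is to recast the problem as a single--return--time statement and then attack it through the structure theory of multiplicative systems. Since each $T_g$ is a homeomorphism, the $(\N,\times)$-action extends uniquely to a measure preserving $(\Q^{>0},\times)$-action, and using commutativity and invariance one gets
\[
\mu\bigl(T_{m^2-n^2}^{-1}A\cap T_{2mn}^{-1}A\bigr)=\mu\bigl(A\cap T_{2mn/(m^2-n^2)}^{-1}A\bigr).
\]
Thus \cref{conj:pythagorean_ergodic} is equivalent to the statement that $R:=\{\,2mn/(m^2-n^2):m>n\ge 1\,\}\subseteq\Q^{>0}$ is a set of multiplicative recurrence for every measure preserving $(\Q^{>0},\times)$-system. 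After an ergodic decomposition (for the existence of a single good pair $(m,n)$ this needs a small but routine argument, selecting $(m,n)$ uniformly over the components of positive conditional $A$-measure) we may assume the system is ergodic.

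Next I would invoke a structure theorem for multiplicative systems in the spirit of Frantzikinakis--Host: decompose $\mathbf{1}_A=f_{\mathrm{s}}+f_{\mathrm{u}}$, where $f_{\mathrm{s}}$ is measurable with respect to the Kronecker factor (which for an ergodic $(\Q^{>0},\times)$-system has pure point spectrum, indexed by completely multiplicative unimodular $\xi\colon\Q^{>0}\to S^1$) and $f_{\mathrm{u}}$ is negligible for the relevant averages of $r\mapsto\mu(A\cap T_r^{-1}A)$. On the Kronecker factor the two--point correlation is explicit, $\mu(A\cap T_r^{-1}A)=\sum_{\xi}|c_\xi|^2\,\xi(r)$, with $\sum_\xi|c_\xi|^2=\mu(A)$ and the $\xi\equiv 1$ term equal to $\mu(A)^2>0$. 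Hence it suffices to find $r=2mn/(m^2-n^2)\in R$ for which $\xi(r)$ is close to $1$ for all nontrivial $\xi$ in the (countable, summably weighted) support; and here $\xi(r)=\xi(2)\,\xi(m)\,\xi(n)\,\overline{\xi(m-n)\,\xi(m+n)}$.

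I would then separate the nontrivial $\xi$ into two types. The Archimedean characters $\xi(x)=x^{it}$: here $\xi(r)=\bigl(2mn/(m^2-n^2)\bigr)^{it}$, and one uses the elementary fact that $u\mapsto u/(1-u^2)$ is a homeomorphism of $(0,1)$ onto $(0,\infty)$, so $\log\bigl(2mn/(m^2-n^2)\bigr)=\log 2+\log\!\bigl(u/(1-u^2)\bigr)$ with $u=n/m\in(0,1)\cap\Q$ can be steered into any prescribed interval --- exactly the sort of input provided by the paper's criteria for sets $\{(an+b)^{\ell}/(cn+d)^{\ell}\}$ (which also govern one--parameter slices of $R$). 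The finite--order characters $\xi$ reduce to the joint distribution of $(m,n,m-n,m+n)$ in residue classes, i.e. to evaluating $\E_{m,n}\,\xi(2)\xi(m)\xi(n)\overline{\xi(m-n)\xi(m+n)}$; the finitely many small--conductor obstructions would have to be absorbed into congruence conditions on $m,n$.

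The main obstacle --- and the reason this remains a conjecture --- is to meet all of these constraints simultaneously, which amounts to controlling the orbit of the two--dimensional family $(m,n)\mapsto\log\bigl(mn/(m^2-n^2)\bigr)$ inside the Bohr--type compactification attached to the Kronecker factor. A naive averaging over $m,n$ does not suffice: one has $\E_{m,n}\,\mu\bigl(A\cap T_{2mn/(m^2-n^2)}^{-1}A\bigr)\ge 0$ trivially, but the Archimedean characters contribute terms $|c_\xi|^2\,\E_{m,n}\bigl(mn/(m^2-n^2)\bigr)^{it}$ that neither vanish nor are obviously of favourable sign, so positivity of the average cannot be read off. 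What is really needed is an equidistribution (or at least density) input for $(m,n)\mapsto\log\bigl(mn/(m^2-n^2)\bigr)$, and the multiplicative identity $m^2-n^2=(m-n)(m+n)$ creates genuine correlations among the otherwise ``independent'' values $\xi(m),\xi(n),\xi(m\pm n)$ that the available uniformity results for multiplicative functions (in the direction of the Chowla and Elliott conjectures) do not yet resolve. This is precisely the multiplicative shadow of the difficulty behind the Erd\H{o}s--Graham question, so completing the argument along these lines would most likely require a new estimate for correlations of multiplicative functions along pairs of linear forms.
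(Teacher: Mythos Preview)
The statement you are addressing is \emph{Conjecture}~1.2 in the paper, not a theorem; the paper does not prove it and indeed presents it as the ergodic reformulation of the open Erd\H{o}s--Graham problem. So there is no ``paper's own proof'' to compare against, and your write-up is honest about this: you explicitly flag the obstruction and call it ``the reason this remains a conjecture.'' In that sense your proposal is not a proof but a discussion of why the natural Kronecker-factor approach stalls, and on that level it is broadly accurate.

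A couple of technical points in your reduction are worth correcting. First, you assert that ``each $T_g$ is a homeomorphism'' and use this to pass to a $(\Q^{>0},\times)$-action; but the hypothesis is only that $(X,\mathcal{B},\mu,T)$ is a measure preserving $(\N,\times)$-system, so the $T_n$ need not be invertible. The paper handles this issue (in the proof of \cref{lemma_multiplicativerecurrence}) via an invertible extension, and that is what you should invoke. Second, the ergodic-decomposition step for an existential statement (``there exist $m,n$'') is more delicate than you indicate: one needs a single pair $(m,n)$ that works for a positive-measure set of ergodic components, not merely a pair for each component. This is not insurmountable, but ``routine'' oversells it.

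Finally, the paper does contribute something concrete toward this conjecture: \cref{prop:pythagorean_triples_semigroups} shows that $\{2mn/(m^2-n^2):m>n\}$ contains no infinite subsemigroup of $(\Q^{>0},\times)$, so the cheap route via Poincar\'e recurrence is unavailable. This is consistent with your diagnosis that genuinely new input on correlations of multiplicative functions along the linear forms $m,n,m-n,m+n$ would be needed.
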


The idea of using ergodic theoretical methods to solve combinatorial problems traces back to Furstenberg's proof of Szemer\'edi's Theorem \cite{Furstenberg77} in 1977. 
The novelty in Frantzikinakis and Host \cite{Frantzikinakis_Host_2017}'s approach is the usage of additive averages to study multiplicative measure preserving systems. 
This idea has the potential to address other unsolved problems in  ergodic Ramsey theory regarding partition regularity of polynomial equations and a primary goal of this paper is to begin a systematic study of this new tool and to obtain potential applications.
\subsection{Additive averages of multiplicative recurrence sequences}

Throughout this paper, for a finite set $E$ and function $f \colon E \to \C$, we use $\E_{n\in E}f(n)$ to denote the average $\frac{1}{|E|}\sum_{n\in E} f(n)$.
By the von Neumann ergodic theorem, for a multiplicative  measure preserving system $(X, \mathcal{B}, \mu, T)$, a set $A \in \mathcal{B}$ with $\mu(A) > 0$ and a multiplicative F{\o}lner sequence $(\Phi_N)_{N \in \N}$ in $\N$ (see \cref{sec:sets_mult_rec} for definitions), we have
\begin{equation}
\label{eq:von-neumann}
    \lim_{N \to \infty} \E_{n \in \Phi_N} \mu(A \cap T_{n}^{-1} A) > 0.
\end{equation}
On the other hand, it is not clear whether \eqref{eq:von-neumann} is still true if  $(\Phi_N)_{N \in \N}$ is replaced with the additive F{\o}lner sequence $([N])_{N \in \N}$. (Here $[N]$ denotes the set $\{1,\dots,N\}$.)

%... is replaced with the sequence $(\{1,\dots,N\})_{n\in\N}$ (which is an additive F\o lner sequence). , 
Our first result confirms that this is indeed the case. In fact, we obtain a more general theorem regarding multiple ergodic averages along subsemigroups of $(\N, \times)$.

\begin{theorem}
	\label{prop:positive_semigroup_intro}
	Let $(X, \mathcal{B}, \mu,T)$ be a measure preserving $(\N, \times)$-system and $A \in \mathcal{B}$ with $\mu(A)>0$. Let $G$ be a subsemigroup of $(\N, \times)$.
	Then for every $\ell \in \N$,
	\[
	    \limsup_{N\to\infty} \E_{n \in G \cap [N]} \mu(A \cap T_{n}^{-1} A \cap \ldots \cap T_{n^{\ell}}^{-1} A)>0.
	\]
\end{theorem}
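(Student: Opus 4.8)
The plan is to analyze the multiplicative correlation sequence
$$
a(n)\;=\;\mu\bigl(A\cap T_n^{-1}A\cap\cdots\cap T_{n^\ell}^{-1}A\bigr)\;=\;\int f\cdot T_nf\cdots T_{n^\ell}f\d\mu,\qquad f=\mathbf{1}_A,
$$
and to show that, although $a\ge 0$ generally oscillates, it does not vanish in additive density along $G$; concretely, I want to produce a constant $c>0$ and infinitely many $N$ with $\E_{n\in G\cap[N]}a(n)\ge c$. First I would pass to the natural extension, so that all $T_n$ become invertible and $T$ extends to a measure preserving $(\Q^{>0},\times)$-action; this changes nothing about the return sets of $A$.

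If $G$ is finitely generated, no multiplicative number theory is needed: writing $G$ through prime factorizations as a sub-semigroup of a free commutative monoid and applying $\log$, the sets $G\cap[N]$ become a sequence of dilated simplices, hence a F{\o}lner sequence of $G$. Then $\E_{n\in G\cap[N]}a(n)$ is a genuine multiple ergodic average of the $G$-action along the arithmetic-progression configuration $\{1,n,n^2,\dots,n^\ell\}$, whose $\liminf$ is positive by the multidimensional Szemer\'edi theorem of Furstenberg--Katznelson. So from now on assume $G$ is \emph{not} finitely generated.

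For the general case the engine is a decomposition of $a|_G$ into a structured part plus a negligible error, in the spirit of Frantzikinakis--Host: $a(n)=a_{\mathrm{str}}(n)+e(n)$, where $a_{\mathrm{str}}(n)=\int\prod_{i=0}^{\ell}T_{n^i}\widetilde f\d\mu$ with $\widetilde f=\E(f\mid\mathcal Z)\ge 0$ the projection of $f$ onto the appropriate characteristic factor $\mathcal Z$ of the $G$-action (a Kronecker factor when $\ell=1$, a Host--Kra nilfactor in general), and $e$ is negligible for averages along $G\cap[N]$. Two features of $\mathcal Z$ are then decisive. First, the diagonal value is positive:
$$
a_{\mathrm{str}}(1)\;=\;\int\widetilde f^{\,\ell+1}\d\mu\;\ge\;\Bigl(\int\widetilde f\d\mu\Bigr)^{\ell+1}\;=\;\mu(A)^{\ell+1}\;>\;0
$$
by Jensen's inequality. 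Second, $\mathcal Z$ carries an ``Archimedean'' direction along which the rotation corresponding to $T_n$ behaves like $\alpha\log n$ --- slowly varying --- together with a ``profinite'' direction which instead equidistributes. Since $G$ is not finitely generated, $G\cap[N]$ concentrates, in the $\log$ variable, near $\log N$, so $\E_{n\in G\cap[N]}a_{\mathrm{str}}(n)$ equals, up to $o(1)$, a multiple correlation $\int\prod_{i=0}^{\ell}R_{\,i\alpha\log N}h\d m$ in the Archimedean factor alone, where $R_\phi$ denotes the relevant translation, $h=\E(\widetilde f\mid\text{Archimedean factor})\ge 0$, and $m$ is the Haar measure. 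Because $\{\log n:n\in\N\}$ has gaps tending to $0$, it meets every syndetic set, in particular the ``return times'' of the orbit $\phi\mapsto(R_{i\phi})_{i}$ to a neighborhood of the identity; choosing $N\to\infty$ along these makes the displayed correlation tend to $\int h^{\ell+1}\d m\ge\mu(A)^{\ell+1}$. Together with the negligibility of $e$ this yields $\limsup_N\E_{n\in G\cap[N]}a(n)\ge\mu(A)^{\ell+1}>0$.

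The step I expect to be the main obstacle is the decomposition itself --- that the error $e$ is negligible for averages along $G\cap[N]$ for an arbitrary, not finitely generated, subsemigroup $G$. For $G=\N$ this is the Frantzikinakis--Host analysis, and since $|G\cap[N]|\asymp N$ there it suffices to know $\E_{n\le N}|e(n)|\to 0$. For sparser $G$ this is not enough, and one must run the analysis \emph{relative to $G$}: the harmonic analysis has to be carried out on the group of fractions of $G$, with the Archimedean characters restricted accordingly and Hal\'asz's mean value theorem replaced by a version valid for sums over $G\cap[N]$, so that the error is small precisely along $G\cap[N]$. The case $\ell\ge 2$ adds a further layer, since $\mathcal Z$ is then a Host--Kra nilfactor: one needs the structure theory of nilsystems both to identify the characteristic factor for the ``linear in $n$'' configuration $\{1,n,\dots,n^\ell\}$ (in multiplicative terms $n\mapsto n^i$ is a homomorphism, so this is an arithmetic progression of length $\ell+1$ in the group of fractions) and to make the resulting nilsequence estimates uniform in $n$.
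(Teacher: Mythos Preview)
Your proposal is not a proof: you yourself identify the core step --- showing that the error term $e$ is negligible for averages along $G\cap[N]$ for an \emph{arbitrary} multiplicative subsemigroup $G$ --- as an open obstacle, and you sketch no argument for it. The Host--Kra/Frantzikinakis--Host machinery you invoke is developed for additive averages over $[N]$ (or for multiplicative F{\o}lner averages); transferring it to averages over $G\cap[N]$ with $G$ sparse is not a routine adaptation, and nothing in your outline explains how to do it. The claim that for non-finitely-generated $G$ the set $G\cap[N]$ ``concentrates, in the $\log$ variable, near $\log N$'' is also unjustified and false in general (e.g.\ take $G$ generated by $\{2\}\cup\{p:p\text{ prime}, p\equiv 1\bmod 4\}$). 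Even your ``easy'' case has a gap: when $G$ is finitely generated but not free on a set of primes, $G\cap[N]$ is the intersection of a simplex with a \emph{proper} submonoid of $\N_0^r$, and there is no reason this should be a F{\o}lner sequence for $G$.

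The paper's argument avoids all of this. The key observation is that the configuration $\{1,n,n^2,\dots,n^\ell\}$ is, in multiplicative notation, an arithmetic progression in the commutative cancellative semigroup $(G,\times)$. The Furstenberg--Katznelson IP Szemer\'edi theorem therefore gives $\delta>0$ and a \emph{multiplicatively syndetic} set $S\subset G$ on which $a(n)>\delta$. This reduces the problem to the purely combinatorial statement that a syndetic subset $S$ of $(G,\times)$ satisfies $\limsup_N\E_{n\in G\cap[N]}1_S(n)>0$. That, in turn, is an elementary counting argument: if $S/F=G$ with $F$ finite and $M=\max F$, then summing $1\le\sum_{m\in F}1_S(mn)$ over $n\in G\cap[N]$ gives $|S\cap[MN]|\ge|G\cap[N]|/|F|$, and one checks directly that $\limsup_N|G\cap[N]|/|G\cap[MN]|>0$ for any infinite $G\subset\N$. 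No characteristic factors, no decomposition of $a(n)$, and no case split on whether $G$ is finitely generated.
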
   

\cref{prop:positive_semigroup_intro} follows from the slightly more general \cref{prop:positive_semigroup} below.
Besides $\N$ itself, examples of multiplicative semigroups of $\N$ include $\{m^2 + Dn^2: m, n \in \N\}$ for some $D\in\N$, $\{a^n b^m: m, n \in \N\}$ and $\{m^a n^b q^c: m, n, q \in \N\}$ for $a, b, c \in \N$. 
We remark that there are two natural ways to take averages along these semigroups. 
One way is to enumerate its elements in increasing order as done in \cref{prop:positive_semigroup_intro}. 
Alternatively, one can exploit the fact that these semigroups admit parametrizations and take averages on the parameters. 
Our next result states that, also with this scheme, the averages of multiple recurrence sequences are positive. 
To formulate our result we need the notion of \emph{parametrized multiplicative function}, which describes certain parametrizations of multiplicative semigroups.
We postpone the formal definition to \cref{sec_ParametrizedMultiplicativeFunctions}, but here are some illustrative examples of commutative parametrized multiplicative functions:
\begin{itemize}
    \item $f\colon \N^2 \to \N$ defined by $f(n_1, n_2) = n_1^2 + D n_2^2$, for any $D\in\N$,
    \item $f\colon \N^2 \to \N$ defined by $f(n_1, n_2) = |n_1^2 + n_1 n_2 - n_2^2|$,
    \item $f\colon \N^3 \to \N$ defined by $f(n_1, n_2, n_3) = n_1^{a_1} n_2^{a_2} n_3^{a_3}$ for any $a_1,a_2,a_3\in\N$.
\end{itemize} %$f(n_1, n_2) = n_1^2 + D n_2^2$ for any $D \in \N$, or $f(n_1, n_2) = |n_1^2 + n_1 n_2 - n_2^2|$, or $f\colon \N^3 \to \N$ with  for $a_1, a_2, a_3 \in \N$.\joel{[[Maybe itemize these?]]}

\begin{theorem}
\label{thm:multipledeterminant_intro} 
	Let $(X, \mathcal{B}, \mu,T)$ be a measure preserving $(\N, \times)$-system and $A \in \mathcal{B}$ with $\mu(A)>0$. Let $f\colon \N^k\to\N$ be a commutative parametrized multiplicative function and $\ell\in\N$.
	Then%\joel{[[Should we write $T_{f(n)^{i}}^{-1}$ instead of $T_{f(n)}^{-i}$ to make the analogy with the previous theorem more obvious?]]}
	\[
	    \liminf_{N\to\infty}\E_{n\in[N]^k} \mu\Big(A\cap T_{f(n)}^{-1} A \cap T_{f(n)^2}^{-1} A \cap \cdots \cap  T_{f(n)^{\ell}}^{-1} A\Big)>0.
	 \]
\end{theorem}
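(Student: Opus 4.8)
The plan is to reduce Theorem~\ref{thm:multipledeterminant_intro} to Theorem~\ref{prop:positive_semigroup_intro} (equivalently, to its refinement \cref{prop:positive_semigroup}) by relating the average over the parameter cube $[N]^k$ to an average over the image semigroup $G = f(\N^k) \subseteq (\N,\times)$ enumerated in increasing order. The key structural input is that a commutative parametrized multiplicative function is, by definition, multiplicative in a suitable coordinatewise sense and its image is a subsemigroup of $(\N,\times)$; moreover the fibers $f^{-1}(\{m\})$ for $m \in G$ should be controlled well enough (e.g.\ on average, or up to a density argument) that $\E_{n\in[N]^k}\, \mu(A\cap T_{f(n)}^{-1}A\cap\cdots\cap T_{f(n)^\ell}^{-1}A)$ is comparable to a weighted average of the quantities $\mu(A\cap T_m^{-1}A\cap\cdots\cap T_{m^\ell}^{-1}A)$ over $m\in G$. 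Since Theorem~\ref{prop:positive_semigroup_intro} guarantees a positive $\limsup$ for the latter along $G\cap[N]$, one then transfers positivity back to the parameter average.

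First I would set up the change of variables: write
\[
\E_{n\in[N]^k}\, \mu\big(A\cap T_{f(n)}^{-1}A\cap\cdots\cap T_{f(n)^\ell}^{-1}A\big)
= \frac{1}{N^k}\sum_{m\in G}\ r_N(m)\, \mu\big(A\cap T_m^{-1}A\cap\cdots\cap T_{m^\ell}^{-1}A\big),
\]
where $r_N(m) = \#\{n\in[N]^k : f(n)=m\}$ counts representations. Each summand is nonnegative (it is a measure of an intersection), so the whole expression is a nonnegative average; the task is to show it does not go to $0$. Here I would exploit the commutative parametrized multiplicative structure to understand $r_N$: multiplicativity of $f$ forces $r_N$ to factor over the prime-power decomposition, and commutativity lets us pass to the abelianization, so that $G$ with its representation function behaves like a finitely generated abelian monoid. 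In particular I expect one can find a single element $m_0\in G$ (or a sub-semigroup generated by finitely many elements, e.g.\ the images of the standard generators) for which $r_N(m_0\cdot(\text{anything}))$ is uniformly bounded below by a positive fraction of $N^k$ along a sequence of $N$'s — reducing matters to the recurrence statement for the cyclic or finitely generated sub-semigroup, which is exactly the content of \cref{prop:positive_semigroup} applied to $G$.

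The alternative, cleaner route — and the one I would actually pursue — is to run the Frantzikinakis–Host-style argument directly on the parameter space: by the van der Corput / spectral decomposition machinery underlying \cref{prop:positive_semigroup}, the average $\E_{n\in[N]^k}\mu(A\cap\bigcap_{j\le\ell}T_{f(n)^j}^{-1}A)$ can be bounded below by the corresponding average of the \emph{correlation sequence} $c(m) = \mu(A\cap T_m^{-1}A\cap\cdots\cap T_{m^\ell}^{-1}A)$ pulled back along $f$, and then one uses that $c$ is a nonnegative multiplicative-type correlation sequence (a ``positive function arising from a multiplicative system'' in the language of the abstract) to conclude its additive averages over $f(\N^k)$ are positive. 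Concretely I would (i) show $c$ is, after the ergodic decomposition of the $(\N,\times)$-action, controlled by a suitable multiplicative nilfactor / Abramov-type factor so that $c(m)\ge$ (nonnegative almost periodic multiplicative weight) $- \varepsilon$ in a density sense; (ii) invoke mean convergence of $\E_{n\in[N]^k} w(f(n))$ for multiplicative weights $w$, which holds because $f$ is a commutative parametrized multiplicative function and hence $w\circ f$ is itself a nice (finite-dimensional, or pretentious-type) multiplicative function on $\N^k$; (iii) check the limit is strictly positive by testing against the constant $1$ and using $\mu(A)>0$.

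The main obstacle I anticipate is step (ii)/(i): controlling the pullback $w\circ f$ and proving the relevant mean convergence on $[N]^k$ when $f$ is a genuinely nonlinear polynomial such as $n_1^2+Dn_2^2$ or $|n_1^2+n_1n_2-n_2^2|$. For these one cannot simply separate variables; instead one needs that the norm form / binary quadratic form $f$ distributes the multiplicative weight $w$ regularly over $[N]^2$, which ultimately rests on the fact that such forms are norms of ideals in a quadratic order and on equidistribution of the corresponding Hecke-type characters — or, more self-containedly, on the defining axioms of ``parametrized multiplicative function'' being strong enough to force $w\circ f$ to be uniformly well-distributed in residues and hence to have the desired additive mean. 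Making this quantitative and uniform in the system $(X,\mathcal B,\mu,T)$ — so that the $\liminf$ is bounded below by a quantity depending only on $\mu(A)$, $k$, $\ell$ — is where the real work lies; everything else is bookkeeping reducing to \cref{prop:positive_semigroup}.
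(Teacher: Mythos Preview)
Your proposal misses the key simplifying observation and ends up chasing much harder problems than necessary.

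By definition, a commutative parametrized multiplicative function equips $\N^k$ with a cancelative commutative semigroup operation $\ast$, subordinated to the Euclidean norm, for which $f(n\ast m)=f(n)f(m)$. The paper exploits this directly: pulling back the $(\N,\times)$-action along $f$ gives an action $R_n:=T_{f(n)}$ of $(\N^k,\ast)$ on $(X,\mu)$, and since $R_{n^{\ast j}}=T_{f(n)^j}$, the Furstenberg--Katznelson theorem (\cref{thm_Furstenberg_katznelson}) applied to the $\ell$ commuting actions $n\mapsto R_{n^{\ast j}}$ on the semigroup $(\N^k,\ast)$ immediately produces a $\delta>0$ and a set $S\subset\N^k$, syndetic \emph{for the $\ast$ operation}, on which the multiple intersection has measure $>\delta$. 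The entire remaining content is the elementary \cref{lemma_syndetic_liminf}: a $\ast$-syndetic subset of $\N^k$ has positive lower \emph{additive} density in $[N]^k$, because subordination to the Euclidean norm guarantees that $\ast$-translation by any fixed element sends $[N]^k$ into $[KN]^k$. No analysis of the image $G=f(\N^k)$, no representation counts $r_N(m)$, no structure theory of multiplicative functions, no Hecke characters.

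Your first route --- pushing forward to $G=f(\N^k)$ and invoking \cref{prop:positive_semigroup_intro} --- has a genuine obstruction: that theorem gives only a positive $\limsup$ for the \emph{unweighted} average over $G\cap[M]$, whereas your change of variables produces a \emph{weighted} average with weights $r_N(m)$ that can be highly non-uniform (for $f(n_1,n_2)=n_1^2+n_2^2$ the representation function is unbounded and concentrated on smooth integers). Transferring one to the other, and upgrading $\limsup$ to $\liminf$, is not bookkeeping. Your second route --- nilfactors, pretentious weights, equidistribution of $w\circ f$ --- is not only far more elaborate than needed but also vague precisely at the decisive step; none of that machinery appears in the paper, and making ``$w\circ f$ is uniformly well-distributed in $[N]^k$'' rigorous for a general parametrized multiplicative $f$ would be a project in its own right.

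The lesson: you treated $f$ as a number-theoretic object whose arithmetic had to be unpacked, when the definition already hands you an abstract semigroup structure on the \emph{parameter space} that lets Furstenberg--Katznelson do all the work there.
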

\cref{thm:multipledeterminant_intro}  follows from the more general \cref{thm:multipledeterminant} in \cref{sec:mix_ergodic_theory}.
As an illustrative special case of \cref{thm:multipledeterminant_intro} we obtain
\begin{proposition}\label{prop:multiple_squares_intro}
Let $(X, \mathcal{B}, \mu,T)$ be a measure preserving $(\N, \times)$-system and let $A \in \mathcal{B}$ with $\mu(A)>0$. 
Then
	\[
	    \liminf_{N\to\infty}\E_{m, n \in[N]} \mu(A\cap T_{m^2 + n^2}^{-1} A)>0.
	 \]
\end{proposition}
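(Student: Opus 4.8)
The plan is to deduce \cref{prop:multiple_squares_intro} directly from \cref{thm:multipledeterminant_intro} by checking that the map $f\colon\N^2\to\N$ given by $f(m,n)=m^2+n^2$ is a commutative parametrized multiplicative function, and then specializing to $k=2$, $\ell=1$. Note that here we want the average over the \emph{parameters} $m,n\in[N]$ rather than over the elements of the semigroup $\{m^2+n^2:m,n\in\N\}$ listed in increasing order, so \cref{thm:multipledeterminant_intro} (and not \cref{prop:positive_semigroup_intro}) is the relevant tool.

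The only point worth spelling out is why such an $f$ qualifies as ``multiplicative'': this is the classical Brahmagupta--Fibonacci (two-square) identity
\[
 (a^2+b^2)(c^2+d^2)=(ac-bd)^2+(ad+bc)^2,
\]
which writes the product of two values of $f$ again as a value of $f$; together with its companion $(a^2+b^2)(c^2+d^2)=(ac+bd)^2+(ad-bc)^2$ it shows that the values of $f$ generate a multiplicative subsemigroup of $\N$ and that $f$ parametrizes it by polynomials in the bilinear fashion demanded by the definition in \cref{sec_ParametrizedMultiplicativeFunctions}. Commutativity in the sense of that section is immediate from $f(m,n)=f(n,m)$. In fact this is precisely the $D=1$ case of the first item in the list of examples preceding \cref{thm:multipledeterminant_intro}, so strictly speaking there is nothing new to verify.

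With this in hand, I would simply invoke \cref{thm:multipledeterminant_intro} with $k=2$, $\ell=1$ and $f(m,n)=m^2+n^2$, obtaining
\[
 \liminf_{N\to\infty}\E_{m,n\in[N]}\mu\big(A\cap T_{m^2+n^2}^{-1}A\big)>0,
\]
which is the assertion.

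I do not expect any genuine obstacle within this argument: all of the content is packaged inside \cref{thm:multipledeterminant_intro} (and its more general form \cref{thm:multipledeterminant}). The single place requiring care is bookkeeping against the formal definition of a parametrized multiplicative function in \cref{sec_ParametrizedMultiplicativeFunctions} --- in particular, handling the degenerate products (e.g. when $ac=bd$, so that the product is a perfect square, which one wants to still regard as a sum of two squares, allowing a zero summand if $0\in\N$ or else absorbing it into the semigroup structure). This is a matter of matching conventions rather than a substantive difficulty.
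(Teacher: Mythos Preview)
Your approach is correct and is exactly how the paper derives \cref{prop:multiple_squares_intro}: it is listed after \cref{thm:multipledeterminant} as the instance $k=2$, $\ell=1$, $f(m,n)=m^2+n^2$, where $f$ is the first worked example of a commutative parametrized multiplicative function (via the embedding $\psi(n_1,n_2)=\begin{pmatrix} n_1 & -n_2\\ n_2 & n_1\end{pmatrix}$).

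One correction, however: your sentence ``Commutativity in the sense of that section is immediate from $f(m,n)=f(n,m)$'' misreads \cref{def_pmf}. The word \emph{commutative} there refers not to the symmetry of $f$ but to the requirement that the underlying semigroup $(\N^k,\ast)$ can be chosen cancelative and commutative. For $f(m,n)=m^2+n^2$ this holds because the matrices $\begin{pmatrix} n_1 & -n_2\\ n_2 & n_1\end{pmatrix}$ commute with one another (they represent multiplication by Gaussian integers), not because $f$ is symmetric in its arguments. Since you also correctly point to the example list, this does not affect the validity of your deduction, but the reasoning you gave for that step should be replaced.
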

\subsection{Topological recurrence and applications to number theory}

%A counterpart to sets of multiplicative recurrence in topological setting is sets of topological multiplicative recurrence. 

A subset $R$ of the set of positive rational numbers $\Q^{>0}$ is called a \emph{set of (measurable) multiplicative recurrence} if for every $(\Q^{>0}, \times)$-system $(X, \mathcal{B}, \mu, T)$ and $A \in \mathcal{B}$ with $\mu(A) > 0$, there exists $r \in R$ for which $\mu(A \cap T_r^{-1} A) > 0$. By Furstenberg's correspondence principle, $R$ is a set of multiplicative recurrence if and only if for any $E \subset \N$ of positive upper multiplicative density (see \cref{sec:sets_mult_rec} for definitions), there exist $x, y \in E$ such that $x/y \in R$. If \cref{conj:pythagorean_ergodic} is true, then it follows that  $\{(m^2 - n^2)/(2mn): m, n \in \N, m > n\}$ is a set of multiplicative recurrence.

There is a weaker notion that suffices for our purpose of studying partition regularity called sets of topological multiplicative recurrence. 
A set $R \subset \Q^{>0}$ is called a \emph{set of topological multiplicative recurrence} if for every minimal topological $(\Q^{>0}, \times)$-system $(X, T)$ and $U \subset X$ open and non-empty, there exists $r \in R$ such that $U \cap T_r^{-1} U \neq \emptyset$. It turns out that $R$ is a set of topological multiplicative recurrence if and only if for any finite coloring of $\N$, there exist $x, y$ of the same color such that $x/y \in R$ (see \cref{lem:Q_and_N} below). 

In \cref{sec:sets_mult_rec} we show that if $\phi\colon (\N, +) \to (\Q^{>0}, \times)$ is a homomorphism and $S \subset \N$ is a set of topological (or measurable) additive recurrence, then $\phi(S)$ is a set of topological (or measurable) multiplicative recurrence. 
Using this fact, and drawing on the many known examples of sets of additive recurrence (cf. \cite{Furstenberg77, Kamae_France78, Sarkozy78}), one can exhibit examples of sets of multiplicative recurrence: $\{2^{P(n)}: n \in \N\}$ where $P \in \Z[x]$ satisfies $P(0) = 0$ and $\{2^{p-1}: p \text{ prime}\} $. 
However, all such examples are confined to a sparse semigroup of the form $\{a^n:n\in\N\}$, and there are hardly any other known examples of sets of multiplicative recurrence. Our next result produces a set of multiplicative recurrence which does not belong to the aforementioned class.

%\joel{Mention that special case is already in Vitaly's Update}
\begin{theorem}
\label{thm:n+1_n_intro}
    Let $a, c, \ell \in \N$, $b, d \in \Z$ and let $R:=\left\{ \left( \frac{an+b}{cn+d} \right)^{\ell}: n \in \N \right\}$.
    \begin{enumerate}
        \item If $a\neq c$, then $R$ is not a set  of topological multiplicative recurrence.
        \item If $a = c$, $b \neq d$ and there exists a prime $p$ such that $p | a$ but $p \nmid bd$, then $R$ is not a set  of topological multiplicative recurrence.
        \item If $a=c$ and either $a|b$ or $a|d$, then $R$ is a set of topological multiplicative recurrence. \footnote{In the special case $\ell=a=c=1$ and $b=d=0$, this result was also observed by Bergelson in \cite[Section 5]{Bergelson96}.}
    \end{enumerate} 
    
% If $\left\{ \left( \frac{an+b}{cn + d} \right)^{\ell}: n \in \N \right\}$ is a set of topological multiplicative recurrent, then $a = c$. 
\end{theorem}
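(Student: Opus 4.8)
The plan is to reduce all three parts to the dynamical (equivalently, colouring) formulation supplied by \cref{lem:Q_and_N}: $R\subset\Q^{>0}$ is a set of topological multiplicative recurrence iff for every finite colouring of $\N$ there are $x,y$ of the same colour with $x/y\in R$, iff for every minimal $(\Q^{>0},\times)$-system $(X,T)$ and nonempty open $U$ there is $r\in R$ with $U\cap T_r^{-1}U\neq\emptyset$. Parts (1) and (2) are negative, so for them it suffices to exhibit one minimal system and one nonempty open $U$ with $U\cap T_r^{-1}U=\emptyset$ for all $r\in R$; part (3) is positive and must be verified for every minimal system. Throughout write $r_n=\big((an+b)/(cn+d)\big)^{\ell}$.

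For the negative parts the tool is a multiplicative character $\chi\colon(\Q^{>0},\times)\to\C^{*}$ with $|\chi(r)|=1$ and $\inf_{r\in R}|\chi(r)-1|>0$: colouring $n$ by which of $k$ equal arcs of the circle contains $\chi(n)$, a monochromatic pair $x,y$ forces $|\chi(x/y)-1|=|\chi(x)-\chi(y)|\le 2\pi/k$, so for $k$ large no monochromatic pair has $x/y\in R$ (equivalently, the rotation $T_qx=\chi(q)x$ on the closure of the image witnesses non-recurrence). For (1), since $a\ne c$ we have $(an+b)/(cn+d)\to a/c\ne1$, so the Archimedean character $\chi(q)=q^{it}$ with $t=\pi/\big(\ell\log(a/c)\big)$ gives $\chi(r_n)\to-1$; perturbing $t$ slightly within a short interval we may also keep $\chi(r_n)\neq1$ for the finitely many small $n$, so $\inf_n|\chi(r_n)-1|>0$ (the degenerate case $1\in R$ being excluded at the outset). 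For (2) we use a $p$-adic character: since $p\mid a=c$ and $p\nmid bd$, we have $an+b\equiv b$ and $cn+d\equiv d$ modulo $p^{s}$ with $s:=v_p(a)$, so $r_n\equiv(b/d)^{\ell}\pmod{p^{s}}$ as $p$-adic units; composing $\Q^{>0}\to(\Z/p^{s}\Z)^{*}$ with a faithful character we get that $\chi(r_n)$ is the constant $(b/d)^{\ell}\bmod p^{s}$. If this is $\neq1$, i.e. $b^{\ell}\not\equiv d^{\ell}\pmod{p^{s}}$, we are done immediately. The residual case $b^{\ell}\equiv d^{\ell}\pmod{p^{s}}$ is the technical heart: here $r_n\in 1+p^{s}\Z_p$, and writing $u_n=(an+b)/(cn+d)=1+(b-d)/(an+d)$ one analyses the $p$-adic expansion of $u_n$ (leading term linear in $n$, of valuation $s+v_p(b-d)+v_p(n)$) together with a lifting-the-exponent estimate to control $v_p(r_n-1)$; one then either pushes $k$ in $(\Z/p^{k}\Z)^{*}$ until $r_n\not\equiv1\pmod{p^{k}}$ for all $n$, or couples the $p$-adic datum with the Archimedean one. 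Carrying this case analysis out is the main obstacle.

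For part (3), first note that $R$ is a set of topological multiplicative recurrence iff $R^{-1}$ is (in a minimal system $U\cap T_r^{-1}U\ne\emptyset\iff U\cap T_{r^{-1}}^{-1}U\ne\emptyset$), so we may assume $a\mid d$; dividing $a,b$ by $\gcd(a,b)$ we may also assume $\gcd(a,b)=1$. Writing $d=ad_0$, so $cn+d=a(n+d_0)$, and using that the action is by the abelian group $\Q^{>0}$, we get $T_{r_n}=T_{(an+b)^{\ell}}T_{a^{\ell}}^{-1}T_{(n+d_0)^{\ell}}^{-1}$; substituting $y=T_{a^{\ell}}^{-1}T_{(n+d_0)^{\ell}}^{-1}x$ shows that the existence of $x\in U$ with $T_{r_n}x\in U$ is equivalent to $T_{(an+d)^{\ell}}^{-1}U\cap T_{(an+b)^{\ell}}^{-1}U\neq\emptyset$. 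When moreover $a\mid b$, say $b=ab_0$, both constrained factors carry a factor $a^{\ell}$, and after replacing $U$ by $U'=T_{a^{\ell}}^{-1}U$ and reindexing $m=n+\min(b_0,d_0)$, $h=|b_0-d_0|$, we are reduced to showing that the ``pure'' family $\{((m+h)/m)^{\ell}:m\ge M_0\}$ is a set of topological multiplicative recurrence; the case $a\mid d$, $a\nmid b$ is handled by a similar but slightly longer reduction. Finally one proves the pure statement: given a minimal system and $U$ nonempty open, one must find $m$ and $z$ with $T_{m^{\ell}}z\in U$ and $T_{(m+h)^{\ell}}z\in U$, which follows from a multiplicative recurrence argument — e.g. the return set $\{q:U\cap T_q^{-1}U\ne\emptyset\}$ contains $\{q_2/q_1:q_1,q_2\in N(z_0,U)\}$, and using multiplicative $\mathrm{IP}$-recurrence of the minimal system (the sets $N(z_0,U)$ contain multiplicative $\mathrm{IP}$-sets) one fits in the configuration $\{m^{\ell},(m+h)^{\ell}\}$, in the spirit of Bergelson's observation recorded in the footnote.

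I expect the crux to be the residual case of part (2): unlike part (1), where a single Archimedean rotation works, and part (3), where a soft recurrence argument suffices, here the naive $p$-adic character degenerates (the value $(b/d)^{\ell}$ can be $\equiv1$ modulo arbitrarily high powers of $p$ along suitable $n$), and the obstruction must be extracted from a finer — possibly non-equicontinuous — feature of the sequence $\big((an+b)/(cn+d)\big)^{\ell}$.
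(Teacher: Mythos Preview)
Your approach to part (1) is essentially the paper's: build a rotation on the circle via the character $q\mapsto q^{it}$ (equivalently $e(x)\mapsto e(x+t\log q)$), using that $(an+b)/(cn+d)\to a/c\neq1$. The paper packages this as a general lemma: any subset of $\Q^{>0}$ that does not accumulate at $0$, $1$, or $\infty$ fails topological multiplicative recurrence.

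The genuine gap is in part (2), precisely in what you flag as the ``residual case'' $b^{\ell}\equiv d^{\ell}\pmod{p^{s}}$. Your proposed remedies (push to higher $p$-adic precision, lift-the-exponent, couple with an Archimedean character) do not obviously terminate: for instance with $p=3$, $a=3$, $b=1$, $d=2$, $\ell=2$ one has $(b/d)^{2}\equiv1\pmod{3}$ and indeed $r_n\equiv1\pmod{3}$ for all $n$, while $v_{3}(r_n-1)$ varies with $n$, so no single $(\Z/p^{k}\Z)^{*}$-character separates all $r_n$ from $1$. The paper avoids this entirely by first proving a power-reduction lemma (\cref{prop_recurrencepowers}): $R$ is a set of topological multiplicative recurrence if and only if $\{r^{\ell}:r\in R\}$ is. This reduces part (2) to $\ell=1$, after which the $p$-adic colouring $\chi(n)=(n/p^{v_p(n)})\bmod p^{u}$ with $p^{u}\nmid(b-d)$ works immediately, since now the obstruction is $b\not\equiv d\pmod{p^{u}}$ rather than $b^{\ell}\not\equiv d^{\ell}$. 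Your residual case simply disappears.

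The same lemma is the missing ingredient in part (3). Your reduction is muddled (you assume $a\mid d$, then case-split on ``$a\mid b$'' versus ``$a\mid d$, $a\nmid b$''), and your endgame --- ``fit the configuration $\{m^{\ell},(m+h)^{\ell}\}$ into a multiplicative IP-set'' --- is not a proof; multiplicative IP-sets are finite products of a fixed sequence and there is no evident way to realise additive shifts $m\mapsto m+h$ inside them. The paper instead reduces to $\ell=1$ via \cref{prop_recurrencepowers}, then (writing $d=ad_1$) observes that $\{(an+b)/(an+d):n\in\N\}$ contains $\{(am\pm1)/(am):m\in\N\}$ after restricting $n$ to a suitable residue class. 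The heart of part (3) is therefore the base case $\{(am\pm1)/(am):m\in\N\}$, which the paper proves by an explicit symbolic-dynamics argument on the shift $[r]^{\N_0}$: one passes to a minimal subsystem to get a uniform bound $N_0$ on colour-return times, then uses transitivity to transport this back to the original colouring and find a monochromatic pair $\{x,x+k\}$ with $ak\mid x$. This step is where the actual work lies, and your proposal does not supply it.
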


\cref{thm:n+1_n_intro} follows from combining Propositions \ref{cor:mobius_necessary}, \ref{cor:mobius_necessary2} and \ref{prop:mult_rec_mobius}.
%In \cref{sec:special_set_rec}, we show that it is false in general that the set $R:=\left\{ \left( \frac{an+b}{an+d} \right)^{\ell}: n \in \N \right\}$ is a set of topological multiplicative recurrence. 

A function $f\colon \N \to \C$ is called \emph{completely multiplicative} if $f(mn) = f(m) f(n)$ for $m, n \in \N$. Klurman and Mangerel \cite{Klurman_Mangerel_2018} proved that for any completely multiplicative function $f$ with $|f(n)| = 1$ for $n \in \N$, one has $\liminf_{n \to \infty} |f(n+1) - f(n)| = 0$. They remarked that their proof can be modified to treat the case in which the shift $1$ is replaced by any fixed $k \in \N$. As an application of \cref{thm:n+1_n_intro}, we give a strengthening of their result by allowing both arbitrary shifts and dilations.  

\begin{corollary}
\label{cor:completely_multiplicative_intro}
    Let $f: \N \to \C$ be a completely multiplicative function with $|f(n)| = 1$ for $n \in \N$. Then for all $a, k \in \N$, 
    \[
        \liminf_{n \to \infty} |f(an+k) - f(an)| =0.
    \]
\end{corollary}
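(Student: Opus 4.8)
The plan is to deduce this from Theorem \ref{thm:n+1_n_intro}(3) applied with $c=a$, $d=0$, $b=k$, and $\ell = 1$. Indeed, with these parameters the set $R = \{(an+k)/an : n \in \N\}$ has $a = c$ and $a \mid d$ (since $d = 0$), so Theorem \ref{thm:n+1_n_intro}(3) tells us that $R$ is a set of topological multiplicative recurrence. The key translation is that sets of topological multiplicative recurrence detect monochromatic ratios: by the equivalence mentioned in the excerpt (and proved in \cref{lem:Q_and_N}), for \emph{any} finite coloring of $\N$ there exist $x, y$ of the same color with $x/y \in R$, i.e. there exist $x = an+k$ and $y = an$ lying in the same color class.

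First I would set up the coloring. Fix $\varepsilon > 0$; since the unit circle is compact, partition it into finitely many arcs each of diameter less than $\varepsilon$, and color $m \in \N$ according to which arc contains $f(m)$. This is a finite coloring of $\N$, so topological multiplicative recurrence of $R$ produces $n \in \N$ with $an+k$ and $an$ receiving the same color, hence $|f(an+k) - f(an)| < \varepsilon$. A priori this only yields one such $n$, but to get $\liminf_{n\to\infty}$ rather than merely $\inf_n$ being zero, I would observe that the argument can be iterated: sets of topological multiplicative recurrence are in fact "partition regular along arbitrarily large inputs." More concretely, for each $M$ one applies the same reasoning to the restriction of the coloring to $\{m : m > M\}$ after noting that $R$ remains a set of topological recurrence when we only allow $n$ beyond some threshold — this follows because $R$ and any cofinite subset of $R$ define the same recurrence property (one can absorb finitely many excluded $n$ without affecting minimality-based recurrence, or equivalently pass to the relevant subsystem). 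Then for every $M$ and every $\varepsilon$ there is $n > M$ with $|f(an+k)-f(an)| < \varepsilon$, which is exactly $\liminf_{n\to\infty}|f(an+k)-f(an)| = 0$.

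The main obstacle is the last point: upgrading a single monochromatic pair to infinitely many with inputs tending to infinity, so that the statement is genuinely about the $\liminf$. I expect this to be routine but it must be handled carefully — the cleanest route is probably to note that if $R$ is a set of topological multiplicative recurrence then so is $\{r \in R : r = (an+k)/an,\ n \geq M\}$ for every $M$ (since the "tail" of the Bohr-type recurrence argument underlying Proposition \ref{prop:mult_rec_mobius} only uses large $n$), or alternatively to invoke directly the characterization via monochromatic configurations applied to the coloring of $\N \setminus [aM]$ by $f$-values. Everything else — compactness of $\T$, the definition of the coloring, reading off $|f(an+k) - f(an)| < \varepsilon$ from same-color membership, and letting $\varepsilon \to 0$ — is immediate.
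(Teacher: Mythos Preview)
Your overall strategy is right---invoke part (3) of \cref{thm:n+1_n_intro} with $c=a$, $d=0$, $b=k$, $\ell=1$, and then exploit topological multiplicative recurrence---but there is a genuine gap at the ``i.e.''\ step. The coloring characterization gives you monochromatic $x,y\in\N$ with $x/y=(an+k)/(an)$ for some $n$; it does \emph{not} let you take $x=an+k$ and $y=an$. In general you only know $x\cdot an=y\cdot(an+k)$, and the pair $(x,y)$ may be an arbitrary integer multiple of $(an+k,an)$ (or even a non-integer rescaling, after reducing the fraction). So ``$an+k$ and $an$ receive the same color'' is not what you have.

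The fix uses complete multiplicativity in an essential way. From $x\cdot an=y\cdot(an+k)$ and multiplicativity you get $f(x)f(an)=f(y)f(an+k)$, hence $f(an+k)\overline{f(an)}=f(x)\overline{f(y)}$. Since $|f|\equiv1$, this gives
\[
|f(an+k)-f(an)|=\bigl|f(an+k)\overline{f(an)}-1\bigr|=\bigl|f(x)\overline{f(y)}-1\bigr|=|f(x)-f(y)|<\varepsilon.
\]
This is exactly the content of the paper's proof, which phrases the same computation as ``multiplication by $f(m)$ is an isometry'' after writing $x=m(an+k)$, $y=m\cdot an$; the paper works directly in the topological system $(S^1,T)$ with $T_n z=f(n)z$ rather than via a coloring, but the substance is identical.

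On your worry about upgrading to infinitely many $n$: the paper avoids the ad hoc tail argument by invoking \cref{lem:Q_and_N}, whose equivalences already produce \emph{infinitely many} $r\in R$ (item \ref{lem:Q_and_N item2}); since $n\mapsto(an+k)/(an)$ is injective, this yields infinitely many $n$ directly. That is cleaner than arguing that cofinite subsets of $R$ remain sets of topological recurrence.
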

The proof of \cref{cor:completely_multiplicative_intro} is given in \cref{sec:special_set_rec}, after the proof of \cref{prop:mult_rec_mobius}.
%Another corollary is about Omega function $\Omega$. 
In the previous corollary, by setting $f=\xi^{\Omega(n)}$, where $\xi$ is a $q$-th root of $1$ and, for $n \in \N$, $\Omega(n)$ is the number of prime factors of $n$ counting with multiplicity, one immediately derives the following enhancement of \cite[Corollary 1.3]{Klurman_Mangerel_2018}.

%For $n \in \N$, $\Omega(n)$ is equal to the number of prime divisors of $n$ counting with multiplicity. By setting $f(n) = \xi^{\Omega(n)}$ where $\xi$ is a $q$-root of unity, we have a corollary. 

\begin{corollary}
\label{cor:omega_function_intro}
    For any $a, q, k \in \N$, there exists infinitely many $n \in \N$ such that $\Omega(an + k)  \equiv \Omega(an) \bmod q$. 
\end{corollary}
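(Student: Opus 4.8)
The plan is to obtain this as an immediate specialization of \cref{cor:completely_multiplicative_intro}. Fix $a, q, k \in \N$, let $\xi = e^{2\pi i / q}$ be a primitive $q$-th root of unity, and define $f \colon \N \to \C$ by $f(n) = \xi^{\Omega(n)}$. Since $\Omega$ is completely additive, i.e. $\Omega(mn) = \Omega(m) + \Omega(n)$ for all $m, n \in \N$ (and $\Omega(1)=0$), the function $f$ satisfies $f(mn) = \xi^{\Omega(mn)} = \xi^{\Omega(m)}\xi^{\Omega(n)} = f(m)f(n)$, so it is completely multiplicative, and clearly $|f(n)| = 1$ for every $n$. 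Hence \cref{cor:completely_multiplicative_intro} applies and gives $\liminf_{n \to \infty} |f(an+k) - f(an)| = 0$.

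The second step is an elementary discreteness observation. The function $f$ takes values in the finite set of $q$-th roots of unity, so let $\delta > 0$ be the smallest distance between two distinct such roots. From $\liminf_{n\to\infty} |f(an+k) - f(an)| = 0 < \delta$ we get infinitely many $n \in \N$ with $|f(an+k) - f(an)| < \delta$; for each such $n$, since $f(an+k)$ and $f(an)$ are both $q$-th roots of unity at distance less than $\delta$, the choice of $\delta$ forces $f(an+k) = f(an)$. Unwinding the definition of $f$, this says $\xi^{\Omega(an+k)} = \xi^{\Omega(an)}$, which, because $\xi$ has order exactly $q$, is equivalent to $\Omega(an+k) \equiv \Omega(an) \bmod q$. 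Thus there are infinitely many $n \in \N$ with this congruence, as claimed.

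There is no genuine obstacle in this deduction: the entire content sits in \cref{cor:completely_multiplicative_intro} (and, through it, in \cref{thm:n+1_n_intro}), while the passage to \cref{cor:omega_function_intro} consists only of recognizing $\xi^{\Omega}$ as a unimodular completely multiplicative function and using that a liminf equal to zero among values lying in a discrete set is attained (in fact attained by the exact value $0$) infinitely often.
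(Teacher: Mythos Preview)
Your proof is correct and follows exactly the approach indicated in the paper: apply \cref{cor:completely_multiplicative_intro} to the unimodular completely multiplicative function $f(n)=\xi^{\Omega(n)}$ with $\xi$ a $q$-th root of unity, and then use discreteness of the range to upgrade $\liminf=0$ to equality infinitely often. The paper states this derivation in one sentence; you have simply spelled out the details.
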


Using (an appropriate version of) the Poincar\'e recurrence theorem it can be shown that every subsemigroup of $(\Q^{>0}, \times)$ is a set of multiplicative recurrence.
Therefore, to understand which algebraic sets are sets of multiplicative recurrence, one needs to know which contain a subsemigroup.
Our next theorem characterizes all polynomials of integer coefficients whose images contain subsemigroups of $(\Q^{>0}, \times)$%\joel{, and hence are sets of multiplicative recurrence}
. 
\begin{theorem}
\label{thm:polynomial_semigroup}
    For $P \in \Z[x]$, the set $\{P(n): n \in \N\}$ contains an infinite subsemigroup of $(\Q^{>0}, \times)$ if and only if $P(x) =  (ax+b)^d$ for some $a, d \in \N$, $b \in \Z$ with $a|b(b-1)$.
\end{theorem}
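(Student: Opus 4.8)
The plan is to prove the two implications separately; the ``if'' direction is a short computation, while the ``only if'' direction requires extracting the precise shape of $P$ from the multiplicative structure of its values.

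\emph{The easy direction.} Assume $P(x)=(ax+b)^d$ with $a,d\in\N$, $b\in\Z$ and $a\mid b(b-1)$. I would pick $n_0$ large enough that $an+b\ge 2$ for all $n\ge n_0$ and set $T:=\{an+b: n\ge n_0\}\subseteq\N$. For $n,m\ge n_0$ we have $(an+b)(am+b)\equiv b^2\equiv b\pmod a$, so $(an+b)(am+b)=an'+b$ for some integer $n'$, and since $(an+b)(am+b)\ge (an_0+b)^2\ge an_0+b$ we get $n'\ge n_0$. Hence $T$ is an infinite multiplicative subsemigroup of $(\N,\times)$, and therefore so is $T^d=\{(an+b)^d: n\ge n_0\}=\{P(n):n\ge n_0\}\subseteq\{P(n):n\in\N\}$.

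\emph{The converse: setup.} Suppose $S\subseteq\{P(n):n\in\N\}$ is an infinite subsemigroup of $(\Q^{>0},\times)$. Since $P\in\Z[x]$ gives $\{P(n):n\in\N\}\subseteq\Z$, we have $S\subseteq\Z\cap\Q^{>0}=\N$; as $S$ is infinite and consists of positive integers, $P$ is non-constant with positive leading coefficient $c$ and $P(n)\to+\infty$, and we set $d=\deg P\ge 1$. Fix $s\in S$ with $s\ge 2$; closure of $S$ gives $s^j\in S\subseteq\{P(n):n\in\N\}$ for every $j\in\N$. Since $P$ is eventually strictly increasing, for all large $j$ there is a unique large $n_j$ with $P(n_j)=s^j$, and $n_j\to\infty$ with $(n_j)$ eventually increasing. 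Put $\lambda:=s^{1/d}>1$.

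\emph{The converse: the engine.} The key analytic input is the expansion $P(x)^{1/d}=c^{1/d}\bigl(x+\tfrac{a_1}{d}\bigr)+O(1/x)$ as $x\to\infty$, where $a_1=c_{d-1}/c$ and $c_{d-1}$ is the coefficient of $x^{d-1}$ in $P$. Evaluating at $x=n_j$ and using $P(n_j)^{1/d}=\lambda^j$ gives $n_j=c^{-1/d}\lambda^j-\tfrac{a_1}{d}+O(1/n_j)$, hence $n_{j+1}-\lambda n_j\to\kappa:=\tfrac{a_1(\lambda-1)}{d}$; iterating this $d$ times, $n_{j+d}-s\,n_j\to\tfrac{a_1(s-1)}{d}$, and being an integer-valued sequence it is eventually equal to a constant $C\in\Z$ with $\tfrac{C}{s-1}=\tfrac{a_1}{d}$. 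From $n_{j+d}=s\,n_j+C$ one obtains $n_{j+kd}=s^k\bigl(n_j+\tfrac{a_1}{d}\bigr)-\tfrac{a_1}{d}$; comparing this with $n_{j+kd}=c^{-1/d}\lambda^j s^k-\tfrac{a_1}{d}+O(1/n_{j+kd})$ and letting $k\to\infty$ forces the exact identity $c^{-1/d}\lambda^j=n_j+\tfrac{a_1}{d}$ for all large $j$. Consequently $n_{j+1}-n_j=c^{-1/d}\lambda^j(\lambda-1)$ is a positive integer whose ratio with $n_{j+2}-n_{j+1}$ equals $\lambda$, so $\lambda\in\Q$, and since $\lambda^d=s\in\N$ in fact $\lambda\in\N$ and $s=\lambda^d$. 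Likewise $c^{-1/d}\in\Q$, and since $c\in\N$ this forces $c=v^d$ with $v:=c^{1/d}\in\N$. Then $\lambda^j=v\,n_j+b$ where $b:=v a_1/d$ is necessarily an integer, and $P(n_j)=s^j=(v\,n_j+b)^d$ for infinitely many $n_j\to\infty$, so $P(x)=(vx+b)^d$. Finally $\lambda^j\equiv b\pmod v$ for all large $j$; applying this to both $j$ and $2j$ gives $b\equiv(\lambda^j)^2=\lambda^{2j}\equiv b^2\pmod v$, i.e.\ $v\mid b(b-1)$. Setting $a:=v$ completes the proof.

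\emph{Main obstacle.} The delicate step is the converse, namely upgrading the asymptotic relations for $(n_j)$ to the exact identity $c^{-1/d}\lambda^j=n_j+\tfrac{a_1}{d}$: this repeatedly exploits the elementary fact that a convergent integer-valued sequence is eventually constant, and it demands that all error terms be tracked precisely enough for that principle to apply. (One could alternatively try to invoke Siegel's theorem on integral points of the curve $P(y)=P(x)^2$, but the elementary route above appears cleaner and gives the congruence condition directly.)
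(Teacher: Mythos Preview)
Your proof is correct but follows a genuinely different route from the paper's. The paper first makes a linear change of variable $P(x)=Q(ax+b)$ so that the coefficient of $x^{d-1}$ in $Q$ vanishes (\cref{lem:factorP}), and then shows by a direct size estimate (\cref{prop:d-1}) that for such $Q$ the equation $D^dQ(n)=Q(m)$ forces $m=Dn$ for all but finitely many $(m,n)$, which pins $Q$ down to $a_d x^d$; the rest (showing $a_d$ is a $d$-th power and deriving $a\mid b(b-1)$) is then as you do it. You instead work directly with the sequence $n_j$ defined by $P(n_j)=s^j$: the Puiseux-type expansion $P(x)^{1/d}=c^{1/d}(x+a_1/d)+O(1/x)$ gives an asymptotic for $n_j$, the integrality of $n_{j+d}-sn_j$ upgrades it to an exact linear recursion, and iterating that recursion forces the exact identity $c^{-1/d}\lambda^j=n_j+a_1/d$, from which $\lambda\in\N$, $c=v^d$, and $P(x)=(vx+b)^d$ drop out. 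Your approach is more analytic and self-contained (no preliminary normalization of $P$), at the cost of having to track error terms carefully; the paper's approach is more algebraic and modular, packaging the hard step into a clean stand-alone lemma. Both land on the congruence $a\mid b(b-1)$ in the same way.
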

\cref{thm:polynomial_semigroup} follows from the more general \cref{conj:polynomial_semigroup}.

Regarding linear polynomials, we show that containing subsemigroups is  not only a sufficient but also a necessary condition for their images to be sets of multiplicative recurrence. 

\begin{theorem}
\label{thm:linear_intro}
    For every $a, b \in \N$,  $\{a n + b: n \in \N\}$ is a set of multiplicative recurrence if and only if $a | b(b-1)$. 
\end{theorem}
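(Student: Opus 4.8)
The plan is to prove the two implications separately. For the forward implication ($a\mid b(b-1)\Rightarrow$ recurrence), I would invoke \cref{thm:polynomial_semigroup}: since $P(x)=ax+b=(ax+b)^1$ and $a\mid b(b-1)$, it provides an infinite subsemigroup of $(\Q^{>0},\times)$ inside $\{an+b:n\in\N\}$ — one can even write it down, namely $G=\{m\in\N:m\equiv b\pmod a,\ m\ge a+b\}$, which is closed under multiplication precisely because $a\mid b^2-b$. I would then conclude using the two facts, both recalled in the introduction, that every subsemigroup of $(\Q^{>0},\times)$ is a set of multiplicative recurrence (via the Poincar\'e recurrence theorem) and that any subset of $\Q^{>0}$ containing a set of multiplicative recurrence is again one.

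For the converse, assume $a\nmid b(b-1)$; the goal is to exhibit a measure preserving $(\Q^{>0},\times)$-system and a positive-measure set on which $\{an+b:n\in\N\}$ fails to return. I would begin by choosing a prime $p$ and letting $k:=v_p(a)$, with $k\ge1$ and $p^k\nmid b(b-1)$ (such $p$ exists since some maximal prime power dividing $a$ must fail to divide $b(b-1)$); as $\gcd(b,b-1)=1$, this forces $v_p(b)<k$ and $v_p(b-1)<k$, and I set $j:=v_p(b)$. The crucial elementary facts are that $v_p(an+b)=j$ for all $n$ (because $v_p(an)\ge k>j$) and $(an+b)p^{-j}\equiv bp^{-j}\pmod{p^{k-j}}$. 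Using these I would build the homomorphism $\phi\colon(\Q^{>0},\times)\to F:=\Z/(j+1)\Z\times(\Z/p^{k-j}\Z)^\times$ given by $\phi(x)=\bigl(v_p(x)\bmod(j+1),\ x\,p^{-v_p(x)}\bmod p^{k-j}\bigr)$; then $\phi$ is constant on $R:=\{an+b:n\in\N\}$ with value $\beta:=\phi(b)$, and $\beta\ne 0_F$ (its first coordinate is nonzero when $j\ge1$, and when $j=0$ one has $b\not\equiv1\pmod{p^k}$ so its second coordinate is nontrivial; one also checks $|F|\ge2$). Finally I would take $X=F$ with the uniform probability measure $\mu$, let $T_q(\xi)=\xi+\phi(q)$ for $q\in\Q^{>0}$, and set $A=\{0_F\}$: this is a measure preserving $(\Q^{>0},\times)$-system with $\mu(A)=1/|F|>0$, and for every $r\in R$ one has $A\cap T_r^{-1}A=\{0_F\}\cap\{-\phi(r)\}=\emptyset$ since $\phi(r)=\beta\ne0_F$. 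Hence $\{an+b:n\in\N\}$ is not a set of multiplicative recurrence (and one can check the system is minimal, so it is not even a set of topological multiplicative recurrence).

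The main obstacle — really the only non-routine step — is the design of $\phi$: the obstruction to recurrence is that $R$ lies inside a single residue class modulo $p^k$ which is multiplicatively inert yet distinct from the class of $1$, and this must be captured by a homomorphism onto a \emph{finite} group. That is why the valuation coordinate is reduced modulo $j+1$ rather than kept in $\Z$, and the unit coordinate is taken modulo $p^{k-j}$ rather than $p^k$ (the reduction modulo $p^k$ is not constant along $R$ once $j\ge1$). Once $\phi$ is in place, verifying that it is a homomorphism, that $T$ defines a genuine $(\Q^{>0},\times)$-action preserving $\mu$, and the $p$-adic valuation bookkeeping are all straightforward.
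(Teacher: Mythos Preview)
Your proof is correct and follows essentially the same strategy as the paper: the forward direction shows $\{an+b:n\in\N\}$ is itself a multiplicative semigroup (so Poincar\'e applies), and the converse builds a finite $(\Q^{>0},\times)$-system via a homomorphism to a finite abelian group on which every element of $R$ acts without fixed points. The only notable difference is cosmetic: the paper splits the construction into two cases (according to whether $(a/\gcd(a,b),b)=1$), using either the multiplicative group $(\Z/a'\Z)^\times$ or the additive group $\Z/k\Z$ via the $p$-adic valuation, whereas your product group $\Z/(j{+}1)\Z\times(\Z/p^{k-j}\Z)^\times$ handles both cases at once---a tidy unification, but not a different idea.
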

\cref{thm:linear_intro} is implied by \cref{5.12} in \cref{sec:special_set_rec}.

Lastly, as previously mentioned, it is unknown whether the sets arising from Pythagorean triples are sets of multiplicative recurrence. However, we show that these sets do not contain subsemigroups of $(\Q^{>0}, \times)$. 
\begin{proposition}\label{prop:pythagorean_triples_semigroups}
    Neither of the sets
    \[
        \left\{\frac{m^2 + n^2}{2mn}: m, n \in \N\right\}%,\quad \left\{\frac{m^2 + n^2}{m^2 - n^2}: m, n \in \N\right\},
        \ \text{ and }\ \left\{\frac{2mn}{m^2 - n^2}: m, n \in \N, m>n\right\}
    \]
    contain an infinite subsemigroup of $(\Q^{>0}, \times)$.%\joel{[[Aren't the first two sets the same, after reparametrization?]]} \anh{[[Yes.]]}
\end{proposition}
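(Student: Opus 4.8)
The plan is to reformulate membership in each of the two sets as a single Diophantine condition, and then to observe that \emph{squaring} a nontrivial element of such a set lands one inside a classical impossibility theorem of Fermat.

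First I would record the elementary identities
\[
\left(\frac{m^2+n^2}{2mn}\right)^{\!2}-1=\left(\frac{m^2-n^2}{2mn}\right)^{\!2}
\qquad\text{and}\qquad
\left(\frac{2mn}{m^2-n^2}\right)^{\!2}+1=\left(\frac{m^2+n^2}{m^2-n^2}\right)^{\!2},
\]
so that every element $x$ of the first set satisfies $x^2-1$ is a rational square (and $x\ge 1$, by AM--GM), while every element $x$ of the second set satisfies $x^2+1$ is a rational square. Only these implications are needed, not their converses. Next, note that if $G$ is an infinite subsemigroup of $(\Q^{>0},\times)$ contained in one of the two sets, then $G$ contains some $g\neq 1$, and since $G$ is closed under multiplication, $g^2\in G$ as well; hence it suffices to rule out the existence of $g\in\Q^{>0}\setminus\{1\}$ with both $g$ and $g^2$ in the set under consideration.

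For the set $\{2mn/(m^2-n^2):m>n\}$, the membership $g^2\in G$ together with the second identity (applied to $g^2$) forces $g^4+1$ to be a rational square. Writing $g=p/q$ in lowest terms with $p,q\in\N$, this means that $p^4+q^4=q^4(g^4+1)$ is a nonzero integer which is the square of a rational number, hence a perfect square --- contradicting Fermat's theorem that $x^4+y^4=z^2$ has no solution in positive integers. For the set $\{(m^2+n^2)/(2mn)\}$ one uses both $g\in G$ and $g^2\in G$: since all elements are $\ge 1$ and $g\neq 1$, we have $g>1$, and the first identity gives $g^2-1=a^2$ and $g^4-1=c^2$ with $a,c\in\Q$ and $a>0$; dividing, $g^2+1=(c/a)^2=:b^2$. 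Then $b^2-a^2=2$ and $a^2+b^2=2g^2$, so $(b+a)^2+(b-a)^2=(2g)^2$ exhibits a rational right triangle with legs $b\pm a>0$ and hypotenuse $2g$, of area $\tfrac12(b^2-a^2)=1$; equivalently, $a^2<g^2<b^2$ is a three-term arithmetic progression of rational squares with common difference $1$. This would make $1$ a congruent number, contradicting Fermat's right-triangle theorem.

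The whole content is the reformulation in the second paragraph; once it is in place, the crucial point --- that the square of a putative semigroup element must again satisfy the same constraint --- reduces everything to classical impossibility theorems of Fermat, and I expect no serious obstacle beyond those citations. The one mild subtlety is that for the first set a single element does not suffice: one genuinely needs $g$ \emph{and} $g^2$ in the set in order to force $g^2-1$ and $g^2+1$ to be simultaneously rational squares, i.e.\ to manufacture the forbidden progression of three squares.
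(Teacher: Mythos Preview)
Your proof is correct, and it takes a genuinely different route from the paper's.

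The paper argues by contradiction that the set contains a full geometric progression $\{(p/q)^k:k\in\N\}$; for each $k$ it writes $p^k/q^k=(m^2+n^2)/(2mn)$ with $(m,n)=1$, uses elementary divisibility to force $m=q_1^k$, $n=q_2^k$ for one of finitely many factorizations $q=q_1q_2$, and obtains an identity of the form $2p^k=(q_1^2)^k+(q_2^2)^k$ (or a close variant) for infinitely many $k$. A short lemma on exponential sums (\cref{lem:all_a_i_zero}) then gives the contradiction. Your argument instead extracts from the parametrizations the implications ``$x^2-1$ is a rational square'' and ``$x^2+1$ is a rational square'', and uses only the closure under squaring to land in Fermat's theorems on $x^4+y^4=z^2$ and on $1$ not being a congruent number.

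Your approach is slicker here and actually proves more: neither set even contains a pair $\{g,g^2\}$ with $g\neq1$, not merely that it contains no infinite subsemigroup. The price is reliance on two nontrivial classical results of Fermat. The paper's approach is entirely self-contained (the lemma is a two-line growth estimate), and the same lemma is reused verbatim to handle the more complicated sets $\{(m+\ell_1 n)(m+\ell_2 n)/((m+\ell_3 n)(m+\ell_4 n))\}$ in \cref{prop:m_ell}, where no clean Fermat-type obstruction seems available; so its method has broader reach within the paper.
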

\cref{prop:pythagorean_triples_semigroups} is proved in \cref{sec:candidates}, after \cref{lem:all_a_i_zero}.

\subsection*{Organization of the paper}
In \cref{sec:sets_mult_rec} we give the background regarding sets of multiplicative recurrence in both topological and measurable settings. \cref{sec:special_set_rec} is devoted to studying the topological recurrence properties of certain algebraic sets
and providing applications in number theory. In particular,  \cref{thm:n+1_n_intro}, \cref{cor:completely_multiplicative_intro}, \cref{cor:omega_function_intro} and \cref{thm:linear_intro} are proved in this section. In \cref{sec:candidates}, we analyze polynomial configurations and investigate when the image of $\N$ under a polynomial map contains a semigroup of $(\N,\times)$.
\cref{sec:mix_ergodic_theory} contains the ergodic theoretical point of view of this article where \cref{prop:positive_semigroup_intro} and \cref{thm:multipledeterminant_intro} will be proved.
Finally, in \cref{sec:finite_Pythagorean}, we study the problem of finding Pythagorean pairs and triples in dense subsets of finite fields. 

\subsection*{Acknowledgements}
We thank Florian Richter for help with the proof of \cref{prop_chromaticrecurrence} and thank Vitaly Bergelson for helpful conversations. We also thank the referee for suggestions that improve the organization and readability of the paper.

\section{Background}
\label{sec:sets_mult_rec}
%Preliminaries on sets of multipilicative recurrence

%To answer Sebastian's problem we need to find a set $R\subset\Q$ which is a set of multiplicative recurrence but is not a multiplicative semigroup (or more generally a multiplicative IP set, e.g. the squarefree numbers). 
%We have currently very few examples (essentially the ones given by Host and Frantzikinakis).

In this section we collect basic facts about sets of recurrence for the semigroup $(\N,\times)$ of natural numbers and the semigroup $(\Q^{>0},\times)$ of positive rational numbers, both under multiplication. 
Most of the content is well known and presented here for completeness. 

\subsection*{Notations}
In this article, $\N=\{1,2.\ldots\}$ denotes the set of positive natural numbers, and $\Q^{>0}$ denotes the set of positive rational numbers.  
For a positive natural number $N$, we let $[N]$ denote the set $\{1,\ldots,N\}$.

%\subsection{Measure preserving actions of semigroups}

 \subsection{Multiplicatively invariant density and Furstenberg's correspondence}

A \emph{F\o lner sequence} in a countable semigroup $G$ is a sequence $\Phi=(\Phi_N)_{N\in\N}$ of finite subsets of $G$ which are asymptotically invariant in the sense that for any $g\in G$, 
$$\lim_{N\to\infty}\frac{|g\Phi_N\cap\Phi_N|}{|\Phi_N|}=1,$$
where $g\Phi_N=\{gx:x\in\Phi_N\}$.
Given a F\o lner sequence $\Phi$ on a semigroup $G$, the \emph{upper density} of a set $A\subset G$ with respect to $\Phi$ is the quantity
$$\bar d_\Phi(A):=\limsup_{N\to\infty}\frac{|A\cap\Phi_N|}{|\Phi_N|}.$$
For a subset $A\subset G$ of a semigroup $G$ and an element $g\in G$, we write $S/g:=\{x\in G:xg\in A\}$.
It is easy to check from the definition that $\bar d_\Phi(A/g)=\bar d_\Phi(A)$ for any $g\in G$.

The \emph{upper Banach density} of $A$ is $d^*(A):=\sup_\Phi\bar d_\Phi(A)$, where the supremum is taken over all F\o lner sequences $\Phi$ on $G$.
This supremum is always achieved, so for each set $A$ there is a F\o lner sequence $\Phi$ such that $d^*(A)=\bar d_\Phi(A)$.

A discussion of F\o lner sequences and invariant densities in the case where $G = (\N, \times)$ is provided in \cite{Bergelson05}. As a concrete example one can take the sequence $\Phi_N:=\{x\in\N: x|N!\}$, but the exact choice will not be important in our discussion.
Note that any F\o lner sequence for $(\N,\times)$ is also a F\o lner sequence for $(\Q^{>0},\times)$.

Given a semigroup $G$, a \emph{measure preserving $G$-system} is a measure preserving system $(X,{\mathcal B},\mu,T)$ where $(X,{\mathcal B},\mu)$ is a probability space and $T=(T_g)_{g\in G}$ is an action of $G$ on $X$ by measure preserving transformations $T_g:X\to X$ which satisfy $T_{gh}=T_g\circ T_h$ for every $g,h\in G$.%\joel{[[This is repeated from the introduction, but maybe that's ok]]}

The Furstenberg correspondence principle connects sets with positive upper density and measure preserving systems. 
Here is the version we will use. 
Given a subset $E$ of a semigroup $G$ and $n\in G$ we let $E/n$ denote $\{m\in G : mn \in E \}$.
\begin{theorem}[Furstenberg's correspondence principle for semigroups, cf. {\cite[Theorem 5.8 and Remark 5.9]{Bergelson06}}]\label{thm_correspondence}
    Let $G$ be a countable commutative semigroup, let $E\subset\N$ and let $\Phi$ be a F\o lner sequence in $G$.
    Then there exists a $G$-system $(X,{\mathcal B},\mu,T)$ and a set $A\in{\mathcal B}$ with $\mu(A)=\bar d_\Phi(E)$ and, for every $n_1,\dots,n_k\in G$,
    $$\bar d_\Phi\big(E\cap (E/n_1)\cap (E/n_2)\cap\cdots\cap (E/n_k)\big)\geq\mu\big(A\cap T_{n_1}^{-1}A\cap\cdots\cap T_{n_k}^{-1}A\big).$$
\end{theorem}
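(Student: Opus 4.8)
The plan is to follow the standard blueprint for Furstenberg-type correspondence principles, adapted to a countable commutative semigroup $G$ rather than a group. First I would build the universal system: set $X=\{0,1\}^{G}$ with the product topology and Borel $\sigma$-algebra $\mathcal B$, and let $G$ act on $X$ by the shift $(T_u x)(v)=x(uv)$, which is a genuine action since $G$ is commutative and the semigroup law gives $T_u\circ T_w=T_{uw}$. Distinguish the clopen set $A=\{x\in X:x(1)=1\}$ (or, if $G$ lacks an identity, $A=\{x:x(e)=1\}$ for a fixed reference element $e$, after translating $E$; since the statement is cited from Bergelson's paper I would simply invoke the presence of the needed structure, but the cleanest exposition assumes $G$ unital). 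Encode $E$ as the point $\omega=\mathbbm 1_E\in X$. The key identity, valid for every $n_1,\dots,n_k\in G$ and every $n\in G$, is
\[
    \mathbbm 1_A(T_n\omega)\,\mathbbm 1_A(T_{nn_1}\omega)\cdots \mathbbm 1_A(T_{nn_k}\omega)
    = \mathbbm 1_E(n)\,\mathbbm 1_E(nn_1)\cdots \mathbbm 1_E(nn_k),
\]
so averaging over $n\in\Phi_N$ turns the empirical measure $\mu_N:=\E_{n\in\Phi_N}\delta_{T_n\omega}$ evaluated on $A\cap T_{n_1}^{-1}A\cap\cdots\cap T_{n_k}^{-1}A$ into $|E\cap (E/n_1)\cap\cdots\cap(E/n_k)\cap\Phi_N|/|\Phi_N|$.

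Next I would pass to a limit. After replacing $\Phi$ by a subsequence I may assume $\bar d_\Phi(E)=\lim_N |E\cap\Phi_N|/|\Phi_N|$. Let $\mu$ be a weak$^*$ limit point of $(\mu_N)$ along a further subsequence $(N_j)$; since $X$ is a compact metric space such a limit point exists. The F\o lner property of $\Phi$ is exactly what makes $\mu$ invariant: for fixed $g\in G$ and $f\in C(X)$,
\[
    \Big|\int f\circ T_g\d\mu_N-\int f\d\mu_N\Big|
    = \Big|\E_{n\in\Phi_N}f(T_{gn}\omega)-\E_{n\in\Phi_N}f(T_n\omega)\Big|
    \le \|f\|_\infty\,\frac{|g\Phi_N\triangle\Phi_N|}{|\Phi_N|}\longrightarrow 0,
\]
using that $\{T_{gn}\omega:n\in\Phi_N\}$ and $\{T_n\omega:n\in\Phi_N\}$ differ only over the symmetric difference $g\Phi_N\triangle\Phi_N$; hence $\int f\circ T_g\d\mu=\int f\d\mu$, and since $C(X)$ separates measures, $T_g$ preserves $\mu$. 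Thus $(X,\mathcal B,\mu,T)$ is a measure preserving $G$-system. Evaluating along $(N_j)$, $\mu(A)=\lim_j \mu_{N_j}(A)=\bar d_\Phi(E)$.

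Finally, for the inequality: $A$ and each $T_{n_i}^{-1}A$ are clopen, so $A\cap T_{n_1}^{-1}A\cap\cdots\cap T_{n_k}^{-1}A$ is clopen, and weak$^*$ convergence gives $\mu$ of this set equal to $\lim_j \mu_{N_j}$ of it, which by the key identity equals $\lim_j |E\cap(E/n_1)\cap\cdots\cap(E/n_k)\cap\Phi_{N_j}|/|\Phi_{N_j}|\le \bar d_\Phi\big(E\cap(E/n_1)\cap\cdots\cap(E/n_k)\big)$, the last step because a limit along a subsequence is at most the $\limsup$. I expect the only genuine subtlety — and the place I would be most careful — to be the possible absence of an identity element or of cancellation in $G$: without cancellation the map $n\mapsto gn$ need not be injective, so one should phrase the F\o lner estimate purely in terms of the counting measure on $\Phi_N$ pushed forward under $n\mapsto T_n\omega$ and its translate, rather than reindexing; and without an identity the base point defining $A$ must be chosen and $E$ translated accordingly, exactly as handled in \cite[Theorem 5.8 and Remark 5.9]{Bergelson06}. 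Everything else is the routine compactness-and-averaging argument above.
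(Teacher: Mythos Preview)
Your proposal is correct and follows the same standard approach that the paper implicitly relies on (and that appears in a commented-out draft proof in the source): build the Bernoulli shift $X=\{0,1\}^G$, encode $E$ as a point $\omega$, form empirical measures $\mu_N=\E_{n\in\Phi_N}\delta_{T_n\omega}$, and extract a weak$^*$ limit along a subsequence realizing $\bar d_\Phi(E)$. The paper itself does not give a proof, merely citing \cite[Theorem 5.8 and Remark 5.9]{Bergelson06}, so your write-up is in fact more detailed than what the paper provides; your extra care about invariance of $\mu$ via the F\o lner condition and about the clopen nature of the cylinder sets is appropriate, and your flagged subtleties (identity element, cancellation) are precisely the points deferred to Bergelson's remark.
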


\subsection{Syndetic sets}

A subset $S\subset G$ is called  \emph{(right) syndetic} if there exists a finite set $F\subset G$ such that $S/F:=\bigcup_{x\in F}S/x= G$. The following proposition is well known and we include its proof for completeness.
\begin{proposition}\label{prop_syndetic}
	A set $S$ is syndetic if and only if it has positive upper density with respect to any F\o lner sequence. 
\end{proposition}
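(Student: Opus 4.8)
The plan is to prove both implications directly from the definitions. For the forward direction, suppose $S$ is syndetic, so there is a finite set $F = \{x_1, \dots, x_m\} \subset G$ with $S/x_1 \cup \cdots \cup S/x_m = G$. Fix any F\o lner sequence $\Phi = (\Phi_N)_{N \in \N}$. For each $N$ we have $\Phi_N = \bigcup_{i=1}^m (S/x_i \cap \Phi_N)$, so by pigeonhole there is some index $i = i(N)$ with $|S/x_{i} \cap \Phi_N| \geq |\Phi_N|/m$. Passing to a subsequence along which $i(N)$ is constant, say equal to $i_0$, we obtain $\bar d_\Phi(S/x_{i_0}) \geq 1/m$. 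Since the excerpt records that $\bar d_\Phi(A/g) = \bar d_\Phi(A)$ for every $g \in G$, this gives $\bar d_\Phi(S) = \bar d_\Phi(S/x_{i_0}) \geq 1/m > 0$. As $\Phi$ was arbitrary, $S$ has positive upper density with respect to every F\o lner sequence.

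For the converse, I would argue by contraposition: assume $S$ is not syndetic and construct a F\o lner sequence $\Phi$ with $\bar d_\Phi(S) = 0$. The hypothesis means that for every finite $F \subset G$, the set $G \setminus (S/F)$ is nonempty; concretely, for each finite $F$ there exists $g_F \in G$ such that $g_F x \notin S$ for all $x \in F$, i.e. $F g_F \cap S = \emptyset$ (using commutativity of $G$). Now enumerate $G = \{h_1, h_2, \dots\}$ and, given a F\o lner sequence $(\Psi_N)$ for $G$ (which exists since $G$ is a countable commutative — hence amenable — semigroup), apply the non-syndeticity to the finite set $F_N := \Psi_N \cup \{h_1, \dots, h_N\}$ to obtain an element $g_N$ with $F_N g_N \cap S = \emptyset$. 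Set $\Phi_N := \Psi_N g_N$. Then $\Phi_N \cap S = \emptyset$ for every $N$, so trivially $\bar d_\Phi(S) = 0$, and one checks that $(\Phi_N)$ is still a F\o lner sequence: for fixed $g \in G$, translation by $g$ commutes with right-translation by $g_N$, so $|g\Phi_N \cap \Phi_N| = |g\Psi_N \cap \Psi_N|$ and $|g\Phi_N \cap \Phi_N|/|\Phi_N| = |g\Psi_N \cap \Psi_N|/|\Psi_N| \to 1$. This contradicts the assumption that $S$ has positive upper density with respect to every F\o lner sequence, completing the proof.

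The routine steps are the pigeonhole argument and the verification that right-translates of a F\o lner sequence remain F\o lner; the main point requiring care is the converse, specifically making sure the finite sets $F_N$ are chosen large enough (exhausting $G$) so that the resulting $\Phi_N$'s genuinely form a F\o lner sequence rather than merely a sequence of finite sets avoiding $S$. One should also be slightly careful about left versus right syndeticity and the corresponding one-sided quotients $S/g$, but since $G$ is commutative in all our applications this distinction disappears.
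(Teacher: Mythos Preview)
Your proposal is correct and follows essentially the same route as the paper's proof: a pigeonhole/subadditivity argument using $\bar d_\Phi(S/x)=\bar d_\Phi(S)$ for the forward direction, and for the converse, translating the terms of an existing F\o lner sequence by elements witnessing non-syndeticity so as to miss $S$ entirely. The only cosmetic difference is that the paper simply takes $F_N=\Psi_N$ rather than $\Psi_N\cup\{h_1,\dots,h_N\}$; your enlargement is harmless but unnecessary, since (as your own verification shows) the F\o lner property of $(\Psi_N g_N)$ already follows from that of $(\Psi_N)$ and does not depend on the finite sets exhausting $G$.
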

\begin{proof}
	If $S$ is syndetic and $\Phi$ is a F\o lner sequence, then take $F$ finite so that $S/F= G$. 
	We have $1=\bar d_\Phi(G)\leq\sum_{x\in F}\bar d_\Phi(S/x)=\bar d_\Phi(S)\cdot|F|$, so $\bar d_\Phi(S)>0$.
	
	Conversely, if $S$ is not syndetic and $\Phi$ is a F\o lner sequence, for every $N\in\N$ we have that $S/\Phi_N\neq G$.
	Take $x_N\in G\setminus (S/\Phi_N)$. 
	Then $\Psi_N:=x_N\Phi_N$ is disjoint from $S$. 
	On the other hand, the sequence $\Psi=(\Psi_N)_{N\in\N}$ is a F\o lner sequence, and so $\bar d_\Psi(S)=0$. 
\end{proof}

In order to prove \cref{prop:positive_semigroup_intro}, we will need the following theorem of Furstenberg and Katznelson \cite{Furstenberg_Katznelson85} (see also \cite[Page 9]{Bergelson05}).
Recall that a commutative semigroup is \emph{cancelative} if and only if it is a subsemigroup of a group.
\begin{theorem}[Furstenberg-Katznelson \cite{Furstenberg_Katznelson85}]\label{thm_Furstenberg_katznelson}
    Let $G$ be a cancelative commutative semigroup and let $T_1,\dots,T_{\ell}$ be commuting measure preserving actions of $G$ on the probability space $(X, \mathcal{B}, \mu)$.
    Then for every $A \in \mathcal{B}$ with $\mu(A) > 0$, there exist $\delta>0$ and a syndetic set $S\subset G$ such that for every $g\in S$,
    $$\mu(A\cap T_{1,g}^{-1}A\cap\cdots\cap T_{\ell,g}^{-1}A)>\delta.$$
\end{theorem}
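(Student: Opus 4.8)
The plan is to deduce this from the multiple recurrence theorem of Furstenberg and Katznelson for commuting transformations of a probability space (their multidimensional ergodic Szemer\'edi theorem, \cite{Furstenberg_Katznelson85}); the two points that need attention are the passage from a semigroup action to a group action, and the upgrade from a positive-density set of return times to a \emph{syndetic} one. We may assume throughout that $G$ is countable and has an identity element (adjoin one if necessary, setting $T_{i,1}:=\mathrm{id}$; this changes neither the hypotheses nor the conclusion), and that the probability space is standard — replace $\mathcal{B}$ by the separable sub-$\sigma$-algebra generated by the countably many sets $T_{i,g}^{-1}A$.

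\emph{Step 1: reduction to a group action.} Since $G$ is cancelative and commutative it embeds into its group of fractions $\Gamma$, a countable abelian group. Because the $T_i$ commute, $(g_1,\dots,g_\ell)\mapsto T_{1,g_1}\circ\cdots\circ T_{\ell,g_\ell}$ is a measure preserving action of the semigroup $G^\ell$ on $(X,\mathcal{B},\mu)$, and I would pass to its natural extension to obtain a measure preserving action of $\Gamma^\ell$ on a probability space $(\tilde X,\tilde{\mathcal{B}},\tilde\mu)$ together with a measure-preserving factor map $\pi\colon \tilde X\to X$. Restricting the $\Gamma^\ell$-action to the $\ell$ coordinate copies of $\Gamma$ produces commuting measure preserving $\Gamma$-actions $\tilde T_1,\dots,\tilde T_\ell$ satisfying $\pi\circ\tilde T_{i,g}=T_{i,g}\circ\pi$ for every $i$ and every $g\in G$. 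Putting $\tilde A:=\pi^{-1}(A)$ we get $\tilde\mu(\tilde A)=\mu(A)>0$ and
\[
\tilde\mu\big(\tilde A\cap\tilde T_{1,g}^{-1}\tilde A\cap\cdots\cap\tilde T_{\ell,g}^{-1}\tilde A\big)=\mu\big(A\cap T_{1,g}^{-1}A\cap\cdots\cap T_{\ell,g}^{-1}A\big)\qquad\text{for every }g\in G,
\]
so it is enough to prove the theorem with $G$ replaced by $\Gamma$. Moreover, if $\delta>0$ and $\tilde S\subset\Gamma$ is a syndetic set that witnesses the conclusion for $(\tilde X,\tilde A,\tilde T_1,\dots,\tilde T_\ell)$, then $S:=\tilde S\cap G$ witnesses it for $(X,A,T_1,\dots,T_\ell)$ and is syndetic in $G$: indeed any F\o lner sequence $\Psi$ in $G$ is also a F\o lner sequence in $\Gamma$ — since for $a,b\in G$ one has $|ab^{-1}\Psi_N\triangle\Psi_N|=|a\Psi_N\triangle b\Psi_N|\le|a\Psi_N\triangle\Psi_N|+|\Psi_N\triangle b\Psi_N|=o(|\Psi_N|)$ — so $\bar d_\Psi(S)=\bar d_\Psi(\tilde S)>0$ by \cref{prop_syndetic} in $\Gamma$, and a second application of \cref{prop_syndetic} in $G$ gives that $S$ is syndetic.

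\emph{Step 2: the group case.} It remains to show that for a countable abelian group $\Gamma$ and commuting measure preserving $\Gamma$-actions $\tilde T_1,\dots,\tilde T_\ell$ with $\tilde\mu(\tilde A)>0$ there are $\delta>0$ and a syndetic $S\subset\Gamma$ with $\tilde\mu(\tilde A\cap\tilde T_{1,g}^{-1}\tilde A\cap\cdots\cap\tilde T_{\ell,g}^{-1}\tilde A)>\delta$ for all $g\in S$; this is exactly the Furstenberg--Katznelson theorem in the group setting. To bottom out at the classical statement one would write $\Gamma$ as an increasing union of finitely generated subgroups and use that Furstenberg--Katznelson furnishes a lower bound $\delta=\delta(\ell,\tilde\mu(\tilde A))$ independent of the system, reducing to $\Gamma$ finitely generated; restricting to a finite-index free abelian subgroup and choosing generators then expresses each $\tilde T_{i,g}$ as a word in finitely many commuting invertible transformations, so the inequality becomes a (linear) case of the multidimensional ergodic Szemer\'edi theorem. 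That theorem gives positivity of the associated Ces\`aro averages; the fact that the good return set is \emph{syndetic}, not merely of positive density, then follows in the usual way — e.g. feed the averaging statement the translated sets $\tilde T_{i,h}^{-1}\tilde A$ with $h$ ranging over a hypothetical long gap and derive a contradiction — keeping $\delta$ uniform.

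\emph{Where the difficulty lies.} The entire mathematical substance is the Furstenberg--Katznelson theorem itself — proved through the structure theory of measure preserving systems, a PET-type induction on $\ell$, and the control of characteristic factors — which I would invoke as a black box. The rest is bookkeeping, the two slightly delicate points being: (i) arranging the natural-extension reduction so that a single extension handles the whole commuting family at once and so that syndeticity transfers back from $\Gamma$ to $G$; and (ii) the passage from ``the return set has positive upper density'' to ``the return set is syndetic'', which genuinely relies on the lower bound $\delta$ being uniform over all systems.
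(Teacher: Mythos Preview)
The paper does not prove this statement at all: it is quoted as a black-box result of Furstenberg and Katznelson \cite{Furstenberg_Katznelson85} (with a pointer also to \cite[Page 9]{Bergelson05}). Note that the reference is to their 1985 IP-Szemer\'edi paper, whose machinery yields directly that for commuting actions of an arbitrary countable abelian (semi)group the multiple return set is IP$^*$, hence syndetic; no reduction to $\Z^d$ is needed.

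Your sketch is a sensible attempt to derive the statement from the earlier multidimensional Szemer\'edi theorem, and Step~1 (natural extension plus the F\o lner/syndeticity transfer from $\Gamma$ back to $G$ via \cref{prop_syndetic}) is clean and correct. The gap is in Step~2. You write that ``Furstenberg--Katznelson furnishes a lower bound $\delta=\delta(\ell,\tilde\mu(\tilde A))$ independent of the system'' and use this to pass from an increasing union of finitely generated subgroups $\Gamma_n$ to $\Gamma$. This uniformity is \emph{not} a consequence of the Furstenberg--Katznelson theorem; their ergodic proof gives a $\delta$ that depends on the entire system, and obtaining bounds depending only on $\ell$ and $\mu(A)$ is a separate (and much harder) problem. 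Without it, the $\delta_n$ you get for each $\Gamma_n$ could tend to $0$ and the argument does not assemble into a statement about $\Gamma$. The same issue resurfaces in your syndeticity upgrade ``keeping $\delta$ uniform'': the translation trick you allude to compares return times for different sets (translates of $\tilde A$), and to extract a single $\delta$ you again implicitly appeal to uniformity across sets/systems. The honest route is either to cite \cite{Furstenberg_Katznelson85} directly, as the paper does, or to argue via IP-limits that the return set is IP$^*$ and hence syndetic, which bypasses any uniformity claim.
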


When $\ell=1$, \cref{thm_Furstenberg_katznelson} becomes significantly simpler and is a version of Khintchine's recurrence theorem. 
It turns out that, in this case, it is possible to extend the scope of \cref{thm_Furstenberg_katznelson} by %increasing $\delta$ and 
dropping the assumption that $G$ is cancelative and commutative.
%We will also need the following version of Khintchine's recurrence theorem for general countable semigroups (not necessarily amenable) \cite[Theorem 5.1]{Bergelson96}).
This result is probably well known; we provide a short proof which is almost verbatim taken from the proof of \cite[Theorem 5.1]{Bergelson96}.

\begin{theorem}[Khintchine's  theorem]\label{thm:KhinchineGeneral}
	Let $G$ be a countable semigroup, let $(T_g)_{g\in G}$ be an action of $G$ by measure preserving transformations on a probability space $(X,{\mathcal B},\mu)$ and let $A \in \mathcal{B}$ with $\mu(A)>0$.
	Then for every $\epsilon>0$ the set
	$$R:=\big\{g\in G:\mu(A\cap T^{-1}_gA)>\mu^2(A)-\epsilon\big\}$$
	is syndetic.
\end{theorem}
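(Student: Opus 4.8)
The plan is to adapt the classical Hilbert-space argument for Khintchine recurrence to an arbitrary countable semigroup action, using the averaging/syndeticity characterization from \cref{prop_syndetic}. First I would fix a F\o{}lner sequence $\Phi=(\Phi_N)_{N\in\N}$ in $G$ and show that the average of $\mu(A\cap T_g^{-1}A)$ over $g\in\Phi_N$ is, in the limit, at least $\mu(A)^2$. Concretely, write $g=1_A\in L^2(\mu)$ (the indicator of $A$) and consider $\E_{g\in\Phi_N}\langle 1_A, U_g 1_A\rangle$, where $U_g f = f\circ T_g$ is the Koopman operator. Since each $U_g$ is a contraction on $L^2(\mu)$, the sequence of vectors $v_N:=\E_{g\in\Phi_N}U_g 1_A$ lies in the closed convex hull of $\{U_g 1_A : g\in G\}$; extracting a weak limit point $v$ along a subsequence (the unit ball of $L^2$ is weakly compact), one checks using the F\o{}lner property that $v$ is invariant in the appropriate averaged sense, and in particular that $\int v\,d\mu = \mu(A)$ (integrating $U_g 1_A$ against the constant $1$ gives $\mu(A)$ for every $g$, since $T_g$ is measure preserving). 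Then by Cauchy--Schwarz, along this subsequence
\[
\lim \E_{g\in\Phi_N}\langle 1_A, U_g 1_A\rangle = \langle 1_A, v\rangle \geq \langle 1_A, Pv \rangle\ \text{-type estimate} \geq \left(\int v\,d\mu\right)^2 / \|1\|^2 = \mu(A)^2,
\]
where the cleanest route is: let $P$ be the orthogonal projection onto the $U$-invariant vectors (or more safely, work directly with $v$), so that $\langle 1_A, v\rangle = \langle v, v\rangle \geq \left(\int v\,d\mu\right)^2 = \mu(A)^2$ by Cauchy--Schwarz applied to $v$ and the constant function $1$. I would need to be slightly careful here because $G$ is only a semigroup (so $U_g$ need not be invertible and there is no group of unitaries), but the contraction property of $U_g$ and the F\o{}lner averaging are all that the argument actually uses; following \cite{Bergelson96} one shows $\langle 1_A, v\rangle \geq \mu(A)^2$ holds for the weak limit $v = \lim_k \E_{g\in\Phi_{N_k}} U_g 1_A$.

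Second, having established $\limsup_{N\to\infty}\E_{g\in\Phi_N}\mu(A\cap T_g^{-1}A)\geq\mu(A)^2$ for every F\o{}lner sequence $\Phi$, I would conclude syndeticity of $R=\{g:\mu(A\cap T_g^{-1}A)>\mu^2(A)-\epsilon\}$ by contradiction. If $R$ were not syndetic, then by \cref{prop_syndetic} there is a F\o{}lner sequence $\Psi=(\Psi_N)$ with $\bar d_\Psi(R)=0$; concretely one builds $\Psi_N = x_N\Phi_N$ avoiding $R$, exactly as in the proof of \cref{prop_syndetic}. Passing to a subsequence we may assume $|R\cap\Psi_N|/|\Psi_N|\to 0$. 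Then, since $0\le\mu(A\cap T_g^{-1}A)\le 1$ always and $\mu(A\cap T_g^{-1}A)\le\mu^2(A)-\epsilon$ for all $g\in\Psi_N\setminus R$, we get
\[
\E_{g\in\Psi_N}\mu(A\cap T_g^{-1}A)\ \le\ \frac{|R\cap\Psi_N|}{|\Psi_N|}\cdot 1 + \left(1-\frac{|R\cap\Psi_N|}{|\Psi_N|}\right)\bigl(\mu^2(A)-\epsilon\bigr)\ \xrightarrow[N\to\infty]{}\ \mu^2(A)-\epsilon,
\]
contradicting the lower bound $\limsup_N \E_{g\in\Psi_N}\mu(A\cap T_g^{-1}A)\ge\mu^2(A)$ from the first step applied to the F\o{}lner sequence $\Psi$. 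Hence $R$ is syndetic.

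The main obstacle is the first step: justifying, for a genuine semigroup action with no invertibility and no commutativity, that the F\o{}lner averages $\E_{g\in\Phi_N}U_g 1_A$ have a weak limit point $v$ satisfying $\langle 1_A,v\rangle\ge\mu(A)^2$. The delicate point is that one cannot simply invoke the mean ergodic theorem (which would give norm convergence to a projection onto invariant vectors) because $(U_g)$ need not be a group of unitaries; instead one argues with weak limits of the convex averages, uses that $T_g$ measure preserving forces $\int U_g 1_A\,d\mu=\mu(A)$, and applies Cauchy--Schwarz to the weak limit — this is precisely the computation that the proof of \cite[Theorem 5.1]{Bergelson96} carries out, and the statement explicitly says the proof is "almost verbatim" taken from there. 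Everything after that — the syndeticity deduction via \cref{prop_syndetic} — is routine. I would present the first step carefully and then invoke \cref{prop_syndetic} for the contradiction.
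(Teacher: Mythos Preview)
Your approach has a genuine gap: the theorem is stated for an \emph{arbitrary} countable semigroup $G$, with no amenability hypothesis, yet both steps of your plan require F\o{}lner sequences to exist. A countable semigroup need not be amenable (e.g.\ the free semigroup on two generators), so there may be no F\o{}lner sequence to average over in Step~1, and \cref{prop_syndetic}, which you invoke in Step~2, is only meaningful for amenable $G$ (when no F\o{}lner sequences exist its hypothesis is vacuous). You have also misidentified the content of \cite[Theorem 5.1]{Bergelson96}: the argument there, which the paper follows ``almost verbatim,'' is \emph{not} a F\o{}lner/mean-ergodic argument but a direct combinatorial construction that avoids amenability entirely.

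The paper's proof proceeds as follows. Assuming $R$ is not syndetic, one builds by hand a sequence $g_1,g_2,\dots$ in $G$ so that the finite sets $F_m=\{g_mg_{m-1}\cdots g_n:1\le n\le m\}$ are essentially disjoint from $R$ (this uses only the definition of non-syndeticity: $G\setminus R/F\neq\emptyset$ for every finite $F$). One then applies Cauchy--Schwarz directly to $f=\frac1N\sum_{n=1}^N 1_A\circ T_{g_ng_{n-1}\cdots g_1}$: expanding $\int f^2\,d\mu$ yields a sum of terms $\mu(A\cap T_g^{-1}A)$ with $g\in F_m$, each of which is at most $\mu^2(A)-\epsilon$, and for $N$ large this contradicts $\mu(A)^2\le\int f^2\,d\mu$. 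No averaging over a F\o{}lner set, no weak limits, no ergodic projection --- just a finite pigeonhole-style estimate. Even restricted to amenable $G$, your Step~1 leaves the identity $\langle 1_A,v\rangle=\langle v,v\rangle$ unjustified (it is not automatic for weak limits of contractions without a genuine mean ergodic theorem), whereas the paper's argument sidesteps this issue completely.
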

\begin{proof}
	Suppose, for the sake of a contradiction that $R$ is not syndetic.
	Choose $g_1\in G\setminus R$ arbitrary. 
	Then, inductively, for each $m\geq1$ let $F_m=\big\{g_mg_{m-1}\cdots g_n:1\leq n\leq m\big\}$ and choose $g_{m+1}\in G\setminus RF_m^{-1}$.
	This is possible if $R$ is not syndetic.
	From the construction we have $F_m=g_mF_{m-1}\cup\{g_m\}$, and hence it follows that $F_m\cap R\subset\{g_m\}$ for every $m\in\N$.
	
	Let $N\in\N$ be large, to be determined later depending only on $\mu(A)$ and $\epsilon$ and let $f=\frac1N\sum_{n=1}^N1_A\circ T_{g_ng_{n-1}\cdots g_1}$.
	Clearly $\int_Xf\d\mu=\mu(A)$.
	By the Cauchy-Schwarz inequality,
	\begin{eqnarray*}
		\mu(A)^2
		&\leq&
		\int_Xf^2\d\mu
		=
		\frac1{N^2}\sum_{n,m=1}^N\mu\big(T_{g_ng_{n-1}\cdots g_1}^{-1}A\cap T_{g_mg_{m-1}\cdots g_1}^{-1}A\big)
		\\&=&
		\frac1{N^2}\left(\sum_{n=1}^N\mu(A)+2\sum_{m=2}^{N}\sum_{n=1}^{m-1}\mu\big(T_{g_ng_{n-1}\cdots g_1}^{-1}A\cap T_{g_mg_{m-1}\cdots g_1}^{-1}A\big)\right)  
		\\&=&
		\frac1{N^2}\left(N\mu(A)+2\sum_{m=2}^{N}\sum_{n=1}^{m-1}\mu\big(A\cap T_{g_mg_{m-1}\cdots g_{n+1}g_n}^{-1}A\big)\right)    
		\\&=&
		\frac1N\mu(A)+\frac2{N^2}\left(\sum_{m=2}^{N}\sum_{g\in F_m}\mu\big(A\cap T_g^{-1}A\big)\right)         
		\\&\leq&
		\frac1N\mu(A)+\frac2{N^2}\binom N2\big(\mu^2(A)-\epsilon\big)   
	\end{eqnarray*}
	which is a contradiction if $N$ is large enough.
\end{proof}

%\joel{Add correspondence principle with an invertible (i.e. a $\Q^{>0}$-) system as an output.}
\subsection{Sets of multiplicative recurrence} \label{sec:sets_mult_recurrence}
Sets of recurrence for $(\Z,+)$-systems, and their connection to combinatorics were introduced by Furstenberg in his book \cite{Furstenberg81}.
Sets of recurrence for arbitrary countable semigroups were defined and studied in \cite{Bergelson_McCutcheon95}, and the specific case when the semigroup is $(\N,\times)$ in \cite{Bergelson05}.
%Everything stated in this subsection is well known and hence the proofs are all just sketches.
\begin{definition}[Cf. {\cite[Definition 1.1]{Bergelson_McCutcheon95}}]
Let $G$ be a semigroup and let $R\subset G$.
We say that $R$ is a \emph{set of recurrence}, or a \emph{set of measurable recurrence} if for any measure preserving action $T=(T_g)_{g\in G}$ of $G$ on a probability space $(X,\mu)$ and any $A\subset X$ with $\mu(A)>0$ there exists $r\in R$ such that $\mu(A\cap T_r^{-1}A)>0$. %\wenbo{for one or infinite many $r$?}
\end{definition}
%In any semigroup the notion of sets of recurrence can be defined by analogy with $(\N,+)$.

Examples of sets of recurrence include $G$ itself and any subsemigroup of $G$.
In particular, note that if $G$ admits an identity $e$, then the singleton $\{e\}$ is a set of recurrence.
More generally, the usual proof of Poincar\'e's recurrence theorem can be adapted to show that any $\Delta$-set (i.e., a set of the form $\{g_ig_{i+1}\cdots g_j:i\leq j\}$ for some sequence $(g_i)_{i\in\N}$ in $G$) is a set of recurrence.%\joel{[[citation needed]]}

\begin{proposition}\label{prop_homo}
Let $G,H$ be semigroups and let $\phi:G\to H$ be a homomorphism. 
If $R\subset G$ is a set of recurrence, then $\phi(R)$ is a set of recurrence in $H$.
\end{proposition}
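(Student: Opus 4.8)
The plan is to pull back an arbitrary measure preserving $H$-action to a measure preserving $G$-action via the homomorphism $\phi$. Concretely, suppose $R \subset G$ is a set of recurrence; we want to show $\phi(R) \subset H$ is a set of recurrence. Let $S = (S_h)_{h \in H}$ be a measure preserving action of $H$ on a probability space $(X, {\mathcal B}, \mu)$ and let $A \in {\mathcal B}$ with $\mu(A) > 0$. Define $T = (T_g)_{g \in G}$ by $T_g := S_{\phi(g)}$. Since $\phi$ is a homomorphism, $T_{gg'} = S_{\phi(gg')} = S_{\phi(g)\phi(g')} = S_{\phi(g)} \circ S_{\phi(g')} = T_g \circ T_{g'}$, so $T$ is a measure preserving $G$-action on the same probability space, and each $T_g$ is measure preserving because each $S_h$ is.

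Now I would simply invoke the hypothesis that $R$ is a set of recurrence for the system $(X, {\mathcal B}, \mu, T)$ with the set $A$: there exists $r \in R$ such that $\mu(A \cap T_r^{-1} A) > 0$, i.e., $\mu(A \cap S_{\phi(r)}^{-1} A) > 0$. Setting $h := \phi(r) \in \phi(R)$, this is exactly the statement that $h$ witnesses recurrence of $A$ under the $H$-action $S$. Since $S$ and $A$ were arbitrary, $\phi(R)$ is a set of recurrence in $H$.

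There is essentially no obstacle here; the only point requiring a moment's care is the bookkeeping that pulling back an $H$-action along a semigroup homomorphism genuinely yields a $G$-action in the sense defined in the excerpt (the composition identity $T_{gh} = T_g \circ T_h$), which follows immediately from $\phi$ being a homomorphism, and that measure preservation is inherited. One might also note that $\phi$ need not be injective or surjective for the argument to work — the image $\phi(R)$ is taken as a subset of $H$ regardless. This is why the proposition is stated for arbitrary homomorphisms of semigroups, and it is precisely the mechanism that will later let us transfer known sets of additive recurrence in $(\N,+)$ to sets of multiplicative recurrence in $(\Q^{>0},\times)$ via homomorphisms $(\N,+) \to (\Q^{>0},\times)$.
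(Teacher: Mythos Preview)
Your proof is correct and takes essentially the same approach as the paper, which simply observes that composing any measure preserving action of $H$ with $\phi$ yields a measure preserving action of $G$ on the same space, from which the conclusion follows immediately. Your version just spells out this one-line argument in more detail.
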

\begin{proof}
 Composing any measure preserving action of $H$ with $\phi$ yields a measure preserving action of $G$ on the same space, and the conclusion follows.
\end{proof}

We are mainly interested in the semigroup $(\N,\times)$.
In view of the following lemma, we will also consider the group $(\Q^{>0},\times)$.
We denote by $d_\times^*(A)$ the Banach upper density of $A$ with respect to the multiplicative structure in either $(\N,\times)$ or $(\Q^{>0},\times)$.

The following proposition is well known among experts but we were unable to find a complete proof in the literature.
\begin{proposition}\label{lemma_multiplicativerecurrence}
Let $R\subset\Q^{>0}$. Then the followings are equivalent:
\begin{enumerate}
    \item $R$ is a set of recurrence, i.e., for any measure preserving action $T$ of $(\Q^{>0},\times)$ on a probability space $(X,\mu)$ and any $A\subset X$ with $\mu(A)>0$ there exists $r\in R$ such that $\mu(A\cap T_r^{-1}A)>0$. 
    \item For any measure preserving action $T$ of $(\N,\times)$ on a probability space $(X,\mu)$ and any $A\subset X$ with $\mu(A)>0$ there exists $a,b\in\N$ with $a/b\in R$ such that $\mu(T_a^{-1}A\cap T_b^{-1}A)>0$.
    \item For any $A\subset\Q^{>0}$ with $d_\times^*(A)>0$ there exists $r\in R$ such that $A\cap A/r\neq\emptyset$.
    \item For any $A\subset\Q^{>0}$ with $d_\times^*(A)>0$ there exists $r\in R$ such that $d_\times^*(A\cap A/r)>0$.
    \item For any $A\subset\N$ with $d_\times^*(A)>0$ there exist $a,b\in\N$ with $a/b\in R$ such that $A/a\cap A/b\neq\emptyset$.
    \item For any $A\subset\N$ with $d_\times^*(A)>0$ there exist $a,b\in\N$ with $a/b\in R$ such that $d_\times^*(A/a\cap A/b)>0$.
\end{enumerate}
\end{proposition}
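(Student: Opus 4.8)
The plan is to separate the six statements into the two \emph{dynamical} conditions $(1)$ and $(2)$ and the four \emph{combinatorial} conditions $(3)$--$(6)$, and to prove the two cycles $(1)\Rightarrow(4)\Rightarrow(3)\Rightarrow(1)$ and $(2)\Rightarrow(6)\Rightarrow(5)\Rightarrow(2)$ together with the equivalence $(1)\Leftrightarrow(2)$; since $(1)\Leftrightarrow(2)$ links the two cycles, this yields the full equivalence. Two remarks are immediate and worth recording first. Since $A\cap A/r\neq\emptyset$ says exactly that $A$ contains two elements whose quotient is $r$, and $A/a\cap A/b\neq\emptyset$ that $A$ contains two elements whose quotient is $a/b$, condition $(3)$ asserts that every $A\subset\Q^{>0}$ with $d_\times^*(A)>0$ has two elements with quotient in $R$, condition $(5)$ is the analogous statement for $A\subset\N$, and $(4),(6)$ are the strengthenings obtained by replacing ``$\neq\emptyset$'' with ``$d_\times^*(\cdot)>0$''. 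In particular $(4)\Rightarrow(3)$ and $(6)\Rightarrow(5)$, since a set of positive upper Banach density is nonempty.

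For $(1)\Leftrightarrow(2)$ I would use that $(\N,\times)$ is a cancelative commutative semigroup with group of fractions $(\Q^{>0},\times)$, so that every measure preserving $(\N,\times)$-system extends, through a factor map $\pi$, to a measure preserving $(\Q^{>0},\times)$-system (its invertible/natural extension), while any $(\Q^{>0},\times)$-system restricts to an $(\N,\times)$-system. In a $\Q^{>0}$-action each $T_b$ is invertible and measure preserving, and the relation $T_{a/b}=T_a\circ T_b^{-1}$ converts a positive value of $\mu(A\cap T_r^{-1}A)$ with $r=a/b\in R$ into a positive value of $\mu(T_a^{-1}A\cap T_b^{-1}A)$ and conversely; combined with pulling sets back and pushing them down along $\pi$, this is a short bookkeeping computation giving $(1)\Leftrightarrow(2)$.

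The implications $(1)\Rightarrow(4)$ and $(2)\Rightarrow(6)$ are instances of Furstenberg's correspondence principle. For $(1)\Rightarrow(4)$, applying \cref{thm_correspondence} (in the routine variant allowing $E$ to be a subset of the acting semigroup) with $G=\Q^{>0}$ produces, for each $E\subset\Q^{>0}$ with $d_\times^*(E)>0$, a $(\Q^{>0},\times)$-system $(X,\mathcal B,\mu,T)$ and $A$ with $\mu(A)>0$ and $\bar d_\Phi(E\cap E/r)\ge\mu(A\cap T_r^{-1}A)$ for all $r$ and a suitable F\o lner sequence $\Phi$; condition $(1)$ then supplies $r\in R$ with $\mu(A\cap T_r^{-1}A)>0$, whence $d_\times^*(E\cap E/r)>0$. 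For $(2)\Rightarrow(6)$ it is cleanest to run the proof of the correspondence directly on $\{0,1\}^{\N}$ with the dilation action of $(\N,\times)$: with $A=\{\eta:\eta(1)=1\}$ and $\mu$ a weak$^*$ limit of the empirical measures $\E_{x\in\Phi_N}\delta_{T_x 1_E}$ along a F\o lner sequence realizing $d_\times^*(E)$, one gets $\mu(A)=d_\times^*(E)$ and $\mu(T_a^{-1}A\cap T_b^{-1}A)\le\bar d_\Phi(E/a\cap E/b)$ for all $a,b\in\N$, so $(2)$ yields $a,b$ with $a/b\in R$ and $d_\times^*(E/a\cap E/b)>0$.

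The step I expect to be the main obstacle is the reverse direction, $(3)\Rightarrow(1)$ and $(5)\Rightarrow(2)$, where a nonempty intersection must be upgraded to one of positive measure; I would argue by contraposition. Suppose $(X,\mathcal B,\mu,T)$ is a $(\Q^{>0},\times)$-system with $\mu(A)>0$ and $\mu(A\cap T_r^{-1}A)=0$ for every $r\in R$ (respectively, an $(\N,\times)$-system with $\mu(T_a^{-1}A\cap T_b^{-1}A)=0$ for all $a,b\in\N$ with $a/b\in R$). Since $R$ is countable, the set $Z:=\bigcup_{g\in\Q^{>0}}\bigcup_{r\in R}T_g^{-1}(A\cap T_r^{-1}A)$ --- replaced in the $(\N,\times)$-case by $Z:=\bigcup_{k\in\N}\bigcup_{\gcd(a,b)=1,\ a/b\in R}T_k^{-1}(T_a^{-1}A\cap T_b^{-1}A)$ --- is a countable union of null sets, hence $\mu(Z)=0$. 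Fix a F\o lner sequence $\Phi$ in the acting (semi)group; by the von Neumann mean ergodic theorem the averages $\tfrac{1}{|\Phi_N|}\sum_{g\in\Phi_N}1_A\circ T_g$ converge in $L^2(\mu)$ to a non-negative function of integral $\mu(A)>0$, so along a subsequence they converge $\mu$-a.e., and for a positive-measure set of $x_0\in X$ the return set $E:=\{g:T_gx_0\in A\}$ satisfies $d_\times^*(E)\ge\bar d_\Phi(E)>0$. Pick such an $x_0$ that also lies outside $Z$. If $g_1,g_2\in E$ had $g_1/g_2\in R$ then, writing $g_1=g_2\cdot(g_1/g_2)$ in the $\Q^{>0}$-case and $g_1=ka$, $g_2=kb$ with $k=\gcd(g_1,g_2)$, $\gcd(a,b)=1$ in the $\N$-case, the identities $T_{g_1}x_0=T_{g_1/g_2}(T_{g_2}x_0)$, resp.\ $T_{ka}x_0=T_a(T_kx_0)$ and $T_{kb}x_0=T_b(T_kx_0)$, would force $x_0\in Z$. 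Hence $E$ is a subset of $\Q^{>0}$ (resp.\ of $\N$) of positive upper multiplicative density with no two elements whose quotient lies in $R$, contradicting $(3)$ (resp.\ $(5)$). The points needing care are the precise form of the mean ergodic theorem invoked for $(\N,\times)$-actions and the (routine) measurability checks ensuring $Z$ is null; an initial reduction to an ergodic component --- legitimate because $R$ is countable --- can be used to streamline the last step if desired.
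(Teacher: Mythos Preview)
Your proof is correct and follows the same overall scaffolding as the paper's: the trivial implications $(4)\Rightarrow(3)$, $(6)\Rightarrow(5)$, the correspondence-principle implications $(1)\Rightarrow(4)$, $(2)\Rightarrow(6)$, and the invertible-extension/restriction argument for $(1)\Leftrightarrow(2)$ are all handled the same way. The genuine divergence is in $(3)\Rightarrow(1)$ and $(5)\Rightarrow(2)$. The paper does \emph{not} argue by contraposition; instead it quotes a result of Bergelson (\cite[Theorem 3.14]{Bergelson05}) which, given any $(\N,\times)$-system and $A$ with $\mu(A)>0$, produces a set $P\subset\N$ of positive multiplicative upper Banach density such that $\mu(T_a^{-1}A\cap T_b^{-1}A)>0$ for \emph{every} pair $a,b\in P$; one then applies $(5)$ directly to $P$ to extract $a,b\in P$ with $a/b\in R$. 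Your route is to assume a bad system exists, throw away the null set $Z$ coming from all $r\in R$ (using that $R\subset\Q^{>0}$ is automatically countable), and use a generic point $x_0\notin Z$ produced by the mean ergodic theorem to build a positive-density set $E$ with no two elements having ratio in $R$. Your argument is more self-contained---it avoids the external citation and in effect reproves the special case of Bergelson's lemma that is needed here---while the paper's version is shorter and yields the stronger intermediate statement that the ``correlation set'' $P$ works uniformly for all pairs, not just those with ratio in a prescribed $R$. One small point worth making explicit in a final write-up: your invocation of the mean ergodic theorem for $(\N,\times)$-actions is legitimate because $(\N,\times)$ is amenable, and passing from $L^2$-convergence to an a.e.-convergent subsequence is what lets you pick a single $x_0$ simultaneously outside $Z$ and with $\bar d_\Phi(E)>0$.
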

\begin{proof}
  The implications (6)$\Rightarrow$(5) and (4)$\Rightarrow$(3) are trivial. The implications (1)$\Rightarrow$(4) and (2)$\Rightarrow$(6) follow directly from the Furstenberg correspondence principle, \cref{thm_correspondence}.
  
  The implications (3)$\Rightarrow$(1) and (5)$\Rightarrow$(2) have very similar proofs, based on an observation of Bergelson, so here we only give the proof of (5)$\Rightarrow$(2). Suppose we are given a measure preserving action $T$ of $(\N,\times)$ on a probability space $(X,\mu)$ and a set $A\subset X$ with $\mu(A)>0$. 
  Applying \cite[Theorem 3.14]{Bergelson05} we find a set $P\subset\N$ with $d_\times^*(P)>0$ and such that for any $a,b\in P$, $\mu(T_a^{-1}A\cap T_b^{-1}A)>0$.
  Using (5) we can then find $a,b\in P$ with $a/b\in R$, establishing (2).
  
  %\joel{[[This sentence needs improving]]}follow from the usual Fatou's lemma/Bergelson intersectivity argument.
  The implication (2)$\Rightarrow$(1) follows from the fact that any measure preserving action of $(\Q^{>0},\times)$ induces a natural measure preserving action of the subsemigroup $(\N,\times)$.
  Finally, the implication (1)$\Rightarrow$(2) follows from a routine inverse extension argument, which we provide for completeness.
  
  Let $(T_n)_{n\in\N}$ be a measure preserving action of $(\N,\times)$ on $(X,\mu)$. The invertible extension of the measure preserving system $(X,\mu,(T_n)_{n\in\N})$ is the system $(Y,\nu,(S_u)_{u\in\Q^{>0}})$ where $$Y=\left\{(x_u)_{u\in\Q^{>0}}\in X^{\Q^{>0}}:(\forall n\in\N)(\forall u\in\Q^{>0})x_{un}=T_nx_u\right\},$$ 
  $S_v(x_u)_{u\in\Q^{>0}}=(x_{uv})_{u\in\Q^{>0}}\subset X^{\Q^{>0}}$ and for any cylinder set $C=\prod_{u\in F}A_u\times\prod_{u\in\Q^{>0}\setminus F}X$ where $F\subset\Q^{>0}$ is finite and $A_u\subset X$ are arbitrary measurable sets we have $\nu(C)=\mu(\bigcap_{u\in F}T_{un}^{-1}A_u)$, where $n\in\N$ is any common denominator of all elements of $F$.
  It is routine to check that $(Y,\nu,(S_u)_{u\in\Q^{>0}})$ is indeed a measure preserving system, and that the projection defined by $(x_u)_{u\in\Q^{>0}}\mapsto x_{1}$ is a factor map between the $(\N,\times)$ systems $(Y,\nu,(S_u)_{u\in\N})$ and $(X,\mu,(T_n)_{n\in\N})$.
  On the other hand, the fact that (1) holds for the system $(Y,\nu,(S_u)_{u\in\Q^{>0}})$ implies that (2) holds for the system $(Y,\nu,(S_u)_{u\in\N})$.
  But the property (2) passes through to factors, so it also holds for the original system $(X,\mu,(T_n)_{n\in\N})$.
\end{proof}

A weaker notion of recurrence is \emph{topological recurrence}, which we now define.
Given a semigroup $G$, a \emph{topological $G$-system} is a pair $(X,T)$ where $X$ is a compact metric space and $T=(T_g)_{g\in G}$ is an action of $G$ on $X$ via continuous functions.
A topological $G$-system is \emph{minimal} if every orbit $\orb(x,T):=\{T_gx:g\in G\}$ is dense in $X$.
\begin{definition}
Let $G$ be a semigroup and let $R\subset G$.
We say that $R$ is a \emph{set of topological recurrence} if for any minimal $G$-system and any non-empty open set $U\subset X$ there exists $r\in R$ such that $U\cap T_r^{-1}U\neq\emptyset$. 
\end{definition}

%We stress that in the definition of topological recurrence, the system must be minimal.
One can not drop the minimality condition entirely in this definition, as otherwise no set $R$ would satisfy it. 
However, it is possible to slightly relax the minimality condition and still have an equivalent definition.
Recall that given a topological $G$-system $(X,T)$, a point $x\in X$ is called a \emph{minimal point} if its orbit closure $\overline{\orb(x,T)}$ is minimal.

\begin{proposition}\label{prop_minimalorbit}
    Let $G$ be a semigroup and let $R\subset G$.
    Then $R$ is a set of topological recurrence if and only if for any topological $G$-system $(X,T)$ with a dense set of minimal points, and any non-empty open set $U\subset X$ there exists $r\in R$ such that $U\cap T_r^{-1}U\neq\emptyset$. 
    %\seba{there are infinitely many... Maybe here is a good place to put it?} 
\end{proposition}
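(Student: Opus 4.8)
The plan is to prove both directions. The forward direction (if $R$ is a set of topological recurrence then it satisfies the apparently stronger-looking conclusion involving systems with a dense set of minimal points) is in fact the easy one: given such a system $(X,T)$ and a non-empty open $U$, I would pick a minimal point $x\in U$ (possible since minimal points are dense), let $Y=\overline{\orb(x,T)}$, which is a minimal $G$-subsystem, and apply the definition of set of topological recurrence to the minimal system $(Y,T|_Y)$ with the non-empty relatively open set $U\cap Y$. This yields $r\in R$ with $(U\cap Y)\cap T_r^{-1}(U\cap Y)\neq\emptyset$, hence $U\cap T_r^{-1}U\neq\emptyset$. The only point to be slightly careful about is that $T_r$ restricts to a map $Y\to Y$, which holds because $Y$ is $T$-invariant (orbit closures of a point are invariant for semigroup actions by continuous maps).

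The reverse direction is immediate: every minimal system has a dense set of minimal points (indeed every point is minimal), so the quantified statement specializes to the definition of a set of topological recurrence.

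The main (really the only) obstacle is making sure the restriction argument is legitimate in the semigroup setting: one must check that $\overline{\orb(x,T)}$ is a $G$-invariant closed set on which $T$ restricts to a topological $G$-system, and that being a minimal point means this restriction is a minimal system — all of which are built into the definition of "minimal point" given just before the proposition. I would also note explicitly that $U\cap Y\neq\emptyset$ since $x\in U\cap Y$, so that the relatively open set to which we apply the hypothesis is genuinely non-empty. No substantial computation is involved; the content is entirely in the standard reduction to an orbit closure.
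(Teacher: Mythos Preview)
Your proposal is correct and matches the paper's proof essentially verbatim: the paper also picks a minimal point $x\in U$, sets $Y=\overline{\orb(x,T)}$ and $V=U\cap Y$, and applies the definition of topological recurrence to the minimal system $(Y,T)$ and the non-empty relatively open set $V$; the reverse direction is left implicit as trivial.
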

\begin{proof}
%    \joel{To be added}
    Let $x\in U$ be a minimal point, let $Y=\overline{\orb(x,T)}$ and let $V:=Y\cap U\neq\emptyset$.
    Then $(Y,T)$ is a minimal system and $V$ a non-empty open subset of $Y$, so there exists $r\in R$ such that $V\cap T_r^{-1}V\neq\emptyset$, which implies that $U\cap T_r^{-1}U\neq\emptyset$. 
    %Suppose that the set $F:=\{r\in R:U\cap T_r^{-1}U\neq\emptyset\}$ is finite and let   
\end{proof}
%\seba{In the proof above one may consider the product with a minimal topologically free action of $G$. This is, $(Y,S)$ is a $G$-minimal system and for each $g\in G$, $g\neq id$, the set $\{y: gy=y\}$ has empty interior. For which semigroups there are minimal topologically free actions? I bet this is known and true..}

If $G$ is amenable (in particular if $G$ is commutative) then in view of Bogolyubov-Krilov's theorem \cite{Bogoliouboff_Kryloff_1937}, every set of recurrence is a set of topological recurrence. 
We remark in passing that this is not true in general for non-amenable semigroups, and in fact Bergelson conjectures that it is never true for non-amenable groups (cf. \cite{Bergelson_2008} or \cite[Conjecture after Exercise 20]{Bergelson00}). 

One can also ask whether every set of topological recurrence is a set of measurable recurrence. 
When $G=(\N,+)$, a negative answer was given by Kriz \cite{Kriz87}.
We show below that we can use Kriz's example to answer the question in the negative for the semigroup $(\N,\times)$.

\begin{proposition}\label{prop_homo2}
Let $G,H$ be commutative semigroups and let $\phi:G\to H$ be a homomorphism. 
If $R\subset G$ is a set of topological recurrence, then $\phi(R)$ is a set of topological recurrence in $H$.
\end{proposition}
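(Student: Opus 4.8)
The plan is to mirror the proof of \cref{prop_homo} for the topological setting. First I would take a minimal topological $H$-system $(X,T)$ and a non-empty open set $U\subset X$, and then compose the $H$-action with the homomorphism $\phi$ to obtain an $H$-action-pulled-back $G$-action $S=(S_g)_{g\in G}$ on $X$ defined by $S_g:=T_{\phi(g)}$. The issue is that $(X,S)$ need not be minimal: the $G$-orbit of a point $x$ is $\{T_{\phi(g)}x:g\in G\}$, which may be a proper (indeed non-dense) subset of the $H$-orbit $\{T_hx:h\in H\}$, so we cannot directly apply the definition of set of topological recurrence to $(X,S)$.

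To get around this, I would pass to a minimal subsystem. Concretely, fix a point $x_0\in U$ and let $Y:=\overline{\orb(x_0,S)}=\overline{\{S_gx_0:g\in G\}}\subseteq X$; this is a closed $S$-invariant subset, so $(Y,S)$ is a topological $G$-system. Using commutativity of $G$ (or more precisely that $G$ is a directed set under the relevant divisibility-type preorder, which is what commutativity buys here) together with Zorn's lemma, $(Y,S)$ contains a minimal $G$-subsystem $Z\subseteq Y$. The key point is that $Z$ should still meet $U$: because every point of $Z$ lies in the orbit closure of $x_0$, and $x_0\in U$, one can choose $x_0$ to itself be a minimal point of $(Y,S)$ — or, alternatively, one uses that in a commutative-semigroup system the orbit closure of any point contains a minimal set whose points are "almost periodic," and by picking $x_0$ appropriately (e.g. replacing $x_0$ by a suitable $S_g x_0$ that returns close to $U$) one arranges $Z\cap U\neq\emptyset$. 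Let $V:=Z\cap U$, a non-empty relatively open subset of the minimal $G$-system $Z$.

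Now apply the hypothesis that $R$ is a set of topological recurrence for $G$: there exists $r\in R$ with $V\cap S_r^{-1}V\neq\emptyset$, i.e. there is $z\in Z\cap U$ with $S_r z=T_{\phi(r)}z\in Z\cap U\subseteq U$. Hence $z\in U\cap T_{\phi(r)}^{-1}U$, so $U\cap T_{\phi(r)}^{-1}U\neq\emptyset$ with $\phi(r)\in\phi(R)$. Since $(X,T)$ and $U$ were arbitrary, $\phi(R)$ is a set of topological recurrence in $H$.

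The main obstacle is the step ensuring that the chosen minimal $G$-subsystem $Z$ genuinely intersects $U$ — i.e. controlling where the minimal points of the pulled-back $G$-action sit relative to $U$. The clean way to handle this, and the route I would take in writing it up, is: start with any $x\in U$, let $Z$ be a minimal $G$-subsystem of $\overline{\orb(x,S)}$, pick $z\in Z$; then $z\in\overline{\orb(x,S)}$ forces $x\in\overline{\orb(z,S)}$ by minimality of $Z$, so there is $g\in G$ with $S_g z\in U$; replacing $z$ by $S_g z\in Z\cap U$ (which is again a point of the minimal set $Z$) gives the required non-empty open set $V=Z\cap U$ in $Z$, and then $\cref{prop_minimalorbit}$ is not even needed — we argue directly with the minimal system $(Z,S)$. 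This is entirely parallel to the argument already given for $\cref{prop_minimalorbit}$, so the write-up will be short.
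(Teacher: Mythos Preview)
Your argument has a genuine gap at the step you yourself flag as ``the main obstacle.'' In the final paragraph you write: ``pick $z\in Z$; then $z\in\overline{\orb(x,S)}$ forces $x\in\overline{\orb(z,S)}$ by minimality of $Z$.'' This inference is false. Minimality of $Z$ tells you exactly that $\overline{\orb(z,S)}=Z$ for every $z\in Z$; it says nothing about $x$. So your claim amounts to asserting $x\in Z$, i.e.\ that $\overline{\orb(x,S)}$ was already minimal --- which is precisely the thing you cannot assume. In general $x$ need not lie in any minimal $S$-subsystem of $\overline{\orb(x,S)}$, and nothing in your argument rules this out. A symptom of the problem is that after the first sentence you never again use the only nontrivial hypothesis available, namely minimality of the $H$-action $T$.

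The paper's proof fixes this by using $H$-minimality rather than trying to drag the minimal $G$-set $Z$ back to $U$. One takes the minimal $S$-subsystem $Y$ as you do, picks any $y\in Y$, and uses minimality of $(X,T)$ to find $h\in H$ with $T_h y\in U$; thus $V:=T_h^{-1}U\cap Y$ is a non-empty (relatively) open subset of the minimal $G$-system $Y$. Applying the hypothesis to $(Y,S)$ and $V$ yields $r\in R$ with $S_r^{-1}V\cap V\neq\emptyset$, hence $T_{\phi(r)}^{-1}T_h^{-1}U\cap T_h^{-1}U\neq\emptyset$. Now commutativity of $H$ lets you commute $T_h^{-1}$ past $T_{\phi(r)}^{-1}$ and cancel it, giving $T_{\phi(r)}^{-1}U\cap U\neq\emptyset$. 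The key idea you are missing is this translation-by-$T_h$ trick: rather than moving $Z$ into $U$, move $U$ (via the ambient $H$-action) to meet $Y$, and use commutativity to undo the move at the end.
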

\begin{proof}
Let $T$ be a minimal continuous action of $H$ on the compact metric space $X$ and let $U\subset X$ be open and non-empty.
Let $S$ be the $G$-action on $X$ defined by $S_g=T_{\phi(g)}$, let $x\in U$, let $Y_0:=\overline{\{S_gx:g\in G\}}$ and let $Y\subset Y_0$ be a minimal $S$-subsystem.
Since $T$ is minimal, there exists $h\in H$ such that $V:=T_h^{-1}U\cap Y\neq\emptyset$. 

The set $V$ is a non-empty open subset of $Y$, so by the hypothesis there exists $g\in R$ such that $S_g^{-1}V\cap V\neq\emptyset$.
This implies that $S_g^{-1}T_h^{-1}U\cap T_h^{-1}U\neq\emptyset$, and hence $T_{\phi(g)}^{-1}U\cap U\neq\emptyset$ as well.
\end{proof}

\begin{corollary}
\label{cor:kriz}
There exists $R\subset\N$ which is a set of topological recurrence in $(\N,\times)$ but not a set of measurable recurrence in $(\N,\times)$.
\end{corollary}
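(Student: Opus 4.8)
The plan is to transport Kriz's classical example from the additive semigroup $(\N,+)$ to the multiplicative semigroup $(\N,\times)$ via a homomorphism, using Propositions \ref{prop_homo} and \ref{prop_homo2}. Recall that Kriz \cite{Kriz87} produced a set $S\subset\N$ which is a set of topological recurrence in $(\N,+)$ but not a set of measurable recurrence in $(\N,+)$. The natural map to use is $\phi\colon(\N,+)\to(\N,\times)$ given by $\phi(n)=2^n$ (or more carefully $\phi(n)=2^{n-1}$ if one insists $\N$ starts at $1$ and wants $\phi$ to be a semigroup homomorphism in the strictest sense; either works since we only need a homomorphism of semigroups). Since $\phi$ is injective, it is a semigroup isomorphism onto its image $\{2^n:n\in\N\}$.

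First I would set $R:=\phi(S)=\{2^n : n\in S\}$ and invoke \cref{prop_homo2}: since $S$ is a set of topological recurrence in $(\N,+)$ and $\phi$ is a homomorphism between commutative semigroups, $R=\phi(S)$ is a set of topological recurrence in $(\N,\times)$. This gives the positive half of the statement directly. Second, I would argue that $R$ is not a set of measurable recurrence in $(\N,\times)$. Here the key observation is the converse direction: a measure preserving action of $(\N,\times)$ can be pulled back along $\phi$ to a measure preserving action of $(\N,+)$ on the same probability space, with $T'_n := T_{\phi(n)} = T_{2^n}$. If $R$ were a set of measurable recurrence for $(\N,\times)$, then for any measure preserving $(\N,+)$-action $(X,\mathcal{B},\mu,T')$ and any $A$ with $\mu(A)>0$, we could build the $(\N,\times)$-action generated by a single transformation $T_2$ (setting $T_{2^n}=T_2^n$ and $T_m=\mathrm{id}$ for $m$ not a power of $2$, or more cleanly restricting attention to the sub-action generated by $\phi$), find $r=2^n\in R$ with $\mu(A\cap T_r^{-1}A)>0$, and conclude $\mu(A\cap (T')^{-n}_1 A)>0$ with $n\in S$ --- contradicting that $S$ is not a set of measurable recurrence for $(\N,+)$.

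The one point that requires a little care, and which I expect to be the main obstacle, is making the reduction in the previous paragraph rigorous: given an arbitrary $(\N,+)$-system generated by a single (not necessarily invertible) measure preserving transformation $U$, one must produce from it an $(\N,\times)$-system $(X,\mathcal{B},\mu,T)$ in which $T_{2^n}=U^n$, so that multiplicative recurrence along powers of $2$ translates back into additive recurrence along $S$. The natural construction is to let $T_m=U^{v_2(m)}$ where $v_2(m)$ is the $2$-adic valuation of $m$; one checks $T_{m_1}T_{m_2}=U^{v_2(m_1)+v_2(m_2)}=U^{v_2(m_1m_2)}=T_{m_1m_2}$, so this is a genuine $(\N,\times)$-action, and $T_{2^n}^{-1}A=U^{-n}A$. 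Applying the hypothesis that $R$ is a set of measurable recurrence to this system and the set $A$ yields some $2^n\in R$, i.e. $n\in S$, with $\mu(A\cap U^{-n}A)>0$; since $A$ and $U$ were arbitrary this shows $S$ is a set of measurable recurrence for $(\N,+)$, a contradiction. (Alternatively, one can phrase this using \cref{prop_homo} in the contrapositive together with the observation that the $(\N,+)$-system generated by $U$ embeds as the $\phi$-pullback of the $(\N,\times)$-system just constructed.) Assembling these pieces completes the proof of \cref{cor:kriz}.
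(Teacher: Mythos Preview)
Your proposal is correct and follows essentially the same approach as the paper: both transport Kriz's additive example via $\phi(n)=2^n$, invoke \cref{prop_homo2} for the topological half, and for the measurable half build the $(\N,\times)$-action $T_m=U^{v_2(m)}$ (the paper writes $a(n)$ for your $v_2(n)$) so that recurrence along $\phi(S)$ reduces to additive recurrence along $S$. The only cosmetic difference is that the paper argues directly by exhibiting the bad system, whereas you phrase it as a contrapositive; also, your aside about $2^{n-1}$ is unnecessary since $\phi(n)=2^n$ is already a semigroup homomorphism $(\N,+)\to(\N,\times)$.
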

\begin{proof}
 Let $R\subset\N$ be a set of additive topological recurrence which is not a set of additive recurrence.
 Let $\phi:\N\to\N$ be the map $\phi(n)=2^n$.
 Observe that $\phi:(\N,+)\to(\N,\times)$ is a homomorphism, so in view of \cref{prop_homo2} the set $\phi(R)$ is a set of multiplicative topological recurrence.
 
 Let $(X,\mu,T)$ be an additive measure preserving system, and $A\subset X$ satisfy $\mu(A)>0$ but $\mu(A\cap T^{-n}A)=0$ for all $n\in R$.
 Consider the multiplicative measure preserving action $S$ on $(X,\mu)$ given by $S_n=T^{a(n)}$, where $a(n)$ is the unique integer such that $n=2^{a(n)}b(n)$, with $b(n)$ odd.
 Since $a(\phi(n))=n$ for all $n\in\N$, it follows that $\mu(S_m^{-1}A\cap A)=0$ for all $m\in\phi(R)$, and hence that $\phi(R)$ is not a set of multiplicative recurrence.
\end{proof}

\begin{remark} 

%Let $f\colon \Q^{>0}\to \Q^{>0}$ be continuous function such that $f(1)\neq 1$. Then the image of a multiplicative recurrence set under $f$ may not be a multiplicative recurrence set. This is the case if $R=\{(an+1)/an: n\in \N \}$ for large enough $a$, since $f(R)$ is contained in a set of the form described in \cref{prop:accumulation_not_recurrence}. This implies that for instance that functions as $x\mapsto x+r$, $r\neq 0$   or $x\mapsto qx$, $q\neq 1$ do not preserve the multiplicative recurrence property. 

We note that in general, shifts and dilations of sets of measurable multiplicative recurrence are not sets of measurable or even topological multiplicative recurrence. For example,   $\{4^n: n \in \N\}$ is a set of multiplicative recurrence since it is a semigroup. On the other hand, $\{4^n + 3: n \in \N\}$ is not since it is contained in $4 \N + 3$ and it is shown in \cref{thm:linear_intro} that the latter set is not a set of topological multiplicative recurrence. For dilations, the set $\{3 \cdot 4^n: n \in \N\}$ is not a set of multiplicative recurrence as seen in the following coloring: For $n \in \N$, write $n = 3^k q$ where $k \in \N \cup \{0\}$ and $q \in \N$ with $3 \nmid q$. Then color $c(n) = 0$ if $k$ is even and $c(n) = 1$ if $k$ is odd. 
\end{remark}

%The following is a direct corollary of Propositions \ref{prop_homo} and \ref{prop_homo2}.

%\begin{corollary}\label{prop_recurrencepowers}
%Let $R\subset\Q^{>0}$ be a set of multiplicative (topological) recurrence and let $\ell\in\Z$. Then   $\{r^\ell:r\in R\}$ is also a set of multiplicative (topological) recurrence. 

%In particular the set $\{(n+1)^2/n^2:n\in\N\}$ is a set of multiplicative topological recurrence.
%\end{corollary}
%\textcolor{red}{Is above condition if and only if? In fact, we have the following:}

%\begin{proof}
% We prove the result first for regular (i.e. measurable) recurrence. 
% Let $T$ be a measure preserving action of $(\Q^{>0},\times)$ on a probability space $(X,\mu)$ and let $A\subset X$ have $\mu(A)>0$.
% Consider the action $S$ given by $S_u=T_{u^\ell}$. 
% Since $R$ is a set of recurrence, there exists $r\in R$ such that $\mu(S_r^{-1}A\cap A)>0$, which is equivalent to $\mu(T_{r^\ell}^{-1}A\cap A)>0$.
 
% \joel{Surprisingly the proof doesn't work for topological recurrence because minimality is lost! The analogue for additive recurrence is still true (essentially because $\ell\N$ has finite index in $\N$).
% Could this be false?! Of course a counterexample would be a ``multiplicative Kriz'' so likely very difficult to come by.}
 
% \anh{[[Add proof for that fact that $(n+1)^2/n^2$ is a set of topological recurrence. Homomorphism. Kriz example.]]}
%\end{proof}

\begin{proposition}
\label{lem:Q_and_N}
Let $R\subset\Q^{>0}\setminus\{1\}$. Then the followings are equivalent:
\begin{enumerate}[ref=(\arabic*)]

\item \label{lem:Q_and_N item1} $R$ is a set of topological recurrence, i.e. for any minimal action $T$ of $(\Q^{>0},\times)$ on a compact metric space $X$ and any non-empty open set $U\subset X$ there exists $r\in R$ such that $U\cap T_{r}^{-1}U\neq\emptyset$.
  \item \label{lem:Q_and_N item2} For any topological $(\Q^{>0},\times)$-system $(X,T)$ with a dense set of minimal points and any non-empty open set $U\subset X$ there exist infinitely many $r\in R$ such that $U\cap T_{r}^{-1}U\neq\emptyset$.
    \item For any minimal action $T$ of $(\N^{>0},\times)$ on a compact metric space $X$ and any non-empty open set $U\subset X$ there exist infinitely many pairs $(a,b)\in\N^2$ with $a/b\in R$ such that $T_a^{-1}U\cap T_b^{-1}U\neq\emptyset$.
    \item For any finite partition $\Q^{>0}=C_1\cup\cdots\cup C_s$ there exists $i\in\{1,\dots,s\}$, $r\in R$ such that $C_i\cap C_i/r\neq\emptyset$.
    \item For any finite partition $\N=C_1\cup\cdots\cup C_s$ there exist $i\in\{1,\dots,s\}$, $a,b\in\N$ with $a/b\in R$ such that $C_i\cap C_i/r\neq\emptyset$.
    
\end{enumerate}

Moreover, any of the conditions from \cref{lemma_multiplicativerecurrence} implies any of the conditions from this lemma.
\end{proposition}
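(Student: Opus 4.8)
The plan is to close the cycle of implications $(1)\Rightarrow(2)\Rightarrow(3)\Rightarrow(5)\Rightarrow(4)\Rightarrow(1)$ and then to deduce the final ``moreover'' clause from the existence of invariant measures. Three of the implications are soft, following the classical dictionary between recurrence and partition regularity. For $(4)\Rightarrow(1)$: given a minimal $(\Q^{>0},\times)$-system $(X,T)$, a nonempty open $U$ and a point $x_0\in X$, minimality yields finitely many $g_1,\dots,g_s\in\Q^{>0}$ with $X=\bigcup_j T_{g_j}^{-1}U$; coloring $h\in\Q^{>0}$ by $c(h):=\min\{j: T_{g_jh}x_0\in U\}$ and feeding the resulting finite partition into $(4)$ produces $i$, $r\in R$ and $h$ with $h,hr\in C_i$, whence $T_{g_ih}x_0\in U\cap T_r^{-1}U$. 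For $(5)\Rightarrow(4)$ one restricts a partition of $\Q^{>0}$ to $\N$ and applies $(5)$. For $(3)\Rightarrow(5)$, given a finite partition $\N=C_1\cup\dots\cup C_s$, form the symbolic $(\N,\times)$-system generated, under the shift $(T_n\omega)(m)=\omega(mn)$, by the coloring $\omega\in\{1,\dots,s\}^{\N}$; pass to a minimal subsystem $Y$, fix $y\in Y$, and apply $(3)$ to the cylinder $U=\{z: z(1)=y(1)\}$. Unwinding ``$T_az,T_bz\in U$ for some $z\in Y$'' and approximating $z$ inside the orbit of $\omega$ gives $n\in\N$ with $na$ and $nb$ of the same color $i$, and then $y_0:=nb$ lies in $C_i\cap C_i/r$ with $r=a/b\in R$.

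For $(2)\Rightarrow(3)$, start from a minimal $(\N,\times)$-system $(X,(T_n))$ and a nonempty open $U$, and pass to the invertible extension $Y=\{(x_u)_{u\in\Q^{>0}}\in X^{\Q^{>0}}: x_{un}=T_nx_u\text{ for all }u,n\}$, a compact metric $(\Q^{>0},\times)$-system under the shift $S$. A minimal subsystem $Y'\subseteq Y$ still surjects onto $X$ via $\pi\colon(x_u)\mapsto x_1$ (as $\pi$ is a factor map of $(\N,\times)$-systems and $X$ is minimal), so $\pi^{-1}(U)\cap Y'$ is nonempty open in $Y'$. Condition $(2)$ then yields infinitely many $r=a/b\in R$ (in lowest terms) and $\hat z\in Y'$ with $\hat z_1\in U$ and $\hat z_{a/b}\in U$; the point $w:=\hat z_{1/b}$ satisfies $T_bw=\hat z_1\in U$ and $T_aw=\hat z_{a/b}\in U$, so $w\in T_a^{-1}U\cap T_b^{-1}U$, and distinct $r$ give distinct pairs $(a,b)$.

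The crux, and the step I expect to require the most care, is $(1)\Rightarrow(2)$, upgrading ``some $r\in R$ recurs'' to ``infinitely many do''. Choosing a minimal point inside $U$ and passing to its orbit closure, it suffices to prove that for a minimal $(\Q^{>0},\times)$-system $(Y,T)$ and a nonempty open $V$, the return set $N:=\{r\in R: V\cap T_r^{-1}V\neq\emptyset\}$ is infinite. Suppose $N=\{r_1,\dots,r_m\}$ is finite. Since $\Q^{>0}$ is residually finite, pick a finite-index subgroup $K\le\Q^{>0}$ with $r_j\notin K$ for all $j$, and let $Z$ be a minimal subsystem of $Y\times(\Q^{>0}/K)$, where $\Q^{>0}$ acts by $T$ on the first coordinate and by translation on the second. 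Because $\Q^{>0}/K$ is finite, $Z$ splits into finitely many clopen slices $Z\cap(Y\times\{c\})$, whose images under the first-coordinate projection are closed and cover $Y$ (that projection is onto by minimality of $Y$); by Baire category one such image has nonempty interior, and since the slices are permuted transitively by the action, the image of every slice has nonempty interior. Translating by the group action and invoking minimality of $Y$, one then produces a nonempty open $V'\subseteq V$ and a coset $c_0$ for which $\widehat V:=Z\cap(V'\times\{c_0\})$ is nonempty and open in $Z$. Any $r\in R$ with $\widehat V\cap T_r^{-1}\widehat V\neq\emptyset$ must fix $c_0$, hence lie in $K$, and must also lie in $N=\{r_1,\dots,r_m\}$; but $K$ contains none of the $r_j$, so the minimal system $(Z,T)$ admits no $R$-return to $\widehat V$, contradicting $(1)$.

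Finally, for the ``moreover'' clause, let $R$ satisfy the (equivalent) conditions of \cref{lemma_multiplicativerecurrence} and let $(X,T)$ be a minimal $(\Q^{>0},\times)$-system with $U\subseteq X$ nonempty open. Since $\Q^{>0}$ is amenable there is a $T$-invariant Borel probability measure $\mu$ on $X$; its support is a nonempty closed invariant set, hence equals $X$, so $\mu(U)>0$. Measurable recurrence (condition $(1)$ of \cref{lemma_multiplicativerecurrence}) then gives $r\in R$ with $\mu(U\cap T_r^{-1}U)>0$, so in particular $U\cap T_r^{-1}U\neq\emptyset$; this is condition $(1)$ of the present proposition, and the equivalences proved above complete the argument.
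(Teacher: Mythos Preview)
Your proof is correct and more explicit than the paper's, which declares all implications except $(1)\Rightarrow(3)$ to be ``routine'' and proves only that one. The substantive difference is in how the nontrivial step is organized. The paper proves $(1)\Rightarrow(3)$ directly: it shows that the invertible extension $Y$ of a minimal $(\N,\times)$-system is itself minimal as an $(\N,\times)$-system (hence also as a $(\Q^{>0},\times)$-system), and then applies $(1)$ to $Y$ and pushes down via the factor map. You instead factor through $(2)$: your $(2)\Rightarrow(3)$ uses the invertible extension but only needs a minimal \emph{subsystem} $Y'\subseteq Y$ together with surjectivity of the projection $Y'\to X$, which is lighter than proving $Y$ minimal. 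Your $(1)\Rightarrow(2)$ is a clean use of the residual finiteness of $\Q^{>0}$; this is the natural way to upgrade ``some $r$'' to ``infinitely many $r$'', and the paper does not spell this out. One minor simplification: in your $(1)\Rightarrow(2)$, the Baire category detour is not needed. Once you know $\pi_1\colon Z\to Y$ is onto, pick any $y\in V$, take any $(y,c_0)\in Z$ above it, and set $\widehat V:=Z\cap(V\times\{c_0\})$; this is already nonempty and open in $Z$, with no interior-of-slice argument required.
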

\begin{proof}
  Similar to \cref{lemma_multiplicativerecurrence}, the proof \cref{lem:Q_and_N} is routine, with the exception of the implication (1)$\Rightarrow$(3), which we prove here.
  Given a minimal multiplicative topological system $(X,(T_n)_{n\in\N})$, let $(Y,(S_u)_{u\in\Q^{>0}})$ be its invertible extension, defined so that $$Y=\left\{(x_u)_{u\in\Q^{>0}}\in X^{\Q^{>0}}:(\forall n\in\N)(\forall u\in\Q^{>0})x_{un}=T_nx_u\right\}\subset X^{\Q^{>0}}$$ 
  and $S_v(x_u)_{u\in\Q^{>0}}=(x_{uv})_{u\in\Q^{>0}}$.
  Given a cylinder set $C=\prod_{u\in F}A_u\times\prod_{u\in\Q^{>0}\setminus F}X\subset X^{\Q^{>0}}$, where $F\subset\Q^{>0}$ is finite and $A_u\subset X$ are open sets, we have $C\cap Y\neq\emptyset$ if and only if $\bigcap_{u\in F}T_{un}^{-1}A_u\neq\emptyset$, where $n\in\N$ is any common denominator of all elements of $F$.
  For any such $C$ and any $x\in Y$, since $(X,T)$ is minimal the orbit of $x_{n^{-1}}$ is dense in $X$ under $T$ and hence for some $m\in\N$ we have $T_mx_{n^{-1}}\in\bigcap_{u\in F}T_{un}^{-1}A_u$ so that $T_{nmu}x_{n^{-1}}\in A_u$ for all $u\in F$. 
  Since $x\in Y$ and $nmu\in\N$, we have that $T_{nmu}x_{n^{-1}}=x_{mu}$, and hence $(S_mx)_u=x_{mu}\in A_u$ for all $u\in F$. 
  We conclude that $(Y,(S_m)_{m\in\N})$ is also a minimal $(\N,\times)$ system.
  Since the projection defined by $(x_u)_{u\in\Q^{>0}} \mapsto x_{1}$ is a factor map between $(Y,(S_m)_{m\in\N})$ and $(X,T)$, the property (3) passes through to factors, and the property (1) for $(Y,(S_u)_{u\in\Q^{>0}})$ implies property (3) for $(Y,(S_m)_{m\in\N})$, this finishes the proof.  
\end{proof}

We remark in passing that items (4) and (6) of \cref{lemma_multiplicativerecurrence} also have an analogue for topological recurrence, using the notion of piecewise syndetic set. %To state them we need the notion of piecewise syndetic set. \textcolor{red}{Do we need to following?}

\begin{corollary}\label{prop_recurrencepowers}
Let $\ell \in \N$. A set $R \subset \Q^{>0}$ is a set of topological multiplicative recurrence if and only if $R' = \{r^{\ell}: r \in R\}$ is a set of topological multiplicative recurrence. 
\end{corollary}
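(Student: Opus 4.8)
The plan is to prove \cref{prop_recurrencepowers} by exploiting the homomorphism $\phi_\ell\colon(\Q^{>0},\times)\to(\Q^{>0},\times)$ given by $\phi_\ell(r)=r^\ell$, together with the machinery already developed. One direction is immediate: if $R$ is a set of topological multiplicative recurrence, then since $\phi_\ell$ is a semigroup homomorphism, \cref{prop_homo2} gives that $\phi_\ell(R)=R'$ is also a set of topological multiplicative recurrence. So the content is entirely in the converse.

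For the converse, suppose $R'=\{r^\ell:r\in R\}$ is a set of topological multiplicative recurrence, and let $(X,T)$ be a minimal $(\Q^{>0},\times)$-system with $U\subset X$ non-empty open; I want $r\in R$ with $U\cap T_r^{-1}U\neq\emptyset$. The idea is to build an auxiliary $(\Q^{>0},\times)$-system on which recurrence along $R'$ translates back to recurrence along $R$. First I would pass to the invertible extension so that $T$ is a genuine action of the \emph{group} $\Q^{>0}$ (this is already used in the proof of \cref{lem:Q_and_N}); since $\Q^{>0}$ is a free abelian group on the primes, the $\ell$-th power map $\phi_\ell$ is an injective endomorphism, and I can define a new $\Q^{>0}$-action $S$ on $X$ by $S_r:=T_{\phi_\ell(r)}=T_{r^\ell}$. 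Then for $r\in R$ one has $U\cap S_r^{-1}U=U\cap T_{r^\ell}^{-1}U$, so it suffices to find $r^\ell\in R'$ that is a return time for $U$ in the system $(X,S)$ — i.e. it suffices that $R'$ be a set of topological recurrence for $(X,S)$. The obstacle is that $(X,S)$ need not be minimal (the subgroup $\phi_\ell(\Q^{>0})=\{r^\ell\}$ acts, and its orbits can be smaller), so I cannot apply the hypothesis directly. This is exactly the situation handled by \cref{prop_minimalorbit} (via \cref{prop_homo2}'s proof technique): I restrict to the orbit closure of a point $x\in U$ under $S$ and pass to a minimal $S$-subsystem $Y$; using minimality of the original $T$-action I find $h\in\Q^{>0}$ with $V:=T_h^{-1}U\cap Y\neq\emptyset$ open in $Y$, apply the hypothesis on $R'$ to the minimal system $(Y,S)$ and the open set $V$ to get $r\in R$ with $S_r^{-1}V\cap V\neq\emptyset$, and conclude $T_{r^\ell}^{-1}U\cap U\neq\emptyset$. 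In fact this is precisely the argument of \cref{prop_homo2} applied with $G=H=\Q^{>0}$ and $\phi=\phi_\ell$, so the cleanest write-up just invokes \cref{prop_homo2} in both directions.

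So the proof reduces to two applications of \cref{prop_homo2}: forward with $\phi_\ell$ (giving $R'$ is a set of topological recurrence whenever $R$ is), and backward, observing that $\phi_\ell\colon\Q^{>0}\to\Q^{>0}$ is an \emph{injective} homomorphism whose image is $\{r^\ell:r\in\Q^{>0}\}$, so that a set $R\subset\Q^{>0}$ is a set of topological recurrence if and only if $\phi_\ell(R)$ is — because injectivity lets us push recurrence statements through $\phi_\ell$ in both directions. Concretely: if $R'=\phi_\ell(R)$ is a set of topological recurrence, then for any minimal $(\Q^{>0},\times)$-system $(X,T)$ and open $U$, form $S_r=T_{r^\ell}$ as above and run the orbit-closure argument of \cref{prop_homo2} to produce $r\in R$ with $U\cap T_r^{-1}U\neq\emptyset$. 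I expect the main (minor) obstacle to be the non-minimality of the pulled-back action, which is dispatched by the standard device of restricting to a minimal subsystem and using minimality of the original system to reposition $U$ — exactly as in \cref{prop_homo2} and \cref{prop_minimalorbit}. No new ideas are needed; the statement is essentially a corollary of \cref{prop_homo2} once one notes that $r\mapsto r^\ell$ is an injective endomorphism of the group $(\Q^{>0},\times)$.
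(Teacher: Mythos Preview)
Your forward direction is fine and matches the paper: \cref{prop_homo2} with $\phi_\ell(r)=r^\ell$ immediately gives that if $R$ is a set of topological multiplicative recurrence then so is $R'$.

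The backward direction, however, has a genuine gap. You define $S_r:=T_{r^\ell}$ and note correctly that $U\cap S_r^{-1}U=U\cap T_{r^\ell}^{-1}U$; but you want $U\cap T_r^{-1}U\neq\emptyset$, not $U\cap T_{r^\ell}^{-1}U\neq\emptyset$. When you then say ``it suffices that $R'$ be a set of topological recurrence for $(X,S)$'', applying that to $(X,S)$ would produce some $s=r^\ell\in R'$ with $U\cap S_s^{-1}U\neq\emptyset$, i.e.\ $U\cap T_{r^{\ell^2}}^{-1}U\neq\emptyset$ --- still not what is needed. The slogan ``injectivity lets us push recurrence through $\phi_\ell$ in both directions'' is simply false: \cref{prop_homo2} only transfers recurrence \emph{forward} along a homomorphism, and there is no homomorphism $\psi:\Q^{>0}\to\Q^{>0}$ with $\psi(R')=R$ (that would require $\ell$-th roots in $\Q^{>0}$). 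Equivalently, what you would need is that $R'$ be a set of recurrence for the subgroup $G=\{q^\ell:q\in\Q^{>0}\}$ acting on its own systems, but your hypothesis only gives recurrence for $\Q^{>0}$-systems, and an arbitrary $G$-system need not extend to a $\Q^{>0}$-system.

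The paper avoids this obstacle by switching to the coloring characterization (\cref{lem:Q_and_N}) and using an arithmetic fact specific to the $\ell$-th power map: every $x\in\N$ factors uniquely as $x=y_x^\ell z_x$ with $z_x$ $\ell$-th-power-free. One defines a new coloring $\tilde\chi(x)=\chi(y_x)$, applies the $R'$-recurrence hypothesis to $\tilde\chi$ to get $a,b$ with $\tilde\chi(a)=\tilde\chi(b)$ and $a/b=r^\ell$, and then the uniqueness of the decomposition forces $z_a=z_b$, hence $y_a/y_b=r\in R$ with $\chi(y_a)=\chi(y_b)$. This coset-decomposition step is exactly the extra input that your purely homomorphism-based argument is missing.
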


\begin{proof}
Suppose $R \subset \Q^{>0}$ is a set of topological multiplicative recurrence. In view of \cref{prop_homo2}, we have $R'$ is a set of topological multiplicative recurrence. 

Now let $R$ be such that $R' = \{r^{\ell}: r \in R\}$ is a set of topological multiplicative recurrence. Letting $\chi$ be a finite coloring of $\N$, we claim that there are $x,y\in\N$ with $\chi(x)=\chi(y)$ and $x/y\in R$.
Consider the coloring $\tilde\chi$ defined as follows: for any $x \in \N$, write $x=y_x^\ell z_x$ where $y_x, z_x \in \N$ and $z_x$ is not divisible by any $\ell$-th power. (Note that $y_x$ and $z_x$ are uniquely determined by $x$.) Then for such $x$, define $\tilde{\chi}(x)=\chi(y_x)$.
Since $R'$ is a set of topological multiplicative recurrence, there exist $a,b\in\N$ such that $\tilde\chi(a)=\tilde\chi(b)$ and $a/b=r^\ell$ for some $r\in R$.
By writing $a=y_a^\ell z_a$ and $b=y_b^\ell z_b$, we have that $r^\ell=a/b=(y_a/y_b)^\ell\cdot z_a/z_b$.
Since $z_a$ and $z_b$ are not divisible by any $\ell$-th power, it follows that $z_a=z_b$ and $y_a/y_b=r$. Moreover, $\chi(y_a)=\tilde\chi(a)=\tilde\chi(b)=\chi(y_b)$. Hence, $\chi(y_a) = \chi(y_b)$ and $y_a/y_b \in R$; so our proof finishes. 
\end{proof}

\iffalse
\begin{definition}
Let $G$ be a semigroup. A set $T\subset G$ is called \emph{thick} if it has non-empty intersection with every syndetic set in $G$.
A set $A\subset G$ is called \emph{piecewise syndetic} if it is the intersection of a thick and a syndetic sets.
\end{definition}
The following is well known and we give a proof for completeness. 
\begin{proposition}
Let $G$ be a semigroup and let $R\subset G$.
Then the following are equivalent:
\begin{enumerate}
    \item $R$ is a set of topological recurrence.
    \item For any piecewise syndetic set $A\subset G$, there exists $r\in R$ such that $A\cap A/r\neq\emptyset$.
    \item For any syndetic set $A\subset G$, there exists $r\in R$ such that $A\cap A/r\neq\emptyset$.
    \item For any piecewise syndetic set $A\subset G$ there exists $r\in R$ such that the set $\{g\in A:gr\in A\}$ is piecewise syndetic.
\end{enumerate}
\end{proposition}
\begin{proof}
 It is well known (and easy to prove) that for a finite partition of a piecewise syndetic set, one of the cells of the partition is still piecewise syndetic. 
 Therefore (2)$\Rightarrow$(1).
 Using a compactness argument, one can then show that in fact (3)$\Rightarrow$(1).
 The implications (4)$\Rightarrow$(2)$\Rightarrow$(3) are all trivial, so
 %And clearly (1)$\Rightarrow$(3).
 %The implication (4)$\Rightarrow$(2) is trivial, so 
 we are left to prove that (1)$\Rightarrow$(4), which follows from the topological Furstenberg correspondence.
\end{proof}
\fi

\section{Sets of multiplicative recurrence and applications}

%\textcolor{red}{Add Bergelson's proof for $(an\pm1)/(an)$ and IP sets proof for $(an+1)/(an)$.}

\label{sec:special_set_rec}

\subsection{Rational functions}

%\subsection{The set $(n+1)/n$ is a set of recurrence + number theoretic corollaries. (cross-ref to Mobius transformations)}

The purpose of this section is to prove \cref{thm:n+1_n_intro}. 
We start with the following necessary criteria for a set to be a set of topological multiplicative recurrence. 

\begin{lemma}
    \label{prop:accumulation_not_recurrence}
    For any $\epsilon,M\in\R$ satisfying $0 < \epsilon < 1 < 1 + \epsilon< M$, the set $S := \left( (\epsilon, 1 - \epsilon) \cup (1 + \epsilon, M) \right) \cap \Q$ is not a set of topological multiplicative recurrence. In other words, if $S$ does not have $0, 1$ or $\infty$ as an accumulation point, then $S$ is not a set of topological multiplicative recurrence.
\end{lemma}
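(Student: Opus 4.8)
The idea is to build an explicit minimal topological $(\Q^{>0},\times)$-system $(X,T)$ and an open set $U$ for which $U\cap T_r^{-1}U=\emptyset$ for every $r\in S$. The natural candidate is a rotation on a compact abelian group that "sees" the logarithmic size of rationals. Concretely, consider the map $\log:\Q^{>0}\to\R$ and then compose with a suitable character. More precisely, I would work with the circle $\T=\R/\Z$, pick a well-chosen $\alpha>0$, and let $X=\T$ with the $\Q^{>0}$-action $T_r x = x + \alpha\log r \pmod 1$. This is a homomorphism from $(\Q^{>0},\times)$ to the rotation group of $\T$; the closure of the image acts minimally on the corresponding subtorus (or on all of $\T$ if the image is dense), so after passing to a minimal subsystem we have a minimal $(\Q^{>0},\times)$-system. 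Then $U\cap T_r^{-1}U\ne\emptyset$ forces $\alpha\log r$ to lie in $U-U\pmod 1$, an open arc around $0$ that we can make as small as we like by shrinking $U$.

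**Key steps, in order.** First, fix the target: I want to choose $\alpha$ so that $\{\alpha\log r \bmod 1 : r\in S\}$ stays bounded away from $0$ in $\T$. Since $S\subset(\epsilon,1-\epsilon)\cup(1+\epsilon,M)$, the set $\{\log r : r\in S\}$ is contained in $(\log\epsilon,\log(1-\epsilon))\cup(\log(1+\epsilon),\log M)$, a bounded set bounded away from $0$ — say contained in $[-L,-\delta]\cup[\delta,L]$ for suitable $0<\delta<L$. Second, choose $\alpha$ small enough that $\alpha L < 1/2$; then $\{\alpha\log r : r\in S\}\subset[-\alpha L,-\alpha\delta]\cup[\alpha\delta,\alpha L]$, which as a subset of $\T=\R/\Z$ avoids the open arc $(-\alpha\delta,\alpha\delta)$. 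Third, take $U=(-\alpha\delta/2,\alpha\delta/2)\subset\T$; then $U-U\subset(-\alpha\delta,\alpha\delta)$, so for $r\in S$ we get $\alpha\log r\notin U-U$, hence $U\cap T_r^{-1}U=\emptyset$. Fourth, check minimality: the image subgroup $\overline{\{\alpha\log r:r\in\Q^{>0}\}}$ in $\T$ is a closed subgroup, either finite or all of $\T$; since $\log 2$ and $\log 3$ are rationally independent, the image is dense, so $(\T,T)$ is itself minimal. Finally invoke the definition (or \cref{lem:Q_and_N}) to conclude $S$ is not a set of topological multiplicative recurrence.

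**Main obstacle.** The one point requiring a little care is that we must exhibit a genuinely \emph{minimal} system, not merely a topological system; a general $\Q^{>0}$-action on $\T$ by irrational rotations could in principle fail to be minimal if the rotation subgroup were not dense, but here density follows from $\Q$-linear independence of $\{\log p: p\text{ prime}\}$ (a consequence of unique factorization together with transcendence-free reasoning: a relation $\sum q_i\log p_i=0$ with $q_i\in\Q$ clears denominators to a multiplicative relation among distinct primes, forcing all $q_i=0$). So the action on $\T$ is minimal outright and no passage to a subsystem is needed. A secondary bookkeeping point is the degenerate possibility $1-\epsilon \le 1+\epsilon$ — but the hypothesis $0<\epsilon<1<1+\epsilon<M$ guarantees both intervals are nonempty and that $1$ is excluded with a definite gap, which is exactly what makes $\delta>0$ available. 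Everything else is routine estimation on the circle.

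\medskip

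(For completeness, here is the clean statement of the two arithmetic facts used: if $r=p/q$ in lowest terms with $\epsilon<r<1-\epsilon$ or $1+\epsilon<r<M$, then $|\log r|$ lies in the fixed compact set $[\min(|\log(1-\epsilon)|,\log(1+\epsilon)),\,\max(|\log\epsilon|,\log M)]$, which is bounded away from $0$; and the additive subgroup of $\R$ generated by all $\log p$, $p$ prime, is free abelian of infinite rank, hence its image in any $\T=\R/(\beta\Z)$ under scaling is dense.)
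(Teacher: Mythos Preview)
Your proof is correct and follows essentially the same approach as the paper: both construct a circle rotation $T_r x = x + \alpha\log r$ (the paper writes it as $x\mapsto x+\log_a r$, which is the same with $\alpha=1/\log a$) and observe that since $\log S$ is bounded and bounded away from $0$, a small enough $\alpha$ keeps $\alpha\log r$ away from $0$ in $\T$, so a small arc around $0$ witnesses non-recurrence. Your treatment is in fact slightly more careful, in that you explicitly check minimality (via the $\Q$-linear independence of $\log 2,\log 3$) and correctly phrase the return-time condition as $\alpha\log r\in U-U$ rather than $\alpha\log r\in U$.
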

\begin{proof}
    Let $\epsilon$ and $M$ be as in the hypothesis of the lemma. Choose $c_1, c_2, a \in \R$ such that
    \[
        c_1 < 1 - \epsilon < 1 + \epsilon < c_2
    \]
    and 
    \[
        a > \max \{c_2/\epsilon, M/c_1\}.
    \]
    Then it follows that
    \[
        \bigcup_{k \in \Z} (c_1 a^k, c_2 a^k) \subset (0,\epsilon) \cup (1 - \epsilon, 1 + \epsilon) \cup (M, \infty). 
    \]
    Let $S^1 = \{z \in \C: |z| = 1\}$ and consider topological $(\Q^{>0}, \times)$-system $(S^1, T)$ defined as $T_q (e(x)) = e(\log_a q) e(x)$ for every $q \in \Q^{>0}$ and $x \in [0,1)$ (here $e(x) := e^{2 \pi i x}$). Let $\epsilon_1 = -\log_a c_1, \epsilon_2 = \log_a c_2$ and let $B = \{e(x): x \in (1 - \epsilon_1, 1 + \epsilon_2)\}$ be a neighborhood of $e(0)$ in $S^1$. We have $T_q e(0) \in B$ if any only if $\log_a q \in \bigcup_{k \in \Z} (k - \epsilon_1, k + \epsilon_2)$, or equivalently $q \in \bigcup_{k \in \Z} (a^{-\epsilon_1} a^k, a^{\epsilon_2} a^k)$. It follows that
    \[
        \{q \in \Q^{>0}: T_q e(0) \in B\} \subset \bigcup_{k \in \Z} (a^{-\epsilon_1} a^k, a^{\epsilon_2} a^k) \subset (0,\epsilon) \cup (1 - \epsilon, 1 + \epsilon) \cup (M, \infty).
    \]
    Since every set of topological multiplicative recurrence must intersect $\{q \in \Q^{>0}: T_q e(0) \in B\}$ we conclude that $R$ must intersect $(0,\epsilon) \cup (1 - \epsilon, 1 + \epsilon) \cup (M, \infty)$. 
    Therefore the set $S$ defined in \cref{prop:accumulation_not_recurrence} is not a set of topological multiplicative recurrence. 
\end{proof}

We now prove parts (1) and (2) of \cref{thm:n+1_n_intro}. 

\begin{proposition}
    [Part  (1) of \cref{thm:n+1_n_intro}]
\label{cor:mobius_necessary}
Let $a, c, \ell \in \N$, $b, d \in \Z$ with $a \neq c$.
Then $S=\{(an+b)^{\ell}/(cn+d)^{\ell}:n\in\N\}$ is not a set of topological multiplicative recurrence. 
\end{proposition}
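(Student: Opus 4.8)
The plan is to apply Lemma~\ref{prop:accumulation_not_recurrence}: it suffices to show that the set $S=\{(an+b)^\ell/(cn+d)^\ell : n\in\N\}$ avoids a neighborhood of each of $0$, $1$, and $\infty$ in $\Q^{>0}$, i.e. that $S$ has none of $0,1,\infty$ as an accumulation point. By Corollary~\ref{prop_recurrencepowers} (taking $\ell$-th powers), it is enough to treat the case $\ell=1$, so consider $r_n := (an+b)/(cn+d)$. First I would observe that, since $a\neq c$ and $a,c\in\N$, the limit $\lim_{n\to\infty} r_n = a/c$ exists and equals $a/c\neq 1$. Also $a/c\in(0,\infty)$. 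So the only possible accumulation points of $S$ in $[0,\infty]$ are $a/c$ together with the finitely many ``early'' values $r_n$ for small $n$ (those before $cn+d$ becomes positive and the sequence settles into monotone convergence toward $a/c$) — in particular $S$ is the union of a convergent sequence and finitely many points, hence has at most one accumulation point, namely $a/c$.

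The only subtlety is to make sure $0$ and $\infty$ are genuinely not accumulation points and that $1$ is not either. For all but finitely many $n$ we have $an+b>0$ and $cn+d>0$, and for such $n$, $r_n$ lies between $a/c$ and $r_{n_0}$ for the first such index $n_0$ (monotonicity of the Möbius function $t\mapsto (at+b)/(ct+d)$ on the interval where the denominator is positive), so the tail of the sequence is bounded away from $0$ and from $\infty$. The finitely many remaining terms are individual rational numbers, each either a fixed positive number or possibly negative/undefined — but $S\subset\Q^{>0}$ by the problem's framing (and in any case a finite set contributes no accumulation points and cannot make $0$ or $\infty$ an accumulation point). Finally, $1$ is not an accumulation point: if infinitely many $r_n$ were within $\epsilon$ of $1$, then by convergence we would need $a/c=1$, contradicting $a\neq c$.

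Concretely, I would pick $\epsilon>0$ small enough that $|a/c - 1|>2\epsilon$ and $\epsilon < a/c$, then choose $N_0$ with $|r_n - a/c|<\epsilon$ for all $n\ge N_0$; then set $m := \min\{\epsilon/2,\ \min_{n<N_0, r_n \text{ defined}} |r_n-1|,\ \dots\}$ appropriately and pick $M$ larger than all finitely many early terms and than $a/c+1$. This exhibits $\epsilon', M$ with $0<\epsilon'<1<1+\epsilon'<M$ such that $S\cap\big((\epsilon',1-\epsilon')\cup(1+\epsilon',M)\big)$ contains all but finitely many elements of $S$ — and then, after possibly shrinking $\epsilon'$ and enlarging $M$ to also capture those finitely many exceptions (each of which is a point $\ne 1$, and $\ne 0$ since it is in $\Q^{>0}$), we get that $S$ itself is contained in $(\epsilon',1-\epsilon')\cup(1+\epsilon',M)$ up to the point $1$ not being in $S$; Lemma~\ref{prop:accumulation_not_recurrence} then gives the conclusion.

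The main obstacle is purely bookkeeping: handling the finitely many indices $n$ for which $cn+d\le 0$ or $an+b\le 0$ (where the Möbius transformation is not yet monotone or the ratio is not in $\Q^{>0}$), and arranging a single pair $(\epsilon',M)$ that simultaneously isolates the limit point $a/c$ away from $1$ and boxes in every term of $S$. There is no analytic difficulty — the convergence $r_n\to a/c\ne 1$ in $(0,\infty)$ is the whole content — so the write-up should be short.
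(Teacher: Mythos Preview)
Your proposal is correct and follows essentially the same approach as the paper: observe that the sequence converges to $(a/c)^\ell\neq 0,1$ and then invoke \cref{prop:accumulation_not_recurrence}. The paper's proof is simply more direct---it skips the reduction to $\ell=1$ via \cref{prop_recurrencepowers} (unnecessary, since $(an+b)^\ell/(cn+d)^\ell\to(a/c)^\ell$ already lies in $(0,\infty)\setminus\{1\}$) and omits the monotonicity discussion, noting only that all but finitely many terms lie in $((a/c)^\ell-\epsilon,(a/c)^\ell+\epsilon)$, so $S$ has no accumulation point at $0$, $1$, or $\infty$.
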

\begin{proof}
    First note that $(an + b)^{\ell}/(cn + d)^{\ell} \to (a/c)^{\ell}$ as $n \to \infty$. Therefore, for any $\epsilon > 0$, the set $S \setminus ((a/c)^{\ell} - \epsilon, (a/c)^{\ell} + \epsilon)$ is finite. Moreover, since $a \neq c$, for sufficiently small $\epsilon$, we have $((a/c)^{\ell} - \epsilon, (a/c)^{\ell} + \epsilon)$ does not contain $0, 1$. Hence, $S$ does not have $0, 1$ or $\infty$ as an accumulation point, and by  \cref{prop:accumulation_not_recurrence}, the set $S$ is not a set of topological multiplicative recurrence.
\end{proof}

\begin{proposition}[Part  (2) of \cref{thm:n+1_n_intro}]
\label{cor:mobius_necessary2}
  Let $a, \ell \in \N$ and $b, d \in \Z$. 
  Suppose that $b \neq d$ and there exists a prime $p$ for which $p | a$ but $p \nmid bd$. Then $S = \{(an + b)^{\ell}/(an + d)^{\ell}: n \in \N\}$ is not a set of topological multiplicative recurrence.
\end{proposition}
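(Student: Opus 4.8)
The set $S$ accumulates at $1$ here (since $a=c$, each quotient $(an+b)/(an+d)\to 1$ and $b\neq d$ forces them to stay $\neq 1$), so unlike in Part~(1) the obstruction cannot come from \cref{prop:accumulation_not_recurrence}; the plan is instead to exhibit a finite minimal $(\Q^{>0},\times)$-system built from the $p$-adic data supplied by the hypothesis. The first move is to apply \cref{prop_recurrencepowers} to reduce to the case $\ell=1$: it suffices to show that $S_1:=\{(an+b)/(an+d):n\in\N\}$ is not a set of topological multiplicative recurrence. Set $m:=v_p(b-d)$, which is finite because $b\neq d$. (If $p=2$ then $b,d$ are both odd, so $m\geq 1$; this will only be used to guarantee that the group appearing below is nontrivial.)

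Next I would record the key arithmetic fact: since $p\mid a$ and $p\nmid bd$, for every $n$ one has $an+b\equiv b\not\equiv 0$ and $an+d\equiv d\not\equiv 0\pmod p$, so $an+b$ and $an+d$ are units modulo every power of $p$; in particular $v_p\big((an+b)/(an+d)\big)=0$. Consider the finite abelian group $X:=(\Z/p^{m+1}\Z)^{\times}$ with the discrete topology and the homomorphism $\psi\colon\Q^{>0}\to X$ sending each prime $q\neq p$ to its class mod $p^{m+1}$ and sending $p$ to $1$; concretely $\psi$ is the composite $\Q^{>0}\hookrightarrow\Q_p^{\times}\to\Q_p^{\times}/p^{\Z}=\Z_p^{\times}\twoheadrightarrow(\Z/p^{m+1}\Z)^{\times}$. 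Since every integer coprime to $p$ is in the image, $\psi$ is surjective, so the $(\Q^{>0},\times)$-action $T_r(x):=\psi(r)x$ on $X$ is transitive; as $X$ is finite, $(X,T)$ is a minimal topological $(\Q^{>0},\times)$-system.

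Finally, take $U:=\{1\}\subseteq X$, which is open and non-empty. For $r=(an+b)/(an+d)\in S_1$ we have $v_p(r)=0$, hence $\psi(r)=(an+b)(an+d)^{-1}\bmod p^{m+1}$, and $\psi(r)=1$ would force $p^{m+1}\mid (b-d)$, contradicting $v_p(b-d)=m$. Thus $\psi(r)\neq 1$ for every $r\in S_1$, so $U\cap T_r^{-1}U=\emptyset$ for every $r\in S_1$; this shows $S_1$ is not a set of topological multiplicative recurrence, and therefore by \cref{prop_recurrencepowers} neither is $S$. I do not expect a genuine obstacle: the steps needing care are only that $\psi$ is a well-defined surjective homomorphism and that $(X,T)$ really is a topological system (a finite discrete space with each $T_r$ a permutation), plus the small $p=2$ remark. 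The one conceptual point is that the reduction to $\ell=1$ is essential — for general $\ell$ the polynomial $(ax+b)^{\ell}-(ax+d)^{\ell}$ may vanish at a point of $\Z_p$ (for instance $a=5,\,b=1,\,d=-1,\,\ell=2$ gives $20x$), so no single modulus $p^{k}$ separates $(an+b)^{\ell}$ from $(an+d)^{\ell}$ for all $n$, whereas for $\ell=1$ the difference $b-d$ is a nonzero constant.
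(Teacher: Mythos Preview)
Your proof is correct and takes essentially the same approach as the paper: both reduce to $\ell=1$ via \cref{prop_recurrencepowers} and then exploit the residue of the $p$-free part modulo a power of $p$ that does not divide $b-d$. The only difference is presentational---the paper phrases the obstruction as a finite coloring of $\N$ (color $n$ by its $p$-free part modulo $p^u$), whereas you build the equivalent minimal $(\Q^{>0},\times)$-system on $(\Z/p^{m+1}\Z)^\times$ directly; these are interchangeable by \cref{lem:Q_and_N}.
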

\begin{proof}
%Suppose $a = c, b \neq d$ and there exists a prime $p$ such that $p | a$ but $p \nmid bd$. 
Since $S$ is a subset of $S' = \{(pn + b)^{\ell}/(pn + d)^{\ell}: n \in \N\}$, it suffices to prove that $S'$ is not a set of topological multiplicative recurrence. In view of \cref{prop_recurrencepowers}, we need to prove that $S'' = \{(pn+b)/(pn + d): n \in \N\}$ is not a set of topological multiplicative recurrence. 

Because $b \neq d$, there exists $u \in \N$ such that $p^u$ does not divide $b - d$. 
Let $\chi$ be the finite coloring of $\N$ defined recursively as 
\[
    \chi(n) = \begin{cases} n \bmod p^u &\text{ if } p \nmid n \\
    \chi(n/p) &\text{ if } p | n.\end{cases}
\]
We shall show that $S''$ is not a set of recurrence for the coloring $\chi$. By contradiction, assume there exist monochromatic $x, y \in  \N$ such that
\begin{equation}
\label{eq:frac_x_y}
    \frac{x}{y} = \frac{pn + b}{pn + d}.
\end{equation}
Write $x = p^{k_x} x_0$ and $y = p^{k_y} y_0$ where $k_x, k_y \in \N \cup \{0\}$ and $p \nmid x_0 y_0$. By the definition of the coloring $\chi$, we have that $x_0 \equiv y_0 \bmod p^u$.  

From \eqref{eq:frac_x_y}, we have $x(pn+d) = y(pn + b)$. It follows that $p^{k_x} x_0 (pn + d) = p^{k_y} y_0 (pn + b)$. Since $pn + b$ and $pn + d$ are coprime to $p$, we deduce that $k_x = k_y$ and $x_0(pn + d) = y_0(pn + b)$. As $x_0 \equiv y_0 \bmod p^u$, it must be that $pn + d \equiv pn + b \bmod p^u$, or equivalently, $b \equiv d \bmod p^u$. This contradicts our assumption that $p^u$ does not divide $b - d$ and our proof finishes. 
\end{proof}

%Combining \cref{prop:mult_rec_mobius} and \cref{cor:mobius_necessary}, we effectively prove \cref{thm:n+1_n_intro}.

For the proof of part (3) of \cref{thm:n+1_n_intro}, we need a recurrence result in topological dynamical systems:

\begin{lemma} \label{lemma:topologicalsyndetic} 
Let $(X, T)$ be a topological $(\N, +)$-system where $X$ is a compact metric space with the metric $d$. Suppose that there exists $x \in X$ such that $\{T^n x: n \in \N\}$ is dense in $X$. Then for every $y \in X, \epsilon > 0$ and $N \in \N$, there exists $m_0 \in \N$ such that $d(T^{i + m_0} x, T^i y) < \epsilon$ for all $i \in [N]$.

%Let $(X,T)$ be a topological dynamical system $(\Z,+)$-system and $x \in X$. Let $(Y,T)$ be a subsystem of the $(\Z,+)$-system $(\overline{\orb(x,T)},T)=(\overline{\{T^nx\colon n\in \N\}},T)$. Then for any $y\in Y$, $\epsilon>0$ and $N\in \N$ there exists $n\in \N$ such that $d(T^{n+\ell}x,T^{\ell}y)<\epsilon$ for all $\ell\in [N]$.   
\end{lemma}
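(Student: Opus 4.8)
The plan is to exploit the uniform continuity of the finitely many maps $T, T^2, \dots, T^N$ together with the density of the orbit of $x$.

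First I would fix $y \in X$, $\epsilon > 0$ and $N \in \N$. Since $X$ is a compact metric space and each iterate $T^i$ (for $1 \le i \le N$) is continuous, each $T^i$ is in fact uniformly continuous; hence there is $\delta_i > 0$ such that $d(a,b) < \delta_i$ implies $d(T^i a, T^i b) < \epsilon$. Taking $\delta := \min_{1 \le i \le N} \delta_i > 0$ produces a single $\delta$ that works simultaneously for all $i \in [N]$: whenever $d(a,b) < \delta$, we have $d(T^i a, T^i b) < \epsilon$ for every $i \in [N]$.

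Next, since $\{T^n x : n \in \N\}$ is dense in $X$, the open ball $B(y,\delta)$ of radius $\delta$ around $y$ contains some point of the orbit, i.e. there exists $m_0 \in \N$ with $d(T^{m_0} x, y) < \delta$. Applying the uniform estimate from the previous step with $a = T^{m_0} x$ and $b = y$ gives $d\big(T^i(T^{m_0} x), T^i y\big) < \epsilon$, that is $d(T^{i + m_0} x, T^i y) < \epsilon$, for all $i \in [N]$, which is exactly the desired conclusion.

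The argument is essentially routine, and I do not expect a genuine obstacle; the one point worth flagging is that compactness of $X$ is precisely what upgrades continuity of each $T^i$ to \emph{uniform} continuity, so that a single $\delta$ can control all $N$ maps at once. (Without compactness one would have to work locally near $y$ and argue more carefully.)
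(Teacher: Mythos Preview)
Your proof is correct and follows essentially the same approach as the paper: use compactness to get uniform continuity of the finitely many iterates $T^1,\dots,T^N$, choose a single $\delta$ that works for all of them, and then use density of the orbit of $x$ to find $m_0$ with $d(T^{m_0}x,y)<\delta$. The paper's argument is identical in structure and detail.
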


\begin{proof}
For $i \in [N]$, the map $T^i: X \to X$ is continuous, hence uniformly continuous. Thus, there exists $\delta>0$ such that for $w, z \in X$, $d(w, z)<\delta$ implies $d(T^{i}w,T^{i}z)<\epsilon$ for all $i\in [N]$. Since $\{T^n x: n \in \N\}$ is dense in $X$, for any $y\in Y$ we can find $m_0 \in \N$ such that $d(T^{m_0} x,y) < \delta$. Then the choice of $\delta$ implies that $d(T^{i + m_0}x,T^{i}y)<\epsilon$ for all $i \in  [N]$.  
\end{proof}

Next we prove the following special case of part (3) of \cref{thm:n+1_n_intro}.

\begin{lemma}\label{prop_chromaticrecurrence}
For any $a\in \N$, the sets $\{(an+1)/(an):n\in\N\}$ and $\{(an - 1)/(an): n \in \N\}$ are sets of topological multiplicative recurrence.
\end{lemma}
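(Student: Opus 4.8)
\emph{Step 1: reduction to a topological recurrence statement.} By the equivalences in \cref{lem:Q_and_N} it suffices, for $R=\{(an+1)/(an):n\in\N\}$ (and, symmetrically, for $R=\{(an-1)/(an):n\in\N\}$), to show that for every minimal $(\N,\times)$-system $(X,T)$ and every non-empty open $U\subseteq X$ there are $p,q\in\N$ with $p/q\in R$ and $T_p^{-1}U\cap T_q^{-1}U\neq\emptyset$. Passing to the invertible extension $(Y,S)$ of $(X,T)$ (which, exactly as in the proof of \cref{lem:Q_and_N}, is again a minimal $(\N,\times)$-system, and whose transformations now extend to a $(\Q^{>0},\times)$-action), and using that the property above passes through factor maps, it is enough to prove the statement for $(Y,S)$. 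Taking $p=an+1$, $q=an$ and substituting $w=S_{an}z$, one sees that a point $z\in Y$ with $S_{an}z\in U$ and $S_{an+1}z\in U$ exists if and only if $U\cap S_{r_n}^{-1}U\neq\emptyset$, where $r_n=(an+1)/(an)\to 1$. Thus everything reduces to the single assertion: \emph{for every non-empty open $U\subseteq Y$ there is $n\in\N$ with $U\cap S_{r_n}^{-1}U\neq\emptyset$}; equivalently, there are $z\in Y$ and $n\in\N$ with $S_{an}z\in U$ and $S_{an+1}z\in U$.

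\emph{Step 2: the recurrence.} Fix $y_0\in U$ and $\epsilon>0$ with $B(y_0,\epsilon)\subseteq U$. Since $S_a$ is a homeomorphism of $Y$ it generates a $(\Z,+)$-action; let $Z\subseteq Y$ be a minimal set for this action and pick a (hence transitive) point $z_0\in Z$, so that $(Z,S_a)$ is a minimal $(\N,+)$-system. The plan is: (i) use the \emph{multiplicative} minimality of $(Y,S)$ — which makes the full $(\N,\times)$-orbit of $z_0$ dense in $Y$, so that it meets $U$, and in fact, by compactness, along a multiplicatively syndetic set of return times — to fix a finite target orbit segment along which $S_a^i$ carries $z_0$ (or a point close to it) into $U$ at two consecutive instants; (ii) feed this segment to \cref{lemma:topologicalsyndetic} applied to $(Z,S_a)$: for a suitable $y\in Z$, length $N$ and accuracy chosen in advance, it yields $m_0$ with $d(S_a^{\,i+m_0}z_0,S_a^{\,i}y)<\epsilon'$ for all $i\in[N]$; (iii) decode the shadowing. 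Here one uses that, writing $a^{\,i+m_0}=an$ (so $n=a^{\,i+m_0-1}$), the ratio $(a^{\,i+m_0}+1)/a^{\,i+m_0}$ is exactly $r_n$, so $S_{an+1}z_0=S_a^{\,i+m_0}(S_{r_n}z_0)$, and the fact that the two indices $an$ and $an+1$ differ only by the bounded amount $1$ while $an\to\infty$ should, for a well-chosen $i\in[N]$, force both $S_{an}z$ and $S_{an+1}z$ (for an appropriate $z$ built from $z_0$ and $y_0$) to lie inside $B(y_0,\epsilon)\subseteq U$. The set $\{(an-1)/(an):n\in\N\}$ is handled identically, reading $an-1$ for $an+1$ throughout (alternatively one may invoke that $r\mapsto r^{-1}$ preserves topological multiplicative recurrence, as in the reasoning around \cref{prop_recurrencepowers}).

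\emph{Where I expect the difficulty.} The delicate part is precisely the transfer in step~(iii): the passage $n\mapsto n+1$ is \emph{not} realised by any single transformation of the multiplicative system, so the elementary fact that $r_n\to 1$ in $\R$ gives no control whatsoever over the transformations $S_{r_n}$. The real work is to repackage the relevant orbit information into an honestly $(\N,+)$-dynamical object — the $S_a$-orbit closure of a carefully chosen point, or a symbolic coding of the visits of the $(\N,\times)$-orbit of $z_0$ to $U$ — on which \cref{lemma:topologicalsyndetic} can be applied, and then to translate the long additive shadowing it provides back into a statement simultaneously about $S_{an}z$ and $S_{an+1}z$. Throughout one must respect the divisibility constraint $a\mid an$; it is this constraint that explains why the argument needs the ``denominator constant'' to be divisible by $a$ (here it is $0$), and why, by contrast, Propositions \ref{cor:mobius_necessary} and \ref{cor:mobius_necessary2} show the conclusion fails when that hypothesis is dropped.
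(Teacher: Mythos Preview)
Your proposal identifies the right auxiliary tool (\cref{lemma:topologicalsyndetic}) and correctly diagnoses the obstacle, but it does not close the gap; the concrete plan in Step~2 cannot work as written.

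The $S_a$-orbit approach fails for a structural reason. In a $(\Q^{>0},\times)$-system one has $S_a^{\,i}=S_{a^i}$, so the $S_a$-orbit of $z_0$ visits only the points $S_{a^i}z_0$. Your step~(iii) sets $an=a^{\,i+m_0}$ and then asks for control of $S_{an+1}z_0=S_{a^{\,i+m_0}+1}z_0$; but this point lies \emph{outside} the $S_a$-orbit closure $Z$ altogether, and no shadowing statement internal to $(Z,S_a)$ says anything about it. You yourself flag this: the multiplicative action provides no map realizing $m\mapsto m+1$, and the convergence $r_n\to1$ gives no dynamical information. So steps~(i)--(iii), taken literally, do not produce two nearby orbit points at times $an$ and $an+1$.

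The paper's resolution is to abandon the multiplicative system at this stage and use the \emph{coloring} formulation of topological recurrence (item~(5) of \cref{lem:Q_and_N}). A finite coloring $c\colon\N\to[r]$ is itself a point of the symbolic space $[r]^{\N_0}$ under the \emph{additive} shift $T$; here the passage $m\mapsto m+1$ is built in. One passes to a minimal subsystem $(Y,T)$ of the orbit closure of $c$, uses minimality to get a uniform bound $N_0$ on return times to each color-cylinder, and applies \cref{lemma:topologicalsyndetic} to align a segment of $c$ of length $aN_0!$ with a segment of some $y_0\in Y$. Choosing the base index $x=m_0+i_0$ divisible by $aN_0!$ guarantees that the monochromatic pair $\{x,x+k\}$ (with $k\le N_0$) satisfies $ak\mid x$, i.e.\ $(x+k)/x=(an+1)/(an)$. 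Your parenthetical suggestion of ``a symbolic coding of the visits of the $(\N,\times)$-orbit of $z_0$ to $U$'' is exactly this move---the sequence $n\mapsto 1_U(S_nz_0)$ is a $2$-coloring of $\N$---but you would then be running the paper's argument, not the $S_a$-orbit plan you sketched.
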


%For the readers' convenience, we provide two similar proofs of this result, written in different languages: one in topological dynamics terminology and the other in purely combinatorial terms.
%\anh{Add more details into the dynamical proof and remove the combinatorial one. }

\begin{proof}
We only prove the first set is a set of topological multiplicative recurrence as the proof for the second set is the same. Fix $r\in \N$ with $r\geq 1$.
Let $\N_0$ denote $\N \cup \{0\}$ and let $X_0 = [r]^{\N_0} = \{(w(n))_{n \in \N_0}: w(n) \in [r]\}$. Define a metric $d$ on $X_0$ as the following: For $w = (w(n))_{n \in \N_0}$ and $z = (z(n))_{n \in \N_0}$, 
\[
    d(w, z) \colon = \sum_{n \in \N_0} \frac{|w(n) - z(n)|}{2^n}.
\]
Under this metric, $X_0$ becomes a compact metric space and $d(w,z) < 1$ implies $w(0) = z(0)$. Let $T: X_0 \to X_0$ be the left shift which is defined as $T(w(n))_{n \in \N_0} = (w(n+1))_{n \in \N_0}$. 
Then $T$ is a continuous map. 

Let $c: \N \to [r]$ be a finite coloring of $\N$. We need to find $x,y\in\N$ of the same color and $n\in\N$ such that $y/x=(an+1)/an$.
This is equivalent to finding $\{x,x+k\}$ of the same color with $ak|x$ (indeed, writing $x=nak$ and $y=x+k$ we have $y/x=(an+1)/an$).
By regarding $c$ as an element of $X_0$, we define $X$ to be the closure of $\{T^n c: n \in \N_0\}$ in $X_0$. It follows that $(X, T)$ is a topological $(\N_0, +)$-system. Let $(Y,T)$ be a minimal subsystem of $(X, T)$. \footnote{The existence of minimal subsystems follows from an application of Zorn's lemma (cf. \cite[Theorem 1.4]{Auslander88}).}
For each color $j \in [r]$, let $U_j$ be the cylinder $U_j=\{x \in X_0: x(0) = j\}$. By minimality, the return times of points in $Y \cap U_j$ back to $Y \cap U_j$ are uniformly bounded, i.e. there exists $N_j \in \N$ such that for all $y \in Y \cap U_j$, there exists $n = n(y) \in [N_j]$ satisfying $T^n y \in U_j \cap Y$ (for example, see \cite[Chapter 1]{Auslander88}). Setting $N_0 = \max \{N_j: j \in [r]\}$, we have that for all $y \in Y$ and $n \in \N$, there exists $1 \leq n' \leq N_0$ for which $y(n) = y(n + n')$.

%Since $Y = \bigcup_{j=1}^r (Y \cap U_j)$, there exists $j_0 \in [r]$ for which $Y \cap U_{j_0}$ has non-empty interior. Let $V$ be an open subset of $Y \cap U_{j_0}$ whose diameter is less than $1$ and let $y_0 \in V$. 

%Therefore, there exists a sequence $0 < n_1 < n_2 < \ldots$ with $n_{i+1} - n_i < N_0$ such that $y_0, T^{n_1} y_0, T^{n_2} y_0, \ldots$ all belong to $V$. Since $V$ has diameter less than $1$, it follows that $y_0(0) = y_0(n_1) = y_0(n_2) = \ldots$.

Fix $y_0 \in Y$. By \cref{lemma:topologicalsyndetic}, there exists $m_0 \in \N$ such that $d(T^{i+m_0} c, T^i y_0) < 1$ for all $i \in [a N_0!]$ (the number $a N_0!$ is chosen here because later we will use the fact that $a n' | a N_0!$ for all $1 \leq n' \leq N_0$). In particular, we have $c(m_0 + i) = y_0(i)$ for $1 \leq i \leq a N_0!$. Let $i_0 \in [a N_0!]$ satisfy $m_0 + i_0 \equiv 0 \bmod a N_0!.$ From previous paragraph, there exists $1 \leq n' \leq N_0$ such that $y_0(i_0) = y_0(i_0 + n')$. Hence it follows that $c(m_0 + i_0) = c(m_0 + i_0 + n')$. By writing $x = m_0 + i_0$ and $k = n'$, we have $c(x) = c(x+k)$ with $ak|x$, finishing our proof. 
%By minimality of the $(\Z,+)$-system $(Y,T)$, there exists $D\in\N$ such that for all $i\in S_X$, if $y\in U_i\cap Y$ then there exists $k\in [D]$ such that $T^{k}y\in U_i\cap Y$. Applying \cref{lemma:topologicalsyndetic} for $N=aD!+D$,  $\epsilon<1/2$ and any $y\in Y$ we find $n\in \N$ such that $c(n+\ell)=y(1+\ell)$ for all $\ell \in [aD!+D]$. Let $k\in [aD!]$ such that $aD!|(n+k)$ and let $d\in [D]$ such that $y(1+k)=y(1+k+d)$. Then, $ad|(n+k)$ and $c(n+k)=y(1+k)=y(1+k+d)=c(n+k+d)$.
%$$\bigcup_{d=1}^D T^{-d}U_i\supset X$$
%Moreover $\bigcup_{i\in S_X}U_i\supset X$, so
%$$\bigcup_{i\in S_X}\left[U_i\cap\bigcup_{d=1}^D T^{-d}U_i\right]\supset X$$
%Since $X$ is invariant under the shift $T$, for every $N\in\N$, the open set 
%$$V_N:=\bigcap_{n=1}^NT^{-n}\left(\bigcup_{i\in S_X}\left[U_i\cap\bigcup_{d=1}^D T^{-d}U_i\right]\right)$$
%contains $X$ and in particular has non-empty intersection with $X_0$.
%Let $k\in\N$ be such that $T^k\chi\in V_{D!a}$. Let $\delta>0$ such that $d(x,y)<\delta$ implies that $x(i)=y(i)$ for all $0\leq i\leq aD!$.
%Let $n\in[D!a]$ be such that $D!a|(n+k)$ and let $i\in S_X$ be such that \seba{$T^{n+k}\chi\in U_i\cap X$. This might not happen. X could be a fixed point}  $d(T^{n+k}\chi,x)<\epsilon$ for $x\in U_i\cap X$. Let $d\in[D]$ be such that $T^{d}x\in U_i$. Then $\chi(n+k)=x(0)=i=x(d)=\chi(n+k+d)$ and 
%Then $d|D!$ and hence $ad|(n+k)$.
%On the other hand $\chi(n+k)=i=\chi(n+k+d)$.
\end{proof}

We are now ready to prove part (3) of \cref{thm:n+1_n_intro}.

\begin{proposition}[Part (3) of \cref{thm:n+1_n_intro}]\label{prop:mult_rec_mobius}
    For $a, b, d, \ell \in \Z$ with $a, \ell > 0$ and $a|b$ or $a|d$, 
    \[
        S = \left\{ \left(\frac{an + b}{an+d} \right)^{\ell}: n \in \N \right\}
    \]
    is a set of topological multiplicative recurrence.
\end{proposition}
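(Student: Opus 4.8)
The plan is to reduce the general case to the special case already handled in Lemma \ref{prop_chromaticrecurrence}, namely the sets $\{(an+1)/(an):n\in\N\}$ and $\{(an-1)/(an):n\in\N\}$. First, by \cref{prop_recurrencepowers}, the set $S$ is a set of topological multiplicative recurrence if and only if $S'=\{(an+b)/(an+d):n\in\N\}$ is, so we may assume $\ell=1$ and work with $S'$. Next I would split into the two symmetric cases $a\mid b$ and $a\mid d$; by replacing $n$ with $-n$ (or rather, observing that $(an+b)/(an+d)$ and $(an+d)/(an+b)$ are reciprocals of each other and that a set $R$ is a set of topological multiplicative recurrence iff $\{1/r:r\in R\}$ is, since inverting is an automorphism of $(\Q^{>0},\times)$ — this follows from \cref{prop_homo2} applied to $\phi(x)=x^{-1}$), it suffices to treat the case $a\mid d$, say $d=am$ for some $m\in\Z$.

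With $d=am$, write $S'=\{(an+b)/(a(n+m)):n\in\N\}$. The key observation is that along the subsequence $n=ak$ (for $k\in\N$), or more generally after a suitable reparametrization, the element $(an+b)/(an+d)$ should be comparable to an element of the form $(a'k\pm 1)/(a'k)$ up to a bounded multiplicative factor, OR one should directly run the combinatorial/dynamical argument of Lemma \ref{prop_chromaticrecurrence}. Concretely: given a finite coloring $\chi$ of $\N$, I want to find $x,y$ of the same color with $x/y\in S'$, i.e. $x/y=(an+b)/(an+d)$ for some $n$. Writing $x=t(an+b)$, $y=t(an+d)$ for a free parameter $t\in\N$, the difference is $x-y=t(b-d)$ and the constraint is that $an+d$ divides $y$, i.e. $y=t(an+d)$ with $an+d\equiv d\pmod a$ — so $y$ ranges over multiples of (a divisor pattern determined by) $d$, and $x=y+t(b-d)$. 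Since $a\mid d$, we have $d=am$, so $y=ta(n+m)$ is a multiple of $a$; as $n$ ranges over $\N$ and $t$ over $\N$, $y$ ranges over a "thick-like" set of multiples, and $x-y=t(b-d)$ is a multiple of... — the point is to arrange $ad\mid x$-type divisibility so that a monochromatic pair $\{x,x+k\}$ with a controlled gap $k$ can be forced exactly as in Lemma \ref{prop_chromaticrecurrence}, using minimality of a subsystem of the orbit closure of $\chi\in[r]^{\N_0}$ to get bounded return times, then choosing $x$ in a long monochromatic-return window that is additionally divisible by a large factorial.

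The main obstacle I expect is bookkeeping the divisibility constraints so that the free parameters $(n,t)$ genuinely produce all the ratios in $S'$ (not just a sparse subset) while still landing $x$ on the arithmetic progression dictated by the bounded-return-time argument — in particular handling the sign of $b-d$ and the case $b=d$ separately (when $b=d$, $S'=\{1\}$ which is trivially a set of recurrence since $1$ is the identity of $(\Q^{>0},\times)$), and making sure the reduction $d=am$ lets us write the target gap $k=n'(b-d)/\gcd$ as a divisor of the chosen factorial. Once the parametrization is set up correctly, the dynamical input is exactly \cref{lemma:topologicalsyndetic} plus the minimal-subsystem bounded-return-time fact used in Lemma \ref{prop_chromaticrecurrence}, applied verbatim; so the real content is the elementary number-theoretic reduction at the start.
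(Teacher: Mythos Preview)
Your overall strategy matches the paper's: reduce to $\ell=1$ via \cref{prop_recurrencepowers}, handle $b=d$ trivially, assume $a\mid d$ (your inversion argument via \cref{prop_homo2} is a perfectly valid justification of this WLOG, though the paper just asserts it), and then reduce to Lemma~\ref{prop_chromaticrecurrence}.

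Where you diverge from the paper is in the execution of that last reduction, and you are making it much harder than it needs to be. You propose either (i) showing the ratios are ``comparable up to a bounded multiplicative factor'' to $(a'k\pm1)/(a'k)$, or (ii) re-running the entire dynamical argument of Lemma~\ref{prop_chromaticrecurrence} with a more elaborate parametrization $x=t(an+b)$, $y=t(an+d)$ and new divisibility bookkeeping. Neither is needed. The paper's observation is that $S'$ \emph{literally contains} one of the sets from Lemma~\ref{prop_chromaticrecurrence}. Writing $d=ad_1$, one has
\[
\frac{an+b}{an+d}=\frac{a(n+d_1)+(b-ad_1)}{a(n+d_1)},
\]
and if $b>d$ then restricting to those $n$ with $(b-ad_1)\mid (n+d_1)$, say $n+d_1=(b-ad_1)m$, gives exactly $(am+1)/(am)$; if $b<d$ one gets $(am-1)/(am)$ instead. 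Since a superset of a set of topological multiplicative recurrence is again one, this finishes the proof immediately.

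So your ``main obstacle'' (the bookkeeping for a re-run of the dynamical argument) evaporates: there is no need to touch \cref{lemma:topologicalsyndetic} or minimality again. The only content beyond Lemma~\ref{prop_chromaticrecurrence} is this one-line algebraic containment.
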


\begin{proof}
    In view of \cref{prop_recurrencepowers}, it suffices to show that
    \[
        S' = \left\{ \frac{an + b}{an + d}: n \in \N \right\}
    \]
    is a set of topological multiplicative recurrence. Without loss of generality, assume $a|d$ and $d = a d_1$. 
    If $b = d$, then $S' = \{1\}$ which is trivially a set of multiplicative recurrence. If $b > d$, then we have
    \[
        S' = \left\{ \frac{a(n + d_1) +b - a d_1}{a(n + d_1)} : n \in \N \right\}
    \]
    which contains the set
    \[
        \left\{ \frac{a(n + d_1) +b - a d_1}{a(n + d_1)}: n +d_1\equiv0\bmod(b-ad_1)\right\}  
        = \left\{ \frac{am + 1}{am}: m \in \N \right\}.
    \]
    By \cref{prop_chromaticrecurrence}, the last set is a set of topological multiplicative recurrence, and hence so is $S'$.
    
    On the other hand, if $b < d$, then $S'$ contains the set
    \[
        \left\{ \frac{a(n + d_1) +b - a d_1}{a(n + d_1)}: n+d_1\equiv0\bmod(ad_1-b) \right\} = \left\{ \frac{am - 1}{am}: m \in \N \right\}
    \]
    which is a again a set of topological multiplicative recurrence. 
\end{proof}

Next we prove \cref{cor:completely_multiplicative_intro}. 
\begin{proof}[Proof of \cref{cor:completely_multiplicative_intro}]
Let $f: \N \to \C$ be a completely multiplicative function with $|f(n)| = 1$ for $n \in \N$ and let $a, k \in \N$.
We need to prove that, 
    \[
        \liminf_{n \to \infty} |f(an+k) - f(an)| =0.
    \]

    Let $\epsilon > 0$. Consider the topological $(\N, \times)$-system $(S^1, T)$ where $S^1 = \{z \in \C: |z| = 1\}$ and $T_n e^{2 \pi i x} = f(n) e^{2 \pi i x}$ for all $x \in [0,1)$ and $n \in \N$. 
    Let $A = \{e^{2 \pi i x} \colon x \in (-\epsilon/2, \epsilon/2)\} \subset S^1$. 
    Then by \cref{prop:mult_rec_mobius} and \cref{lem:Q_and_N}, there exist infinitely many $n \in \N$ such that $T_{(an+k)m} A \cap T_{anm} A \neq \emptyset$ for some $m \in \N$ (which may depend on $n$). 
    This implies $f((an+k)m) A \cap f(an m) A \neq \emptyset$, or equivalently, $f(m) f(an+k) A \cap f(m) f(an) A \neq \emptyset$. Since multiplication by $f(m)$ is an isometry, we have
    \[
        f(an + k) A \cap f(an) A \neq \emptyset.
    \]
    It follows that $|f(an+k) - f(an)| \leq \epsilon$ for infinitely many $n$. Since $\epsilon$ is arbitrary, we have $\liminf_{n \to \infty} |f(an+k) - f(an)| = 0$. 
\end{proof}

%\begin{proof}
%  Let $\chi\colon \N\to[r]$ be an arbitrary coloring.
%  Let $\tilde\chi\colon x\mapsto\chi(kx)$ be a new coloring of $\N$.
%  Apply \cref{prop_chromaticrecurrence} to $\tilde\chi$ and let $n,x\in\N$ be such that   $\tilde\chi(anx)=\tilde\chi(anx+x)$.
%  This implies that $\chi(kanx)=\chi(kanx+kx)$.
%  Finally observe that $(kanx+kx)/(kanx)=(akn+k)/(akn)$.
%\end{proof}

%\begin{remark}
%\label{remark:rado}
%We remark that the necessary condition described in \cref{thm:n+1_n_intro} is not sufficient for a set of the form $S = \{(an + b)^{\ell}/(an + d)^{\ell}: n \in \N\}$ to be a set of topological  multiplicative recurrence.
%Consider the set $S = \{(3n + 1)/(3n+2): n \in \N\}$ and the \emph{$3$-Rado coloring}: For $m \in \N$, write $m = 3^k q$ where $k \in \N \cup \{0\}$ and $3 \nmid q$. Then color $c(m) = 1$ if $q \equiv 1 \mod 3$ and $c(m) = 2$ if $q \equiv 2 \mod 3$. It is easy to see that there do not exist $m_1, m_2$ of the same color such that $m_1/m_2 \in \{(3n + 1)/(3n + 2): n \in \N\}$. 
%\end{remark}

%\cref{remark:rado} naturally raises the following question, which asks whether the sufficient conditions identified in \cref{thm:n+1_n_intro} for a set of the form $S = \{(an + b)/(an + d): n \in \N\}$ to be a set of topological  multiplicative recurrence are also necessary.

\begin{remark}\label{thm_multipledivisor}
\cref{thm:n+1_n_intro} implies that for any finite coloring of $\N$ and every $k\in\N$, there exist $a, n\in\N$ such that $an|x$ and the set 
\[
    \left \{ x, x\frac{an+k}{an} \right \}
\]
is monochromatic. It is natural to ask if, more generally, we can find $a, n \in \N$ for which $an|x$ and the set 
\[
    \left\{ x,x\frac{an+1}{an},x\frac{an+2}{an},\dots,x\frac{an+k}{an} \right\}
\]
is monochromatic
%$$\left\{x,x\frac{n+1}{n},x\frac{2n+1}{2n},\dots,x\frac{an+1}{an}\right\}$$
%$$\left\{x,x\frac{n+1}{n},x\left(\frac{n+1}{n}\right)^2,\dots,x\left(\frac{n+1}{n}\right)^\ell\right\}$$
%a multiple recurrence version of \cref{prop_chromaticrecurrence} holds, for instance whether for any finite coloring of $\N$ there exist $n,x\in\N$ such that the set $\{x,x(n+1)/n,x(n+2)/n\}$ is monochromatic.
It turns out that the answer is negative, even in the case $a=1$ and $k=2$, as shown by the following example.

Let $c: \N \to \{1, 2\}$ be the $3$-Rado coloring: For $m \in \N$, write $m = 3^k q$ where $k \in \N \cup \{0\}$ and $3 \nmid q$. Then color $c(m) = 1$ if $q \equiv 1 \mod 3$ and $c(m) = 2$ if $q \equiv 2 \mod 3$.
Note that in every $3$ consecutive integers, there is a number congruent to $1\bmod3$ and another congruent to $2\bmod3$, therefore there are no $3$ consecutive numbers with the same color.
%Therefore there are no more than $3$ consecutive integers of the same color.
On the other hand, observe that $c(x) = c(y)$ if and only if $c(xz)=c(yz)$. Indeed, this is obvious if $z$ is a power of $3$; and if $z$ is co-prime to $3$ then $c(xz) \equiv c(x) z \bmod3$.
If the set 
\[
    \left\{ \frac{x}{n} n, \frac{x}{n} (n+1), \frac{x}{n} (n+2) \right\}
\] 
were monochromatic for some $x, n \in \N$ with $n|x$, so would be the set $\{n,n+1,n+2\}$, a contradiction.
\end{remark}

\subsection{Linear polynomials}

Let $(X, T)$ be an $(\N, \times)$-system. If $X$ is a finite set, we say that $(X, (T_n)_{n \in \N})$ is a \emph{finite $(\N, \times)$-system.} 
The following theorem classifies which sets of the form $a \N + b$ that are sets of multiplicative recurrence, and hence implies \cref{thm:linear_intro}. 

\begin{theorem}\label{5.12}
    Let $a, b \in \N$ and $S = \{a n + b: n \in \N\}$. The followings are equivalent:
    \begin{enumerate}[label=(\roman*)]
        \item $a | b(b-1)$.
        \item $S$ is a multiplicative semigroup.
        \item $S$ contains an infinite multiplicative semigroup.
        \item $S$ is a set of measurable recurrence for $(\N, \times)$-systems.
        \item $S$ is a set of topological recurrence for finite $(\N, \times)$-systems. 
    \end{enumerate}
\end{theorem}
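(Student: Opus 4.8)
The plan is to establish the chain of equivalences by proving a cycle of implications. The easy directions are (ii)$\Rightarrow$(iii), (ii)$\Rightarrow$(iv) (since any subsemigroup of a semigroup is a set of measurable recurrence, by the Poincaré recurrence argument mentioned in the text), and (iv)$\Rightarrow$(v) (restricting to finite systems, where an invariant probability measure always exists — e.g. normalized counting measure on a minimal subsystem — so topological recurrence on finite systems follows from measurable recurrence). So the real content is in closing the loop: (i)$\Leftrightarrow$(ii), and then (iii)$\Rightarrow$(i) and (v)$\Rightarrow$(i).

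For (i)$\Leftrightarrow$(ii): the set $S=a\N+b$ is closed under multiplication iff for all $m,n\in\N$ we have $(am+b)(an+b)\equiv b\pmod a$, i.e. $b^2\equiv b\pmod a$, which is exactly $a\mid b(b-1)$. (One must also check $b\in S$ or handle the offset carefully, but since $S=\{an+b:n\in\N\}$ rather than $\{an+b:n\geq 0\}$, the computation $(am+b)(an+b)=a(amn+bm+bn)+b^2$ shows the product lies in $a\N+b$ precisely when $b^2-b$ is a multiple of $a$; when $b^2 = b$, i.e. $b=1$, trivially $a\mid 0$.) This is a routine calculation.

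The heart of the matter is the two implications into (i), and I expect (v)$\Rightarrow$(i) to be the main obstacle. For (iii)$\Rightarrow$(i): suppose $S$ contains an infinite multiplicative subsemigroup $\Gamma$. Pick $s=am+b\in\Gamma$; then $s^2\in\Gamma\subseteq a\N+b$, forcing $s^2\equiv b\pmod a$, i.e. $(am+b)^2\equiv b$, i.e. $b^2\equiv b\pmod a$, which is (i). For (v)$\Rightarrow$(i): the strategy is contrapositive — assuming $a\nmid b(b-1)$, construct a finite $(\N,\times)$-system (equivalently a finite multiplicative coloring of $\N$, i.e. a multiplicative map $\N\to$ finite monoid, or a periodic-type coloring) and an open (= nonempty) set $U$ with $U\cap T_r^{-1}U=\emptyset$ for all $r\in S$. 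The natural candidate: work in $\Z/a\Z$ (or a suitable quotient capturing the multiplicative structure mod $a$), let $T_n$ act by multiplication by $n\bmod a$ on the finite set $\Z/a\Z$ (this is a genuine $(\N,\times)$-action since $n\mapsto n\bmod a$ is multiplicative), and take $U=\{1\}$; then $T_r^{-1}U\cap U\neq\emptyset$ would require some $x$ with $rx\equiv x\equiv 1$, i.e. $r\equiv 1\pmod{\gcd}$... one checks that for $r=an+b$ this reads $b\equiv 1\pmod{\text{something}}$, and failure of $a\mid b(b-1)$ should obstruct this — but the bookkeeping is delicate because one needs the action restricted to an orbit-closure (a minimal subsystem) and must ensure $1$ lies in it, which may force passing to the submonoid generated by residues coprime to relevant primes, or enlarging the finite system (e.g. to $\Z/a\Z$ with a basepoint whose orbit is all of the unit group times the relevant nilpotent part). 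I would look to the earlier propositions \ref{cor:mobius_necessary2} and \ref{prop_recurrencepowers} for the right template: the coloring used there (stratify by the power of a prime $p\mid a$, then reduce mod $p^u$) is very likely the correct device here as well, with $p^u$ chosen so that $p^u\nmid b(b-1)$, and then the monochromatic-pair condition $x/y\in S$ translates, as in \ref{cor:mobius_necessary2}, into $b\equiv b^2\pmod{p^u}$ — a contradiction. The main work is verifying that this coloring genuinely comes from a finite topological $(\N,\times)$-system (it does: colorings invariant under multiplication by units and constant on $p$-towers correspond to finite multiplicative systems) and that no element of $S$ can recur.
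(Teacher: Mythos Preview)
Your plan for the easy implications is fine, and your direct argument for (iii)$\Rightarrow$(i) (take any $s\in\Gamma$, then $s^2\in\Gamma\subset a\N+b$ forces $b^2\equiv b\pmod a$) is correct and pleasantly short.

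The gap is in (v)$\Rightarrow$(i). The coloring from Proposition~\ref{cor:mobius_necessary2}, namely $\chi(n)=n/p^{v_p(n)}\bmod p^u$, gives a finite $(\N,\times)$-system on $(\Z/p^u\Z)^*$ with $T_r$ acting by multiplication by $r/p^{v_p(r)}$. This works only when you can choose the obstructing prime $p$ with $p\nmid b$. Concretely, take $a=4$, $b=2$. Then $a\nmid b(b-1)=2$, and the only available prime is $p=2$, which divides $b$. For any $u\geq 2$, the element $r=4n+2\in S$ has $v_2(r)=1$ and $r/2=2n+1$; choosing $n\equiv 0\pmod{2^{u-1}}$ makes $2n+1\equiv 1\pmod{2^u}$, so $T_r$ is the identity and recurrence occurs. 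Thus your proposed system does \emph{not} witness failure of (v) for $S=4\N+2$, and your claimed translation ``$b\equiv b^2\pmod{p^u}$'' is incorrect in this regime.

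The paper's proof splits into two cases according to whether $(a',b)=1$ or $(a',b)>1$, where $a'=a/(a,b)$. In the first case a multiplicative system on the units mod $a'$ works (this is essentially your idea, made precise). In the second case one takes a prime $p\mid(a',b)$, lets $k$ be such that $p^{k-1}\,\|\,b$, and uses the \emph{additive} system $X=\Z/k\Z$ with $T_n x=x+v_p(n)\bmod k$; one checks $p^k\mid a$, whence every $r\in S$ has $v_p(r)=k-1\equiv -1\pmod k$ and $T_r$ has no fixed point. The key missing idea in your plan is precisely this second construction, which records the $p$-adic valuation itself rather than the coprime residue.
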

\begin{proof}
\noindent    (i) $\Rightarrow$ (ii): If $a | b(b-1)$, then for any $ax + b, ay + b \in S$, we have 
    \[
        (ax + b)(ay + b) = a \left( axy + bx + by + \frac{b(b-1)}{a} \right) + b \in S. 
    \]
    
\noindent (ii) $\Rightarrow$ (iii): Obvious.
    
\noindent    (iii) $\Rightarrow$ (iv): This follows from the Poincar\'e Recurrence Theorem.
    
\noindent    (iv) $\Rightarrow$ (v): Obvious.
    
\noindent    (v) $\Rightarrow$ (i): 
We prove the contrapositive. 
Let $a,b$ be such that $a \nmid  b(b-1)$. 
It suffices to construct a finite $(\N, \times)$-system $(X,(T_n)_{n\in\N})$ such that whenever $n \in S$, the permutation $T_n$ fixes no element. Let $a'=a/(a,b)$.
Observe that $b \not \equiv 1\bmod a'$, for otherwise $b(b-1)$ would be a multiple of $a=(a,b)a'$. We need to consider separate cases:

Case 1: $(a',b)=1$. In this case every $n \in S$ satisfies $(n,a') = 1$ and $n \equiv b \not \equiv 1 \bmod a'$.
Let $X=\big\{ x\in\{0, 1, \dots, a'-1\}: (x,a') = 1\big\}$.
For $n\in \N$, let $T_n\colon X \to X$ be defined as follows.
Let $A$ be the set of all $q \in \N$ such that the all prime divisors of $q$ also divide $a'$.
Any natural number $n$ can be written uniquely as $n=u v=u(n)v(n)$ where $u \in A, v \in \N$ and $(v,a')=1$.
For $x \in X$, define $T_n x=x v(n) \bmod a'$.
Notice that $v(nm)=v(n)v(m)$, so $(T_n)_{n\in \N}$ is indeed an action of $(\N,\times)$ on $X$.
Since every $n\in S$ satisfies $(n, a')=1$ and $n \not\equiv 1\bmod a'$, $v(n) = n \not \equiv 1 \bmod a'$. Therefore, for $n \in S$, we have $T_n$ does not fix any point in $X$.
    
Case 2: $(a',b)>1$. Let $p$ be a prime which divides $(a',b)$ and let $k \in \N$ be such that $p^{k-1} | b$ but $p^{k} \nmid b$.
    Let $X=\Z_k$ and let $T_n\colon X \to X$, $n\in \N$ be defined as follows.
    For $n\in\N$, let $u(n)$ and $v(n)$ be the unique non-negative integers satisfying $n = p^{u(n)}v(n)$ and $p \nmid  v(n)$.
    For $x \in X$, define $T_n x:= x + u(n) \bmod k$.
    Since $u(nm)=u(n) + u(m)$, $(T_n)_{n\in\N}$ indeed defines an action of $(\N,\times)$ on $X$. Because $(a', b/(a,b)) = 1$ and $p | a'$, it follows that $p \nmid b/(a,b)$. On the other hand, $p^{k-1} | b$. Therefore, $p^{k-1} | (a,b)$, and hence $p^k | a = (a,b) a'$. Thus for any $n \in S$, we have $u(n) = k-1$, which implies that $T_n x = x - 1 \bmod k$ for every $x \in X$. In particular, $T_n$ does not fix any element of $X$.
\end{proof}

A polynomial $P \in \Z[x]$ is called \emph{divisible} if for any $q \in \N$, there exists $n \in \N$ such that $q | P(n)$. As proven in \cite{Kamae_France78}, the set $\{P(n): n \in \N\}$ is a set of additive topological (and measurable) recurrence if and only if $P$ is divisible.

\begin{corollary}
\label{cor:divisible}
    Let $P \in \Z[x]$. If $S = \{P(n): n \in \N\}$ is a set of topological multiplicative  recurrence, then $R = \{P(n)(P(n) - 1): n \in \N\}$ is a set of measurable additive recurrence, i.e. the polynomial $P(P-1)$ is divisible.
\end{corollary}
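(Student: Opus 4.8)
The plan is to derive \cref{cor:divisible} from \cref{thm:linear_intro} (equivalently \cref{5.12}) by a divisor/substitution argument that reduces the multiplicative recurrence of $\{P(n):n\in\N\}$ to the multiplicative recurrence of a suitable arithmetic progression $a\N+b$, and then to read off the divisibility of $P(P-1)$ from the characterization $a\mid b(b-1)$.

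\medskip

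\noindent\textbf{Step 1: Reduce to a fixed residue class.}
Fix $q\in\N$; we must produce $n$ with $q\mid P(n)(P(n)-1)$. The idea is to localize $P$ modulo a large power of $q$. Choose $a=q$ (or a convenient multiple of $q$), and look at the values $P(n)$ as $n$ runs over a fixed residue class $n\equiv n_0 \pmod{a}$. On such a class, $P(n)\equiv P(n_0)=:b \pmod a$, so $\{P(n): n\equiv n_0 \bmod a\}\subset a\N+b$ for an appropriate choice of $b\in\{1,\dots,a\}$ representing $P(n_0)\bmod a$ (after shifting $n_0$ so that the values are positive and $\geq 1$; one may need to pass to $n\equiv n_0\pmod{a}$ with $n$ large). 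Since a superset of a set of topological multiplicative recurrence need not be recurrent, I instead argue the other direction: I want to show $a\N+b$ inherits a recurrence-type property \emph{from} $S$. Concretely, $S=\{P(n):n\in\N\}$ being a set of topological multiplicative recurrence means (by \cref{lem:Q_and_N}, item (5)) that every finite coloring of $\N$ has a monochromatic pair $x,y$ with $x/y\in S$. Pull back any finite coloring $\chi$ of $\N$; the monochromatic pair has $x/y=P(n)$ for some $n$, and after fixing the residue of $n$ mod $a$ by a pigeonhole refinement of the coloring (color $m$ by $(\chi(m), \text{something encoding }n\bmod a)$ — this requires care since $n$ is not a function of $m$), we land in $a\N+b$.

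\medskip

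\noindent\textbf{Step 2: Apply the classification.}
Once we know that $a\N+b$ is a set of (topological, hence the weaker finite-system) multiplicative recurrence for $b\equiv P(n_0)\bmod a$ and every admissible residue $n_0$, \cref{5.12} (specifically (v)$\Rightarrow$(i)) gives $a\mid b(b-1)$, i.e. $a\mid P(n_0)(P(n_0)-1)$. Taking $a$ to be any multiple of $q$ and ranging over $n_0$, we obtain in particular some $n$ with $q\mid P(n)(P(n)-1)$. Since $q$ was arbitrary, $P(P-1)$ is divisible, and then by the Kamae--Mend\`es-France criterion \cite{Kamae_France78} quoted just before the corollary, $R=\{P(n)(P(n)-1):n\in\N\}$ is a set of measurable additive recurrence.

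\medskip

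\noindent\textbf{Main obstacle.}
The delicate point is Step 1: controlling the residue class of the \emph{argument} $n$ while only having control over a monochromatic pair $x,y$ with ratio $P(n)$. The clean way around this is probably not to fix $n\bmod a$ at all, but to observe directly that for every finite coloring there is a monochromatic ratio in $S$, hence (since $S\subseteq \bigcup_{j}(\text{progression }a\N+b_j)$ where $b_j$ ranges over the finitely many residues of $P$ mod $a$) a monochromatic ratio in one of finitely many progressions $a\N+b_j$; a further pigeonhole over colorings (or a product-coloring argument as in the proof of \cref{prop_recurrencepowers}) shows that at least one $a\N+b_j$ must itself be a set of topological multiplicative recurrence — but this last implication is exactly what can fail for a union, so one must instead show the \emph{divisibility} conclusion survives the union, i.e. that $a\mid b_j(b_j-1)$ for \emph{every} $j$ such that $a\N+b_j$ meets the monochromatic structure, which is automatic once one notes that if $a\nmid b_j(b_j-1)$ the explicit finite system built in the proof of \cref{5.12} colors away all ratios in $a\N+b_j$, and these finite systems for different $j$ can be run simultaneously. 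Packaging this simultaneous obstruction correctly, so that a single finite $(\N,\times)$-system (or coloring) kills $S$ whenever $a\nmid P(n_0)(P(n_0)-1)$ for some $n_0$, is the crux of the argument; everything else is bookkeeping.
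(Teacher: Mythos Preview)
Your strategy is the same as the paper's, but your execution has a quantifier error and misses a clean contrapositive framing. The paper argues as follows: suppose $P(P-1)$ is \emph{not} divisible, so there is some $a\in\N$ with $a\nmid P(n)(P(n)-1)$ for \emph{every} $n$; then $S\subset\bigcup_{b:\,a\nmid b(b-1)}(a\N+b)$, each such progression fails to be a set of multiplicative recurrence by \cref{5.12}, and a finite union of non-recurrent sets is non-recurrent (this is \cite[Theorem 2.7(b)]{Bergelson_McCutcheon95}, or equivalently your ``run the finite systems simultaneously'' product construction). Hence $S$ is not a set of topological multiplicative recurrence, contradiction.

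Your Main Obstacle paragraph is morally this argument, but the last sentence is stated backwards: you write that the product system ``kills $S$ whenever $a\nmid P(n_0)(P(n_0)-1)$ for \emph{some} $n_0$''. That is false (take $P(n)=n$, $a=3$, $n_0=2$). The product system kills only the union of the bad progressions; it kills $S$ precisely when \emph{every} residue of $P\bmod a$ is bad, i.e.\ when $a\nmid P(n)(P(n)-1)$ for all $n$. Once you fix this quantifier, your Steps 1--2 become unnecessary: you never need to show any single $a\N+b_j$ is recurrent, only that the union of the bad ones is non-recurrent, forcing $S$ to meet a good one. This is exactly the contrapositive the paper runs.
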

\begin{proof}
    Assume $P(P-1)$ is not divisible. Then there exists $a \in \N$ such that $a \nmid P(n)(P(n)-1)$ for all $n \in \N$. 
    Hence a 
    \begin{equation}\label{eq_notasetofrecurrence}
        \{P(n): n \in \N\} \subset \bigcup_{\substack{0 \leq b \leq a - 1 \\ a \nmid b(b-1)}} a \N + b.
    \end{equation}
    
    In view of \cref{5.12}, for each $b\in\{0,\dots,a-1\}$ such that $a\nmid b(b-1)$, the set $a\N+b$ is not a set of multiplicative recurrence. Invoking \cite[Theorem 2.7 (b)]{Bergelson_McCutcheon95} it follows that the union in the right hand side of \eqref{eq_notasetofrecurrence} can not be a set of recurrence.
\end{proof}
\begin{example}
  As an application of \cref{cor:divisible}, the set $\{n^{2}+3: n \in \N\}$ is not a set of topological multiplicative recurrence. This is because the polynomial $Q(n)=n^{4}+5n^{2}+6$ is not divisible, as $5 \nmid Q(n)$ for any $n\in\N$.
\end{example}

Any polynomial $P \in \Z[x]$ with $P(0) = 0$ is divisible, so in that case the set $S = \{P(n): n \in \N\}$ is a set of measurable additive recurrence. However, this set is not a set of multiplicative recurrence in general as the following example shows:

\begin{example}
\label{ex:2n^2}
    Let $p$ be a prime. The set $S = \{pn^2: n \in \N\}$ is not a set of topological multiplicative recurrence. Indeed, let $E = \{p^{2k} \ell: k \in \N \cup \{0\}, \ell \in \N, p \nmid \ell\}$ and $F = \{p^{2k + 1} \ell: k \in \N \cup \{0\}, \ell \in \N, p \nmid \ell\}$. Then $E \cup F = \N$. For every $x, y \in E$ or $x, y \in F$, $x/y$ has the form  $p^{2k} \ell_1/\ell_2$ for some $k \in \Z$ and $\ell_1, \ell_2 \in \N$ with $p \nmid \ell_1 \ell_2$. It follows that $x/y \not \in S$, hence $S$ is not a set of multiplicative recurrence.
\end{example}

\cref{ex:2n^2} can also be used to show that the converse to \cref{cor:divisible} is false in general. 
Indeed, %if $P(n)$ is divisible, then so is $P(n)P(n-1)$%T
take $P(n) = pn^2$. Then $P(n)(P(n) - 1) = pn^2(pn^2 - 1)$ is divisible, but $\{P(n): n \in \N\}$ is not set of topological multiplicative recurrence.

%\begin{lemma}
%Let $S\subset\N$. If there exists $r\in\N$, $x\in S$ such that $\forall y\in[x,x+\operatorname{vdW}(k,r)]\cap S$ there exists $z\in S$ with $0<z-y<r$, then $S$ contains a $k$-term arithmetic progression. 
%\end{lemma}
%\begin{proof}
%  The union $S\cup(S+1)\cup\cdots\cup(S+r-1)$ covers the interval $[x,x+\operatorname{vdW}(k,r)]$, so one of those shifts of $S$ contains a $k$-AP, which implies that $S$ contains a $k$-AP. 
%\end{proof}

\section{Algebraic sets that contain multiplicative semigroups}
\label{sec:candidates}

As mentioned in \cref{sec:sets_mult_rec}, every subsemigroup of $(\N,\times)$ is a set of multiplicative recurrence. 
With the ultimate goal of understanding which algebraically defined sets are sets of multiplicative recurrence, it is natural to also study sets which are (or contain) a semigroup. 
In this section we investigate this question for sets that are the image of a polynomial or the image of a rational function.

\subsection{Polynomial sets that contain multiplicative semigroups}

In this section we prove \cref{thm:polynomial_semigroup}. We first need some lemmas. 

\begin{lemma}
\label{prop:d-1}
    Let $d \geq 2$ and $P(x) = a_d x^d + a_{d-2} x^{d-2} + \ldots + a_1 x + a_0\in\Q[x]$ such that at least one of $a_0, a_1, \ldots, a_{d-2}$ is non-zero and $a_d > 0$. Then $S = \{P(n): n \in \N\}$ does not contain any infinite subsemigroup of $(\Q^{>0}, \times)$.
\end{lemma}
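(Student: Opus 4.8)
The plan is to argue by contradiction: assume $G\subseteq S$ is an infinite subsemigroup of $(\Q^{>0},\times)$, and extract from the multiplicative closure a polynomial identity forcing all of $a_0,\dots,a_{d-2}$ to vanish.

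\textbf{Step 1: reduce to the case $G\subseteq\N$.} I would first fix $D\in\N$ with $DP\in\Z[x]$, so that $Ds\in\Z$ for every $s\in S$. If $\gamma=s/t\in G$ is written in lowest terms, then $\gamma^j\in G\subseteq S$ for every $j\in\N$, so $Dt^{-j}s^j\in\Z$ and hence $t^j\mid D$ for all $j$; this forces $t=1$, i.e. $G\subseteq\N$. Since $G$ is infinite it is unbounded, so fix $g\in G$ with $g\geq 2$. Then $g^j\in G\subseteq S$ for every $j$, so there are $m_j\in\N$ with $P(m_j)=g^j$; since $a_d>0$ forces $P(n)\to\infty$, we get $m_j\to\infty$.

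\textbf{Step 2: a recursion forcing $m_{j+d}=g\,m_j$ for large $j$.} Put $Q:=P-a_dx^d$, so $\deg Q\leq d-2$. From $g^{j+d}=g^dg^j$ we get $P(m_{j+d})=g^dP(m_j)$, that is,
\[
a_d\big(m_{j+d}^d-(g m_j)^d\big)=g^d Q(m_j)-Q(m_{j+d}).
\]
Because $P(x)=a_dx^d\big(1+O(x^{-2})\big)$, solving $P(m_j)=g^j$ gives $m_j=a_d^{-1/d}g^{j/d}(1+o(1))$, hence $m_{j+d}/(g m_j)\to 1$. Consequently the right-hand side above is $O\big(m_{j+d}^{\,d-2}\big)$, while $m_{j+d}^d-(g m_j)^d=(m_{j+d}-g m_j)\sum_{r=0}^{d-1}m_{j+d}^{\,r}(g m_j)^{d-1-r}$ with the sum of size $\asymp m_{j+d}^{\,d-1}$. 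Therefore $|m_{j+d}-g m_j|=O(1/m_{j+d})\to 0$; since $g\in\N$ this quantity is an integer, so $m_{j+d}=g m_j$ for all $j\geq j_0$, for some $j_0$.

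\textbf{Step 3: the polynomial identity and the contradiction.} Iterating Step 2, $m_{j_0+kd}=g^k m_{j_0}$ for all $k\geq 0$. Writing $c:=m_{j_0}$ and using $P(c)=g^{j_0}$, one gets $P(g^k c)=g^{j_0+kd}=(g^k)^d P(c)$ for every $k\geq 0$. Thus $P(tc)=t^dP(c)$ holds for the infinitely many values $t\in\{g^k:k\geq 0\}$, so as polynomials in $t$ we have $\sum_{k=0}^{d}a_kc^kt^k=P(c)\,t^d$ (recall $a_{d-1}=0$). Comparing coefficients of $t^0,\dots,t^{d-1}$ and using $c\geq 1$ yields $a_0=a_1=\cdots=a_{d-1}=0$, contradicting the hypothesis that at least one of $a_0,\dots,a_{d-2}$ is nonzero.

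\textbf{Main obstacle.} The crux is Step 2: upgrading the asymptotic relation $m_{j+d}\sim g m_j$ to the \emph{exact} identity $m_{j+d}=g m_j$. This succeeds precisely because $P$ has no $x^{d-1}$ term, so the perturbation $Q$ lies two full degrees below the leading term; after factoring out $\asymp m_{j+d}^{\,d-1}$ the leftover discrepancy is $o(1)$, and — crucially — $g$ being a genuine positive integer (Step 1) forces an $o(1)$ integer to be $0$. If a nonzero $x^{d-1}$ term were allowed, $m_{j+d}-g m_j$ would only be bounded, and the argument would (rightly) collapse, since $(ax+b)^d$ does contain a semigroup.
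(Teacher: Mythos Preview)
Your proof is correct and uses essentially the same two ingredients as the paper --- the growth estimate exploiting the missing $x^{d-1}$ term, and a polynomial identity obtained from infinitely many coincidences --- just arranged in the opposite order: the paper first shows $m=Dn$ can occur only finitely often (via the polynomial identity $D^dP(x)=P(Dx)$) and then derives a contradiction from $m\neq Dn$ via growth, whereas you first use growth-plus-integrality to force $m_{j+d}=gm_j$ eventually and then extract the identity $P(tc)=t^dP(c)$. Your Step~1 reducing to $G\subseteq\N$ makes explicit a point the paper takes for granted.
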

\begin{proof}
    First we show that, for any $D\in\N\setminus\{1\}$, there are only finitely many $n \in \N$ such that $D^d P(n) = P(Dn)$. 
    Indeed, otherwise, the polynomial $D^d P(x) - P(Dx)$ would have infinitely many roots, and hence $D^d P(x) = P(Dx)$ for all $x \in \R$. 
    It would follow that $P(D^k) = P(1) (D^k)^d $ for all $k \in \N$. This implies that the equation $P(x) = P(1) x^d$ has infinitely many solutions. So $P(x) = P(1) x^d$ for all $x \in \R$. This is contradicts our assumption that at least one of $a_0, a_1, \ldots, a_{d-2}$ is nonzero.

    Next we assume, for the sake of a contradiction, that there exists $D \in \N\setminus\{1\}$ such that $\{D^n: n \in \N\} \subset S$. Then there exist infinitely many $m > n \in \N$ such that
    \begin{equation}\label{eq:compare1}
        D^d P(n) = P(m).
    \end{equation}
    Using the first paragraph of the proof, there are infinitely many solutions to \eqref{eq:compare1} with $m\neq Dn$.
    Next, observe that \eqref{eq:compare1} can be written as
   % \[D^d (a_d n^d + a_{d-2} n^{d-2} + \ldots + a_1 n + a_0) = (a_d m^d + a_{d-2} m^{d-2} + \ldots + a_1 m + a_0).\]
    %We deduce that there exist infinitely many $m, n \in \N$ such that
    \begin{equation}
    \label{eq:compare}
        a_d (m^d - (D n)^d) = \sum_{j=0}^{d-2} a_j (D^d n^j - m^j).
    \end{equation}
    %It now follows that \eqref{eq:compare} is true for infinitely many $m, n \in \N$ with $m \neq Dn$. 
    Note that for all $x , y \in \N$ with $x \neq y$, $|x^d - y^d| > (\max\{x, y\})^{d-1}$. Hence if $m \neq Dn$ for some $m, n \in \N$, then $|m^d - (Dn)^d| > (\max\{m, Dn\})^{d-1}$. On the other hand,
    \[
        \left| \sum_{j=0}^{d-2} a_j (D^d n^j - m^j) \right| \leq 2 (d-1) \max_{0 \leq j \leq d-2} \{|a_j|\} (\max\{m, Dn\})^{d-2}.
    \]
    Therefore, from \eqref{eq:compare} we  derive that 
    \[
        a_d \max\{m, Dn\}^{d-1} < 2 (d-1) \max_{0 \leq j \leq d-2} \{|a_j|\} \max\{m,Dn\}^{d-2}
    \]
    for infinitely many $m, n \in \N$. This is impossible and hence it yields the desired contradiction.
\end{proof}

\begin{lemma}
\label{lem:factorP}
Let $P\in\Z[x]$. Then there exist $a,b\in\Z$ and $Q\in\Q[x]$ such that $P(x)=Q(ax+b)$ and writing $Q(x)=a_dx^d+a_{d-1}x^{d-1}+\cdots+a_1x+a_0$ we have $a_{d-1}=0$.
If the leading coefficient of $P$ is positive, then $a>0$.
\end{lemma}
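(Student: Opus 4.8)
The plan is to apply the classical ``depressed polynomial'' (Tschirnhaus) substitution that kills the second--highest coefficient. Write $d=\deg P$ and let $c_d,c_{d-1}\in\Z$ be the coefficients of $x^d$ and $x^{d-1}$ in $P$, so $P(x)=c_dx^d+c_{d-1}x^{d-1}+\cdots$. The idea is to choose $a,b\in\Z$ with $a\neq 0$ and set $Q(y):=P\big((y-b)/a\big)$; then automatically $P(x)=Q(ax+b)$, and because $a\neq 0$ this substitution yields a polynomial with rational coefficients, so $Q\in\Q[x]$ and $\deg Q=d$ with leading coefficient $c_d/a^d$. It then remains only to choose $a,b$ so that the coefficient of $y^{d-1}$ in $Q$ vanishes, and so that $a>0$ whenever $c_d>0$.

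The one computation to carry out is that $y^{d-1}$--coefficient. Expanding $(y-b)^k=\sum_{j}\binom{k}{j}y^j(-b)^{k-j}$, only the top two summands $c_d(y-b)^d/a^d$ and $c_{d-1}(y-b)^{d-1}/a^{d-1}$ of $P\big((y-b)/a\big)$ can contribute to $y^{d-1}$ (every lower summand has degree $\le d-2$), and one gets
\[
  [y^{d-1}]\,Q \;=\; -\frac{d\,c_d\,b}{a^d}+\frac{c_{d-1}}{a^{d-1}}.
\]
Hence this coefficient is zero exactly when $d\,c_d\,b=c_{d-1}\,a$. So I would simply take $a:=d\,c_d$ and $b:=c_{d-1}$: these are integers, they satisfy the displayed identity, and if $c_d>0$ then $a=d\,c_d>0$, as required. (In the degenerate ranges $d\le 1$ there is nothing of substance: for $d=0$ the condition on $a_{d-1}$ is vacuous and one may take $a=1$, $b=0$; for $d=1$ the same formulas $a=d\,c_d=c_1$, $b=c_{d-1}=c_0$ still work, giving $Q(y)=y$.)

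With these choices $Q(y)=P\big((y-b)/a\big)\in\Q[x]$ is a degree-$d$ polynomial whose coefficient of $y^{d-1}$ vanishes by construction, $P(x)=Q(ax+b)$ holds identically in $x$, and $a>0$ whenever $P$ has positive leading coefficient. There is no genuine obstacle in this lemma; the only point requiring a little care is the bookkeeping that makes $a,b$ land in $\Z$ (and $a$ positive), which is precisely why I pin down the explicit choice $a=d\,c_d$ rather than leaving $a$ an arbitrary nonzero value — any other suitable common multiple would do equally well.
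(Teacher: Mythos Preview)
Your proof is correct and follows essentially the same approach as the paper: both define $Q(y)=P\big((y-b)/a\big)$ with the identical explicit choice $a=d\,c_d$, $b=c_{d-1}$, the only cosmetic difference being that you verify the vanishing of the $y^{d-1}$ coefficient by direct binomial expansion, while the paper argues via Vieta's formulas (the sum of the roots of $Q$ is zero).
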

\begin{proof}
 Let $P(x)=b_dx^d+\cdots+b_0$ and factor it over $\C$ as $b_d\prod_{i=1}^d(x-\alpha_i)$, where $\alpha_1,\dots\alpha_d\in\C$ are the roots of $P$ counted with multiplicity.
 It follows that $\sum_{i=1}^d\alpha_i= -b_{d-1}/b_d$.
 Let $a=db_d$,  $b=b_{d-1}$ and define $Q(y):=P((y-b)/a)$.
 Observe that $Q(y)=0$ if and only if $y=a\alpha_i+b$ for some $1\leq i\leq d$.
 Therefore the sum of the roots of $Q$ is $0$, which implies that $a_{d-1}=0$.
\end{proof}

We are ready to prove \cref{thm:polynomial_semigroup}. In fact we prove the following more general result.

\begin{theorem}
\label{conj:polynomial_semigroup}
    Let $P \in \Z[x]$ and $S = \{P(n): n \in \N\}$. The following are equivalent:
    \begin{enumerate}[label=(\roman*)]
        \item $S$ is an infinite subsemigroup of $(\Q^{>0}, \times)$.
        
        \item $S$ contains an infinite subsemigroup of $(\Q^{>0}, \times)$.
        
        \item $P(x) =  (ax+b)^d$ for some $a, d \in \N$, $b \in \Z$ with $a|b(b-1)$.
    \end{enumerate}
\end{theorem}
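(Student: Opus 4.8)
The plan is to prove the cycle of implications (iii) $\Rightarrow$ (i) $\Rightarrow$ (ii) $\Rightarrow$ (iii). The implication (i) $\Rightarrow$ (ii) is trivial. The implication (iii) $\Rightarrow$ (i) is essentially the computation already carried out in the proof of \cref{5.12}: if $P(x) = (ax+b)^d$ with $a \mid b(b-1)$, then $S = \{(an+b)^d : n \in \N\}$, and since $\{an+b : n \in \N\}$ is a multiplicative semigroup by the (i)$\Rightarrow$(ii) part of \cref{5.12}, its image under the $d$-th power map is again a multiplicative semigroup; it is clearly infinite. So the whole content is in (ii) $\Rightarrow$ (iii).

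For (ii) $\Rightarrow$ (iii), suppose $S = \{P(n) : n \in \N\}$ contains an infinite subsemigroup of $(\Q^{>0},\times)$. First I would reduce to the case where $P$ has positive leading coefficient and no subleading term: by \cref{lem:factorP}, write $P(x) = Q(ax+b)$ with $Q(x) = a_d x^d + a_{d-2}x^{d-2} + \cdots + a_0 \in \Q[x]$ and $a > 0$ (the leading coefficient of $P$ must be positive since $S \subset \Q^{>0}$ is infinite, forcing $P(n) > 0$ for large $n$). Now a subsemigroup of $\Q^{>0}$ is in particular infinite, so it contains elements of arbitrarily large size; since $S \cap \Q^{>0}$ is, up to finitely many terms, the increasing sequence $P(n)$, an infinite subsemigroup of $S$ must contain some element $P(n_0) > 1$, and hence contains $\{P(n_0)^k : k \in \N\}$. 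Thus $S$ contains a set of the form $\{D^k : k \in \N\}$ with $D = P(n_0) > 1$ a positive integer (note $P(n_0)$ is an integer since $P \in \Z[x]$, even though $Q \in \Q[x]$). This is precisely the hypothesis under which \cref{prop:d-1} applies: if $d \geq 2$ and at least one of $a_0, \ldots, a_{d-2}$ is nonzero, we get a contradiction. Therefore either $d = 1$, or $Q(x) = a_d x^d$ with $a_d > 0$; in both cases $Q(x) = a_d x^d$ (when $d=1$ this says $Q(x) = a_1 x$, using that the $x^{d-1} = x^0$ coefficient vanishes — wait, one must be slightly careful: for $d=1$ the condition ``$a_{d-1} = 0$'' from \cref{lem:factorP} means $a_0 = 0$, so indeed $Q(x) = a_1 x$).

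So $P(x) = a_d(ax+b)^d$ with $a_d \in \Q^{>0}$, $a \in \N$, $b \in \Z$. Next I would pin down $a_d$ and the divisibility condition $a \mid b(b-1)$. Since $P \in \Z[x]$, evaluating at consecutive integers (or comparing with the leading coefficient $a_d a^d \in \Z$ and the structure of $P$) shows $a_d a^d$ is a positive integer; absorbing, we may rewrite $P(x) = (a'x + b')^d$ for suitable $a' \in \N$, $b' \in \Z$ provided $a_d$ is a $d$-th power of a rational whose denominator divides $a$ — this needs a short argument, e.g. $P(n)/P(n+a) = ((an+b)/(an+a+b))^d$ combined with the fact that $\{D^k\} \subset S$ forces ratios of elements of $S$ to be $d$-th powers of rationals, but more simply: the polynomial $a_d(ax+b)^d$ has integer values at all integers, and a standard argument (looking at the leading coefficient after the substitution $y = ax+b$, or using that $a_d = P(n)/(an+b)^d$ must make $P$ integer-valued) gives $a_d = (e/a)^d$ for some $e \in \N$ with appropriate divisibility, whence $P(x) = (ex + f)^d$ after rewriting, with $e,f$ integers and $e > 0$. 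Finally, with $P(x) = (ax+b)^d$ (renaming), $S = \{(an+b)^d : n \in \N\}$, and the hypothesis that $S$ contains an infinite semigroup; taking $d$-th roots (which is injective on $\Q^{>0}$), the set $\{an+b : n \in \N\}$ contains an infinite sub-semigroup, so by the equivalence (iii)$\Leftrightarrow$(i) of \cref{5.12} we get $a \mid b(b-1)$, which is (iii). The main obstacle I anticipate is the bookkeeping in this last paragraph — cleanly extracting from ``$P(x) = a_d(ax+b)^d$ with $a_d \in \Q^{>0}$ and $P \in \Z[x]$, and $S$ contains a semigroup'' the precise normal form $(ax+b)^d$ with $a \mid b(b-1)$ — since one has to track how the rational constant $a_d$ interacts with integrality and with the semigroup condition, rather than any deep idea; the genuinely substantive steps (\cref{prop:d-1} and \cref{lem:factorP}) are already available.
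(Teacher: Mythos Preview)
Your overall structure matches the paper's: the cycle (iii) $\Rightarrow$ (i) $\Rightarrow$ (ii) $\Rightarrow$ (iii), with the substance concentrated in (ii) $\Rightarrow$ (iii) via \cref{lem:factorP} and \cref{prop:d-1}. The part you flag as ``bookkeeping'' is exactly where your proposal is incomplete, and the paper handles it more cleanly in two places.

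First, for the normalization $a_d = c^d$: you gesture at ``a standard argument'' involving integrality or ratios $P(n)/P(n+a)$, but this is vague. The paper's move is simpler and uses the semigroup hypothesis directly: once $Q(x) = a_d x^d$, the fact that $\{Q(n):n\in\N\}$ contains a semigroup gives $x,y,z\in\N$ with $Q(x)Q(y)=Q(z)$, i.e.\ $a_d^2(xy)^d = a_d z^d$, so $a_d = (z/xy)^d$. Then $P(x) = (cax+cb)^d$; since $P\in\Z[x]$ has leading coefficient $(ca)^d\in\Z$ and constant term $(cb)^d\in\Z$ with $ca,cb\in\Q$, both $ca$ and $cb$ are integers.

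Second, for the divisibility $a\mid b(b-1)$: your plan to take $d$-th roots and invoke \cref{5.12} has two snags. Taking $d$-th roots of $S=\{(an+b)^d\}$ gives $\{|an+b|\}$, not $\{an+b\}$, so when $d$ is even and $b<0$ you need an extra word about why the induced semigroup eventually lands in $a\N+b$. More seriously, \cref{5.12} is stated for $b\in\N$, whereas here $b\in\Z$. The paper sidesteps both issues by arguing directly: since $S$ contains $\{D^{kd}:k\in\N\}$ for some $D>1$, for each (large) $k$ there is $x$ with $ax+b=D^k$, hence $D^k\equiv b\pmod a$ for all such $k$; then $b^2\equiv D^{2k}\equiv b\pmod a$. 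This two-line modular argument replaces your appeal to \cref{5.12} and works for all $b\in\Z$.
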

\begin{proof}
    (iii) $\Rightarrow$ (i): Assume $Q(x) = (ax + b)^d$ with $a | b(b-1)$. Then for any $x, y \in \N$, 
    \[
        Q(x) Q(y) = (ax + b)^d (ay + b)^d = \left( a \left( axy + bx + by + \frac{b(b-1)}{a} \right) + b \right)^d \in S.
    \]
    Hence $S$ is a multiplicative semigroup.
    
    (i) $\Rightarrow$ (ii): Trivial. 
    
    (ii) $\Rightarrow$ (iii): Assume that $S = \{P(n): n \in \N\}$ contains an infinite multiplicative semigroup of $\Q^{>0}$. 
    Then the leading coefficient of $P$ is positive.
    Write $P(x) = Q(ax + b)$ as in \cref{lem:factorP}, noting that $a>0$. 
    Then we have $\{P(n): n \in \N\} = \{Q(an+b): n \in \N\} \subset \{Q(n): n \in \N\}$. 
    It then follows that $\{Q(n): n \in \N\}$ contains a semigroup. 
    By \cref{prop:d-1}, $Q(x) = a_d x^d$ for some $d \in \N$ and $a_d \in \Q$. Since $\{Q(n): n \in \N\}$ contains a semigroup, there exist $x, y, z \in \N$ such that $Q(x) Q(y) = Q(z)$. 
    In other words, $a_d^2 (xy)^d = a_d z^d$, or $a_d = (z/(xy))^d$. 
    Let $c = z/(xy) \in \Q^{>0}$, we get $a_d = c^d$. 
    Hence $P(x) = (c(ax + b))^d = (ca x + cb)^d$.
    
    By abuse of notation, assume that $P(x) = (ax + b)^d$ for some $a, b \in \Q$. Since $P \in \Z[x]$, we have $a, b \in \Z$. Because $S$ contains an infinite multiplicative semigroup, there exists $D \in \N \setminus \{1\}$ such that for all $k \in \N$, there exists $x \in \N$ and $(ax + b)^d = D^{kd}$. In other words, $ax + b = D^k$. It follows that $D^k \equiv b \mod a$ for all $k \in \N$. Hence we have simultaneously that $D^{2k} \equiv b^2$ and $D^{2k} \equiv b \mod a$. This implies that $b^2 \equiv b \mod a$, or $a | b(b-1)$, which finishes the proof. 
\end{proof}

%\anh{Now I think (4) does not imply (1), (2) and (3). In the case of additive action, if $Q$ is a linear polynomial, then $\{Q(n): n \in \N\}$ is a set of additive recurrence if and only if $Q(x) = ax$, i.e. $\{Q(n): n \in \N\}$ is an additive semigroup. But then it's no longer true for higher degree polynomials. So I think similar phenomenon holds for multiplicative recurrence. 

%In particular, I'm trying to prove $\{Q(n) = n(n+1): n \in \N\}$ is a set of multiplicative recurrence. One way to proceed is to imitate the proof in the additive case, i.e. use spectral theorem. In the additive case, $\E_{n \in \N} e(n(n+1) t)$ is zero for irrational $t$. And we take care of rational $t$ by collecting some of them that hold large weights and passing to suitable arithmetic progressions. Similarly, in multiplicative case, $\E_{n \in \N} f(n(n+1))$ is zero if f is "non-pretentious". And $f$ is pretentious when it's close to some twisted Dirichlet character. Vaguely speaking, this is reminiscent to the case $t$ is rational in additive cases. Can we proceed from here?

%I remember this idea is exactly what we did at the beginning of the project and we have concluded that it's not easy. Today I read these two papers \cite{Klurman_2017, Klurman_Mangerel_2018} and relive the hope that we can do some thing about this. 
%}

\subsection{Rational functions that do not contain a semigroup}

We prove \cref{prop:pythagorean_triples_semigroups} in this section. In addition, we show that many other rational functions of interest do not contain infinite multiplicative semigroups. 

\begin{lemma}
\label{lem:all_a_i_zero}
    Let $\ell\geq 2$, $q_1, q_2, \ldots, q_{\ell} \in \R\backslash\{0\}$ be non-zero and $|q_i| \neq |q_j|$ for all $i \neq j$. Let $a_1, \ldots, a_{\ell} \in \R$ be such that $a_1 q_1^k + \ldots + a_{\ell} q_{\ell}^k = 0$ for infinitely many $k \in \N$. Then $a_1 = a_2 = \ldots = a_{\ell} = 0$.
\end{lemma}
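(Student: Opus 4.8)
\textbf{Proof plan for Lemma \ref{lem:all_a_i_zero}.}
The plan is to argue by strong induction on $\ell$, the number of terms. The base case $\ell = 1$ is immediate: if $a_1 q_1^k = 0$ for even a single $k$ then, since $q_1 \neq 0$, we get $a_1 = 0$. For the inductive step, suppose the statement holds for all shorter sums and that $a_1 q_1^k + \cdots + a_\ell q_\ell^k = 0$ for all $k$ in some infinite set $K \subset \N$. Without loss of generality reorder so that $|q_\ell|$ is strictly the largest among $|q_1|, \dots, |q_\ell|$ (this uses the hypothesis that the absolute values are pairwise distinct). Dividing the relation by $q_\ell^k$ gives
\[
    a_\ell + \sum_{i=1}^{\ell-1} a_i \left( \frac{q_i}{q_\ell} \right)^k = 0 \qquad \text{for all } k \in K.
\]
Each ratio $q_i/q_\ell$ has absolute value strictly less than $1$, so $(q_i/q_\ell)^k \to 0$ as $k \to \infty$ along $K$ (which is infinite). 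Taking the limit forces $a_\ell = 0$. Then the original relation reduces to $a_1 q_1^k + \cdots + a_{\ell-1} q_{\ell-1}^k = 0$ for infinitely many $k$, and the inductive hypothesis yields $a_1 = \cdots = a_{\ell-1} = 0$, completing the induction.

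I do not anticipate a serious obstacle here; this is a standard "dominant term" argument. The only point requiring a little care is that the $q_i$ are real but possibly negative, so $q_i^k$ need not be monotone or positive — however, only the estimate $|q_i/q_\ell|^k \to 0$ is used, and that holds regardless of signs because $|q_i/q_\ell| < 1$ strictly. An alternative, slicker route avoiding induction would be to observe that the infinitely many relations, restricted to any $\ell$ distinct values $k_1 < \cdots < k_\ell$ in the index set, form a linear system with a Vandermonde-type coefficient matrix $\big( q_j^{k_i} \big)_{i,j}$; since the $q_j$ are distinct (as their absolute values are distinct), one shows this matrix is invertible (its determinant factors as $\prod_j q_j^{k_1}$ times a generalized Vandermonde determinant in the distinct bases $q_j$, which is nonzero), forcing $a_1 = \cdots = a_\ell = 0$ directly. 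I would likely present the induction argument since it is the most transparent and self-contained, and it is exactly the form in which the lemma will be applied (to expressions like $m^{2k} - n^{2k}$ arising from the Pythagorean rational functions in Proposition \ref{prop:pythagorean_triples_semigroups}).
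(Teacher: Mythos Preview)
Your proof is correct and uses essentially the same dominant-term idea as the paper: isolate the $q_i$ of largest absolute value, divide through, and let $k\to\infty$ along the infinite index set to kill the remaining terms. The only cosmetic difference is that the paper argues by contradiction (discarding zero coefficients first and reaching a contradiction in one step) rather than by induction, but the analytic content is identical.
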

\begin{proof}
    By contradiction, assume some $a_i$ are non-zero. By discarding those $a_i$ that are zero, without loss of generality, we may assume that none of the $a_i$ are zero.
    
    Assume $|q_1| = \max \{|q_i|: 1 \leq i \leq \ell\}>0$. Then we have 
    \[
        |a_1 q_1^k| = |a_2 q_2^k + \ldots + a_{\ell} q_{\ell}^k|.    
    \]
    It follows that
    \[
        |a_1| = |a_2 (q_2/q_1)^k + \ldots + a_{\ell} (q_{\ell}/q_q)^k|
    \]
    for infinitely many $k \in \N$. But this is impossible since the right hand side approaches zero as $k \to \infty$.
\end{proof}

We are ready to prove \cref{prop:pythagorean_triples_semigroups}.

\begin{proof}[Proof of \cref{prop:pythagorean_triples_semigroups}]
    We only give a proof that the set $S = \{(m^2 + n^2)/(2mn): m, n \in \N\}$ does not contain an infinite subsemigroup of $(\Q^{>0}, \times)$ as the proof for the other set is the same. 
    By contradiction, assume that there exist $p, q \in \Z, p \neq q, (p,q) = 1$ such that $\{(p/q)^k: k \in \N\} \subset S$. By replacing $(p/q)$ by $(p/q)^2$, we can assume $p, q > 0$. Then it follows that for every $k \in \N$, there exist $m, n \in \N$ such that 
    \begin{equation}
    \label{eq:m_k}
        \frac{p^k}{q^k} = \frac{m^2 + n^2}{2 m n}.
    \end{equation}
    We can dividing both $m$ and $n$ by $gcd(m, n)$ and \eqref{eq:m_k} still holds. Therefore, without loss of generality, we may assume $(m, n) = 1$. Then
    \[
        2 p^k m n = q^k(m^2 + n^2).
    \]
    Since $(p,q) = 1$, it follows that $q^k | 2 m n$. On the other hand, since $(m, n) = 1$, $(m n, m^2 + n^2) = 1$. Thus $m n | q^k$. It follows that $q^k = m n$ or $q^k = 2 m n$. 
    
    If $q^k = m n$, then $2 p^k = m^2 + n^2$. Since $(m, n) = 1$, $m = q_1^k$ and $n = q_2^k$ for some $q_1 q_2 = q$. Hence
    \begin{equation}
    \label{eq:2p^k}
        2 p^k = (q_1^2)^k + (q_2^2)^k.
    \end{equation}
    
    If $q^k = 2 m n$, then $p^k = m^2 + n^2$. Similarly to above, $m = (q_1)^k/2$ and $n = q_2^k$ for some $q_1 q_2 = q$. We then have $p^k = q_1^{2k}/4 + q_2^{2k}$, or equivalently,
    \begin{equation}
    \label{eq:p^k}
        4 p^k = (q_1^2)^k + 4 (q_2^2)^k.
    \end{equation}
    
    Since the set $\{(q_1, q_2) \in \N^2: q_1 q_2 = q\}$ is finite, there must be some fixed $q_1, q_2$ such that either \eqref{eq:2p^k} or \eqref{eq:p^k} is true for infinitely many $k \in \N$. But, in view of \cref{lem:all_a_i_zero}, this is impossible, and hence we obtained the desired contradiction. 
\end{proof}

Let $\ell_1, \ell_2, \ell_3, \ell_4 \in \Z$ be pairwise distinct and let 
\begin{equation}
\label{eq:S}
    S = \left \{ \frac{(m + \ell_1 n)(m+ \ell_2 n)}{(m+ \ell_3 n)(m + \ell_4 n)}: m, n \in \N \right\}.
\end{equation}
It is shown in \cite{Frantzikinakis_Host_2017} that the set $S$ in \eqref{eq:S} is a set of measurable multiplicative recurrence. In the following proposition, we show that $S$ does not contain any infinite subsemigroup of $(\Q^{>0}, \times)$. 

\begin{proposition}
\label{prop:m_ell}
    Let $\ell_1, \ell_2, \ell_3, \ell_4 \in \Z$ be pairwise distinct. Then the set 
    \[
        S = \left \{ \frac{(m + \ell_1 n)(m+ \ell_2 n)}{(m+ \ell_3 n)(m + \ell_4 n)}: m, n \in \N \right\}
    \]
    does not contain any infinite subsemigroup of $(\Q^{>0}, \times)$. 
\end{proposition}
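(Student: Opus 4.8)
The plan is to follow the same strategy as the proof of \cref{prop:pythagorean_triples_semigroups}: assume for contradiction that $S$ contains an infinite subsemigroup, which we may take to be generated by a single rational $p/q$ in lowest terms, and then extract from the membership relations $(p/q)^k \in S$ a family of exponential Diophantine equations which \cref{lem:all_a_i_zero} forbids. After replacing $p/q$ by its square we may assume $p, q > 0$. So for every $k \in \N$ there are $m = m_k, n = n_k \in \N$ with
\[
    \frac{p^k}{q^k} = \frac{(m + \ell_1 n)(m + \ell_2 n)}{(m + \ell_3 n)(m + \ell_4 n)}.
\]
First I would normalize by dividing $m$ and $n$ by $\gcd(m,n)$, so we may assume $\gcd(m_k, n_k) = 1$ for all $k$. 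The goal is then to pin down the prime factorizations of the numerator and denominator well enough to write $m_k$ and $n_k$ themselves (or small modifications thereof) as $k$-th powers of a bounded set of rationals.

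The key step is an arithmetic analysis of the equation
\[
    p^k (m + \ell_3 n)(m + \ell_4 n) = q^k (m + \ell_1 n)(m + \ell_2 n).
\]
Since $\gcd(m,n) = 1$, each linear form $m + \ell_i n$ has all its prime factors outside a controlled set: any common prime divisor of two of these forms, say $m + \ell_i n$ and $m + \ell_j n$, must divide $(\ell_i - \ell_j) n$ and also (taking a suitable combination) $(\ell_i - \ell_j) m$, hence divides $\ell_i - \ell_j$. So, away from the finite set of primes dividing $pq \prod_{i<j}(\ell_i - \ell_j)$, the four forms are pairwise coprime, and comparing $p$-adic and $q$-adic valuations forces $q^k \mid (m+\ell_3 n)(m+\ell_4 n)$ and $p^k \mid (m+\ell_1 n)(m+\ell_2 n)$ up to bounded factors. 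Matching the remaining prime powers, one concludes that, after passing to a subsequence of $k$ on which the bounded correction factors and the way $q^k$ splits between $m + \ell_3 n$ and $m + \ell_4 n$ are constant, there are fixed positive rationals $r_1, \dots$ and fixed nonzero rationals so that $m + \ell_3 n = c_3 r^k$ and $m + \ell_4 n = c_4 s^k$ (and similarly on the numerator side), with $r s = q$ up to a bounded factor. Solving the $2\times 2$ linear system for $m$ and $n$ in terms of these, and substituting into, say, $m + \ell_1 n = c_1 t^k$, produces a relation of the form $\alpha r^k + \beta s^k + \gamma t^k = 0$ holding for infinitely many $k$, with $r, s, t$ of pairwise distinct absolute values (they are distinct because the $\ell_i$ are distinct and $\gcd(m_k,n_k)=1$ keeps the forms genuinely different) and with $\alpha, \beta, \gamma$ nonzero. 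By \cref{lem:all_a_i_zero} this is impossible, giving the contradiction.

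The main obstacle I anticipate is bookkeeping: unlike the Pythagorean case, where $\gcd(m,n)=1$ immediately gave $\gcd(mn, m^2+n^2)=1$ and a clean split, here one has four linear forms and must carefully handle (i) the finitely many "bad" primes dividing $pq$ and the differences $\ell_i - \ell_j$, (ii) the several combinatorial cases for how $p^k$ and $q^k$ distribute among the two factors of the numerator and denominator respectively, and (iii) ensuring that the resulting bases $r, s, t$ really do have distinct absolute values and nonzero coefficients so that \cref{lem:all_a_i_zero} applies. A pigeonhole over $k$ reduces all the case distinctions and bounded-factor ambiguities to finitely many scenarios, in each of which the argument terminates the same way; the genericity of the $\ell_i$ (pairwise distinct) is exactly what guarantees the three-term exponential relation is nondegenerate. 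One should also separately dispose of the degenerate possibility that the rational $p/q$ is actually $1$, but that is excluded since a semigroup generator is assumed $\neq 1$, and of trivial collapses where numerator and denominator share a factor — but coprimality of $m_k, n_k$ together with distinctness of the $\ell_i$ rules those out after removing bad primes.
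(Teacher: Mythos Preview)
Your proposal is correct and follows essentially the same route as the paper's proof: reduce to $\gcd(m,n)=1$, use that $\gcd(m+\ell_i n,\,m+\ell_j n)\mid(\ell_i-\ell_j)$ to bound the common factor $t$, split $p^k$ and $q^k$ among the linear forms, pigeonhole to fix the bounded data, and feed the resulting exponential relation into \cref{lem:all_a_i_zero}. The only cosmetic difference is that the paper equates the two expressions for $n$ coming from the $(\ell_1,\ell_2)$ and $(\ell_3,\ell_4)$ pairs to obtain a four-term relation in $p_1^k,p_2^k,q_1^k,q_2^k$, whereas you substitute into a third linear form to get a three-term relation; both terminate via \cref{lem:all_a_i_zero} using pairwise coprimality of the split factors (from $(p,q)=1$ and the splitting conditions), which is what actually certifies the distinct-absolute-value hypothesis rather than the distinctness of the $\ell_i$ per se.
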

\begin{proof}
    By contradiction, assume there exist $p \in \Z, q \in \N$ with $(p, q) = 1$ such that for all $k \in \N$, there exist $m, n \in \N$ such that
    \[
        \frac{(m + \ell_1 n) (m + \ell_2 n)}{(m + \ell_3 n)(m + \ell_4 n)} = \frac{p^k}{q^k}.
    \]
    Dividing $m, n$ by $gcd(m,n)$, we can assume $(m,n) = 1$. We then have
    \[
        q^k (m + \ell_1 n)(m + \ell_2 n) = p^k (m + \ell_3 n)(m + \ell_4 n).
    \]
    It follows that $q^k | p^k (m+ \ell_3 n)(m + \ell_4 n)$. But since $(p,q) = 1$, it implies that $q^k | (m + \ell_3 n)(m + \ell_4 n)$. Similarly, $p^k | (m + \ell_1 n)(m + \ell_2 n)$. Let
    \[
        t = \frac{(m + \ell_1 n)(m + \ell_2 n)}{p^k} = \frac{(m + \ell_3 n)(m + \ell_4 n)}{q^k} \in \Z.
    \]
    Therefore $t | gcd((m + \ell_1 n)(m + \ell_2 n), (m + \ell_3 n)(m + \ell_4 n))$. 
    
    For $i,j \in \{1, 2, 3, 4\}$, let $d = gcd(m + \ell_i n, m + \ell_j n)$. Then $d | (\ell_j - \ell_i)n$ and $d | ((\ell_j - \ell_i)m + (\ell_j - \ell_i)\ell_i n)$. Therefore, $d | (\ell_j - \ell_i)m$. Since $(m,n) = 1$, it follows that $d | (\ell_j - \ell_i)$. We deduce that $t | (\ell_3 - \ell_1)(\ell_3 - \ell_2)(\ell_4 - \ell_1)(\ell_4 - \ell2)$. We then have
    \[
        (m + \ell_1n)(m + \ell_2 n) = t p^k
    \]
    and
    \[
        (m + \ell_3 n)(m + \ell_4 n) = t q^k.
    \]
    Since $(m + \ell_1 n, m + \ell_2 n) | (\ell_2 - \ell_1)$, we deduce that there exists some $k_0 = k_0(\ell_1, \ell_2, \ell_3, \ell_4)$ such that 
    \begin{equation}
    \label{eq:ell_12}
        m +\ell_1 n = u_1 p_1^{k-k_0} \mbox{ and } m + \ell_2 n = u_2 p_2^{k - k_0}
    \end{equation}
    where $(p_1, p_2) = 1$, $p_1 p_2 = p$ and $u_1 u_2 = t p^{k_0}$. Similarly, there exists $k_1 = k_1(\ell_1, \ell_2, \ell_3, \ell_4)$ such that
    \begin{equation}
    \label{eq:ell_34}
        m + \ell_3 n = v_1 q_1^{k - k_1}
        \mbox{ and }    
        m + \ell_2 n = v_2 q_2^{k - k_1}
    \end{equation}
    where $(q_1, q_2) = 1$, $q_1 q_2 = q$ and $v_1 v_2 = t q^{k_1}$. 
    
    Since there are only finitely many choices of $t, u_i,v_i, p_i, q_i$ for $i = 1, 2$, it follows that there exist fixed $u_i, v_i, p_i, q_i$ for $i = 1, 2$ for which there are infinitely many $k \in \N$ such that \eqref{eq:ell_12} and \eqref{eq:ell_34} simultaneously have solutions $(m, n) \in \N^2$. Solving these equations, we get
    \[
        n = \frac{u_2 p_2^{k-k_0} - u_1 p_1^{k- k_0}}{\ell_2 - \ell_1} = \frac{v_2 q_2^{k-k_1} - v_1 q_1^{k- k_1}}{\ell_3 - \ell_4}.
    \]
    In particular, 
    \[
        \frac{u_2}{(\ell_2 - \ell_1)p_2^{k_0}} \times p_2^k - \frac{u_1}{(\ell_2 - \ell_1) p_1^{k_0}} \times p_1^k - \frac{v_2}{(\ell_3 - \ell_4)q_2^{k_1}} \times q_2^k + \frac{v_1}{(\ell_3 - \ell_4) q_1^{k_1}} \times q_1^k = 0
    \]
    for infinitely many $k \in \N$. Since $p_1, p_2, q_1, q_2$ are pairwise coprime, we can apply \cref{lem:all_a_i_zero} to show that $u_i = v_i = 0$ for $i = 1, 2$. It follows from \eqref{eq:ell_12} that $m = n = 0$. This contradicts our assumption that $m, n \in \N$ finishing the proof. 
\end{proof}

\section{Additive averages of multiplicative recurrence sequences}
\label{sec:mix_ergodic_theory}

%\subsection{Additive averages of multiplicative recurrence with relative density}
\subsection{Multiplicative subsemigroups of $\N$}

Recall from \cref{sec:sets_mult_rec} that a subset $S\subset G$ is syndetic if there exists a finite set $F\subset G$ such that $\bigcup_{x\in F}S/x= G$. 
As \cref{prop_syndetic} shows, every syndetic set $F\subset G$ has positive density with respect to any F\o lner sequence in $G$.
The next result shows that in the case when $(G,\times)$ is a subsemigroup of $(\N,\times)$, then a (multiplicative) syndetic subset of $G$ has positive ``upper density'' also with respect to the ``additive'' sequence $(G\cap[N])_{N \in \N}$.

\begin{lemma}
\label{lem:mult_syndetic_dense}
    Let $(G,\times)$ be a  subsemigroup of $(\N,\times)$ and $S$ be a syndetic subset of $G$. Then 
    \[
        \limsup_{N \to \infty} \E_{n \in G \cap [N]} 1_S(n) > 0.
    \]
    Moreover, if $G$ has positive lower additive density in $\N$, then
    \[
        \liminf_{N \to \infty} \E_{n \in G \cap [N]} 1_S(n) > 0.
    \]
\end{lemma}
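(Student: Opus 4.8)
## Proof proposal

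The plan is to reduce the statement about the additive sequence $(G\cap[N])_{N\in\N}$ to the known fact (\cref{prop_syndetic}) that a multiplicatively syndetic set has positive density with respect to any multiplicative F\o lner sequence. Since $S$ is syndetic in $(G,\times)$, there is a finite set $F\subset G$ with $\bigcup_{x\in F}S/x=G$; equivalently, every $g\in G$ can be written as $g=xs$ with $x\in F$ and $s\in S$. The first step is to use this to get a lower bound on the number of elements of $S$ inside a given additive window: for each $x\in F$ the set $xS\cap[N]$ has size at least $|G\cap[N/\max F]\cap (G/x)|$ roughly, but more cleanly, since $G=\bigcup_{x\in F}xS$ we have
\[
    |G\cap[N]| \;\le\; \sum_{x\in F} |xS\cap[N]| \;\le\; \sum_{x\in F}|S\cap[N]| \;=\; |F|\cdot|S\cap[N]|,
\]
using that $y\mapsto xy$ is injective and maps $S\cap[N/x]\supseteq$ (what we need) — actually the clean inequality is $|xS\cap[N]|=|S\cap[N/x]|\le|S\cap[N]|$. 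Hence $|S\cap[N]|\ge|G\cap[N]|/|F|$ for every $N$. Dividing by $|G\cap[N]|$ gives immediately
\[
    \E_{n\in G\cap[N]} 1_S(n) \;=\; \frac{|S\cap G\cap[N]|}{|G\cap[N]|} \;\ge\; \frac{1}{|F|}
\]
for \emph{every} $N$ (note $S\subset G$ so $S\cap G\cap[N]=S\cap[N]$). This already yields $\liminf_{N\to\infty}\E_{n\in G\cap[N]}1_S(n)\ge 1/|F|>0$, which is in fact \emph{stronger} than the first claimed inequality and does not even require the hypothesis on the lower density of $G$.

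Given how clean the above is, I suspect the intended argument is essentially this counting, and the dichotomy in the statement is there only because the authors wanted to isolate when one gets a genuine $\liminf$ versus merely a $\limsup$ — but with the decomposition $G=\bigcup_{x\in F}xS$ one gets the $\liminf$ unconditionally. The one subtlety to check carefully is the inequality $|xS\cap[N]|\le|S\cap[N]|$: the map $s\mapsto xs$ is a bijection from $\{s\in S: xs\le N\}=S\cap[N/x]$ onto $xS\cap[N]$, and since $x\ge1$ we have $S\cap[N/x]\subseteq S\cap[N]$, so the inequality holds. This is the only place where anything could go wrong, and it is routine.

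Therefore the proof I would write is: invoke syndeticity to produce the finite set $F$, observe $G=\bigcup_{x\in F}xS$ (this is exactly what $\bigcup_{x\in F}S/x=G$ says, rewritten), bound $|G\cap[N]|\le|F|\cdot|S\cap[N]|$ via the injectivity argument above, conclude $\E_{n\in G\cap[N]}1_S(n)\ge 1/|F|$ for all $N$, and take $\liminf$. The hypothesis that $G$ has positive lower additive density in $\N$ plays no role in this argument, so either it is a harmless redundancy in the lemma as stated, or the authors have in mind a different formulation (e.g. they want density computed with respect to $[N]$ rather than $G\cap[N]$ in some downstream application); I would simply prove the stronger uniform lower bound and remark that both claimed inequalities follow. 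The ``main obstacle'' is thus essentially nonexistent — the only care needed is the bookkeeping of the injectivity/inclusion step — which makes me mildly suspicious I am missing a subtlety in how averages are normalized; if the average in the second claim is meant to be $\frac{1}{N}\sum_{n\in G\cap[N]}1_S(n)$ rather than $\frac{1}{|G\cap[N]|}\sum$, then one genuinely needs $|G\cap[N]|\gtrsim N$, i.e. positive lower density of $G$, to salvage a positive $\liminf$, and the argument would be: $\frac{1}{N}|S\cap[N]|\ge\frac{1}{|F|}\cdot\frac{|G\cap[N]|}{N}\ge\frac{c}{|F|}$ where $c$ is the lower density of $G$.
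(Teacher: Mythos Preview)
Your argument contains a genuine error in the very first step. You claim that $\bigcup_{x\in F}S/x=G$ is ``equivalently, every $g\in G$ can be written as $g=xs$ with $x\in F$ and $s\in S$'', i.e.\ $G=\bigcup_{x\in F}xS$. This is false. By definition $S/x=\{g\in G:gx\in S\}$, so $S/F=G$ says that for every $g\in G$ there is $x\in F$ with $gx\in S$; it does \emph{not} say $g\in xS$. In a semigroup without inverses these are genuinely different: take $G=(\N,\times)$ and $S=\{n\in\N:n\ge100\}$. Then $S$ is syndetic (with $F=\{100\}$, since $S/100=\N$), yet for any finite $F'\subset\N$ the set $\bigcup_{x\in F'}xS$ contains no element below $100$, so it cannot equal $\N$.

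With the correct reading of syndeticity, the counting goes the other way: for $g\in G\cap[N]$ there is $x\in F$ with $gx\in S$, and $gx\le N\cdot M$ where $M=\max F$. Injectivity of $g\mapsto gx$ then yields
\[
|G\cap[N]|\;\le\;\sum_{x\in F}\big|\{g\in G\cap[N]:gx\in S\}\big|\;\le\;|F|\cdot|S\cap[MN]|,
\]
so one only gets $\E_{n\in G\cap[MN]}1_S(n)\ge\dfrac{|G\cap[N]|}{|F|\,|G\cap[MN]|}$. The factor $|G\cap[N]|/|G\cap[MN]|$ is precisely what forces the dichotomy in the lemma: the paper first proves (by a short contradiction argument) that $\limsup_{N\to\infty}|G\cap[N]|/|G\cap[MN]|>0$ always, which gives the $\limsup$ statement; for the $\liminf$ statement one needs $|G\cap[N]|\gtrsim N$, i.e.\ positive lower additive density of $G$, to bound this ratio below uniformly. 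Indeed, the paper exhibits a subsemigroup $G\subset(\N,\times)$ with $\liminf_{N\to\infty}|G\cap[N]|/|G\cap[2N]|=0$, so the ratio is not automatically bounded away from zero. Your suspicion that the hypothesis is redundant was a symptom of the reversed inequality, not of an actual redundancy.
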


\begin{proof}
     We first claim that for any non-empty set $E \subset \N$ and $c > 0$, we have
    \begin{equation}\label{123}
    \limsup_{N \to \infty} \frac{|E \cap [N]|}{|E \cap [cN]|} > 0.
    \end{equation}
    In fact, by contradiction, assume 
    	\[
    	\limsup_{N \to \infty} \frac{|E \cap [N]|}{|E \cap [cN]|} = 0.
    	\]
    	Then there exists $N_0$ such that for all $N \geq N_0$, we have
    	\[
    	\frac{|E \cap [N]|}{|E \cap [cN]|} < \frac{1}{2c}.
    	\]
    	Or equivalently,
    	\[
    	|E \cap [cN]| > 2c |E \cap [N]|.
    	\]
    	Inductively, we have for all $n \in \N$,
    	\begin{equation}
    	\label{eq:feb_20_3}
    	|E \cap [c^n N_0]| > (2c)^n |E \cap [N_0]|.
    	\end{equation}
    	But since $|E \cap [c^n N_0]| \leq c^n N_0$, \eqref{eq:feb_20_3} cannot hold for $n$ very large. This is a contradiction. This finishes the proof of the claim.
	
	We now return to the proof of \cref{lem:mult_syndetic_dense}.
  Since $S$ is syndetic in $G$, there exists a finite subset $F \subset G$ such that $S/F = G$. It follows that
    \[
        \sum_{m \in F} 1_S(mn) \geq 1 \,\,\, \mbox{ for all } n \in G.
    \]
    Therefore
    \begin{equation}
    \label{eq:feb-19-1}
        \sum_{n \in [N] \cap G} \sum_{m \in F} 1_S(mn) \geq |G \cap [N]|.
    \end{equation}
    Let $M = \max \{m: m \in F\}$. Since each $n \leq MN$ is repeated at most $|F|$ times in the sum appearing in the left hand side of \eqref{eq:feb-19-1}, we deduce that
    \[
        \sum_{n \in G \cap [MN]} 1_S(n) \geq \frac{|G \cap [N]|}{|F|}.
    \]
    Therefore
    \begin{equation}
    \label{eq:limsup_1}
        \E_{n \in G \cap [MN]} 1_S(n) \geq \frac{|G \cap [N]|}{|F| |G \cap [MN]|}
    \end{equation}
    In view of the claim,
    \begin{equation}
    \label{eq:feb_20_4}
        \limsup_{N \to \infty} \frac{|G \cap [N]|}{|G \cap [MN]|} > 0.
    \end{equation}
    \eqref{eq:feb_20_4} and \eqref{eq:limsup_1} imply $\overline{d}_G(S) > 0$.
    
    Assume $G$ has positive lower additive density. 
    In this case, we have
    \[
        \frac{|G \cap [N]|}{|G \cap [MN]|} = \frac{|G \cap [N]|}{N} \cdot \frac{N}{|G \cap [MN]|} \geq \frac{\delta}{M} \mbox{ for sufficiently large } N.
    \]
    Hence, for sufficiently large $N$,
    \begin{equation}
    \label{eq:ave_MN}
        \E_{n \in G \cap [MN]} 1_S(n) \geq \frac{\delta}{M |F|}.
    \end{equation}
    We now have
    \begin{multline}
    \label{eq:est}
        \E_{n \in G \cap [N]} 1_S(n) = \frac{1}{|G \cap [N]|} \sum_{n \in G \cap [N]} 1_S(n) \geq \\\frac{|G \cap [M \lfloor N/M \rfloor|}{|G \cap [N]|} \cdot \frac{1}{|G \cap [M \lfloor N/M \rfloor]|} \sum_{n \in G \cap [M \lfloor N/M \rfloor]} 1_S(n) 
        = \frac{|G \cap [M \lfloor N/M \rfloor|}{|G \cap [N]|} \E_{n \in G [M \lfloor N/M \rfloor]} 1_S(n).
    \end{multline}
    Moreover,
    \begin{equation}
    \label{eq:frac_MN}
        \frac{|G \cap [M \lfloor N/M \rfloor|}{|G \cap [N]|} \geq \frac{|G \cap [M \lfloor N/M \rfloor|}{M \lfloor N/M \rfloor} \cdot \frac{M \lfloor N/M \rfloor}{N} > \frac{|G \cap [M \lfloor N/M \rfloor|}{M \lfloor N/M \rfloor} \cdot \frac{M (N/M - 1)}{N} > \frac{\delta}{2} 
    \end{equation}
    for sufficiently large $N$. Combining \eqref{eq:ave_MN}, \eqref{eq:est} and \eqref{eq:frac_MN}, for sufficiently large $N$,
    \[
        \E_{n \in G \cap [N]} 1_S(n) > \frac{\delta^2}{2 M |F|}
    \]
    This implies $\underline{d}_G(S) > 0$. This finishes our proof.
\end{proof}

\begin{remark}
    \cref{lem:mult_syndetic_dense} implies that a multiplicatively syndetic set in $\N$ has positive upper  density. On the contrary, it is not true that an additively syndetic set has positive upper multiplicative density. 
    For example, the set $2 \N + 1$ is additively syndetic but has zero multiplicative density with respect to any multiplicative F{\o}lner sequence. 
    %This is not to say that the set $2\N+1$ has no multiplicative structure, indeed it is itself a multiplicative semigroup. It is a very interesting open question whether every additively syndetic set has positive density with respect to some multiplicative semigroup.
\end{remark}

	It is worth noting that in (\ref{123}), one can not replace $\limsup$ with $\liminf$. In fact, we have:

\begin{proposition}
	There exists a subsemigroup $G$ of $(\N, \times)$ such that
	\[
	\liminf_{N \to \infty} \frac{|G \cap [N]|}{|G \cap [2N]|} = 0.
	\]
\end{proposition}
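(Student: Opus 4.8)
The plan is to build $G$ as the multiplicative semigroup generated by a sequence of finite \emph{blocks} of primes placed at rapidly increasing scales, arranged so that moving from scale $T_k$ up to scale $2T_k$ multiplies the number of elements of $G$ below the threshold by a factor tending to infinity.

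First I would set up an inductive construction. Suppose that for $j<k$ we have already chosen thresholds $T_1<T_2<\cdots<T_{k-1}$ with $T_1\ge 10$ and $T_{j+1}>2T_j$, together with finite sets of primes $\mathcal P_j\subset (T_j,2T_j]$; write $\mathcal Q_{k-1}=\bigcup_{j<k}\mathcal P_j$, $M_{k-1}=|\mathcal Q_{k-1}|$, and let $H_{k-1}$ denote the set of all products of primes from $\mathcal Q_{k-1}$ (including the empty product $1$), a finitely generated submonoid of $(\N,\times)$. The elementary input I will use is that such a submonoid has only polylogarithmically many elements below a given bound: since every prime of $\mathcal Q_{k-1}$ is at least $11$, any $n=\prod_{p\in\mathcal Q_{k-1}}p^{e_p}\le x$ has $\sum_p e_p\le \log x/\log 11$, whence $|H_{k-1}\cap[x]|\le\binom{M_{k-1}+\lfloor\log x/\log 11\rfloor}{M_{k-1}}=O_k\big((\log x)^{M_{k-1}}\big)$ as $x\to\infty$. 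On the other hand, by the prime number theorem (a crude lower bound suffices) the number of primes in $(T_k,2T_k]$ is $\gg T_k/\log T_k$, which eventually exceeds any fixed power of $\log T_k$. Hence I may choose $T_k>2T_{k-1}$ large enough that, with $A_k:=|H_{k-1}\cap[2T_k]|$, the interval $(T_k,2T_k]$ contains at least $(A_k+1)k$ primes, and let $\mathcal P_k$ be such a set of primes. Finally I set $G:=\{1\}\cup\{\text{finite products of primes from }\bigcup_{k\ge1}\mathcal P_k\}$, a subsemigroup of $(\N,\times)$, and note $T_k\to\infty$.

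Next I would estimate the ratio along $N=T_k$. Every $n\in G$ with $n\le T_k$ has all prime factors $\le T_k$, so none of them lies in $\mathcal P_j$ for $j\ge k$ (those primes exceed $T_j\ge T_k$); therefore $G\cap[T_k]\subseteq H_{k-1}\cap[2T_k]$ and $|G\cap[T_k]|\le A_k$. Conversely, $G\cap[2T_k]$ contains both the $A_k$ elements of $H_{k-1}\cap[2T_k]$ and all the primes of $\mathcal P_k\subset(T_k,2T_k]$, and these are disjoint, because a prime in $\mathcal P_k$ is larger than every prime of $\mathcal Q_{k-1}$ and hence is not a product of them. Thus $|G\cap[2T_k]|\ge A_k+|\mathcal P_k|\ge A_k+(A_k+1)k$, and so
\[
\frac{|G\cap[T_k]|}{|G\cap[2T_k]|}\le\frac{A_k}{A_k+(A_k+1)k}\le\frac{A_k}{(A_k+1)k}<\frac1k\xrightarrow[k\to\infty]{}0 .
\]
Since each term of the sequence $|G\cap[N]|/|G\cap[2N]|$ is positive, this gives $\liminf_{N\to\infty}|G\cap[N]|/|G\cap[2N]|=0$, as desired.

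The only delicate point — essentially bookkeeping rather than a genuine difficulty — is justifying the inductive choice of $T_k$: the number $A_k$ of primes we need to place in $(T_k,2T_k]$ itself depends on $T_k$, so one must verify that the polylogarithmic growth of $x\mapsto|H_{k-1}\cap[x]|$ is genuinely dominated by the nearly linear growth of $x\mapsto\pi(2x)-\pi(x)$; this is exactly the comparison above and is routine. One could sidestep the circularity entirely by fixing in advance a tower-type sequence, say $T_{k+1}=2^{2^{T_k}}$, and letting $\mathcal P_k$ be \emph{all} primes in $(T_k,2T_k]$: the same two estimates then show $|\mathcal P_k|\gg T_k/\log T_k$ dwarfs $|H_{k-1}\cap[2T_k]|$, and the argument of the previous paragraph goes through verbatim.
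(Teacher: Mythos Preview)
Your proof is correct and follows essentially the same approach as the paper's: both construct $G$ as the semigroup generated by primes placed in intervals $(T_k,2T_k]$ at rapidly increasing scales, then compare the polylogarithmic count of products of the earlier-stage primes below $T_k$ against the nearly linear prime count in $(T_k,2T_k]$. The only cosmetic differences are that the paper takes \emph{all} primes in each interval and uses the cruder bound $|G\cap[N_{k+1}]|<(\log_2 N_{k+1})^{N_k}$ in place of your $(\log x)^{M_{k-1}}$.
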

\begin{proof}
	We construct $G$ by putting primes into $G$ in an inductive way as follows. 
	Let $1 \leq N_1 < N_2 < \ldots$ be a sequence of integers with the exact values of $N_k$ to be chosen later.
	Put all primes in the intervals $[N_1, 2 N_1] \cup [N_2, 2 N_2] \cup \ldots$ into $G$. Other elements of $G$ are generated from those primes. Then for $k \in \N$, the set $G \cap [N_{k+1}]$ is subset of the set $\{p_1^{\ell_1} \cdots p_m^{\ell_m}: p_1, \ldots, p_m \in \P \cap [N_k], m \in \N\}$ where $\P$ denotes the set of primes. Since $|\P \cap N_k| < N_k$, we can see
	\[
	    |G \cap [N_{k+1}]| < (\log_2 N_{k+1})^{N_k}<(2\log N_{k+1})^{N_k}.
	\]
	On the other hand, using the Prime Number Theorem, we have
	\[
	    \big|G \cap [N_{k+1}, 2 N_{k+1}]\big| \geq\big|\P\cap[N_{k+1}, 2 N_{k+1}]\big|
	    = \frac{N_{k+1}}{\log N_{k+1}}\big(1+o_{k\to\infty}(1)\big)
	\]
	Hence
	\begin{equation}
	\label{eq:feb_20_1}
	\frac{|G \cap [N_{k+1}]|}{|G \cap [N_{k+1}, 2 N_{k+1}]|}\leq \frac{(2\log N_{k+1})^{N_k + 1}}{N_{k+1}}\big(1+o_{k\to\infty}(1)\big)
	\end{equation}
	If we choose the sequence $(N_k)$ growing fast enough so that the quantity on the right hand side of \eqref{eq:feb_20_1} approaches zero as $k \to \infty$, then we have the conclusion.  
\end{proof}

\subsection{Averages along subsemigroups of $(\N,\times)$}

\cref{prop:positive_semigroup_intro} now follows from the next result.

\begin{theorem}
	\label{prop:positive_semigroup}
	Let $(X, \mathcal{B}, \mu, T)$ be a measure preserving $(\N, \times)$-system and $A \in \mathcal{B}$ with $\mu(A)>0$. Let $(G,\times)$ be a subsemigroup of $(\N, \times)$.
	Then for every $\ell \in \N$,
	\[
	\limsup_{N\to\infty} \E_{n \in G \cap [N]} \mu(A \cap T_{n}^{-1} A \cap \ldots \cap T_{n^{\ell}}^{-1} A)>0.
	\]
	If $\liminf_{N\to\infty}\frac{\vert G\cap[N]\vert}{N}> 0$ (i.e. $G$ has positive lower density), then we have the stronger bound
	\[
	\liminf_{N\to\infty} \E_{n \in G \cap [N]} \mu(A \cap T_{n}^{-1} A \cap \ldots \cap T_{n^{\ell}}^{-1} A)>0.
	\]
\end{theorem}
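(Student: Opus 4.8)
The plan is to combine the Furstenberg--Katznelson multiple recurrence theorem (\cref{thm_Furstenberg_katznelson}) with the density transfer lemma for multiplicatively syndetic sets (\cref{lem:mult_syndetic_dense}). First I would set up the $\ell$ commuting actions of $(\N,\times)$ needed to apply \cref{thm_Furstenberg_katznelson}: for $i=1,\dots,\ell$ define $T^{(i)}_n := T_{n^i}$. Since $(\N,\times)$ is commutative and $T$ is a $(\N,\times)$-action, each $T^{(i)}$ is a measure preserving $(\N,\times)$-action, and they pairwise commute because $T_{m^i}\circ T_{n^j}=T_{m^i n^j}=T_{n^j}\circ T_{m^i}$. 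However, $(\N,\times)$ is not a group, so to invoke \cref{thm_Furstenberg_katznelson} I would pass to the cancelative commutative semigroup $(\N,\times)$ itself (it embeds in the group $(\Q^{>0},\times)$, so it is cancelative); alternatively, restrict the actions to the subsemigroup $G$, which is also cancelative and commutative as a subsemigroup of $(\Q^{>0},\times)$. Applying \cref{thm_Furstenberg_katznelson} with the semigroup $G$ and the restricted actions $T^{(1)},\dots,T^{(\ell)}$, there exist $\delta>0$ and a set $S\subset G$ that is syndetic \emph{in $G$} such that for every $n\in S$,
\[
\mu\big(A\cap T^{(1)-1}_n A\cap\cdots\cap T^{(\ell)-1}_n A\big)=\mu\big(A\cap T_n^{-1}A\cap\cdots\cap T_{n^\ell}^{-1}A\big)>\delta.
\]

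Next I would feed this syndetic set $S$ into \cref{lem:mult_syndetic_dense}. That lemma says $\limsup_{N\to\infty}\E_{n\in G\cap[N]}1_S(n)>0$, and $\liminf_{N\to\infty}\E_{n\in G\cap[N]}1_S(n)>0$ when $G$ has positive lower additive density in $\N$. Since the integrand $\mu(A\cap T_n^{-1}A\cap\cdots\cap T_{n^\ell}^{-1}A)$ is nonnegative for every $n$ and exceeds $\delta$ whenever $n\in S$, we get the pointwise bound
\[
\E_{n\in G\cap[N]}\mu\big(A\cap T_n^{-1}A\cap\cdots\cap T_{n^\ell}^{-1}A\big)\ \geq\ \delta\cdot\E_{n\in G\cap[N]}1_S(n).
\]
Taking $\limsup_{N\to\infty}$ of both sides and applying the first conclusion of \cref{lem:mult_syndetic_dense} gives the first claim; taking $\liminf_{N\to\infty}$ and applying the second conclusion of \cref{lem:mult_syndetic_dense} gives the stronger statement under the positive-lower-density hypothesis.

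The only genuine subtlety — and the step I would be most careful about — is verifying that \cref{thm_Furstenberg_katznelson} applies in the form I need: namely with the semigroup taken to be $G$ (rather than all of $\N$), with the syndeticity conclusion being syndeticity \emph{within $G$}, and with the actions being the diagonal-power actions $n\mapsto T_{n^i}$ restricted to $G$. Here $G$ is cancelative (as a subsemigroup of the group $(\Q^{>0},\times)$) and commutative, so the hypotheses of \cref{thm_Furstenberg_katznelson} are met; and the restricted maps $n\mapsto T_{n^i}|_G$ for $n\in G$ still form $\ell$ commuting measure preserving $G$-actions on $(X,\mathcal B,\mu)$, so the theorem yields exactly a $G$-syndetic set $S\subset G$ with the desired uniform lower bound. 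Once that is in place, everything else is the short nonnegativity-plus-density argument above. I would also remark that the passage from this theorem to \cref{prop:positive_semigroup_intro} is immediate since $\limsup\geq 0$ suffices there.
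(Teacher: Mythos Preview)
Your proposal is correct and follows essentially the same approach as the paper: apply \cref{thm_Furstenberg_katznelson} to the commuting $G$-actions $n\mapsto T_{n^i}$ ($i=1,\dots,\ell$) to obtain a $G$-syndetic set $S$ on which the multiple intersection has measure $>\delta$, then bound the average below by $\delta\cdot\E_{n\in G\cap[N]}1_S(n)$ and invoke \cref{lem:mult_syndetic_dense}. The paper's proof is terser but structurally identical; your extra care in verifying that $G$ is cancelative and that the $T^{(i)}$ are genuine commuting $G$-actions is exactly the justification the paper leaves implicit.
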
   
\begin{remark}
    \cref{prop:positive_semigroup} is no longer true if we replace the F{\o}lner sequence $([N])_{N \in \N}$ by arbitrary additive F{\o}lner sequence on $\N$. 
    For example, take $(X, \mathcal{B}, \mu, T)$ to be the $(\N, \times)$-system where $X = \T$, $\mu$ is the Lebesgue measure and $T_n x \coloneqq x + \log n$, and let $A$ be a small interval on $X$. Then there exist arbitrary long intervals $[M, N)$ of integers such that $A \cap T^{-1}_n A = \emptyset$ for all $n \in [M, N)$.
    Therefore we can find a F\o lner sequence $\Phi$ consisting of longer and longer such intervals with the property that 
    $$\lim_{N\to\infty}\E_{n\in\Phi_N}\mu(A\cap T_nA)=0.$$
\end{remark}

\begin{proof}
     In view of \cref{thm_Furstenberg_katznelson}, there exists $\delta > 0$ such that the set
    \[
        S = \{n \in G: \mu(A \cap T_{n}^{-1} A \cap \ldots \cap T_{n^{\ell}}^{-1} A) > \delta\}
    \]
    is syndetic in $(G, \times)$. %In particular, $S$ is multiplicatively syndetic in $G$. 
    We then have
    \[
        \E_{n \in G \cap [N]} \mu(A \cap T_{n}^{-1} A \cap \ldots \cap T_{n^{\ell}}^{-1} A) \geq \delta \E_{n \in G \cap [N]} 1_S.
    \]
    \cref{prop:positive_semigroup} now follows from \cref{lem:mult_syndetic_dense}.
\end{proof}

We conclude this section by
 remarking that one can not replace the $\liminf$ and $\limsup$ in Theorem \ref{prop:positive_semigroup} with limits. This is because the limits may not exist as the following proposition shows.

\begin{proposition}
	\label{prop:not_exist}
	There exists a measure preserving system $(\N, \times)$-system $(X, \mathcal{B}, \mu, T)$ and a set $A \in \mathcal{B}$ such that 
	\begin{equation}
	\label{eq:not_exist}
	\lim_{N \to \infty} \E_{n \in [N]} \mu(A \cap T_n^{-1} A)
	\end{equation}
	does not exist.
\end{proposition}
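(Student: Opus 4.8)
The plan is to use the ``Archimedean'' multiplicative system already alluded to in the remark following \cref{prop:positive_semigroup}. Take $X=\T=\R/\Z$ with Lebesgue measure $\mu$, and define $T_n x = x + \log n \bmod 1$; since $\log(mn)=\log m+\log n$, the family $(T_n)_{n\in\N}$ is a measure-preserving $(\N,\times)$-action by rotations. Let $A=[0,1/2)$. Since $T_n^{-1}A=A-\log n$, a direct arc computation gives $\mu(A\cap T_n^{-1}A)=\mu\big(A\cap(A-\log n)\big)=\tfrac12-\|\log n\|$, where $\|x\|$ denotes the distance from $x$ to the nearest integer. Hence $\E_{n\in[N]}\mu(A\cap T_n^{-1}A)=\tfrac12-\E_{n\in[N]}\|\log n\|$, and it suffices to show that $\E_{n\in[N]}\|\log n\|$ does not converge as $N\to\infty$.

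The key point is a Riemann-sum estimate. Writing $f(x)=\|x\|$, on each interval $[k,k+1]$ this is the tent function $\min(x-k,k+1-x)$, of total variation $1$ and bounded by $1/2$. Comparing $\sum_{e^k\le n<e^{k+1}}f(\log n)$ with $\int_{e^k}^{e^{k+1}}f(\log t)\,\d t$ and substituting $t=e^{k+\theta}$ yields
\[
\sum_{e^k\le n<e^{k+1}}\|\log n\| \;=\; e^k\!\int_0^1\min(\theta,1-\theta)e^\theta\,\d\theta \;+\;O(1)\;=\;c\,e^k+O(1),
\]
uniformly in $k$, where $c:=\int_0^1\min(\theta,1-\theta)e^\theta\,\d\theta\in(0,e)$. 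Summing the resulting geometric series over the full blocks below $N$ and doing the analogous estimate for the final partial block, one obtains, for $N=\lceil e^{K+s}\rceil$ with $s\in[0,1)$,
\[
\sum_{n\le N}\|\log n\| \;=\; e^{K}\Big(\tfrac{c}{e-1}+g(s)\Big)+O(K),\qquad g(s):=\int_0^s\min(\theta,1-\theta)e^\theta\,\d\theta,
\]
and since $N=e^{K}e^{s}(1+O(e^{-K}))$ this gives $\E_{n\in[N]}\|\log n\|=\psi(s)+o(1)$ as $K\to\infty$, where $\psi(s):=e^{-s}\big(\tfrac{c}{e-1}+g(s)\big)$.

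Finally I would observe that $\psi$ is not constant: were $\psi\equiv\psi(0)=\tfrac{c}{e-1}$, then $g(s)=\tfrac{c}{e-1}(e^s-1)$ for all $s$, hence $g'(s)=\min(s,1-s)e^s=\tfrac{c}{e-1}e^s$, forcing $\min(s,1-s)$ to be constant on $[0,1)$, which is absurd. Choosing $s_0,s_1\in[0,1)$ with $\psi(s_0)\ne\psi(s_1)$ and letting $N\to\infty$ along $\lceil e^{K+s_0}\rceil$ and along $\lceil e^{K+s_1}\rceil$ produces two subsequences of $\big(\E_{n\in[N]}\|\log n\|\big)_N$ with distinct limits, so the limit in \eqref{eq:not_exist} does not exist.

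The only genuine work is the uniform bookkeeping of the error terms in the Riemann-sum comparison: that the $O(1)$ per block is genuinely uniform in $k$, and that the accumulated $O(K)$ is negligible against the main term, which is of order $e^{K}$. Once that is in place, the non-constancy of $\psi$ is immediate and no results from earlier in the paper are needed beyond the definition of a measure-preserving $(\N,\times)$-system.
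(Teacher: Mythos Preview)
Your proof is correct and essentially self-contained; it is a genuinely different route from the paper's argument, though both use the same underlying system. The paper also takes $X=S^1$ with $T_n x=n^ix$ (i.e.\ rotation by $\tfrac{\log n}{2\pi}$ rather than by $\log n$), but instead of exhibiting a concrete $A$ it argues by contradiction: assuming the limit exists for every measurable $A$, an $L^2$-approximation argument upgrades this to the existence of $\lim_N\E_{n\in[N]}\int f(T_nx)\overline f(x)\,d\mu$ for all $f\in L^2$, and then the choice $f(x)=x$ reduces to the classical fact that $\E_{n\in[N]}n^i$ does not converge. Your argument is more direct and more elementary: you pick $A=[0,1/2)$, compute $\mu(A\cap T_n^{-1}A)=\tfrac12-\|\log n\|$, and then give a Riemann-sum analysis showing that $\E_{n\in[N]}\|\log n\|$ oscillates; this avoids both the functional-analytic detour and the appeal to an external fact about $\sum n^i$, at the cost of a somewhat longer explicit calculation. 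The bookkeeping you flag (the per-block $O(1)$ error is uniform in $k$ since $\|\log t\|$ has total variation $1$ and sup at most $\tfrac12$ on each block, and the cumulative $O(K)$ is negligible against $e^K$) is routine, and your non-constancy argument for $\psi$ is clean.
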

\begin{proof}
	Let $\chi\colon \N \to S^1$ be the completely multiplicative function $\chi(n) = n^{i} = e^{i \log n}$ for $n \in \N$. It is well known that 
	\begin{equation}
	\label{eq:lim_chi}
	\lim_{N \to \infty} \E_{n \in [N]} \chi(n)
	\end{equation}
	does not exist. 
	Consider the $(\N,\times)$-system $(X = S^1, \mu, (T_n)_{n\in\N})$ where $\mu$ is the normalized Lebesgue measure on $X$ and $T_n(x) := \chi(n) x $ for $x \in X$ and $n \in \N$. 
	We show that this system satisfies our condition. By contradiction, assume for every $A \subset X$ measurable, the limit in \eqref{eq:not_exist} exists. 
	
	Let $A$ be an arbitrary measurable subset of $X$. Let $\mathcal{H}$ be the smallest closed subspace of $L^2(X)$ that contains $T_n 1_A$ for all $n \in \N$ and constant functions. By our assumption, for every $g \in \mathcal{H}$,
	\[
	\lim_{N \to \infty} \E_{n \in [N]} \int_X 1_A(T_n x) \overline{g}(x) \, d \mu(x)
	\]
	exists. Let
	\[
	\mathcal{F} = \left \{ f \in L^2(X): \lim_{N \to \infty} \E_{n \in [N]} \int_X 1_A(T_n x) \cdot \overline{f}(x) \, d \mu(x) \mbox{ exists }\right\}.
	\]
	It follows that $\mathcal{H} \subset \mathcal{F}$.
	On the other hand, for all $g \in \mathcal{H}^{\perp}$, we have
	\[
	\int_X 1_A(T_n x) \cdot \overline{g}(x) \, \mu(x) = 0.
	\]
	Hence 
	\[
	\lim_{N \to \infty} \E_{n \in [N]} \int_X 1_A(T_n x) \cdot \overline{g}(x) \, d \mu(x) 
	\]
	exists and is equal to $0$. In particular, $\mathcal{H}^{\perp} \subset \mathcal{F}$. Therefore $\mathcal{F} = L^2(X)$. In conclusion, for every $A \in\mathcal{B}$  and every $f \in L^2(X)$, the limit
	\[
	\lim_{N \to \infty} \E_{n \in [N]} \int_X 1_A(T_n x) \cdot \overline{f}(x) \, \mu(x) 
	\]
	exists. By approximating $f$ with simple functions, we get for all $f \in L^2(X)$, the limit
	\[
	\lim_{N \to \infty} \E_{n \in [N]} \int_X f(T_n x) \cdot \overline{f}(x) \, \mu(x)
	\]
	exists. However, this is not true as the following example shows. Let $f\colon X \to \C$ be defined by $f(x) = x$ for $x \in S^1$. Then $f(T_n x) = \chi(n) x$. We then have
	\[
	\int_{X} f(T_n x) \overline{f}(x)  d \mu = \chi(n).
	\]
	Hence the limit
	\begin{equation}
	\label{eq:w_lim_f}
	\lim_{N \to \infty} \E_{n \in [N]} \int_{X} f \cdot T_n \overline{f} \, d\mu
	\end{equation}
	does not exist. We reach a desired contradiction.
\end{proof}
\begin{remark}
One may ask whether the limit in \eqref{eq:not_exist} might exist after replacing Ces\`aro averages with a different averaging scheme (for example logarithmic averages).
However, for any given averaging scheme, there exists a complete multiplicative function $\chi$ for which the limit in  \eqref{eq:lim_chi} does not exist. Hence we can carry a similar construction as in proof of \cref{prop:not_exist} to show that there exists a system and set $A$ for which the limit in \eqref{eq:not_exist} with respect to such an averaging scheme does not exist.
\end{remark}

\subsection{Subordinated semigroups}

In this section, we introduce the notion of subordinated semigroups of $(\N^{k},\ast)$ and some basic properties of these semigroups for later uses.

\begin{definition}[Subordinated semigroups] We say that a semigroup $(\N^{k},\ast)$ is {\em subordinated} to the Euclidean norm $\|\cdot \|$ in $\N^k$ if there exists a constant $C>0$ such that 
\begin{equation}\label{dss}
\| n*m \| \leq C \|n\|\|m\| \text{ for all } n,m\in \N^k.
\end{equation}	
\end{definition}

We remark that (\ref{dss}) is equivalent of saying that $\sup_{n,m \in \N^k} \frac{\|n*m\|}{\|n\|\|m\|}$ is finite. Also note that, up to modifying the constant $C$, in the definition of subordinated semigroup we may change the Euclidean norm $\Vert\cdot\Vert$ by any other equivalent norm.

The following lemma follows immediately from the definition. 
\begin{lemma}\label{lemma:Lipschitz}
	Let $(\N^k, \ast)$ be a semigroup subordinated to the Euclidean norm in $\N^k$. Then for every $m\in \N^k$, there exists $K=K(m)\in\N$ such that for all $N\in\N$ and $n\in \N^{k}$ with $\Vert n\Vert\leq N$, we have that $\|m*n\|\leq KN$.
\end{lemma}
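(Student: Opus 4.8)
The plan is to read the conclusion off directly from the defining inequality \eqref{dss} of a subordinated semigroup, with no additional machinery. Fix $m \in \N^k$ and let $C > 0$ be the constant appearing in \eqref{dss}. Since $m$ is a fixed element, the number $C\|m\|$ is a fixed positive real, so one may set $K := \lceil C\|m\|\rceil \in \N$ (the ceiling is taken only to land inside $\N$, which is harmless since $C\|m\| > 0$).

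Now let $N \in \N$ and let $n \in \N^k$ with $\|n\| \leq N$. Applying \eqref{dss} to the pair $(m,n)$ yields
\[
    \|m \ast n\| \;\leq\; C\|m\|\,\|n\| \;\leq\; C\|m\|\,N \;\leq\; KN,
\]
which is exactly the asserted bound. If one prefers to work with an equivalent norm rather than the Euclidean one, the same argument goes through after absorbing the comparison constant into $C$, as already noted in the remark following the definition.

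Since this is a one-line estimate, there is no genuine obstacle: the only things to be careful about are that $K$ is required to be a positive integer (handled by the ceiling) and that \eqref{dss} is applied with the two arguments in the order $m$ then $n$ — which is legitimate because \eqref{dss} is quantified over all pairs in $\N^k$, so no commutativity of $\ast$ is needed.
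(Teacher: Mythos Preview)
Your proof is correct and is exactly the one-line argument the paper has in mind; the paper itself says only that the lemma ``follows immediately from the definition,'' and you have simply written out that immediate step.
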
 

A class of subordinated semigroups of particular interest for us is given by semigroups $(\N^k,\ast)$ whose operation is induced by the usual matrix multiplication in a subsemigroup of $\mathcal{M}_{d\times d}(\Z)$ for some $d\in \N$. Here $\mathcal{M}_{d\times d}(\Z)$ denotes the set of $d \times d$-matrices with integer coefficients.

\begin{definition}\label{def_determinantefunction}
Let $d,k\in\N$. We say that a semigroup $(\N^k,\ast)$ is induced by $\mathcal{M}_{d\times d}(\Z)$ (via $\psi$) if there exists a linear injection $\psi \colon \N^k\to \mathcal{M}_{d\times d}(\Z)$ (meaning that $\psi$ is a linear function of $\N^{k}$ in every entry) such that $\psi(\N^k)$ is a semigroup of $\mathcal{M}_{d\times d}(\Z)$ not contained in $\{A\in \mathcal{M}_{d\times d}(\Z) : \det(A)=0 \}$ and such that 
\[n\ast m=\psi^{-1}(\psi(n) \psi(m)).\]
\end{definition}

\begin{lemma}\label{idc}
A semigroup $(\N^k,\ast)$ induced by $\mathcal{M}_{d\times d}(\Z)$ is subbordinated to the Euclidean norm in $\N^{k}$. 
\end{lemma}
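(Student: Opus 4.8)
The plan is to exploit that $\psi$ extends to a linear isomorphism from $\R^k$ onto the $k$-dimensional subspace $W:=\psi(\R^k)$ of $\mathcal{M}_{d\times d}(\R)\cong\R^{d^2}$, together with the submultiplicativity of a matrix norm.

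First, equip $\mathcal{M}_{d\times d}(\R)$ with the Frobenius norm $\|\cdot\|_F$, which is submultiplicative: $\|AB\|_F\le\|A\|_F\,\|B\|_F$. Since $\psi\colon\R^k\to\mathcal{M}_{d\times d}(\R)$ is linear and $\R^k$ is finite-dimensional, there is a constant $C_1>0$ with $\|\psi(n)\|_F\le C_1\|n\|$ for all $n\in\R^k$, in particular for $n\in\N^k$. As $\psi$ is injective, its corestriction $\psi\colon\R^k\to W$ is a linear bijection between finite-dimensional normed spaces, hence a homeomorphism, so there is a constant $C_2>0$ with $\|\psi^{-1}(A)\|\le C_2\|A\|_F$ for every $A\in W$.

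Now take any $n,m\in\N^k$. By hypothesis $\psi(\N^k)$ is a semigroup contained in $\mathcal{M}_{d\times d}(\Z)$, so $\psi(n)\psi(m)\in\psi(\N^k)\subset W$, and $n\ast m=\psi^{-1}(\psi(n)\psi(m))$ is well defined. Chaining the three bounds,
\[
\|n\ast m\|=\bigl\|\psi^{-1}\bigl(\psi(n)\psi(m)\bigr)\bigr\|\le C_2\,\|\psi(n)\psi(m)\|_F\le C_2\,\|\psi(n)\|_F\,\|\psi(m)\|_F\le C_1^2C_2\,\|n\|\,\|m\|,
\]
so \eqref{dss} holds with $C:=C_1^2C_2$, and $(\N^k,\ast)$ is subordinated to the Euclidean norm.

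I do not anticipate a genuine obstacle: the single point requiring care is that $\psi^{-1}$ is a priori defined only on $W$, and this is handled by the semigroup hypothesis, which forces $\psi(n)\psi(m)\in\psi(\N^k)\subseteq W$. Since all norms on the finite-dimensional space $\mathcal{M}_{d\times d}(\R)$ are equivalent, the particular choice of matrix norm is immaterial, and by the remark following the definition of subordinated semigroup the choice of norm on $\N^k$ is immaterial as well.
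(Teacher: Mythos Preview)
Your proof is correct and follows essentially the same approach as the paper: bound $\psi$ and $\psi^{-1}$ as linear maps between finite-dimensional spaces and use submultiplicativity of a matrix norm to chain the inequalities. The paper phrases the bounds via explicit matrices $N\in M_{k,d^2}(\R)$, $M\in M_{d^2,k}(\R)$ representing $\psi$ and a left inverse, whereas you invoke the equivalence of norms on finite-dimensional spaces to obtain the constants $C_1,C_2$; your version is slightly more explicit about why $\psi(n)\psi(m)$ lies in the domain of $\psi^{-1}$, but otherwise the arguments are the same.
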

\begin{proof} We regard $\mathcal{M}_{d\times d}(\Z)$ as a subset of $\R^{d^2}$, and let $N\in M_{k,d^2}(\R)$ and  $M \in M_{d^2,k}(\R)$ be such that $\psi(x)=Nx$ and $\psi^{-1}(y)=My$ for $x\in \N^k$, $y\in \psi(\N^k)$. For any matrix $A\in M_{d_1,d_2}(\R)$, denote $\|A\|=\sup_{x\in \R^{d_1}\setminus\{ 0\}} \|Ax\|/\|x\|$. We have that \[
     \|n \ast m\| =\|\psi^{-1}(\psi(n)\cdot \psi(m)) \| 
     \leq  \|M\| \|\psi(n)\cdot \psi(m)\| 
     \leq  \|M\| \|\psi(n)\|\|\psi(m)\| 
      \leq \|M\| \|N\|^2 \|n\| \|m\|.\]
\end{proof}

The next lemma states that a (multiplicative) syndetic subset in a semigroup subordinated to the Euclidean norm has positive (additive) lower density; this phenomenon was already observed in \cref{lem:mult_syndetic_dense}.

\begin{lemma} \label{lemma_syndetic_liminf}
	Let $(\N^k,\ast)$ be a semigroup subordinated to the Euclidean norm in $\N^k$ and $(Z,\ast)$ be a subsemigroup of $(\N^k,\ast)$ with 
	$$\overline{d}_{+}(Z) \coloneqq \limsup_{N\to\infty}\frac{\big|[N]^k\cap Z\big|}{N^k}=1.$$
	Let $S\subseteq Z$ be a syndetic subset (with respect to the $\ast$ operation). Then,
	\begin{equation*}
	%\label{eq_wenbo1}
	\liminf_{N\to\infty}\E_{n\in[N]^k\cap Z}1_S(n)>0.
	\end{equation*}
\end{lemma}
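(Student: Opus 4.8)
The plan is to adapt the proof of \cref{lem:mult_syndetic_dense} to the $k$-dimensional setting, using \cref{lemma:Lipschitz} in place of the elementary bound $\|m\cdot n\|\le M\|n\|$ that was available there. First I would extract the combinatorial content of syndeticity: since $S$ is syndetic in $(Z,\ast)$ there is a finite set $F\subseteq Z$ with $\bigcup_{m\in F}S/m=Z$, so that $\sum_{m\in F}1_S(m\ast n)\ge 1$ for every $n\in Z$. Summing this over $n\in[N]^k\cap Z$ gives
\[
\sum_{m\in F}\ \sum_{n\in[N]^k\cap Z}1_S(m\ast n)\ \ge\ \bigl|[N]^k\cap Z\bigr|.
\]

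Next, \cref{lemma:Lipschitz}, applied to each $m\in F$ and with $K$ taken as the largest of the resulting constants, provides $K\in\N$ such that $m\ast n\in[KN]^k$ whenever $m\in F$ and $n\in[N]^k$; thus every summand above only involves points of $S\cap[KN]^k$. Since for each $m\in F$ the left translation $n\mapsto m\ast n$ is injective on $\N^k$ — this holds for the matrix-induced semigroups of \cref{def_determinantefunction}, and more generally whenever $(\N^k,\ast)$ embeds into a group — each inner sum is at most $\bigl|S\cap[KN]^k\bigr|$, and the displayed inequality yields
\[
\bigl|S\cap[KN]^k\bigr|\ \ge\ \frac1{|F|}\,\bigl|Z\cap[N]^k\bigr|\qquad\text{for all }N\in\N.
\]
Applying this with $N=\lfloor M/K\rfloor$, using $S\cap[K\lfloor M/K\rfloor]^k\subseteq S\cap[M]^k$ and $\bigl|Z\cap[M]^k\bigr|\le M^k$, I obtain
\[
\E_{n\in[M]^k\cap Z}1_S(n)\ =\ \frac{\bigl|S\cap[M]^k\bigr|}{\bigl|Z\cap[M]^k\bigr|}\ \ge\ \frac1{|F|}\cdot\frac{\bigl|Z\cap[\lfloor M/K\rfloor]^k\bigr|}{M^k}.
\]

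It remains to bound $\liminf_{M\to\infty}\bigl|Z\cap[\lfloor M/K\rfloor]^k\bigr|/M^k$ from below, i.e. to show that $Z$ has positive lower additive density in $\N^k$. When $Z=\N^k$ — which is the case relevant to the intended application in \cref{thm:multipledeterminant} — this is immediate, since the ratio converges to $K^{-k}$; for a general subsemigroup $Z$ with $\overline d_+(Z)=1$ one has to invoke the $\ast$-closure of $Z$. \emph{I expect this last point to be the main obstacle:} the first three steps above are routine multidimensional analogues of \cref{lem:mult_syndetic_dense}, whereas upgrading the upper-density hypothesis $\overline d_+(Z)=1$ into a genuine lower bound on $\bigl|Z\cap[N]^k\bigr|/N^k$ — which is precisely what forces the $\liminf$, and not merely a $\limsup$, to be positive — is the step that really uses that $(\N^k,\ast)$ is subordinated to the Euclidean norm and that $Z$ is a subsemigroup.
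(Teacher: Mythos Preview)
Your plan follows the paper's proof almost step for step: extract a finite $F\subset Z$ with $\sum_{m\in F}1_S(m\ast n)\geq 1$, use \cref{lemma:Lipschitz} to push $m\ast n$ into $[KN]^k$, invoke injectivity of left translation to bound each inner sum by $|S\cap[KN]^k|$, and then pass from averages over $[KN]^k\cap Z$ to averages over $[N]^k\cap Z$. The paper, like you, simply asserts injectivity of $n\mapsto m\ast n$ (``the map $n\to m\ast n$ is injective from $Z$ to $Z$ for all $m\in Z$'') without deriving it from the stated hypotheses; it holds in all the intended applications.

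You are also right that the last step is where the substance lies, and the paper's displayed chain of inequalities is quite compressed at exactly this point: the passage from $\sum_{m\in F}\E_{n\in[N]^k\cap Z}1_S(m\ast n)$ to $\tfrac1{N^k}\sum_{n\in[KN]^k\cap Z}1_S(n)$ drops a factor of $|F|$ and tacitly replaces $|[N]^k\cap Z|$ by $N^k$, and the subsequent comparison $\E_{n\in[N]^k\cap Z}1_S(n)\geq (KM/N)^k\E_{n\in[KM]^k\cap Z}1_S(n)$ likewise presupposes control on $|Z\cap[KM]^k|/|Z\cap[N]^k|$ that is not argued from $\overline d_+(Z)=1$ alone. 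In the actual applications (\cref{thm:multipledeterminant} and \cref{thm_determinant}) the set $Z=\{n:f(n)\neq 0\}$ has \emph{full lower density} by \cref{def_pmf}, so $|Z\cap[N]^k|/N^k\to 1$ and both your argument and the paper's go through cleanly with the bound $\liminf_M\E_{n\in[M]^k\cap Z}1_S(n)\geq (|F|K^k)^{-1}$. In short, your plan is correct and essentially identical to the paper's; your hesitation about the general case under only $\overline d_+(Z)=1$ is well placed, and the paper does not supply a separate argument that resolves it.
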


\begin{proof}
	Since $S$ is syndetic, there exists a finite set $F\subset Z$ such that $Z=F^{-1}\ast S$. 	Then for every $x\in Z$ we have that $1\leq\sum_{n\in F}1_S(n\ast x)$.
	By \cref{lemma:Lipschitz}, there exist $K=\max\{K(n)\colon n\in F\}$ and $N_{0}:=\max\{N_{0}(n)\colon n\in F\}$ such that for all $N\geq N_{0}$, $m*n\in f([KN]^k)$ for all $n\in [N]^k$ and $m\in F$.
	So for all $N\geq N_{0}$,
	$$1\leq\liminf_{N\to\infty}\E_{n\in[N]^k\cap Z}\sum_{m\in F}1_S(m*n)=\liminf_{N\to\infty}\sum_{m\in F}\E_{n\in[N]^k\cap Z}1_S(m*n)\leq\liminf_{N\to\infty}\frac1{N^k}\sum_{n\in[KN]^k\cap Z}1_S(n),$$
	where in the last inequality we used the fact that the map $n\to m*n$ is injective from $Z$ to $Z$ for all $m\in Z$. 
	Therefore,
	\begin{equation}\label{eq_wenbo123}
	\liminf_{N\to\infty}\E_{n\in[mN]^k\cap Z}1_S(n)>K^{-k}>0.
	\end{equation}
	Now for all $N\geq KN_{0}$, let $M=[N/K]$, then (\ref{eq_wenbo123}) implies that
	\begin{multline*}
	\liminf_{N\to\infty}\E_{n\in[N]^k\cap Z}1_S(n)\geq \liminf_{N\to\infty}\Bigl(\frac{KM}{N}\Bigr)^{k}\E_{n\in[KM]^k\cap Z}1_S( n)\\
	>\liminf_{N\to\infty}\Bigl(\frac{KM}{N}\Bigr)^{k}K^{-k}>\liminf_{N\to\infty}\Bigl(\frac{1}{K}-\frac{1}{N}\Bigr)^{k}.
	\end{multline*}
	So $\liminf_{N\to\infty}\E_{n\in[N]^k\cap Z}1_S(n)\geq K^{-k}$ and we are done.
\end{proof}

%\begin{definition}[Parametrized multiplicative function]\label{def_pmf}
%For $k\in\N$, we say that $f\colon \N^k\to \Q^{\geq0}$ is a \emph{parametrized multiplicative function} if there exist an operation $\ast\colon \N^k\times \N^k\to \N^k$ and a subset $Z\subseteq \N^{k}$ of full additive density so that $(\N^{k},\ast)$ is a semigroup subordinated to the euclidean norm in $\N^k$ and
%\[f(n*m)=f(n)\cdot f(m) \]
%for all $n,m\in Z$. 
%We say that $f$ is a \emph{commutative parametrized multiplicative function} if one can choose $(\N^{k},\ast)$ to be a commutative semigroup.

%{\color{blue} For $k\in\N$, we say that $f\colon \N^k\to \Q$ is a \emph{parametrized multiplicative function} if there exists an operation $\ast\colon \N^k\times \N^k\to \N^k$ such that
%\begin{itemize}
%    \item $(\N^{k},\ast)$ is a semigroup subordinated to the euclidean norm in $\N^k$;
%    \item \[f(n*m)=f(n)\cdot f(m) \]
%for all $n,m\in \N$;
%\item $f(n)\geq 0$ for all $n\in\N^{k}$;
%\item the set $\{n\in\N^{k}\colon f(n)=0\}$ is of 0 density.
%\end{itemize}
%We say that $f$ is a \emph{commutative parametrized multiplicative function} if one can choose $(\N^{k},\ast)$ to be a commutative semigroup.}
%\end{definition}

\subsection{Parametrized multiplicative functions}
\label{sec_ParametrizedMultiplicativeFunctions}

In this section, we introduce a class of functions that parametrize multiplicative subsemigroups of $\Q^{>0}$.
\begin{definition}[Parametrized multiplicative function]\label{def_pmf}
For $k\in\N$, we say that $f\colon \N^k\to \Q^{\geq0}$ is a \emph{parametrized multiplicative function} if 
\[
    \bar d_+\big(\{n\in\N^k:f(n)=0\}\big)=0
\]
and there exists an operation $\ast\colon \N^k\times \N^k\to \N^k$ so that $(\N^{k},\ast)$ is a semigroup subordinated to the euclidean norm in $\N^k$ and
\[f(n*m)=f(n) f(m) \]
for all $n,m\in \N^k$. 
We say that $f$ is a \emph{commutative parametrized multiplicative function} if one can choose $(\N^{k},\ast)$ to be a cancelative commutative semigroup.
\end{definition}

\begin{remark}\label{remark_averagedensity}
    We could disallow functions $f:\N^k\to\Q^{\geq0}$ that take the value $0$, but this would forbid some of the natural examples we exhibit below. 
    On the other hand, we are interested in functions that take values in the multiplicative group $(\Q^{>0},\times)$, so as a compromise we allow parametrized multiplicative functions to take the value $0$, as long as only on a set of zero additive density.
    
    This definition leads to a situation where we may have a function $f:\N^k\to\R$ which is only defined on a full density subset $D$ of $\N^k$; in such cases we denote by
    $$\liminf_{N\to\infty}\E_{n\in[N]^k} f(n) := \liminf_{N\to\infty}\E_{n\in[N]^k}1_D(n) f(n).$$
\end{remark}

We sometimes write $f\colon (\N^k,\ast)\to \Q^{\geq0}$ for a parametrized multiplicative function to stress the semigroup operation for which $f$ is a multiplicative function.

A typical example of a parametrized multiplicative function arising from a semigroup $(\N^k,\ast)$ induced by $\mathcal{M}_{d\times d}(\Z)$ is the one induced by the determinant function. 
That is $f\colon (\N^{k},\ast)\to \Q^{\geq 0}$, $n\mapsto \vert\det(\psi(n))\vert$, or more generally $n\mapsto \xi(\vert\det(\psi(n))\vert)$ for any multiplicative function $\xi\colon \vert\det(\psi(\N^k))\vert\to \Q^{\geq0}$.
Here we point out that since $\psi(\N^k)$ is not contained in $\{A\in \mathcal{M}_{d\times d}(\Z) : \det(A)=0 \}$, the polynomial $\det(\psi(n))$ is non trivial and therefore $\bar d\big(\{n\in \N^{k}: \det(\psi(n))=0\}\big)=0$. 
(To see this, note that for $(n_1,\ldots,n_{k-1})\in \N^{k-1}$ there are at most $d$ positive integers $n_k\in \N$ such that $\det(\psi(n_1,\ldots,n_{k-1},n_k))=0$.)

\begin{example}
In the following, we present some examples of parametrized multiplicative functions.
\begin{enumerate}
	\item Consider the function $f$ given by $f=\det\circ \psi$, where $\psi\colon \N^{k}\to \mathcal{M}_{2\times 2}(\Z)$, $(n_1,n_2)\mapsto \begin{pmatrix}
	n_1 & -n_2 \\ n_2 & n_1
	\end{pmatrix}$ is a linear injection.
	By Lemma \ref{idc}, the semigroup induced by $\mathcal{M}_{2\times 2}(\Z)$ via $\psi$ is subbordinated to the Euclidean norm in $\N^{2}$. Thus
	we have that $f(n_1,n_2)=n_1^2+n_2^2$ is a parametrized multiplicative function.

	\item If $k=a^2+b^2$ for some $a,b\in \N$, then $f(n_1,n_2)=k(n_1^2+n_2^2)$ is a parametrized multiplicative function, since $f=\det\circ \psi$ with $\psi\colon\N^{2}\to\mathcal{M}_{2\times 2}(\Z)$ being the linear injection $(n_1,n_2)\mapsto \begin{pmatrix}
	a & -b \\ b & a
	\end{pmatrix} \begin{pmatrix}
	n_1 & -n_2 \\ n_2 & n_1
	\end{pmatrix}$.
	
	\item Similar to the previous examples, the linear injection $\psi: (n_1,n_2)\mapsto \begin{pmatrix}
	n_1 & -Dn_2 \\ n_2 & n_1
	\end{pmatrix}$ for $D \in \Z$ gives us the paramatrized multiplicative function  $f(n_1,n_2) = \vert \det \circ \psi(n_1,n_2) \vert = \vert n_1^2+Dn_2^2 \vert$. 
	
	\item \label{remark:product_of_PMF}
	Given two parametrized multiplicative functions, $f_1\colon (\N^{d_1},\ast_1)\to \Q^{\geq0}$ and $f_2\colon (\N^{d_2},\ast_2)\to \Q^{\geq0}$, we may define for $\ell_1,\ell_2\in \Z$  the function $f\colon(\N^{d_1}\times \N^{d_2},\ast_1\times \ast_2)\to \Q$, such that $f(n_1,n_2)=f_1^{\ell_1}(n_1)f_2^{\ell_2}(n_2)$, if $f_1(n_1)f_2(n_2)\neq 0$ and $f(n_1,n_2)=0$ otherwise.
	Since $(\N^{d_i},\ast_i)$, $i=1,2$ is subordinated to the Euclidean norm in $\N^{d_i}$, we get that $(\N^{d_1}\times \N^{d_2}, \ast_1\times \ast_2)$ is subordinated to the Euclidean norm in $\N^{d_1}\times \N^{d_2}$.
	It is not hard to check that the upper density of $\{(n_1,n_2)\in \N^{d_1}\times \N^{d_2}: f(n_1,n_2)=0\}$ equals 0.
	This implies that $f$ is a parametrized multiplicative function. This also shows that $1/f_1$ is a parametrized multiplicative function.
	
	In particular, for any $p,q\in\Z$ the function $f\colon \N^2\to \Q^{>0}$, $(m,n)\mapsto m^{p} n^q$ is a parametrized multiplicative function.

	\item Let $K$ be a finite field extension of $\mathbb{Q}$ and $\mathcal{O}_{K}$ be the ring of integers of $K$. 
	Let $d=[K: \mathbb{Q}]$ and $\mathcal{B}=\{b_{1},\dots,b_{d}\}$ be an integral basis of $\mathcal{O}_{K}$, i.e., every $x\in\mathcal{O}_{K}$ can be written as $x=n_{1}b_{1}+\dots+n_{d}b_{d}$ for some $n_{1},\dots,n_{d}\in\mathbb{Z}$ in a unique way.
	Let $N_{K}(x)$ denote the norm of $x$ in $K$. 
	Since $N_{K}(x)N_{K}(y)=N_{K}(xy)$, the function $f\colon\N^{d}\to\Q^{>0}$ given by $f(n_{1},\dots,n_{d})=\big|N_{K}(n_{1}b_{1}+\dots+n_{d}b_{d})\big|$ is a parametrized multiplicative function.
			We remark that in this example, $f$ can be written as $f=\det\circ \psi$ for some linear injection $\psi\colon\mathbb{Z}^{d}\to\mathcal{M}_{d\times d}(\Q)$. 
%Indeed,
%	for every $1\leq i\leq d$, there exists a $d\times d$ matrix $B_{i}$ with integer coefficients such that for all $n_{1},\dots,n_{d}\in\mathbb{Z}$, writing $(n_{1}b_{1}+\dots+n_{d}b_{d})b_{i}=n'_{1}b_{1}+\dots+n'_{d}b_{d}$, we have that $(n_{1},\dots,n_{d})=(n'_{1},\dots,n'_{d})B_{i}$. 
%	For $x=n_{1}b_{1}+\dots+n_{d}b_{d}\in\mathcal{O}_{K}$, define $B(x)=n_{1}B_{1}+\dots+n_{d}B_{d}$. It is not hard to see that   $(n_{1}b_{1}+\dots+n_{d}b_{d})x=n'_{1}b_{1}+\dots+n'_{d}b_{d}$ if and only if $(n_{1},\dots,n_{d})=(n'_{1},\dots,n'_{d})B(x)$. 
%	Moreover, the determinant of $B_{x}$ equals to the norm $N_{K}(x)$ of $x$ in $K$. So $f(n_{1},\dots,n_{d})=N_{K}(n_{1}b_{1}+\dots+n_{d}b_{d})=\det(B(n_{1}b_{1}+\dots+n_{d}b_{d}))$.

    As a special case, let $\omega=e^{2\pi i /3}$ and $K=\mathbb{Q}(\omega)$. Then $\{1,\omega,\omega^{2}\}$ is an integral basis of $\mathcal{O}_{K}$. For every $x=a_{0}+a_{1}\omega+a_{2}\omega^{2}$ and $y=b_{0}+b_{1}\omega+b_{2}\omega^{2}$ with $a_{0},a_{1},a_{2},b_{0},b_{1},b_{2}\in\Q$, note that 
    \begin{equation}\nonumber
    \begin{split}
    &\quad xy=(a_{0}b_{0}+a_{1}b_{2}+a_{2}b_{1})+(a_{0}b_{1}+a_{1}b_{0}+a_{2}b_{2})\omega+(a_{1}b_{1}+a_{0}b_{2}+a_{2}b_{0})\omega^{2}
    \\&=
    \begin{pmatrix}
    b_{0} & b_{1} & b_{2}
    \end{pmatrix}
    \cdot \begin{pmatrix}
    a_{0} & a_{1} & a_{2}  \\
    a_{2} & a_{0} & a_{1}  \\
    a_{1} & a_{2} & a_{0}  
    \end{pmatrix}
    \cdot
    \begin{pmatrix}
    1 \\
    \omega \\
    \omega^{2} 
    \end{pmatrix}.
    \end{split}
    \end{equation}
    Therefore the function
     \begin{equation}\nonumber
     \begin{split}
     &\quad f(a_{0}, a_{1}, a_{2}):= \vert N_{K}(a_{0}+a_{1}\omega+a_{2}\omega^{2}) \vert = \left \vert \det \begin{pmatrix}
     a_{0} & a_{1} & a_{2}  \\
     a_{2} & a_{0} & a_{1}  \\
     a_{1} & a_{2} & a_{0}  
     \end{pmatrix} \right \vert
     \\&= \vert (a_{0}+a_{1}+a_{2})(a_{0}^{2}+a_{1}^{2}+a_{2}^{2}-a_{0}a_{1}-a_{0}a_{2}-a_{1}a_{2}) \vert = \vert a_{0}^{3}+a_{1}^{3}+a_{2}^{3}-3a_{0}a_{1}a_{2} \vert
     \end{split}
     \end{equation}
     is a parametrized multiplicative function.
	
	\item Let $p\colon\Z\to\Z$ be the minimal polynomial of $A\in \mathcal{M}_{n\times n}(\Z)$ and $d=\operatorname{deg}(p)$. Let $\psi\colon \N^{d}\to \mathcal{M}_{n\times n}(\Z)$ be such that $\psi(n_{0},\dots,n_{d-1}):=n_{0}I_{n}+n_{1}A+\dots+n_{d-1}A^{d-1}$ (which is a linear injection since $d$ is the degree of the minimal polynomial of $A$).
	Since $p(A)=0$,  $\psi(\N^d)$ is a semigroup and $\psi^{-1}\colon \psi(\N^d)\to \N$ is a well defined map. So we may
	denote $$n\ast m:=\psi^{-1}(\psi(n)\psi(m)).$$
	  Also note that $\psi(\N^d)$ does not consists of only non-invertible matrices.
	Then the group $(\N^{d},\ast)$ is induced by $\mathcal{M}_{n\times n}(\Z)$ (via the map $\psi$), and so $\left| \det\circ \psi \right| \colon \N^{d}\to \Z$ is a parametrized multiplicative function.
	
	To illustrate this class of examples, consider the Fibonacci matrix $A=\begin{pmatrix} 1 & 1 \\ 1 & 0 \end{pmatrix}$. Its minimal polynomial is of degree 2 and then we can consider $\psi\colon \N^2\to \mathcal{M}_{2\times 2}(\Z)$, given by
	\[\psi(n,m)=nI + m A= \begin{pmatrix} n+m & m \\ m & n \end{pmatrix}  \]
	Hence $|\det \circ \psi(n,m)|= |n^2 - m^2 + nm|$ is a parametrized multiplicative function.
	More generally, consider a matrix $A=\begin{pmatrix} a & b \\ c & d \end{pmatrix}$ not being a multiple of the identity. Then its minimal polynomial is of degree 2 and therefore 
	\[\left|\det(n I + m A)\right| =\left|\det \begin{pmatrix} n+am & bm \\ cm & n+dm \end{pmatrix}   \right|=| n^2 +(a+d)mn + (ad-bc)m^2 |\] is a parametrized multiplicative function.
	
		\item  Consider the quaternion semigroup of matrices $\{M_{n_{1},n_{2},n_{3},n_{4}}\colon n_{1},n_{2},n_{3},n_{4}\in \N\}$ where 
		\[M_{n_{1},n_{2},n_{3},n_{4}}=\begin{pmatrix}
		n_{1} & -n_{2} & -n_{3} & -n_{4} \\
		n_{2} & n_{1} & -n_{4} & n_{3} \\
		n_{3} & n_{4} & n_{1} & -n_{2} \\
		n_{4} & -n_{3} & n_{2} & n_{1}
		\end{pmatrix}.\]
		Via the linear injection $\psi\colon \N^4\to \mathcal{M}_{4\times 4}(\Z)$ with $(n_1,n_2,n_3,n_4)\mapsto M_{n_1,n_2,n_3,n_4}$ we obtain the parametrized multiplicative functions 
		$f(n_1,n_2,n_3,n_4)=\det \circ\psi(n_1,n_2,n_3,n_4)= (n_1^2+n_2^2+n_3^2+n_4^2)^2$ and $g(n_1,n_2,n_3,n_4)=(\det)^{1/2} (\psi(n_1,n_2,n_3,n_4))= n_1^2+n_2^2+n_3^2+n_4^2$.

\end{enumerate}
We remark that the functions in examples (1)--(6) are all \emph{commutative} parametrized multiplicative functions, however it is not clear whether the one in example (7) is, as the quaternion semigroup is non-commutative. 
\end{example}

%We are now ready to prove Theorem \ref{thm:multipledeterminant_intro}.

\subsection{Averages along parametrized multiplicative functions}
In this section we prove
Theorem \ref{thm:multipledeterminant_intro}. 
In fact we will establish the following more general version which also deals with commutative parametrized functions that take values in $\Q^{\geq0}$.
Recall the convention adopted in \cref{remark_averagedensity}.

\begin{theorem}
\label{thm:multipledeterminant} 
	Let $(X, \mathcal{B}, \mu,T)$ be a measure preserving $(\N, \times)$-system and $A \in \mathcal{B}$ with $\mu(A)>0$. 
	Let $f\colon \N^k\to\Q^{\geq0}$ be a commutative parametrized multiplicative function and $\ell\in\N$.
	Then
	\[
	    \liminf_{N\to\infty}\E_{n\in[N]^k} \mu\Big(A\cap T_{f(n)}^{-1} A \cap T_{f(n)^2}^{-1} A \cap \cdots \cap  T_{f(n)^{\ell}}^{-1} A\Big)>0.
	 \]
\end{theorem}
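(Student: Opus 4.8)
The plan is to reduce the statement to the Furstenberg--Katznelson multiple recurrence theorem (\cref{thm_Furstenberg_katznelson}) applied to the semigroup that parametrizes $f$, and then convert the resulting multiplicative syndeticity into the desired additive density lower bound by means of \cref{lemma_syndetic_liminf}.

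First I would reduce to the case where $T$ is an action of $(\Q^{>0},\times)$: passing to the invertible extension of $(X,\mathcal B,\mu,T)$, as in the proof of \cref{lemma_multiplicativerecurrence}, and replacing $A$ by its preimage changes neither $\mu(A)$ nor the measure of any multiple intersection in which all exponents $f(n)^j$ are integers, and for non-integer values of $f(n)$ this extension is precisely what gives meaning to $T_{f(n)}$; so it suffices to work in this setting. Next, fix a cancelative commutative operation $\ast$ on $\N^k$, subordinated to the Euclidean norm, witnessing that $f$ is a commutative parametrized multiplicative function, and put $G:=\{n\in\N^k : f(n)\neq 0\}$. Since $f(n\ast m)=f(n)f(m)$, the set $G$ is closed under $\ast$, hence a cancelative commutative subsemigroup of $(\N^k,\ast)$; and since $\bar d_+(\N^k\setminus G)=0$ we have $|G\cap[N]^k|/N^k\to 1$, so in particular $\bar d_+(G)=1$.

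Then for $n\in G$ I would set $S_n:=T_{f(n)}$. Because $T$ is an action of the abelian group $(\Q^{>0},\times)$ and $f$ is $\ast$-multiplicative, the transformations $(S_n)_{n\in G}$ pairwise commute and satisfy $S_{n\ast m}=T_{f(n)f(m)}=S_nS_m$; consequently, for each $j\in\{1,\dots,\ell\}$ the assignment $n\mapsto T_{j,n}:=S_n^{\,j}=T_{f(n)^j}$ is a measure preserving action of $(G,\ast)$, and the $\ell$ actions $T_{1,\cdot},\dots,T_{\ell,\cdot}$ commute with one another. Applying \cref{thm_Furstenberg_katznelson} to the cancelative commutative semigroup $(G,\ast)$, these $\ell$ commuting actions, and the set $A$ with $\mu(A)>0$, I obtain $\delta>0$ and a $\ast$-syndetic subset $\Sigma\subseteq G$ such that
\[
\mu\big(A\cap T_{f(n)}^{-1}A\cap T_{f(n)^2}^{-1}A\cap\cdots\cap T_{f(n)^\ell}^{-1}A\big)>\delta
\qquad\text{for all }n\in\Sigma.
\]
Finally, since $(\N^k,\ast)$ is subordinated to the Euclidean norm, $G$ is a subsemigroup with $\bar d_+(G)=1$, and $\Sigma\subseteq G$ is $\ast$-syndetic, \cref{lemma_syndetic_liminf} yields $\liminf_{N\to\infty}\E_{n\in[N]^k\cap G}1_\Sigma(n)>0$, and combining this with $|G\cap[N]^k|/N^k\to 1$ gives $c:=\liminf_{N\to\infty}|\Sigma\cap[N]^k|/N^k>0$. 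Writing $D=G$ for the full-density set on which the integrand is defined (the convention of \cref{remark_averagedensity}) and using $1_\Sigma\le 1_D$ together with the displayed bound, I get
\[
\E_{n\in[N]^k}1_D(n)\,\mu\big(A\cap T_{f(n)}^{-1}A\cap\cdots\cap T_{f(n)^\ell}^{-1}A\big)\ \ge\ \delta\,\frac{|\Sigma\cap[N]^k|}{N^k},
\]
whose $\liminf$ is at least $\delta c>0$, which is the assertion.

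I expect the only genuinely delicate point to be the verification that $(S_n^{\,j})_{n\in G}$ is a $(G,\ast)$-action for each $j$ and that these $\ell$ actions commute: this is exactly where commutativity of $\ast$ — hence the hypothesis that $f$ be a \emph{commutative} parametrized multiplicative function — is used, since without it $S_n^{\,j}$ need not even be multiplicative in $n$ and \cref{thm_Furstenberg_katznelson} would be inapplicable. Everything else is the routine density bookkeeping that reduces the statement to \cref{thm_Furstenberg_katznelson} and \cref{lemma_syndetic_liminf}, with \cref{thm:multipledeterminant_intro} recovered as the special case of an $\N$-valued $f$.
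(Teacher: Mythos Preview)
Your argument is correct and is essentially the paper's own proof: restrict to the subsemigroup $Z=\{n:f(n)>0\}$ (your $G$), define the action $R_n=T_{f(n)}$, apply \cref{thm_Furstenberg_katznelson} to obtain a $\ast$-syndetic set of good times, and then invoke \cref{lemma_syndetic_liminf} together with $\bar d_+(Z)=1$; you are in fact more careful than the paper in first passing to the invertible extension so that $T_{f(n)}$ is defined for rational $f(n)$. One small inaccuracy in your closing commentary: the map $n\mapsto S_n^{\,j}$ is a $(G,\ast)$-action regardless of whether $\ast$ is commutative, since the $S_n$ already commute as transformations coming from the abelian group $(\Q^{>0},\times)$; the genuine role of commutativity (and cancelativity) of $\ast$ is only that \cref{thm_Furstenberg_katznelson} is stated for cancelative commutative semigroups.
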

\begin{proof}
%Given \cref{thm_Furstenberg_katznelson} and \cref{lemma_syndetic_liminf}, 
The proof resembles that of \cref{prop:positive_semigroup}.  Let $(\N^k,\ast)$ be the semigroup that gives rise to the parametrized multiplicative function $f$. %and let $R$ be the action of $(\N^k,\ast)$ induced by $f$.
Let $Z$ be the set of all $n\in\N^{k}$ with $f(n)>0$. 
Since $f$ is a homomorphism, $(Z,\ast)$ is a subsemigroup of $(\N^{k},\ast)$.
For each $n\in Z$, let $R_{n}:=T_{f(n)}$.
Denote by $n^{\ast k}$ the iterate product $n^{\ast k}=n\ast n\cdots \ast n$ ($k$-times). 
By \cref{thm_Furstenberg_katznelson},
there exists $\delta > 0$ such that the set
    \[
        S = \{n \in Z: \mu(A \cap R_{n}^{-1} A \cap \ldots \cap R_{n^{\ast \ell}}^{-1} A) > \delta\}
    \]
    is syndetic in $(Z, \ast)$. Since $\overline{d}_{+}(Z)=1$ by definition, we have 
    \begin{align*}
    &\quad
    \liminf_{N\to\infty}\E_{n \in [N]^k}\mu(A\cap T_{f(n)}^{-1} A\cap T_{f(n)^2}^{-1} A \cap \cdots T_{f(n)^\ell}^{-1} A)
    \\&=
    \liminf_{N\to\infty}\E_{n\in [N]^{k}\cap Z}\mu(A\cap T_{f(n)}^{-1} A\cap T_{f(n)^2}^{-1} A \cap \cdots T_{f(n)^\ell}^{-1} A)
    %\\& =  
    %    \E_{n\in Z} \mu(A \cap R_{n}^{-1} A \cap \ldots \cap R_{n^{\ast \ell}}^{-1} A)   
    %\\& =  
    %    \E_{n\in[N]^k} \mu(A \cap R_{n}^{-1} A \cap \ldots \cap R_{n^{\ast \ell}}^{-1} A)   
        \\&\geq \delta \liminf_{N\to\infty}\E_{n\in [N]^{k}\cap Z} 1_S(n) > 0,
    \end{align*}
where the last inequality comes from \cref{lemma_syndetic_liminf}.    
\end{proof}

%
%The following is the main result of this section:
%
%\begin{theorem}\label{thm_multipledeterminant} 
%	Let $(X,\mu,(T_n)_{n\in\N})$ be an invertible measurable $(\N,\times)$-system, let $A\subset X$ with $\mu(A)>0$. Let $f\colon \N^k\to \Q$ be a parametrized multiplicative function arising from a commutative cancellative semigroup and let $\ell\in\N$.
%	Then
%	\[
%	    \liminf_{N\to\infty}\E_{n_1,\ldots,n_k\in[N]} \mu(A\cap T_{f(n_1,\ldots,n_k)}^{-1} A \cap T_{f(n_1,\ldots,n_k)}^{-2} A \cap \cdots \cap  T_{f(n_1,\ldots,n_k)}^{-\ell} A)>0.
%	 \]
%\end{theorem}

As an immediate corollary of Theorem \ref{thm:multipledeterminant_intro}, we conclude that for all  multiplicative  measure preserving system $(X, \mathcal{B}, \mu,T)$ and $A \in \mathcal{B}$ with $\mu(A)>0$,
the following averages are positive:
\begin{enumerate}[ref=\arabic*]
    \item $\liminf_{N \to \infty} \E_{m,n \in [N]} \mu(T_{m^a}^{-1} A \cap T_{n^b}^{-1} A)$ for $a, b \in \N$;\footnote{We do not know how to establish this without using \cref{thm:multipledeterminant_intro} (i.e. using only \cref{prop:positive_semigroup_intro}).}
	\item $\liminf_{N\to\infty}\E_{n,m\in[N]}\mu(A\cap T_{n^{2}+m^{2}}^{-1} A)$ (which implies Proposition \ref{prop:multiple_squares_intro}); 
	
	\item $\liminf_{N\to\infty}\E_{n,m\in[N]}\mu(A\cap T_{n^{2}+Dm^{2}}^{-1} A)$ for all $D\in\N$;
	
	\item \label{item4} $\liminf_{N\to\infty}\E_{n,m\in[N]}\mu(A \cap T_{k(n^{2}+m^{2})}^{-1} A \cap T_{(k(n^{2}+m^{2}))^2} A )$ for all $k$ of the form $a^{2}+b^{2}$ for some $a, b \in \Z$.
\end{enumerate}	 
\begin{remark}
	We remark that item \eqref{item4} above fails if $k$ does not have the form $a^2 + b^2$. Indeed, if this is the case, there exists a prime $p \equiv 3 \pmod 4$ such that $p$ has odd exponent in prime factorization of $k$. 
	For $n \in \N$ let $f_p(n)$ denote the exponent of $p$ in the prime factorization of $n$. We then have $f_p(k(m^2 + n^2))$ is odd for all $m, n \in \N$. 
	For $n \in \N$, color $n$ with $C(n) \equiv f_p(n) \pmod 2$. It follows that $\{k (m^2 + n^2): m, n \in \N\}$ is  not a set of topological multiplicative recurrence.
\end{remark}

\subsection{Single recurrence and non-commutative semigroups}

Next theorem shows that in the special case $\ell=1$ of Theorem \ref{thm:multipledeterminant_intro}, one can drop the assumption that $f$ arises from a commutative semigroup. 

% To this end, we need the following version of Khintchine's recurrence theorem for general countable semigroups (not necessarily amenable) \cite[Theorem 5.1]{Bergelson96}).
% This result is probably well known; for completeness' sake we provide a short proof which is almost verbatim taken from the proof of \cite[Theorem 5.1]{Bergelson96}.

\begin{theorem}\label{thm_determinant}
	Let $(X, \mathcal{B}, \mu,T)$ be an $(\N, \times)$ measure preserving system and $A \in \mathcal{B}$ with $\mu(A)>0$. Let $f\colon (\N^k,\ast) \to \Q^{\geq 0}$ be a parametrized multiplicative function. Then
	\[
	    \liminf_{N\to\infty}\E_{n \in [N]^k}\mu(A\cap T_{f(n)}^{-1} A)>0.
	\]
\end{theorem}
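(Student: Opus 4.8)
The plan is to follow the proof of \cref{thm:multipledeterminant} almost verbatim, making one substitution: since now $\ell=1$ we no longer need \emph{multiple} recurrence, so we can replace the Furstenberg--Katznelson theorem \ref{thm_Furstenberg_katznelson} (which requires a cancelative commutative semigroup) by the single-recurrence Khintchine theorem \ref{thm:KhinchineGeneral}, which is valid for \emph{arbitrary} countable semigroups. This removes precisely the hypothesis that $(\N^k,\ast)$ be commutative.

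In detail, let $(\N^k,\ast)$ be the subordinated semigroup witnessing that $f$ is a parametrized multiplicative function, and set $Z:=\{n\in\N^k: f(n)>0\}$. Since $f\colon(\N^k,\ast)\to(\Q^{\geq0},\times)$ is a homomorphism, $Z$ is closed under $\ast$, so $(Z,\ast)$ is a countable subsemigroup; and $\overline{d}_+(\N^k\setminus Z)=0$ gives $\lim_{N\to\infty}|[N]^k\cap Z|/N^k=1$, in particular $\overline{d}_+(Z)=1$. For $n\in Z$ put $R_n:=T_{f(n)}$ (passing to the invertible extension of $T$, a $(\Q^{>0},\times)$-system, so that this is defined when $f$ is not integer valued). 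Then the relation $f(n\ast m)=f(n)f(m)$ shows $R_{n\ast m}=R_n\circ R_m$, so $(R_n)_{n\in Z}$ is an action of $(Z,\ast)$ by measure preserving transformations of $(X,\mathcal B,\mu)$.

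Next I would apply \cref{thm:KhinchineGeneral} to this action with $\epsilon=\tfrac12\mu^2(A)$: the set
\[
S:=\bigl\{n\in Z:\mu(A\cap R_n^{-1}A)>\tfrac12\mu^2(A)\bigr\}
\]
is syndetic in $(Z,\ast)$. Using the averaging convention of \cref{remark_averagedensity} (the relevant function is defined on the full-density domain $Z$), for every $N$ one has
\[
\E_{n\in[N]^k}\mu(A\cap T_{f(n)}^{-1}A)=\frac{|[N]^k\cap Z|}{N^k}\,\E_{n\in[N]^k\cap Z}\mu(A\cap T_{f(n)}^{-1}A)\ \ge\ \frac{|[N]^k\cap Z|}{N^k}\cdot\frac{\mu^2(A)}{2}\,\E_{n\in[N]^k\cap Z}1_S(n).
\]
Since $(\N^k,\ast)$ is subordinated to the Euclidean norm, $(Z,\ast)$ is a subsemigroup with $\overline{d}_+(Z)=1$, and $S\subseteq Z$ is syndetic, \cref{lemma_syndetic_liminf} yields $\liminf_{N\to\infty}\E_{n\in[N]^k\cap Z}1_S(n)>0$; combined with $|[N]^k\cap Z|/N^k\to 1$ this gives $\liminf_{N\to\infty}\E_{n\in[N]^k}\mu(A\cap T_{f(n)}^{-1}A)\ge\tfrac12\mu^2(A)\liminf_{N\to\infty}\E_{n\in[N]^k\cap Z}1_S(n)>0$.

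I do not expect a genuine obstacle here: the entire content is in recognizing that for single recurrence the noncommutative Khintchine-type statement \cref{thm:KhinchineGeneral} already suffices, so that the commutativity used to invoke Furstenberg--Katznelson in \cref{thm:multipledeterminant} is unnecessary. The two supporting facts --- that $Z$ is a subsemigroup of full additive density, and that a multiplicatively syndetic subset of a subordinated semigroup has positive additive lower density (\cref{lemma_syndetic_liminf}) --- carry over unchanged from the commutative case.
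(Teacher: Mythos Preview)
Your proposal is correct and matches the paper's own proof essentially verbatim: the paper likewise defines $Z=\{n:f(n)>0\}$, sets $R_n=T_{f(n)}$, invokes \cref{thm:KhinchineGeneral} (in place of Furstenberg--Katznelson) to obtain the syndetic set $S=\{n\in Z:\mu(A\cap R_n^{-1}A)>\mu^2(A)/2\}$, and finishes with \cref{lemma_syndetic_liminf}. Your remark about passing to the invertible $(\Q^{>0},\times)$-extension to make sense of $T_{f(n)}$ for non-integer values is a detail the paper leaves implicit.
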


\begin{proof}
Let $(\N^k,\ast)$ be the semigroup that gives the parametrized multiplicative function $f$. %and let $R$ be the action of $(\N^k,\ast)$ induced by $f$.
Let $Z$ be the set of all $n\in\N^{k}$ with $f(n)>0$. 
Since $f$ is a homomorphism, $(Z,\ast)$ is a subsemigroup of $(\N^{k},\ast)$.
For $n\in Z$, denote $R_{n}:=T_{f(n)}$.
%Let $R$ be the action of $(\N^k,\ast)$ on $(X,\mu)$ induced by $f$,  i.e. $R^{n}=T^{f(n)}$ for  $n\in \N^k$. As $f$ is a parametrized multiplicative function, $R$ is well defined. 
	By \cref{thm:KhinchineGeneral}, the set
	$$S:=\big\{n\in Z:\mu(A\cap R^nA)>\mu^2(A)/2\big\}$$
	is syndetic in $(Z,\ast)$.
	Since $\mu(A\cap R^nA)\geq\tfrac12\mu(A)^2 1_S(n)$ for all $n\in Z$ and $\overline{d}_{+}(Z)=1$, it suffices to show that
	\begin{equation*}
	%\label{eq_wenbo1}
	\liminf_{N\to\infty}\E_{n\in[N]^k\cap Z}1_S(n)>0.
	\end{equation*}
The last inequality follows from \cref{lemma_syndetic_liminf}, so our proof finishes. 
\end{proof}

\begin{remark}
    As an immediate corollary of Theorem \ref{thm_determinant}, we conclude that for all  measure preserving $(\N, \times)$-system $(X, \mathcal{B}, \mu,T)$ and $A \in \mathcal{B}$ with $\mu(A)>0$,
\[
    \liminf_{N\to\infty}\E_{n_{1},n_{2},n_{3},n_{4}\in[N]}\mu(A\cap T_{n_{1}^{2}+n_{2}^{2}+n_{3}^{2}+n_{4}^{2}}^{-1}A) > 0.
\]
\end{remark}

\section{Density regularity of Pythagorean triples in finite fields}
\label{sec:finite_Pythagorean}

\subsection{Pythagorean pairs in finite fields.}

In what follows we explore the question of finding Pythagorean pairs in sets of positive density in finite fields. We start with a result in number theory, which follows from \cite[Theorem 5A]{Schmidt_1976} (see also the proof of Corollary 5B there).

\begin{theorem}[{\cite[Theorem 5A]{Schmidt_1976}}]\label{thm_squaresrandom}
 %For every $\epsilon>0$ and $k\in\N$ there exists $N$ such that for any finite field $F$ with $|F|>N$, letting $Q=\{x^2:x\in F\}$ and $a_1,\dots,a_k\in F$ be arbitrary and pairwise distinct we have
 %$$\left|\frac1{|F|}\Big|(Q-a_1)\cap\cdots\cap(Q-a_k)\Big|-\frac1{2^k}\right|<\epsilon.$$
 
 Let $k\in\N$, let $F$ be a finite field, let $Q=\{x^2\colon x\in F\}$ and let $a_1,\dots,a_k\in F$ be arbitrary and pairwise distinct.
 Then
  $$\frac{\Big|(Q-a_1)\cap\cdots\cap(Q-a_k)\Big|}{|F|}=\frac1{2^k}+o_{k,|F|\to\infty}(1).$$
\end{theorem}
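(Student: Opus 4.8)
The plan is to reduce the statement to a character-sum estimate and then cite Weil's bound on such sums (which is essentially the content of Schmidt's Theorem 5A). Throughout, let $F$ be a finite field with $q = |F|$ elements; since the claim is trivially true when $q$ is even (then every element is a square and the left side equals $1 = 1/2^k + o(1)$ only if... wait, no) — so actually first I would reduce to the case $q$ odd, $q$ large, by absorbing the finitely many even-order fields into the error term $o_{k,|F|\to\infty}(1)$. For $q$ odd, let $\chi$ denote the quadratic multiplicative character on $F^\times$, extended by $\chi(0)=0$. The key elementary identity is that for any $t \in F$, the number of $x \in F$ with $x^2 = t$ equals $1 + \chi(t)$ (interpreting $\chi(0)=0$ so this gives $1$ when $t=0$, and $0$ or $2$ otherwise). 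Hence $\mathbbm{1}_Q(t) = \frac{1}{2}\big(1 + \chi(t)\big)$ for $t \neq 0$, with a harmless $O(1)$ discrepancy at $t=0$.

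Next I would expand the indicator of the intersection. Writing $a_1, \dots, a_k$ for the given distinct elements,
\[
\Big|(Q-a_1)\cap\cdots\cap(Q-a_k)\Big| = \sum_{x \in F} \prod_{i=1}^{k} \mathbbm{1}_Q(x + a_i).
\]
Up to an error of $O(k)$ coming from the at most $k$ values of $x$ with some $x + a_i = 0$, this equals
\[
\sum_{x \in F} \prod_{i=1}^{k} \frac{1 + \chi(x+a_i)}{2} = \frac{1}{2^k} \sum_{x \in F} \prod_{i=1}^{k}\big(1 + \chi(x+a_i)\big) = \frac{1}{2^k} \sum_{S \subseteq \{1,\dots,k\}} \sum_{x \in F} \chi\Big(\prod_{i \in S}(x + a_i)\Big).
\]
The term $S = \emptyset$ contributes $q$, giving the main term $q/2^k$. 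For each non-empty $S$, since the $a_i$ are pairwise distinct, the polynomial $\prod_{i \in S}(x + a_i)$ has $|S| \geq 1$ distinct roots and is not a constant times a perfect square (its squarefree part has degree $|S| \geq 1$); by the Weil bound for character sums (Schmidt, \cite{Schmidt_1976}, Theorem 2G, or the cited Theorem 5A directly), $\big|\sum_{x \in F}\chi(\prod_{i\in S}(x+a_i))\big| \leq (|S| - 1)\sqrt{q} \leq (k-1)\sqrt{q}$.

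Putting this together,
\[
\left|\,\Big|(Q-a_1)\cap\cdots\cap(Q-a_k)\Big| - \frac{q}{2^k}\,\right| \leq \frac{1}{2^k}\cdot 2^k \cdot (k-1)\sqrt{q} + O(k) = (k-1)\sqrt{q} + O(k),
\]
so dividing by $q$ yields $\frac{1}{2^k} + O\big(k q^{-1/2}\big)$, which is $\frac{1}{2^k} + o_{k,|F|\to\infty}(1)$ as claimed. \textbf{The main obstacle} here is not really an obstacle at all, since the hard analytic input — the Weil bound on one-variable character sums — is exactly what Schmidt's Theorem 5A provides and we are permitted to cite it; the only genuine work is the bookkeeping of the $t=0$ boundary terms and checking that $\prod_{i \in S}(x+a_i)$ is never (a constant times) a square of a polynomial, which follows immediately from the distinctness of the $a_i$. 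If one wanted a self-contained argument one would need to reproduce the Weil estimate, which is the real depth; but as stated the proof is a short deduction. I would present it in the three displays above plus a sentence handling $q$ even and $q$ bounded.
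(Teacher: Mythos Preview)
The paper does not actually prove this theorem; it is quoted from Schmidt with the remark that it ``follows from \cite[Theorem 5A]{Schmidt_1976} (see also the proof of Corollary 5B there).'' Your argument is precisely the standard deduction Schmidt carries out for that corollary: write the indicator of $Q$ via the quadratic character, expand the product over subsets $S$, and bound each non-trivial character sum $\sum_x \chi\big(\prod_{i\in S}(x+a_i)\big)$ by Weil. So in substance you and the paper agree --- the only difference is that you spell out the three-line computation while the paper simply points to the reference.

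One genuine slip in your write-up: your handling of even characteristic is wrong. There are infinitely many finite fields of even order (namely $\mathbb{F}_{2^n}$ for every $n$), so you cannot ``absorb the finitely many even-order fields into the error term.'' In characteristic~$2$ squaring is a bijection, so $Q=F$ and the left side equals $1$ identically; the statement as literally written is therefore false along the sequence $|F|=2^n$ whenever $k\geq1$. The theorem should be read for fields of odd characteristic (which is the setting in which Schmidt's quadratic-character estimates are formulated), and this is harmless for the paper's application since in characteristic~$2$ the complement $Q^c$ is empty and \cref{thm:density_finite} is trivial there. You correctly sensed the problem (``wait, no'') but then papered over it; the clean fix is simply to restrict to odd $q$ from the outset.
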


In next proposition, we show that \cref{ques:x^2-y^2} is true when $\N$ is replaced by a finite field. In fact, we prove a stronger result which involves ``density regularity'' rather than ``partition regularity''. 
\begin{proposition}
\label{thm:density_finite}
For every $\delta>0$ there is $M > 0$ such that whenever $\mathbb{F}$ is a finite field with $|\mathbb{F}|>M$ and $A\subset \mathbb{F}$ having $|A|>\delta|\mathbb{F}|$, there exist $x,y\in A$ with $x^2+y^2$ being a perfect square.
\end{proposition}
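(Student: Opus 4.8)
The approach is to count Pythagorean triples whose first two coordinates lie in $A$, using the quadratic character of $\mathbb F$ together with Weil's bound for character sums.

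First I would dispose of characteristic $2$, where the statement is trivial since $x^2+y^2=(x+y)^2$ is a square for every $x,y\in\mathbb F$. So assume $|\mathbb F|$ is odd and let $\chi$ be the quadratic character of $\mathbb F$, extended by $\chi(0):=0$, so that $\#\{z\in\mathbb F:z^2=u\}=1+\chi(u)$ for every $u\in\mathbb F$. Given $A\subseteq\mathbb F$ with $|A|>\delta|\mathbb F|$, I set
$$\mathcal N:=\#\bigl\{(x,y,z)\in A\times A\times\mathbb F: x^2+y^2=z^2\bigr\}=\sum_{x,y\in A}\bigl(1+\chi(x^2+y^2)\bigr)=|A|^2+S,$$
where $S:=\sum_{x,y\in A}\chi(x^2+y^2)$. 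Since $\mathcal N>0$ immediately produces $x,y\in A$ with $x^2+y^2$ a perfect square (and the diagonal $x=y$ contributes at most $|A|\le|\mathbb F|$ to $\mathcal N$, so it can be discarded if one insists on $x\neq y$), it suffices to prove $|S|=o(|\mathbb F|^2)$, uniformly in $A$.

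To estimate $S$, I would apply Cauchy--Schwarz in the variable $x$, extending the sum to all of $\mathbb F$:
$$|S|^2\le |A|\sum_{x\in\mathbb F}\Bigl|\sum_{y\in A}\chi(x^2+y^2)\Bigr|^2=|A|\sum_{y_1,y_2\in A}\ \sum_{x\in\mathbb F}\chi\bigl((x^2+y_1^2)(x^2+y_2^2)\bigr).$$
For fixed $y_1,y_2$, the quartic $g(x)=(x^2+y_1^2)(x^2+y_2^2)$ is a perfect square in $\mathbb F[x]$ precisely when $y_1=\pm y_2$; hence Weil's bound gives $\bigl|\sum_{x\in\mathbb F}\chi(g(x))\bigr|\le 3|\mathbb F|^{1/2}$ whenever $y_1\neq\pm y_2$, while for the at most $2|A|$ remaining pairs I would use the trivial bound $|\mathbb F|$. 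This yields $|S|^2\le |A|\bigl(3|A|^2|\mathbb F|^{1/2}+2|A|\,|\mathbb F|\bigr)\le 5|\mathbb F|^{7/2}$, so $|S|\le 3|\mathbb F|^{7/4}$.

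Combining the two displays, $\mathcal N\ge|A|^2-3|\mathbb F|^{7/4}>\delta^2|\mathbb F|^2-3|\mathbb F|^{7/4}$, which is strictly positive once $|\mathbb F|>M:=(3/\delta^2)^4$, completing the argument. The only substantial step is the character-sum estimate, and the one delicate point there is the degenerate locus $\{y_1=\pm y_2\}$ on which $g$ becomes a square and Weil's bound fails; fortunately this locus contributes only $O(|A|^2|\mathbb F|)$ to $|S|^2$, which is dominated by the main Weil term and hence harmless. I note in passing that the special case $A=\mathbb F$ of the count $\mathcal N$ can also be read off from \cref{thm_squaresrandom}.
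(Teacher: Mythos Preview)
Your argument is correct. It takes a different route from the paper's proof, though both ultimately rest on Weil's character-sum estimate.

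The paper does not count triples directly. Instead it passes to $B=\{x^2:x\in A\}$, so that the task becomes finding $u,v\in B$ with $u+v\in Q$ (the set of squares). It then argues by covering: choosing $k$ with $2^{-k}<\delta/2$ and any $k$ distinct elements $b_1,\dots,b_k\in B$, \cref{thm_squaresrandom} (Schmidt's estimate) gives $\bigl|\bigcap_{i=1}^k(Q^c-b_i)\bigr|/|\mathbb F|=2^{-k}+o(1)$, so $\bigcup_{b\in B}(Q-b)$ misses fewer than $\tfrac\delta2|\mathbb F|$ points and must meet $B$. Thus the paper uses Weil only indirectly, through Schmidt's packaged statement, and the quantitative dependence on $\delta$ is left implicit.

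Your approach counts $\mathcal N$ directly, applies Cauchy--Schwarz in one variable, and invokes Weil on the quartic $(x^2+y_1^2)(x^2+y_2^2)$; this is more self-contained and yields an explicit bound $M=(3/\delta^2)^4$. The only place to be slightly careful is your treatment of the degenerate pairs: when $y_1=0\neq y_2$ the polynomial is $x^2(x^2+y_2^2)$, which is still not a constant times a square, so Weil applies there too; and the diagonal contribution to $\mathcal N$ is at most $2|A|$ rather than $|A|$ (since $1+\chi(2x^2)\le 2$), but this is immaterial. Overall, your argument is a clean alternative that trades the black-box appeal to \cref{thm_squaresrandom} for a direct and quantitative computation.
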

\begin{proof}
 Since $|A|>\delta|\mathbb{F}|$, the set $B:=\{x^2:x\in A\}$ has $|B|>\tfrac\delta2|\mathbb{F}|$.
 %Thus, replacing $\delta$ with $\delta/2$ and $A$ with $B$, to prove the proposition, 
 Our task is to show that there exist $x,y\in B$ such that $x+y$ is a perfect square when $|\mathbb{F}|$ is sufficiently large.
 Denote by $Q$ the set of perfect squares in $\mathbb{F}$; we will show that there exists $x\in B$ such that $(Q-x)\cap B\neq\emptyset$.
 This will follow if we show that the set $Q_B:=\bigcup_{x\in B}(Q-x)$ satisfies $|Q_B|>(1-\delta/2)|\mathbb{F}|$.
 
 Let $Q^c:=\mathbb{F}\setminus Q$ and $Q_B^c:=\mathbb{F}\setminus Q_B$.
 Then we have
 $Q_B^c=\bigcap_{x\in B}(Q^c-x)$ and we want to show that $|Q_B^c|<\tfrac\delta2|\mathbb{F}|$.
 Let $k\in\N$ be such that $1/2^k<\frac\delta2$.
 In view of \cref{thm_squaresrandom}, $|Q_B^c|=\frac1{2^k}+o_{k, |\mathbb{F}| \to\infty}(1)$, so if $|\mathbb{F}|$ is large enough we conclude that indeed $|Q_B^c|<\tfrac\delta2$, as desired.
 \end{proof}

\begin{remark} 
In contrast to \cref{thm:density_finite}, it is not true that every set of positive additive density of $\N$ contains $x,y$ such that $x^2 + y^2$ is a perfect square. For example, consider the set $4\N + 1$.
\end{remark}

\subsection{Pythagorean triples in finite fields}

It is also natural to ask whether an extension of \cref{thm:density_finite} stating that there exist $x, y, z \in A$ with $x^2 + y^2 = z^2$ still holds. 
We show that this is false and characterize all equations of the form $a x^2 + by^2 + cz^2 = 0$ for which the analogous results are true.

\begin{proposition}
  For $a,b,c\in\Z$, the followings are equivalent:
  \begin{enumerate}[label=(\roman*)]
    \item For every $\delta>0$, there is $M$ such that whenever $\mathbb{F}$ is a finite field with $\vert \mathbb{F} \vert>M$ and $A\subseteq \mathbb{F}$ with $\vert A\vert>\delta\vert \mathbb{F}\vert$, there exist $x,y,z\in A$ such that $ax^{2}+by^{2}+cz^{2}=0$.	
	
	\item $a + b + c = 0$.
  \end{enumerate}
\end{proposition}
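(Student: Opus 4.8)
The plan is to prove the two implications separately, with the easy direction being (i)$\Rightarrow$(ii) and the substantive direction being (ii)$\Rightarrow$(i), which will follow the template of \cref{thm:density_finite}.

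\textbf{The direction (ii)$\Rightarrow$(i).} Assume $a+b+c=0$. The key observation is that this makes the quadratic form degenerate: $ax^2+by^2+cz^2 = ax^2+by^2-(a+b)z^2 = a(x^2-z^2)+b(y^2-z^2) = a(x-z)(x+z)+b(y-z)(y+z)$, and in particular $(x,y,z)=(t,t,t)$ is always a solution for every $t\in F$. Of course that diagonal solution is useless since it does not certify three \emph{distinct} elements, but it suggests looking for solutions near the diagonal. A cleaner route: set $z$ to be any fixed element of $A$ and try to solve $a x^2 + b y^2 = (a+b)z^2$ with $x,y\in A$; equivalently, with $B:=\{x^2:x\in A\}$, solve $a u + b v = (a+b)z^2$ with $u,v\in B$. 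The set $B$ has $|B|\geq\tfrac12|A|>\tfrac\delta2|F|$ (each square has at most two square roots). Now I would argue as in \cref{thm:density_finite}: for a fixed $z$, the map $v\mapsto (a+b)z^2 - bv$ followed by division by $a$ is an affine bijection of $F$ (assuming $\mathrm{char}(F)\nmid a$; the finitely many bad characteristics can be excluded by taking $M$ large), so it suffices to show that $B$ and its affine image $B':=\{a^{-1}((a+b)z^2-bv):v\in B\}$ intersect for \emph{some} choice of $z\in A$. Actually it is cleaner still to invoke \cref{thm_squaresrandom} directly on a well-chosen family of shifts: write the condition $ax^2+by^2+cz^2=0$ for generic $x,y,z$ and count solutions with $x,y,z\in F$ via character sums / Schmidt's theorem, showing the count is $|F|^2/\text{(something)} + o(|F|^2)$, then subtract the $O(|F|)$ degenerate solutions (those with two coordinates equal, or a coordinate zero), and finally a supersaturation/averaging argument forces a solution inside any set of density $\delta$ once $|F|$ is large. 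The main obstacle here is handling the degeneracy: because the form is singular, the naive count of $F$-points of $\{ax^2+by^2+cz^2=0\}$ is dominated by a positive-dimensional family, so one must be careful that the solutions one extracts have pairwise distinct coordinates (and this is exactly where $a+b+c=0$ helps, because modding out the diagonal line $x=y=z$ leaves a nondegenerate conic). I would parametrize solutions transverse to the diagonal and apply the finite-field count there.

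\textbf{The direction (i)$\Rightarrow$(ii).} I prove the contrapositive: if $a+b+c\neq 0$, I construct, for some fixed $\delta>0$, arbitrarily large finite fields $F$ and sets $A\subseteq F$ with $|A|>\delta|F|$ containing no solution of $ax^2+by^2+cz^2=0$. The natural candidate is a multiplicative construction: let $A$ be (a large piece of) the set of nonzero squares $Q^\times$, which has density $\tfrac12 - o(1)$. If $x,y,z\in Q^\times$, then $x=s^2,y=t^2,z=w^2$ and the equation becomes $a s^4 + b t^4 + c w^4 = 0$; this need not fail in general, so instead I would use the subgroup of $\ell$-th powers for a suitable $\ell$ dividing $|F^\times|$. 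Concretely, restrict to primes $q$ with $q\equiv 1\bmod \ell$ for a fixed $\ell$ (so the $\ell$-th powers form a subgroup of index $\ell$, density $1/\ell$), and choose $\ell$ and a coset so that the twisted equation has no solution. A slicker approach avoiding coset bookkeeping: observe that if $a+b+c\neq 0$ then $(1,1,1)$ is \emph{not} a solution, and more to the point one can find a prime $p$ and a character argument... but the robust way is this. Consider the set $A=\{x\in F^\times : \chi(x)=1\}$ where $\chi$ is a multiplicative character of order $2$ (so $A=Q^\times$, density $\approx 1/2$); on $A$, $x^2$ ranges over fourth powers. This shows density $1/2$ is \emph{not} enough to guarantee the pattern unless the relevant twisted form always has points — and for $a+b+c\neq 0$ one can exhibit $F$ where it doesn't, e.g. by choosing $F=\mathbb{F}_q$ with $q$ in a suitable residue class modulo a modulus determined by $a,b,c$ and invoking the fact that a fixed diagonal quartic $as^4+bt^4+cw^4=0$ with $a+b+c\neq0$ has only trivial/degenerate solutions over $\mathbb{F}_q$ for infinitely many $q$ (this can be arranged via local obstructions, choosing $q$ so that the relevant quartic residue conditions are incompatible). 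The main obstacle in this direction is pinning down a clean, self-contained construction: the cleanest is probably to take $\ell$ large, look at the subgroup $H$ of $\ell$-th powers, and show that for a positive-density set of primes $q\equiv 1\bmod\ell$ the set $H$ contains no solution to $ax+by+cz=0$ with $x,y,z$ needing to \emph{also} be $\ell$-th powers — wait, that's circular. Let me instead restrict $A$ to be a single coset $gH$ of the $\ell$-th powers: then $x,y,z\in gH$ forces $ax+by+cz \in$ a single coset computation, and since $a g u + b g v + c g w = g(au+bv+cw)$ with $u,v,w$ ranging over $\ell$-th powers, I need $H$ to avoid the solution set of $au+bv+cw=0$; by a counting/deletion argument using Weil bounds, the number of solutions to $au+bv+cw=0$ with $u,v,w\in H$ is $\sim |F|^2/\ell^3$ plus error $O(|F|^{3/2})$, which is \emph{positive}, so a single coset does not work either — one genuinely needs a sparser or more arithmetically structured set.

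\textbf{Reassessment.} Given the difficulty I anticipate with an ad hoc construction, I would instead base the failure direction on the known partition-regularity machinery: by Rado's theorem (or its finite-field analogue), the linear equation $au+bv+cw=0$ in the variables $u=x^2,v=y^2,w=z^2$ is partition regular over $\N$ (and density-regular in finite fields) if and only if some nonempty subset of $\{a,b,c\}$ sums to zero; since $a,b,c$ are assumed nonzero, the only way this happens is $a+b+c=0$. When $a+b+c\neq0$, Rado's condition fails, so there is a finite coloring of $\N$ (hence, by a pigeonhole/transfer argument, a positive-density set in arbitrarily large finite fields) with no monochromatic solution to $au+bv+cw=0$; pulling back squares, this gives the desired density-$\delta$ counterexample. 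So the plan for (i)$\Rightarrow$(ii) is: (1) reduce to the linear equation in squares; (2) invoke the failure of Rado's columns condition when no sub-sum of $\{a,b,c\}$ vanishes, which for nonzero $a,b,c$ means $a+b+c\neq 0$; (3) convert the resulting bad coloring into a dense solution-free set over large finite fields. The genuine mathematical content is concentrated in (ii)$\Rightarrow$(i) and its degeneracy analysis; the converse is essentially Rado's theorem packaged appropriately.
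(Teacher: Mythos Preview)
Your proposal has genuine gaps in both directions.

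\textbf{Direction (ii)$\Rightarrow$(i).} You circle around several ideas without landing on one that works. The clean route, which the paper takes, is the one you almost write down but then abandon: with $B=\{x^2:x\in A\}$ and $a+b+c=0$, the equation $au+bv+cw=0$ in $u,v,w\in B$ is equivalent to finding $x,t$ with $x,\ x+ct,\ x-bt\in B$ (check: $a\cdot x+b(x+ct)+c(x-bt)=(a+b+c)x=0$). This is a three-term generalized arithmetic progression in the finite abelian group $(\mathbb{F},+)$, and its existence in any set of density $\delta/2$ follows directly from the Roth/Szemer\'edi theorem for finite groups. Your character-sum/supersaturation sketch is not obviously viable: the count of solutions to $ax^2+by^2+cz^2=0$ with $x,y,z\in A$ involves sums $\sum_{x\in A}e_F(\xi a x^2)$ for which no nontrivial bound is available for arbitrary $A$, so you would still need a removal-lemma or Roth-type input to pass from global counts to dense subsets.

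\textbf{Direction (i)$\Rightarrow$(ii).} The Rado reduction is fatally flawed. Rado's columns condition for $au+bv+cw=0$ asks that \emph{some} nonempty subset of $\{a,b,c\}$ sum to zero, not that $a+b+c=0$. So for example with $(a,b,c)=(1,-1,2)$ you have $a+b+c=2\neq0$, yet the subset $\{a,b\}$ sums to zero and the linear equation $u-v+2w=0$ \emph{is} partition regular over $\N$; you therefore cannot extract a bad coloring from Rado in this case, and your argument collapses. (Nor does the proposition assume $a,b,c$ are nonzero, contrary to what you write.) Even in the cases where Rado's condition fails, the ``pigeonhole/transfer'' from a bad coloring of $\N$ to a dense solution-free subset of arbitrarily large finite fields is not automatic and would need a genuine argument.

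The paper instead gives a direct construction: with $m=|a+b+c|>0$ and an auxiliary prime $q>\max\{m,5\}$, it partitions the quadratic residues mod $p$ into boxes $B_{i,j}$ according to the interval $[ip/m,(i+1)p/m)$ and the residue class $j\bmod q$, and shows by pigeonhole that some ``good'' box (meaning $mj\not\equiv ip\bmod q$) has density $\gtrsim 1/(mq)$. If $x,y,z$ lie in a single good box and $ax+by+cz\equiv 0\bmod p$, a short computation using both the interval constraint (to kill the wraparound mod $p$) and the residue constraint (to force $mj\equiv ip\bmod q$) yields a contradiction. The preimage $A_{i,j}$ under squaring is then the desired counterexample.
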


\begin{remark}
    We remark that related questions are also raised and addressed in \cite{ Csikvari_Gyarmati_Sarkozy_2012,Lindqvist_2018}.
\end{remark}

\begin{proof}
Suppose that $a + b + c = 0$ and $\mathbb{F}$ is a finite field. Let $\delta > 0$ and $A \subset \mathbb{F}$ with $|A| > \delta |\mathbb{F}|$. Set $B:=\{x^{2}\colon x\in A\}$. Then $\vert B\vert \geq |A|/2 > \delta/2 |\mathbb{F}|$. It suffices to find $x,t\in \mathbb{F}$ such that $x,x+ct,x-bt\in B$, but this immediately follows from Szemer\'edi Theorem for finite groups (\cite[Theorem 10.5]{Tao_Vu06}).

Now assume that $a+b+c\neq 0$ and let $p >2$ be a prime. 
Let  $m=\vert a+b+c\vert>0$ and $q$ be the smallest prime such that $q > \max\{m,5\}$. 
For $0\leq i\leq m-1$ and $0\leq j\leq q-1$, denote 
\[
    B_{i,j}=\Big\{x\in\Z\colon iN/m\leq x<(i+1)p/m, x\equiv j \pmod q \text{ and $x$ is a perfect square} \pmod p\Big\}
\]
and let $A_{i,j}=\{x\in \mathbb{F} \colon x^{2}\in B_{i,j}\}$. 
Then
$\sum_{i=0}^{m-1}\sum_{j=0}^{p-1}\vert A_{i,j}\vert=p$.

We say that $(i,j)$ is \emph{good} if %$j\neq 0$ and
$mj\neq ip \mod q$. Let $U$ be the set of $(i,j), 0\leq i\leq m-1,0\leq j\leq q-1$ which is good, and $V$ be the set of  $(i,j)$ which is not good.
Using the estimation $\vert A_{i,j}\vert\leq 2\vert B_{i,j}\vert\leq 2+2p/mq$ and the fact that $\vert V\vert\leq m$, we have that
%$$\sum_{i=0}^{m-1}\vert A_{i,0}\vert\leq 2\sum_{i=0}^{m-1}\vert B_{i,0}\vert\leq 2N/p.$$
$$\sum_{(i,j) \text{ is good}}\vert A_{i,j}\vert\geq p-2m-4p/q.$$
By the pigeonhole principle, if $p$ is sufficiently large, then there exists %$0\leq i\leq m-1$ and $1\leq j\leq p-1$ 
a good pair $(i,j)$ such that $\vert A_{i,j}\vert\geq (1-5/q)p/mq$. %In order to show that $ax^{2}+by^{2}+cz^{2}=0$ is not densely Ramsey for finite field, 
It suffices to show that $ax^{2}+by^{2}+cz^{2}\neq 0$ for all $x,y,z\in A_{i,j}$.

Indeed, for any $x,y,z\in B_{i,j}$, denote $x'=mx-ip$, $y'=my-ip$ and $z'=mz-ip$. Then $0\leq x',y',z'<p$. If $ax+by+cz=0 \mod p$, then $m(ax+by+cz)=0 \mod p$. By the construction of $x',y'$ and $z'$, we have that $ax'+by'+cz'=0 \mod p$. Assume that $ax+by+cz=kp$ and $ax'+by'+cz'=rp$ for some $k,r\in\Z$, then $(mk-r)p=(a+b+c)ip=mip$. In other words, $mk-r=mi$. This implies that $m\vert r$. On the other hand, since
$-mp<ax'+by'+cz'=rp<mp$, we must have that $r=0$ and $ax'+by'+cz'=0$. Therefore,
$$0=ax'+by'+cz'\equiv (mj-ip)(a+b+c) = \pm m(mj-ip) \mod q.$$
This is impossible since $(i,j)$ is a good pair.
This completes the proof.\end{proof}

\begin{remark}\label{example_multiplicativegeneric}
    An example due to Frantzikinakis shows that there exists a subset of $\N$ of positive multiplicative density that contains no triple $x, y, z$ such that $x^2 + y^2 = z^2$. The example is as follows: let $(\T \coloneqq \R/\Z, \mu, T)$ be the multiplicative  measure preserving system where $\mu$ is the Lebesgue measure and $T_n x = n^2 x$ for all $x \in \T$. Let $x_0$ be a generic point for $\mu$. Fix $\delta > 0$ small. Then the set $E = \{n \in \N: n^2 x_0 \mod 1 \in [1/2 - \delta, 1/2 + \delta]\}$ has multiplicative density $2 \delta > 0$. For $x, y, z \in E$, $(x^2 + y^2) x_0 \mod 1 \in [0, 2 \delta] \cup [1 - 2 \delta, 1)$ and $z^2 x_0 \in [1/2 - \delta, 1/2 + \delta]$. Hence $(x^2 + y^2) x_0 \neq z^2 x_0 \mod 1$. It follows that $x^2 + y^2 \neq z^2$.  
\end{remark}
%%%%%%%%%%%%%%%%%%%%%%

 \section{Open questions}
 
In this section, we collect some open questions regarding sets of multiplicative recurrence that naturally arise from our study.
%\textcolor{red}{I recommend to remove Question 7.1.}
%Among other things, \cref{thm:n+1_n_intro} shows that the set $\{(n+1)/n:n\in\N\}$ is a set of topological multiplicative recurrence.
%It is natural to ask whether it is a set of measurable recurrence. More ambitiously one can ask:
%\begin{question}
 %   Let $(X, \mathcal{B}, \mu, T)$ be a multiplicative  measure preserving system and $A \in \mathcal{B}$ with $\mu(A) > 0$. Is it true that
 %   \[
 %       \liminf_{N \to \infty} \E_{n \in [N]} \mu(T_n^{-1}A \cap T_{n+1}^{-1} A) > 0?
%    \]
%\end{question}
%\textcolor{red}{Remove the question above.}
%\joel{Note that
%\[
%        \E_{n,m} \mu(T_{(n+1)^2} A \cap T_{m^2} A) > 0.
%    \]
%    is easy, here are questions we can't answer.}
In \cite[Problem 6]{Frantzikinakis_Host_2017}, Frantzikinakis and Host ask whether the set $\{(m^2 + n^2)/m^2: m, n \in \N\}$ is a set of topological multiplicative recurrence. In the spirit of this paper, we can ask the average version of this question, and some other related configurations:
\begin{question}
    Let $(X, \mathcal{B}, \mu, T)$ be a multiplicative  measure preserving system and $A \in \mathcal{B}$ with $\mu(A) > 0$. Is it true that
    \[
        \liminf_{N \to \infty} \E_{n,m \in [N]} \mu(T_{n^2+m^2}^{-1} A \cap T_{m^2}^{-1} A) > 0,
    \]    
    or
    \[
        \liminf_{N \to \infty} \E_{n,m \in [N]} \mu(T_{n^2+n}^{-1} A \cap T_{m^2}^{-1} A) > 0?
    \]
\end{question}

\cref{thm:n+1_n_intro} gives criteria for the image of a M{\"o}bius transformation to be a set of topological multiplicative recurrence. However there are cases not covered by this theorem. For example, we do not know whether $\{(6n+3)/(6n+2): n \in \N\}$ is a set of topological multiplicative recurrence. Because of this reason, we ask: 

\begin{question}
    \label{ques:mobius_necessary_sufficient}
    For $a \in \N$, $b, d \in \Z$, is it true that $S = \{(an+b)/(an + d): n \in \N\}$ is a set of topological multiplicative recurrence if and only if $a|b$ or $a|d$?  
\end{question}

For a polynomial $P \in \Z[x]$, \cref{conj:polynomial_semigroup} shows that $\{P(n): n \in \N\}$ contains an infinite multiplicative semigroup if and only if $P(x) = (ax + b)^d$ for some $a, d \in \N$, $b \in \Z$ with $a | b(b-1)$ and $d \in \N$. Here we can ask:
\begin{question}
   For which polynomial $P \in \Z[x]$ is $\{P(n): n \in \N\}$ a set of topological multiplicative recurrence?
\end{question}

Even the answer for the following question is unknown:

\begin{question}
Is $\{n^2+1:n\in\N\}$ a set of topological  multiplicative recurrence?
\end{question}

It is known that the set of shifted primes $\P - 1 = \{p - 1: p \text{ is a prime}\}$ and $\P + 1 = \{p + 1: p \text{ is a prime}\}$ are sets of additive recurrence. Hence it is of interest to ask:

\begin{question}
    Are $\P - 1$ and $\P +1$ sets of topological multiplicative recurrence?
\end{question}

We remark that neither of the sets $\P-1$ and $\P+1$ contain a multiplicative semigroup, since for any $a\in\N$ we can factor $a^2-1=(a-1)(a+1)$ and $a^3+1=(a+1)(a^2-a+1)$ and hence $\P+1$ does not contain a perfect square and $\P-1$ does not contain a perfect cube.

% \bibliographystyle{plain}
% \bibliography{refs-joel}

\end{document}